\newcommand\org@hypertarget{}
\let\org@hypertarget\hypertarget
\renewcommand\hypertarget[2]{%
  \Hy@raisedlink{\org@hypertarget{#1}{}}#2%
} 
\newtheorem{theorem}{Theorem}[section]
\newtheorem{lemma}[theorem]{Lemma}
\newtheorem{corollary}[theorem]{Corollary}
\newtheorem{proposition}[theorem]{Proposition}
\theoremstyle{definition}
\newtheorem{definition}[theorem]{Definition}
\newtheorem{remark}[theorem]{Remark}
\newtheorem{example}[theorem]{Example}
\newcommand{\al}{\alpha}
\newcommand{\bb}{\mathbb}
\newcommand{\bor}{\partial}
\newcommand{\Cl}[1]{\overline{\{#1\}}}
\newcommand{\comment}[1]{}
\newcommand{\dblecurly}[1]{\{\!\{ #1 \}\!\}}
\newcommand{\into}{\hookrightarrow}
\newcommand{\isoto}{\stackrel{\simeq}{\to}}
\newcommand{\mult}[1]{#1^{\!\times}}
\newcommand{\op}{\operatorname}
\newcommand{\pid}[1]{\langle #1 \rangle}
\newcommand{\res}{\overline}
\newcommand{\roi}{\mathcal{O}}
\newcommand{\ROI}{O}
\newcommand{\sub}[1]{{\mbox{\scriptsize #1}}}
\newcommand{\To}{\longrightarrow}
\newcommand{\ul}[1]{\underline{#1}}
\newcommand{\w}{\omega}
\renewcommand{\cal}{\mathcal}
\renewcommand{\hat}{\widehat}
\renewcommand{\frak}{\mathfrak}
\newcommand{\indlim}{\varinjlim}
\renewcommand{\tilde}{\widetilde}
\renewcommand{\ker}{\operatorname{Ker}}
\renewcommand{\projlim}{\varprojlim}
\DeclareMathOperator{\Aut}{Aut}
\DeclareMathOperator{\Char}{char}
\DeclareMathOperator{\codim}{codim}
\DeclareMathOperator{\Frac}{Frac}
\DeclareMathOperator{\Gal}{Gal}
\DeclareMathOperator*{\rprod}{\prod\nolimits^{\prime}\hspace{-1mm}}
\DeclareMathOperator{\Spec}{Spec}
\DeclareMathOperator{\supp}{supp}
\DeclareMathOperator{\height}{ht}
\DeclareMathOperator{\cdvdim}{cdvdim}
\newcommand{\rec}{\Upsilon}
\newcommand{\dash}{\textendash}
\newcommand{\comp}{{\hat{\phantom{o}}}}
\begin{document}
\title{\Huge An introduction to higher dimensional local fields and ad\`eles}
\author{\Large \sc Matthew Morrow}

\date{}

\maketitle

\begin{abstract}
These notes are an introduction to higher dimensional local fields and higher dimensional ad\`eles. As well as the foundational theory, we summarise the theory of topologies on higher dimensional local fields and higher dimensional local class field theory.
\end{abstract}

\tableofcontents

\section*{Introduction}
The theory of local fields, or more generally complete discrete valuation fields, is a widely used tool in algebraic and arithmetic geometry. In particular, such fields are at the heart of the local-to-global principle, the idea that one can study a family of local problems and then deduce global information, typically using the ring of ad\`eles.

In the 1970s, A.~Parshin \cite{Parshin1975, Parshin1976, Parshin1978, Parshin1983, Parshin1984} generalised local fields by introducing higher dimensional local fields and establishing, in finite characteristic, their local class field theory using Milnor $K$-theory. At the same time he generalised ad\`eles to algebraic surfaces by constructing two-dimensional ad\`ele groups, namely restricted products of two-dimensional local fields and their rings of integers; using his two-dimensional ad\`eles, he studied Serre duality, intersection theory, and the class field theory of algebraic surfaces. Independently, K.~Kato \cite{Kato1979, Kato1980, Kato1982} developed class field theory for higher dimensional local fields of mixed characteristic. The class field theory was subsequently extended to arithmetic surfaces by K.~Kato and S.~Saito \cite{KatoSaito1983, KatoSaito1986}, who later further extended the theory to arbitrary dimensional arithmetic varieties. Meanwhile, ad\`eles in arbitrary dimensions were introduced by A.~Beilinson \cite{Beilinson1980}.

As well as class field theory, higher dimensional fields have found applications in the development of explicit approaches to Grothendieck duality and trace maps: the basic case of a curve may be found in \cite[III.7.14]{Hartshorne1977}, while the higher dimensional theory \cite{Hubl1996, Yekutieli1992, Lomadze1981, Osipov2000, Parshin2000, Morrow2009, Morrow2011} has still not reached its final form. We mention also the representation theory of algebraic groups over two-dimensional local fields \cite{GaitsgoryKazhdan2004, Lee2004}, Ind-Pro approaches to harmonic analysis on such fields \cite{Kapranov2001a, Parshin2008, Parshin2010}, and a theory of integration and zeta integrals on such fields \cite{Fesenko2003, Fesenko2008a, Morrow2008, Morrow2008a}.

The aim of these notes is to explain the fundamentals of higher dimensional local fields and higher dimensional ad\`eles, and then to summarise various additional aspects. We hope that this will provide the reader with a sufficient grounding in the subject to be able to navigate and understand the existing literature with relative ease. Another introduction to the local theory is \cite{Zhukov2000a}

Section \ref{section_cdvfields} is a terse review of complete discrete valuation fields; we expect the reader to be reasonably comfortable with this material, and many readers may wish to skip the section. The only result which the reader should particularly note is theorem \ref{theorem_uniqueness_of_valn}, from which it follows that a field can be a complete discrete valuation field in at most one way.

Higher dimensional local fields appear in section \ref{section_hdlfs}, where we start by introducing, purely for clarity of exposition, the idea of the `complete discrete valuation dimension' of a field; this is a natural notion which arises as soon as one acknowledges that the residue field of a complete discrete valuation field may again be a complete discrete valuation field. We prove the classification theorem, namely the higher dimensional analogue of the fact that a local field is either a field of Laurent series over $\bb F_q$ or a finite extension of $\bb Q_p$. The section finishes by introducing sequences of local parameters and defining morphisms between higher dimensional fields.

The next three sections of these notes are largely, though not completely, independent, and readers may choose the subjects of most interest to them.

Section \ref{section_integers} describes the family of higher rank rings of integers with which a higher dimensional field is equipped, generalising the usual ring of integers of a local field.

Section \ref{section_topology} is a summary, without proofs, of a body of work due originally to Parshin [{\em op.~cit.}] and Fesenko \cite{Fesenko_thesis, Fesenko1989} concerning the use of topologies in the theory of higher dimensional local fields. The key idea is that the usual discrete valuation topology on a higher dimensional local field is far from satisfactory and must be replaced. We explain how to do this and state the main properties of the resulting topology, before summarising several additional applications (e.g.~topological $K$-groups) and other approaches.

Section \ref{section_CFT} is a summary of the higher dimensional local class field theory originally developed by Parshin and Kato, as we discussed above. After defining and summarising the necessary properties of Milnor $K$-theory, we carefully state the main results of higher dimensional local class field theory and then sketch four existing approaches, due to Parshin, Kato, Fesenko, and Y.~Koya and M.~Spiess.

The remaining sections are of a global or semi-global nature and do not rely on sections \ref{section_integers} -- \ref{section_CFT}. Firstly, sections \ref{section_regular} and \ref{section_singular} explain, with complete proofs, how to associate a family of higher dimensional fields to a scheme: the input is a complete flag $\xi$ of irreducible closed subschemes (or, equivalently, a chain of prime ideals in a local ring), and the output is a higher dimensional field $F_\xi$ (or, more typically, a finite product of such fields). This is a reflection of the adelic philosophy that the local data in higher dimensions are not points, but flags of irreducible closed subschemes. Section \ref{section_regular} treats the case in which the subschemes occurring in the flag are suitably regular; this vastly simplifies the technicalities, while preserving the main concepts. The general case is treated in section \ref{section_singular}, where the statements are clearly separated from the technical proofs; these proofs may certainly be ignored by a reader encountering this material for the first time.

Section \ref{section_adeles} is an introduction to the theory of higher dimensional ad\`eles, due to A.~Parshin in dimension two and A.~Beilinson in general. We begin by carefully formulating the recursive definition, which associates an ad\`ele group $\bb A(T,M)$ to any quasi-coherent sheaf $M$ and a collection $T$ of flags on the scheme $X$. Each such ad\`ele group can be interpreted as a restricted product $\prod_\xi M_\xi^\comp$ of local factors; in particular, taking $M=\roi_X$ and $T$ to be the set of all complete flags, one obtains $\bb A_X$, a restricted product of higher dimensional local fields which plays the role of the ring of ad\`eles for $X$. Section \ref{subsection_adeles_and_cohomology} makes precise this notion of `restricted product'. Then the main theorem of ad\`eles is stated without proof: they provide a cosimplicial flasque resolution of $M$ and therefore may be used to compute its cohomology. Finally, we construct all the ad\`ele groups in dimensions one and two, explicitly describing the restricted products.

Each section begins with its own more detailed introduction.

\subsection*{Acknowledgements}
These notes have existed in a wide variety of forms for several years, and I have received many useful comments from O.~Br\"aunling, A.~C\'amara, and I.~Fesenko. I am very grateful to them.

Further comments are very welcome!

\section{Complete discrete valuation fields}\label{section_cdvfields}
Here we review the basic theory of discrete valuation fields, especially those which are complete, summarising material such as valuations, prime elements, morphisms, and extensions. Familiarity with this material is expected, and the reader may prefer to jump ahead to the next section and return here if necessary; more comprehensive introductions may be found in \cite{Fesenko2002} or \cite{Serre1979}.

\begin{definition}
Let $F$ be a field. A {\em discrete valuation} on $F$ is a non-zero homomorphism $\nu:\mult{F}\to\bb{Z}$, with $\nu(0):=\infty$ where $\infty>n$ for all $n\in\bb{Z}$, and which satisfies $\nu(a+b)\ge\min\{\nu(a),\nu(b)\}$ for all $a,b\in F$.

Associated to the valuation there is the {\em ring of integers} $\roi_\nu=\{x\in F:\nu(x)\ge 0\}$, which is a discrete valuation ring with maximal ideal $\frak{p}_\nu=\{x\in F:\nu(x)>0\}$ and residue field $\res{F}_\nu=\roi/\frak{p}$. A {\em prime} or {\em uniformiser} $\pi_{\nu}$ for $\nu$ is a generator of the principal ideal $\frak{p}_\nu$. When the valuation $\nu$ is clear from the context, the $\nu$ subscripts will sometimes be omitted.

The {\em discrete valuation topology} on $F$ associated to $\nu$ is the topology on $F$ in which the basic open neighbourhoods of $a\in F$ are $a+\frak{p}_\nu^n$, for $n\ge 0$. We say that $F$ is {\em complete} under $\nu$ if and only if it is complete in this topology.
\end{definition}

$\nu(\mult{F})$ is a non-zero subgroup of $\bb{Z}$, hence equals $m\bb{Z}$ for some $m\in\bb{Z}$; we may scale by $m$ and so there is therefore no loss in generality in assuming $\nu$ surjective; i.e., $\nu(\mult{F})=\bb{Z}$. In this case a uniformiser $\pi$ is an element satisfying $\nu(\pi)=1$. From now on all our discrete valuations will be surjective.

The following result, the Approximation lemma, applies to fields equipped with multiple valuations, such as $\bb{Q}$ and is closely related to the Chinese remainder theorem:

\begin{proposition}
Let $F$ be a field and $\nu_1,\dots,\nu_n:\mult{F}\to\bb{Z}$ distinct discrete valuations. Given $a_1,\dots,a_n\in F$ and $c\in\bb{Z}$, there exists $a\in F$ such that $\nu_i(a-a_i)\ge c$ for $i=1,\dots,n$.
\end{proposition}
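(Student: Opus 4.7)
My approach would be to reduce the problem to producing, for each $i$, an element $b_i \in F$ such that $\nu_i(b_i - 1)$ and $\nu_j(b_i)$ for $j \neq i$ are all at least some large $C$ of my choosing. Setting $a := \sum_i a_i b_i$ one then has $a - a_k = a_k(b_k - 1) + \sum_{i \neq k} a_i b_i$, and the ultrametric inequality yields $\nu_k(a - a_k) \geq C + \min_i \nu_k(a_i)$; choosing $C$ large enough relative to the finitely many constants $\nu_k(a_i)$ and $c$ then gives the required estimate.

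\textbf{Constructing the $b_i$.} To build such $b_i$ it suffices to find, for each $i$, an element $y_i \in F$ with $\nu_i(y_i) > 0$ and $\nu_j(y_i) < 0$ for every $j \neq i$; I would then set $b_i := 1/(1 + y_i^N)$ for $N$ large. A direct computation using the ultrametric inequality shows that $\nu_i(b_i - 1) = N\nu_i(y_i)$ (since $1 + y_i^N$ is a $\nu_i$-unit) and that $\nu_j(b_i) = -N\nu_j(y_i)$ (since $y_i^N$ dominates at $\nu_j$); both grow linearly in $N$.

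\textbf{Constructing the $y_i$, and main obstacle.} The $y_i$ would be constructed by induction on $n$. For $n = 2$, the key input is that two distinct discrete valuations on $F$ must have incomparable valuation rings: any inclusion $\roi_{\nu_1} \subseteq \roi_{\nu_2}$ would exhibit $\roi_{\nu_2}$ as a localisation of the DVR $\roi_{\nu_1}$, but a DVR has only the zero ideal and its maximal ideal as primes, forcing $\roi_{\nu_2} \in \{\roi_{\nu_1}, F\}$ and hence $\roi_{\nu_1} = \roi_{\nu_2}$, contrary to hypothesis. So one can pick $u \in \roi_{\nu_i} \setminus \roi_{\nu_j}$, and then $y := u^k \pi_{\nu_i}$ satisfies $\nu_i(y) \geq 1$ and $\nu_j(y) = k\nu_j(u) + \nu_j(\pi_{\nu_i}) < 0$ for $k$ sufficiently large. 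For the inductive step with $n \geq 3$, after relabelling so that $i \leq n-1$, I apply the inductive hypothesis to obtain $z$ with the correct signs at $\nu_1, \ldots, \nu_{n-1}$, invoke the base case to find $w$ with $\nu_i(w) > 0$ and $\nu_n(w) < 0$, and take $y_i := z + w^M$ for $M$ large; the identity $\nu(\alpha + \beta) = \min(\nu(\alpha), \nu(\beta))$ whenever $\nu(\alpha) \neq \nu(\beta)$ guarantees the required signs at every $\nu_k$. The principal obstacle is this base case, specifically the incomparability of valuation rings of distinct rank-one valuations; everything after that is a formal manipulation of the ultrametric inequality.
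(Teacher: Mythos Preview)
Your argument is correct. The paper does not actually prove this statement; it merely cites \cite[I.3.7]{Fesenko2002} and \cite[I.\S3]{Serre1979}, and what you have written is precisely the classical proof found in those references: build ``partition-of-unity'' elements $b_i$ close to $1$ at $\nu_i$ and close to $0$ at all other $\nu_j$, reduce this to finding $y_i$ with $\nu_i(y_i)>0$ and $\nu_j(y_i)<0$ for $j\neq i$, and construct the $y_i$ inductively starting from the incomparability of the valuation rings of two distinct discrete valuations. One small point worth making explicit in the inductive step: when you set $y_i = z + w^M$, you need $M$ large enough that $M\nu_j(w) < \nu_j(z)$ for every $j$ with $\nu_j(w)<0$ (not just $j=n$), so that the two valuations are genuinely distinct and the identity $\nu(\alpha+\beta)=\min(\nu(\alpha),\nu(\beta))$ applies; this is automatic for $M$ sufficiently large, but it is the reason ``$M$ large'' works uniformly across all $j$.
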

\begin{proof}
\cite[I.3.7]{Fesenko2002} or \cite[I.\S3]{Serre1979}.
\end{proof}

The second basic result on valued fields is Hensel's lemma:

\begin{proposition}
Let $F$ be a field complete with respect to a discrete valuation $\nu$; let $f(X)$ be a monic polynomial over $\roi_F$ and suppose that the reduction $\res{f}\in\res F_\nu[X]$ has a simple root $\xi$ in $\res F_\nu$. Then there is a unique $a\in\roi_F$ which satisfies $f(a)=0$ and $\res{a}=\xi$.
\end{proposition}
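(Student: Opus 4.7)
The plan is to prove existence by Newton's iteration and uniqueness by a short factorisation argument, exploiting completeness via the $\frak p_\nu$-adic topology.

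For existence, I would start by picking any lift $a_0\in\roi_F$ of $\xi$, so $\res{a_0}=\xi$. The hypothesis that $\xi$ is a simple root of $\res f$ means $\res{f'}(\xi)\ne 0$, i.e.\ $f'(a_0)$ is a unit in $\roi_F$, so $\nu(f'(a_0))=0$. By construction $\nu(f(a_0))\ge 1$. Define inductively
\[
a_{n+1}=a_n-\frac{f(a_n)}{f'(a_n)}.
\]
The key inductive claim is that $a_n\in\roi_F$, $\res{a_n}=\xi$, $\nu(f'(a_n))=0$, and $\nu(f(a_n))\ge 2^n$. The last bound follows from the Taylor expansion $f(a_{n+1})=f(a_n)+f'(a_n)(a_{n+1}-a_n)+c(a_{n+1}-a_n)^2$ with $c\in\roi_F$: the first two terms cancel by the definition of $a_{n+1}$, and the quadratic term has valuation at least $2\nu(f(a_n))\ge 2^{n+1}$. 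The other inductive properties are immediate because $a_{n+1}-a_n$ lies in $\frak p_\nu^{2^n}$, so $a_{n+1}\equiv a_n\pmod{\frak p_\nu}$ and hence $\res{a_{n+1}}=\xi$ and $\nu(f'(a_{n+1}))=\nu(f'(a_n))=0$.

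The sequence $(a_n)$ is therefore Cauchy in the discrete valuation topology, so by completeness it converges to some $a\in\roi_F$. Continuity of $f$ and the estimate $\nu(f(a_n))\to\infty$ give $f(a)=0$, while $\res{a_n}=\xi$ for all $n$ forces $\res a=\xi$.

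For uniqueness, suppose $a,b\in\roi_F$ both satisfy $f=0$ and reduce to $\xi$. Divide $f(X)$ by $X-a$ in $\roi_F[X]$ to write $f(X)=(X-a)h(X)$ with $h\in\roi_F[X]$. Evaluating at $b$ gives $0=f(b)=(b-a)h(b)$; but modulo $\frak p_\nu$ one has $\res h(X)=\res f(X)/(X-\xi)$, so $\res h(\xi)=\res{f'}(\xi)\ne 0$, whence $h(b)$ is a unit in $\roi_F$ and $a=b$.

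The only delicate step is the valuation estimate $\nu(f(a_{n+1}))\ge 2\nu(f(a_n))$, which requires the Taylor expansion of a polynomial with coefficients in $\roi_F$; everything else is bookkeeping. Completeness enters exactly once, in extracting the limit of the Cauchy sequence.
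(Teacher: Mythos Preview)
Your argument is correct and is the classical Newton--iteration proof of Hensel's lemma. The paper itself does not give a proof but simply refers to \cite[II.1.2]{Fesenko2002} and \cite[II.\S4]{Serre1979}; the proof in Serre is essentially the same Newton iteration you wrote out, so there is nothing to compare.
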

\begin{proof}
\cite[II.1.2]{Fesenko2002} or \cite[II.\S4]{Serre1979}.
\end{proof}

The previous two results have a corollary which will be of enormous importance throughout. Indeed, without the following result it would be almost impossible to define higher dimensional local fields.

\begin{theorem}\label{theorem_uniqueness_of_valn}
Let $F$ be a field complete with respect to a discrete valuation $\nu$. Then $\nu$ is the only (surjective) discrete valuation on $F$.
\end{theorem}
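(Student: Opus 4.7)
My plan is to suppose for contradiction that $F$ carries a second surjective discrete valuation $\nu' \neq \nu$, and to exploit the abundance of $n$-th roots in $F$ — a consequence of Hensel's lemma combined with completeness — to force $\nu'$ to vanish on the principal units $1 + \frak p_\nu$. The Approximation lemma will then produce an element of $1 + \frak p_\nu$ on which $\nu'$ does not vanish, giving the desired contradiction.

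The first step is a divisibility statement: for every integer $n \geq 1$ coprime to the residue characteristic $p := \Char \res F_\nu$ (or every $n$ if $p = 0$), each $u \in 1 + \frak p_\nu$ admits an $n$-th root in $1 + \frak p_\nu$. One obtains this by applying Hensel's lemma to the monic polynomial $f(Y) = Y^n - u \in \roi_\nu[Y]$, whose reduction $Y^n - 1$ has $1$ as a simple root — simple precisely because $\bar f'(1) = n \neq 0$ in $\res F_\nu$ under our assumption on $n$.

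Next I would deduce that $\nu'$ vanishes on $1 + \frak p_\nu$. Writing $u = v^n$ with $v \in 1 + \frak p_\nu \subseteq F^\times$ gives $\nu'(u) = n\,\nu'(v) \in n\bb Z$, so $\nu'(u)$ lies in $n\bb Z$ for every $n$ coprime to $p$. But any nonzero integer $m$ is not divisible by any prime $\ell > |m|$, and one may take $\ell \neq p$, so $\bigcap_{\gcd(n,p)=1} n\bb Z = \{0\}$, forcing $\nu'(u) = 0$.

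To conclude, the Approximation lemma applied to the distinct valuations $\nu$ and $\nu'$ furnishes $a \in F$ with $\nu(a) \geq 1$ and $\nu'(a - 1) \geq 1$. Then $1 - a \in 1 + \frak p_\nu$, so the previous step yields $\nu'(1 - a) = 0$; yet $\nu'(1 - a) = \nu'(a - 1) \geq 1$, a contradiction. The only delicate point is verifying the simple-root hypothesis of Hensel's lemma in positive residue characteristic, which is exactly what confines us to $n$ coprime to $p$; after that restriction, the rest is a routine bookkeeping of the ultrametric inequality and the homomorphism property of $\nu'$.
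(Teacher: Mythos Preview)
Your proof is correct and follows essentially the same approach as the paper: both combine Hensel's lemma (applied to $X^n - a$ for $a \in 1+\frak p_\nu$ and $n$ coprime to the residue characteristic) with the Approximation lemma to derive a contradiction. The only difference is organizational---you first prove in the abstract that $\nu'$ vanishes on all of $1+\frak p_\nu$ and then use Approximation to produce a counterexample, whereas the paper uses Approximation first to find a single element $a\in 1+\frak p_\nu$ with $w(a)=1$ and then derives $m\mid w(a)$ for a single suitable $m$; the underlying mechanism is identical.
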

\begin{proof}
Suppose that $w$ is a surjective discrete valuation on $F$ which is not equal to $\nu$; let $\pi$ be a prime for $\w$. By the approximation theorem there exists $a\in F$ such that $w(a-\pi)>0$ and $\nu(a-1)>0$. Then $a\in\roi_{\nu}$ and the image of $a$ in $\res{F}_{\nu}$ is $1$.

Now let $m>1$ be any integer not divisible by $\mbox{char}\,\res{F}_{\nu}$. Applying Hensel's lemma to the polynomial $X^m-a\in\roi_{\nu}[X]$ obtains $b\in\roi_{\nu}$ such that $b^m=a$. But therefore $w(a)$ is divisible by $m$, whereas the inequality $w(a-\pi)>0$ implies $w(a)=1$. This contradiction completes the proof.
\end{proof}

\begin{corollary}\label{corollary_automatic_continuity}
Let $F,L$ be fields with discrete valuations $\nu_F,\nu_L$; assume $F$ is complete. If $\sigma:F\to L$ is any field isomorphism then $\nu_F=\nu_L\circ\sigma$ and $L$ is also complete.
\end{corollary}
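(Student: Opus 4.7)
The plan is to pull the valuation $\nu_L$ back along $\sigma$ and invoke the uniqueness theorem \ref{theorem_uniqueness_of_valn}. Concretely, define $w := \nu_L \circ \sigma : \mult{F} \to \bb{Z}$. Because $\sigma$ is a field isomorphism, it preserves sums, products, and the multiplicative identity, so $w$ inherits the axioms of a discrete valuation from $\nu_L$; surjectivity of $w$ follows from the surjectivity of $\nu_L$ (which we have standardly assumed) together with the bijectivity of $\sigma$. Thus $w$ is a surjective discrete valuation on $F$.

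Since $F$ is complete with respect to $\nu_F$, theorem \ref{theorem_uniqueness_of_valn} says there is at most one surjective discrete valuation on $F$, forcing $w = \nu_F$, i.e.\ $\nu_F = \nu_L \circ \sigma$, as claimed.

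For the completeness of $L$, I would observe that the identity $\nu_F = \nu_L \circ \sigma$ implies $\sigma(\frak{p}_{\nu_F}^n) = \frak{p}_{\nu_L}^n$ for every $n \ge 0$, so $\sigma$ is a homeomorphism between the discrete valuation topologies on $F$ and $L$. Completeness is a topological (in fact, uniform) property preserved under such homeomorphisms: a Cauchy sequence $(y_n)$ in $L$ pulls back via $\sigma^{-1}$ to a Cauchy sequence in $F$, which converges by hypothesis, and its image under $\sigma$ is a limit of $(y_n)$ in $L$.

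There is no real obstacle; the only thing to be careful about is the implicit assumption that $\nu_L$ is surjective (so the uniqueness theorem applies to the pulled-back valuation), which is automatic under the normalisation convention adopted after the definition of discrete valuation.
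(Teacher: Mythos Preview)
Your argument is correct and follows exactly the same route as the paper: pull back $\nu_L$ along $\sigma$, apply the uniqueness theorem \ref{theorem_uniqueness_of_valn} to conclude $\nu_F=\nu_L\circ\sigma$, and then observe that $\sigma$ is a homeomorphism so completeness transfers. The paper states this more tersely, but there is no substantive difference.
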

\begin{proof}
It is enough to note that $\nu_L\circ\sigma$ is a discrete valuation on $F$, for then the previous result implies that $\nu_F=\nu_L\circ\sigma$, and then $F$ complete implies that $L$ is complete.
\end{proof}

\begin{definition}
Let $F$ be a field. Then $F$ is said to be a {\em complete discrete valuation field} if and only if there exists a discrete valuation on $F$ under which $F$ is complete. By theorem \ref{theorem_uniqueness_of_valn} there is exactly one surjective valuation on such a field; it will be denoted $\nu_F$. We will write $\roi_F,\frak{p}_F,\res{F}$ in place of $\roi_{\nu_F},\frak{p}_{\nu_F},\res{F}_{\nu_F}$.
\end{definition}

\begin{remark}\label{remark_uniqueness_of_valn}
It is important to note the phrasing of the previous definition; being a `complete discrete valuation field' is an intrinsic property of the field $F$ and {\em not dependent on any prior choice of valuation}.
\end{remark}

\begin{definition}
A {\em local field} is a complete discrete valuation field whose residue field is finite. It is not uncommon to be more flexible and insist only that the residue field be perfect, but we will not do this.
\end{definition}

\begin{example}
For completeness we include some examples, but hope that they are familiar to most readers:
\begin{enumerate}
\item $\bb Q_p$, for $p$ a prime number, is a local field.
\item Let $k$ be an arbitrary field. The field of {\em formal Laurent series} $k((t))$ consists of formal infinite series $\sum_i a_it^i$ where $a_i$ are elements of $k$ which vanish for $i$ sufficiently small. Addition and multiplication are defined in the usual way. This makes $k((t))$ into a complete discrete valuation field (even a local field if $k$ is finite); the discrete valuation is defined by \[\nu\left(\sum_ia_it^i\right)=\min\{i:a_i\neq0\}.\] The ring of integers is $k[[t]]$, the maximal ideal $tk[[t]]$, and the residue field is $k$.

Note that each expression $\sum_ia_it^i$ is a genuinely convergent series in the discrete valuation topology, because $\nu(a_it^i)\ge i\to\infty$ as $i\to\infty$.
\end{enumerate}
\end{example}

We finish this preliminary section by reviewing extensions of discrete valuation fields. The following result, describing the equivalent formulations of morphisms between valuation fields, is well-known but I could not find a good reference:

\begin{lemma}\label{lemma_extensions_of_dvfields}
Let $L,F$ be fields with discrete valuations $\nu_F,\nu_L$ and suppose that $i:F\to L$ is a field embedding. Then the following are equivalent:
\begin{enumerate}
\item $i^{-1}(\roi_L)=\roi_F$;
\item $i^{-1}(\frak{p}_L)=\frak{p}_F$;
\item there exists an integer $e\ge 1$ such that $\nu_L\circ i=e\nu_F$;
\item $i$ is continuous with respect to the discrete valuation topologies on $L$ and $F$;
\item $i$ is a homeomorphism onto its image.
\end{enumerate}
\end{lemma}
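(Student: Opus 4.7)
The plan is to take \textup{(iii)} as the pivot and show that \textup{(iii)} implies each of the other four conditions, and conversely each of them implies \textup{(iii)}. Most of the implications out of \textup{(iii)} are immediate from the identity $\nu_L\circ i=e\nu_F$: for example $i^{-1}(\roi_L)=\{x\in F:e\nu_F(x)\ge0\}=\roi_F$ gives \textup{(i)}, similarly $i^{-1}(\frak{p}_L)=\frak{p}_F$ gives \textup{(ii)}, the identity $i^{-1}(\frak{p}_L^n)=\frak{p}_F^{\lceil n/e\rceil}$ shows $i$ is continuous, which is \textup{(iv)}, and $i(\frak{p}_F^n)=i(F)\cap\frak{p}_L^{en}$ shows that $i$ carries basic neighbourhoods of $0$ in $F$ to basic neighbourhoods in the subspace topology on $i(F)$, yielding \textup{(v)}. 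The implication \textup{(v)}$\Rightarrow$\textup{(iv)} is of course trivial.

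For the converse direction I would first handle \textup{(ii)}$\Leftrightarrow$\textup{(i)} using only the fact that $i$ is a field homomorphism: an element $x\in F$ lies in $\roi_F$ iff either $x$ or $x^{-1}$ is not a non-unit, and since $i$ sends $F^\times$ into $L^\times$, the preimages of $\roi_L$ and $\frak{p}_L$ determine each other. Then for \textup{(i)}$\Rightarrow$\textup{(iii)} I would set $e:=\nu_L(i(\pi_F))$ where $\pi_F$ is a uniformiser: since $\pi_F\in\frak{p}_F\subseteq i^{-1}(\frak{p}_L)$ we have $e\ge1$, and every $x\in F^\times$ factors as $x=u\pi_F^{\nu_F(x)}$ with $u\in\roi_F^\times$. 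Hypothesis \textup{(i)} forces $i(u)\in\roi_L^\times$ (apply \textup{(i)} to both $u$ and $u^{-1}$), so $\nu_L(i(x))=\nu_L(i(u))+\nu_F(x)\nu_L(i(\pi_F))=e\nu_F(x)$.

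The main obstacle — and the only step requiring a genuinely topological argument — is \textup{(iv)}$\Rightarrow$\textup{(iii)}. Here continuity of $i$ at $0$ applied to the sequence $\pi_F^k\to0$ gives $e:=\nu_L(i(\pi_F))\ge1$ as before, and the problem reduces to showing that every unit $u\in\roi_F^\times$ satisfies $\nu_L(i(u))=0$. The trick will be to apply continuity not just to $u$ but to the sequence $(u\pi_F)^k$, which tends to $0$ in $F$: this forces $\nu_L(i(u))+e>0$, hence $\nu_L(i(u))\ge 1-e$. Symmetrically, applying the same argument to the sequence $(u^{-1}\pi_F)^k\to0$ yields $\nu_L(i(u))\le e-1$. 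Finally, replacing $u$ by an arbitrary power $u^n$ gives $|n\nu_L(i(u))|\le e-1$ for every integer $n$, which is possible only if $\nu_L(i(u))=0$. Combining with the factorisation $x=u\pi_F^{\nu_F(x)}$ as above then delivers $\nu_L\circ i=e\nu_F$.

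Together with the trivial \textup{(v)}$\Rightarrow$\textup{(iv)} this closes the cycle and proves all five conditions equivalent. The only delicate point is the last paragraph's argument that continuity alone (with no a priori control over how $i$ moves non-vanishing valuations) pins down $\nu_L\circ i$ to a multiple of $\nu_F$; every other implication is either algebraic bookkeeping or a direct unravelling of \textup{(iii)}.
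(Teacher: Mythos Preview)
Your proof is correct and follows essentially the same architecture as the paper's: both take \textup{(iii)} as the hub from which the other conditions follow trivially, and both handle the continuity condition via the criterion ``$x^n\to 0$ in the valuation topology iff $\nu(x)>0$''. The paper routes this as \textup{(iv)}$\Rightarrow$\textup{(i)} (deducing $\nu_F(x)>0\Rightarrow\nu_L(i(x))>0$ and the $x\mapsto x^{-1}$ version, then declaring this is \textup{(i)}), whereas you go straight to \textup{(iii)} and add the explicit powers-of-$u$ bounding argument to pin down $\nu_L(i(u))=0$ for units. That extra step is a genuine improvement in rigour: the paper's passage from ``$\nu_F(x)>0\Rightarrow\nu_L(x)>0$ and $\nu_F(x)<0\Rightarrow\nu_L(x)<0$'' to \textup{(i)} tacitly requires exactly the argument you wrote out (or something equivalent) to exclude the possibility that a unit of $F$ lands at nonzero $\nu_L$-value. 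Your \textup{(i)}$\Leftrightarrow$\textup{(ii)} paragraph is stated loosely but is easily made precise via the identity $\frak p\setminus\{0\}=\{x^{-1}:x\in F\setminus\roi\}$, which is how the paper handles \textup{(ii)}$\Rightarrow$\textup{(i)} as well.
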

\begin{proof}
For simplicity we may identify $F$ as a field with its image $i(F)$ and assume $i$ is an inclusion. It is straightforward to check that (iii) implies each of (i)-(v). Let $\pi\in F$ be a prime for $\nu_F$.

Assume (i). Then for any $x\in F$ such that $\nu_F(x)<0$ we have $\nu_L(x)<0$; moreover, for $x\in\mult{F}$ with $\nu_F(x)>0$ we may replace $x$ by $x^{-1}$ to deduce $\nu_L(x)>0$. Hence $\nu_L(x)=0$ for any $u\in F$ with $\nu_F(u)=0$. Define $e=\nu_L(\pi)>0$. Now for any $x\in\mult{F}$ one has $\nu_F(x\pi^{-\nu(x)})=0$ and so \[\nu_L(x)=\nu_L(x\pi^{-\nu_F(x)}\pi^{\nu_F(x)})=e\nu_F(x),\] proving (iii).

Assume (ii). Then for any $x\in F$ with $\nu_F(x)\le 0$ we have $\nu_L(x)\le 0$; replacing $x$ by $x^{-1}$ we deduce $\nu_F(x)\ge 0$ implies $\nu_L(x)\ge0$. So if $x\in F$ satisfies $\nu_L(x)\ge0$ then $\nu_L(x\pi)>0$ and so $\nu_F(x\pi)>0$; therefore $\nu_F(x)\ge 0$, proving (i).

(v)$\implies$(iv) is apparent.

Assume (iv). The crucial observation is that if $x\in F$ then $x^n$ tends to $0$ in the valuation topology as $n\to\infty$ if and only if $\nu_F(x)>0$. The assumed continuity now implies that if $\nu_F(x)>0$ then $\nu_L(x)>0$; after replacing $x$ by $x^{-1}$, we deduce that $\nu_F(x)<0$ implies $\nu_L(x)<0$, i.e.~(i).

This completes the proof.
\end{proof}

When a field embedding between two discrete valuation fields satisfies the equivalent conditions of the previous lemma, we will say that it is a {\em morphism of discrete valuation fields}, or simply that it is {\em continuous}. Such a morphism induces an embedding of the residue fields \[\res i:\res F\to\res L.\]
Typically one identifies $F$ with its image in $L$ and speaks of {\em an extension of discrete valuation fields}. Recall that, in this situation, we have the \emph{inertia degree} \[f(\nu_L/\nu_F)=|\res{L}/\res{F}|\] and the {\em ramification degree} \[e(\nu_L/\nu_F)=|\nu_L(\mult{L}):\nu_L(\mult{F})|\] (that is, since $\nu_F,\nu_L$ are assumed to be surjective, $e(\nu_L/\nu_F)$ is the unique integer $e\ge1$ which satisfies $\nu_L|_F=e\nu_F$). When there is no ambiguity, in particular when $L/F$ is an extension of complete discrete valuation fields, we will write $e(L/F), f(L/F)$ in place of $e(\nu_L/\nu_F),f(\nu_L/\nu_F)$.

\begin{example}
Consider the field extension $\bb{Q}_p((t))/\,\bb{Q}_p$. Although each field is a complete discrete valuation field, this is {\em not} an extension of discrete valuation fields (for example, $\bb Q_p[[t]]\cap \bb Q_p=\bb Q_p\neq\bb Z_p$, violating condition (i) in the previous lemma).

Similarly, $\bb{F}_p((t_1))((t_2))/\,\bb{F}_p((t_1))$ is not an extension of discrete valuation fields, but $\bb{F}_p((t_1))((t_2))/\,\bb{F}_p((t_2))$ is.
\end{example}

The valuation on a complete discrete valuation field extends uniquely to any finite extension:

\begin{proposition}\label{proposition_extension_of_local_field}
Let $F$ be a complete discrete valuation field, and let $L$ be a finite field extension of $F$. Then $L$ is a complete discrete valuation field and $L/F$ is an extension of complete discrete valuation fields.
\end{proposition}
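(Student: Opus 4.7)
The strategy is to construct a discrete valuation on $L$ extending $\nu_F$ by pushing through the field norm, verify its axioms by characterising integrality of elements in terms of the norm, and then deduce completeness from the finite-dimensionality of $L$ as an $F$-vector space. Put $n=[L:F]$ and define $w\colon\mult{L}\to\bb{Q}$ by $w(x)=\tfrac{1}{n}\nu_F(N_{L/F}(x))$; multiplicativity of $w$ is immediate from multiplicativity of the norm, so it remains to verify (a) the ultrametric inequality for $w$, (b) that $w(\mult{L})=\tfrac{1}{e}\bb{Z}$ for some integer $e\ge1$, so that $\nu_L:=ew$ is a surjective $\bb{Z}$-valued discrete valuation, (c) that $\nu_L|_F=e\nu_F$, making $L/F$ an extension of discrete valuation fields by Lemma~\ref{lemma_extensions_of_dvfields}(iii), and (d) completeness of $L$ under $\nu_L$.

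The key input for (a) is the identity $\{x\in L:w(x)\ge0\}\cup\{0\}=\roi'$, where $\roi'$ denotes the integral closure of $\roi_F$ in $L$. Granting this identity, since $\roi'$ is visibly a subring of $L$ the ultrametric inequality drops out: if $w(x)\le w(y)$ then $w(y/x)\ge 0$, so $1+y/x\in\roi'$, and $w(x+y)=w(x)+w(1+y/x)\ge w(x)$. The inclusion $\roi'\sseq\{w\ge0\}$ is routine, because if $x\in\roi'$ then every $F$-conjugate of $x$ is also integral over $\roi_F$, and consequently $N_{L/F}(x)$ is integral over $\roi_F$ and lies in $F$, hence in $\roi_F$ (as the DVR $\roi_F$ is integrally closed). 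The reverse inclusion is the heart of the argument: if $p(T)=T^m+a_{m-1}T^{m-1}+\cdots+a_0\in F[T]$ is the minimal polynomial of $x\in\mult{L}$ and $w(x)\ge0$, the relation $N_{L/F}(x)=((-1)^m a_0)^{n/m}$ forces $a_0\in\roi_F$; one must then rule out the possibility that some other $a_i$ lies outside $\roi_F$. Here Hensel's lemma intervenes: if some coefficient had negative valuation, then the Newton polygon of $p$ would fail to be a single segment, and a suitable rescaling of $p$ would yield a polynomial in $\roi_F[T]$ whose reduction mod $\frak{p}_F$ admits a non-trivial coprime factorisation, which Hensel's lemma lifts to a factorisation of $p$ in $F[T]$, contradicting irreducibility.

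Parts (b) and (c) are then immediate: for $a\in\mult{F}$ one has $N_{L/F}(a)=a^n$, giving $w(a)=\nu_F(a)$, so $w(\mult{L})$ is a subgroup of $\tfrac{1}{n}\bb{Z}$ containing $\bb{Z}$ and hence of the form $\tfrac{1}{e}\bb{Z}$ for some positive divisor $e$ of $n$; then $\nu_L=ew$ satisfies $\nu_L|_F=e\nu_F$. For completeness in step (d), the plan is to identify the $\nu_L$-topology on $L$ with the product topology on $F^n$ arising from any choice of $F$-basis. Continuity of $F^n\to L$ is immediate from the ultrametric inequality, while the reverse continuity is the standard fact that a Hausdorff topological vector space of finite dimension over a complete non-trivially valued field carries the product topology, proved by induction on $n$ once one knows that every proper $F$-subspace is complete hence closed. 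Completeness of $L$ then follows from completeness of the product $F^n$.

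\textbf{Main obstacle.} The single genuinely substantive step is the characterisation of $\roi'$ as the $\{w\ge 0\}$-locus via the Hensel-lemma/Newton-polygon argument; this is the only place where completeness of $F$ is decisively used, and the rest is formal manipulation of the norm together with soft linear algebra over a topological field.
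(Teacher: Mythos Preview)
Your proposal is correct and follows precisely the standard argument (norm-valuation plus Hensel/Newton-polygon for the valuation ring, then finite-dimensional topological vector space comparison for completeness) that appears in the references the paper cites, namely \cite[II.\S2]{Serre1979} and \cite[II.2.5]{Fesenko2002}. The paper itself gives no proof beyond those citations, so you have in fact supplied what the paper omits; the only minor remark is that the factorisation-lifting form of Hensel's lemma you invoke is slightly stronger than the simple-root version stated earlier in the paper, but it is of course equally standard.
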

\begin{proof}
\cite[II.\S2]{Serre1979} or \cite[II.2.5]{Fesenko2002}.
\end{proof}

We finish with the well-known $n=ef$ equality:

\begin{proposition}\label{proposition_e=fn}
Let $L/F$ be an extension of complete discrete valuation fields. Then $L/F$ is a finite extension if and only if $\res L/\res F$ is a finite extension, in which case \[|L/F|=e(L/F)f(L/F).\]
\end{proposition}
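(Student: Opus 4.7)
The strategy is to construct, whenever $\res L/\res F$ is finite, an explicit $F$-basis of $L$ of size $ef$; both directions of the equivalence and the formula then follow directly.

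I would begin by setting up the candidate basis. Choose $u_1,\dots,u_f\in\roi_L$ whose reductions form an $\res F$-basis of $\res L$, fix a uniformiser $\pi_L$ of $L$, and set $e=e(L/F)$; note $e$ is automatically a positive integer, being the index of a nonzero subgroup of $\bb Z$. The claim is that $\{u_i\pi_L^{\,j}:1\le i\le f,\ 0\le j\le e-1\}$ is an $F$-basis of $L$.

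For $F$-linear independence, write any vanishing relation as $\sum_j \pi_L^{\,j} b_j=0$ with $b_j=\sum_i a_{ij}u_i\in L$. The key observation is that whenever $b_j\ne 0$ one has $\nu_L(b_j)\in e\bb Z$: on dividing by $\pi_F^m$ with $m=\min_i\nu_F(a_{ij})$, the coefficients lie in $\roi_F$ with at least one in $\mult{\roi_F}$, so $\res F$-independence of the $\bar u_i$ forces $\sum_i\bar a_{ij}\bar u_i\ne 0$ in $\res L$, giving $\nu_L(b_j)=em$. Consequently $\nu_L(\pi_L^{\,j} b_j)\equiv j\pmod e$ for $0\le j\le e-1$, so these valuations are pairwise distinct, and the ultrametric inequality forces each $b_j=0$; the same reduction argument applied to $\sum_i a_{ij}u_i=0$ then yields each $a_{ij}=0$.

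For spanning, which is the step that genuinely uses completeness and is the main obstacle, let $M\subseteq\roi_L$ be the $\roi_F$-submodule generated by the $u_i\pi_L^{\,j}$ and write $M_0=\roi_F u_1+\dots+\roi_F u_f$. For any $x\in\roi_L$, lifting the $\res F$-coordinates of $\bar x$ with respect to $\{\bar u_i\}$ to $\roi_F$ shows $x\in M_0+\pi_L\roi_L$; applying this iteratively to the error term $e$ times yields $\roi_L=M_0+\pi_L M_0+\cdots+\pi_L^{e-1}M_0+\pi_L^{\,e}\roi_L=M+\pi_F\roi_L$, using that $\pi_L^{\,e}$ and $\pi_F$ generate the same ideal of $\roi_L$. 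Iterating this last identity produces $m_0,m_1,\dots\in M$ with $x-\sum_{k=0}^n\pi_F^{\,k}m_k\in\pi_F^{n+1}\roi_L$ for every $n$. Expanding $m_k=\sum_{i,j}\alpha_{k,i,j}u_i\pi_L^{\,j}$ with $\alpha_{k,i,j}\in\roi_F$, each series $a_{ij}:=\sum_{k\ge 0}\pi_F^{\,k}\alpha_{k,i,j}$ converges in $F$ because $F$ is complete, and passing to the limit in $L$ yields $x=\sum_{i,j}a_{ij}u_i\pi_L^{\,j}$.

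Having established the basis, the proposition is immediate. If $\res L/\res F$ has degree $f<\infty$ then $|L/F|=ef<\infty$; and if $\res L/\res F$ is infinite, taking arbitrarily large $\res F$-independent subsets of $\res L$ and applying only the linear independence half of the above argument (with $j=0$) exhibits arbitrarily large $F$-independent subsets of $L$, forcing $L/F$ to be infinite.
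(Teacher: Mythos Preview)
Your argument is correct and is exactly the standard proof that the paper defers to by citation (the paper gives no proof of its own, simply referring to \cite[II.\S2]{Serre1979} and \cite[II.2.4]{Fesenko2002}). The only step left implicit is passing from $\roi_L$ to all of $L$ in the spanning argument, which is immediate since every $x\in L$ satisfies $\pi_F^{\,N}x\in\roi_L$ for $N\gg 0$.
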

\begin{proof}
\cite[II.\S2]{Serre1979} or \cite[II.2.4]{Fesenko2002}.
\end{proof}

\begin{corollary}
A finite field extension of a local field is a local field.
\end{corollary}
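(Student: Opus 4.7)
The plan is to chain together the two preceding propositions. Given a local field $F$ and a finite extension $L/F$, proposition \ref{proposition_extension_of_local_field} immediately upgrades $L$ to a complete discrete valuation field, with $L/F$ an extension of complete discrete valuation fields. So the only remaining content is to show that the residue field $\res L$ is finite.

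For this I would invoke proposition \ref{proposition_e=fn}: since $L/F$ is a finite extension of complete discrete valuation fields, the residue extension $\res L/\res F$ is finite (indeed of degree $f(L/F) \le |L/F|$). Because $\res F$ is finite by hypothesis (as $F$ is a local field), a finite extension of it is again finite, so $\res L$ is finite.

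There is essentially no obstacle here; the two preceding propositions already do all the work. The only thing worth flagging is that the statement needs both halves of proposition \ref{proposition_e=fn}: the ``only if'' direction to produce the finite residue extension, and implicitly the $n = ef$ identity is not actually needed for this corollary. One could alternatively remark that $\res L$ embeds into a quotient of the reduction of $\roi_L$, but routing through proposition \ref{proposition_e=fn} is cleaner and matches the style of the section.
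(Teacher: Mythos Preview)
Your proof is correct and matches the paper's approach exactly: the paper's proof is simply ``Combine the previous two propositions,'' which is precisely what you do. Your observation that only the finiteness of the residue extension (not the full $n=ef$ identity) is needed is accurate but incidental.
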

\begin{proof}
Combine the previous two propositions.
\end{proof}

\comment{
In the previous example the discrete valuation fields were complete; in this special case, continuity of an embedding is typically easy to check:

\begin{lemma}\label{lemma_extensions_of_cdvfields}
Let $L/F$ be an extension of fields with discrete valuations $\nu_L,\nu_F$; assume that $F$ is complete. Consider the following conditions:
\begin{enumerate}
\item $L/F$ is an extension of discrete valuation fields;
\item $\nu_L|_F$ is a non-zero valuation on $F$, i.e.~$\mult{F}\not\subseteq\mult{\roi}_F$;
\item $F$ is a non-discrete subspace of $L$;
\item $L/F$ is a finite field extension;
\item there does not exist any field embedding of $F$ into $\res{L}$ (e.g.~if $\Char L\neq\Char\res{L}$).
\end{enumerate}
Then (i) -- (iii) are equivalent and are implied by each of (iv),(v).
\end{lemma}
\begin{proof}
Assume (i). Then the subspace topology which $F$ inherits from $L$ is precisely the valuation topology from $\nu_F$; but the valuation topology from a (non-zero) discrete valuation is non-discrete, i.e. (iii).

Assume (iii). Then $\frak{p}_L\cap F$ is an open (in the subspace topology) subset of $F$ and hence is strictly larger than $\{0\}$; therefore $\mult{F}\not\subseteq L\setminus\frak{p}_L$, which implies (ii).

Assume (ii). Then $\nu_L|_F$ is a non-zero valuation on $F$ and so $e^{-1}\nu_L|_F$ is a surjective valuation on $F$ for some integer $e\ge 1$. By theorem \ref{theorem_uniqueness_of_valn}, $e\nu_F=\nu_L|_F$, proving (i).

Now assume that $L/F$ is not an extension of discrete valuation fields. Then (ii) implies that $F\subseteq\roi_L$ and that the reduction map $\roi_L\to\res{L}$ is injective on $F$; thus we obtain a field embedding $F\to\res{L}$, i.e. not (v).

Assume (iv). Then $L$ is a complete discrete valuation field and $L/F$ is an extension of discrete valuation fields by result proposition \ref{proposition_finite_extension_of_cdvfield}; but by theorem \ref{theorem_uniqueness_of_valn} the valuation with respect to which $L$ is complete is $\nu_L$, i.e. (i).
\end{proof}

This yields ``automatic continuity'' for extensions of local fields:

\begin{corollary}\label{corollary_embeddings_between_local_fields}
An embedding between local fields is automatically continuous, and moreover the extension is necessarily finite.
\end{corollary}
\begin{proof}
Apply condition (v) of the previous lemma, noting that a local field can never be embedded into a finite field. Then apply the next proposition, noting that the extension of residue fields is necessarily finite since the fields themselves are finite.
\end{proof}

We finish this preliminary section by stating the well-known $n=ef$ equality for extensions of complete discrete valuation fields.
}

\section{Higher dimensional local fields}\label{section_hdlfs}
In this section we introduce higher dimensional local fields. We begin by introducing the notion of the `complete discrete valuation dimension' (cdvdim) of a field; for example, a field has cdvdim $\ge 2$ if and only if it is a complete discrete valuation field whose residue field is also a complete discrete valuation field. We explain a process $K\leadsto K\{\{t\}\}$ which, like the Laurent series construction $K\leadsto K((t))$, can be used to produce such fields. Then we define higher dimensional local fields in definition \ref{definition_hlf}) and offer examples. The section finishes with three important additional topics: the Classification theorem for higher dimensional local fields, sequences of local parameters, and extensions.

The following definition does not appear anywhere in the literature, but offers a certain clarity of exposition:

\begin{definition}
Let $F$ be a field; the {\em complete discrete valuation dimension} of $F$, denoted $\cdvdim{F}$, is defined as follows. If $F$ is not a complete discrete valuation field then $\cdvdim F:=0$; if $F$ is a complete discrete valuation field then $\cdvdim{F}:=\cdvdim{\res{F}}+1$.
\end{definition}

In other words, supposing that $F^{(0)}:=F$ is a complete discrete valuation field, we consider the residue field $F^{(1)}:=\res{F}$; if this is not a complete discrete valuation field then we are done and we set $\cdvdim{F}=1$. But perhaps $\res{F}$ is also a complete discrete valuation field, in which case we consider {\em its} residue field $F^{(2)}$; we continue in this way until we reach a field which is not a complete discrete valuation field, and we define $\cdvdim{F}$ to be the number of residue fields we passed through. One typically represents this situation by a diagram \[\xymatrix{F=F^{(0)}\ar@{-}[d]\\F^{(1)}\ar@{-}[d]\\ \vdots\ar@{-}[d]\\F^{(n)},}\] where $F^{(i)}$ is a complete discrete valuation field with residue field $F^{(i+1)}$ for $i=0,\dots,n-1$ and $F^{(n)}$ is not a complete discrete valuation field.

If the sequence of residue fields never terminates then set $\cdvdim{F}=\infty$. Such fields do exist but they are unnatural and we will not encounter them (the reader may wish to try to construct one).

The residue field $\res F=F^{(1)}$ is often referred to as the {\em first residue field} of $F$ while $F^{(\op{cdvdim}F)}$ is called the {\em last residue field}. In general, $F^{(i)}$ is called the $i^\sub{th}$ residue field of $F$, for $1\le i\le\op{cdvdim}F$.

\begin{example}
A field $F$ has cdvdim $\ge 2$ if and only if it is a complete discrete valuation field and its residue field $\res F$ is {\em also} a complete discrete valuation field.
\end{example}

\begin{remark}
In case the reader has already forgotten remark \ref{remark_uniqueness_of_valn}, we take this opportunity to stress again that the property of a field being a complete discrete valuation field is uniquely determined by its algebraic structure, without specifying a priori any valuation. Therefore it makes sense to ask whether $F, \res{F}, F^{(2)}$, etc.~are complete discrete valuation fields, {\em without saying what the valuations are}. This is a common point of confusion when first encountering the theory of higher dimensional local fields.
\end{remark}

\begin{remark}
There is not a fixed convention whether one should label the successive residue fields of $F$ using upper or lower indices, nor whether the final residue field should be indexed by $0$ or $n$. We feel that our chosen convention is most flexible; in particular, it is compatible with the `upper numbering for codimension' of algebraic geometry.
\end{remark}

In the next two examples we describe two ways of constructing complete discrete valuation fields, both of which easily allow us to build fields of arbitrarily large complete discrete valuation dimension. The first is the familiar Laurent series construction:

\begin{example}\label{example_iterated_Laurent_series}
Let $k$ be an arbitrary field (perhaps already a complete discrete valuation field) and let \[F=k((t_1))\cdots((t_n))=\big(k((t_1))\cdots((t_{n-1}))\big)((t_n))\] be a field of iterated Laurent serious over it. Then $\cdvdim{F}=n+\cdvdim{k}$ and \[F^{(i)}=\begin{cases}k((t_1))\cdots((t_{n-i}))&0\le i\le n,\\ k^{(n-i)}&n\le i\le\cdvdim F.\end{cases}\]
\end{example}

The second example is extremely important from an arithmetic point of view and perhaps not familiar to the reader; we will spend some time describing the construction and resulting properties:

\begin{example}\label{example_double_curly_field}
Let $K$ be a complete discrete valuation field. Let $K\dblecurly{t}$ be the following collection of doubly infinite formal series:
\begin{align*}
	K\{\{t\}\}
	=\bigg\{\sum_{i=-\infty}^{\infty}a_it^i:\, &a_i\in K\mbox{ for all }i,\\
	&\inf_i\nu_K(a_i)>-\infty,\\
	&\mbox{and }a_i\to 0\mbox{ as } i\to -\infty\bigg\},\\
\end{align*}
where $\sum_ia_it^i=\sum_ib_it^i$ if and only if $a_i=b_i$ for all $i$. Define addition, multiplication, and a discrete valuation by
\begin{align*}
\sum_{i=-\infty}^{\infty}a_it^i + \sum_{j=-\infty}^{\infty}b_jt^j&=\sum_{i=-\infty}^{\infty}(a_i+b_i)t^i\\
\sum_{i=-\infty}^{\infty}a_it^i\cdot\sum_{j=-\infty}^{\infty}b_jt^j&=\sum_{i=-\infty}^{\infty}\left(\sum_{r=-\infty}^{\infty}a_rb_{i-r}\right)t^i\\
\nu\left(\sum_{i=-\infty}^{\infty}a_it^i\right)&=\inf_i\nu_K(a_i)
\end{align*}
Note that there is nothing formal about the sum over $r$ in the definition of multiplication; rather it is a convergent double series in the complete discrete valuation field $K$. It is left to the reader as an important exercise to verify that these operations are well-defined, make $K\{\{t\}\}$ into a field, and that $\nu$ is a discrete valuation under which $K\{\{t\}\}$ is complete. Note also that $K\{\{t\}\}/K$ is an (infinite) extension of complete discrete valuation fields of ramification degree $e(K\{\{t\}\}/K)=1$.

The ring of integers of $F=K\{\{t\}\}$ and its maximal ideal are given by
\begin{align*}
\roi_F&=\left\{\sum_ia_it^i:a_i\in\roi_K\mbox{ for all }i\mbox{ and }a_i\to0\mbox{ as }i\to-\infty\right\},\\
\frak{p}_F&=\left\{\sum_ia_it^i:a_i\in\frak{p}_K\mbox{ for all }i\mbox{ and }a_i\to0\mbox{ as }i\to-\infty\right\}.
\end{align*}
The surjective homomorphism \begin{align*}\roi_F&\To\res{K}((t))\\\sum_ia_it^i&\mapsto\sum_i\res{a}_it^i\end{align*} identifies the residue field of $F$ with $\res{K}((t))$ ($=$ Laurent series over the residue field of $K$), which is itself a complete discrete valuation field. Therefore we make take residue fields again: $F^{(2)}=\res{K}$:
\[\begin{array}{c}
K\{\{t\}\}\\ | \\ \res K((t)) \\ | \\ \res K
\end{array}\]

Continuing to take successive residue fields, we see that $\cdvdim F=1+\cdvdim K$ and that $F^{(i)}=K^{(i-1)}$ for $i=2,\dots,\cdvdim F$.
\end{example}

\begin{remark}
One may also describe the field $F$ from the previous example as being the field of fractions of the $\frak p_K$-adic completion of $\bb \roi_K((t))$.
\end{remark}

\begin{example}
If we apply the previous construction to the field $K=\bb Q_p$ we obtain $\bb Q_p\{\{t\}\}$, a complete discrete valuation field of characteristic $0$, in which $p$ is a uniformiser, and with residue field $\bb F_p((t))$. Therefore one may think of $\bb Q_p\{\{t\}\}$ as the `field of fractions of the Witt ring of $\bb F_p((t))$', except that the Witt ring construction does not actually work for imperfect fields.
\end{example}

\begin{example}
Just as we iterated Laurent series in example \ref{example_iterated_Laurent_series}, we may iterate the construction of $K\{\{t\}\}$. Given a complete discrete valuation field $K$, consider \[F=K\{\{t_1\}\}\cdots\{\{t_n\}\}=\big(K\{\{t_1\}\}\cdots\{\{t_{n-1}\}\}\big)\{\{t_n\}\}.\] We have a tower of infinite extensions of complete discrete valuation fields \[K\le K\{\{t_1\}\}\le\cdots\le K\{\{t_1\}\}\cdots\{\{t_n\}\},\] with respective residue fields \[\res K\le \res K((t_1))\le\cdots\le\res K((t_1))\cdots ((t_n)).\] In particular, $\cdvdim F=n+\cdvdim K$, and its first $n+1$ residue fields are as follows:
\[\begin{array}{l}
F=K\{\{t_1\}\}\cdots\{\{t_n\}\} \\ | \\ \res F=\res K((t_1))\cdots ((t_n)) \\ | \\F^{(2)}=\res K((t_1))\cdots ((t_{n-1}))\\ | \\\vdots\\ | \\ F^{(n+1)}=\res K
\end{array}\]
\end{example}

The construction of the field $K\{\{t\}\}$ is most interesting when $K$ has mixed characteristic, for the following reason:

\begin{lemma}
Let $K=k((u))$ be the field of Laurent series over a field $k$. Then there is an isomorphism of fields \[K\{\{t\}\}\cong k((t))((u)),\quad \sum_i\left(\sum_j a_{i,j}u^j\right)t^i\mapsto \sum_j\left(\sum_ia_{i,j}t^i\right)u^i\] ($a_{i,j}\in k$). 
\end{lemma}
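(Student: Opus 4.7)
The plan is to exhibit the stated map $\Phi$ together with its inverse $\Psi$ (also given by interchanging the order of summation), verify that both are well-defined as maps between the stated sets, and then check that $\Phi$ respects addition and multiplication. Additivity will be automatic from the coefficient-wise definitions, so the substantive work lies in well-definedness and multiplicativity.

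For well-definedness, I would unwind what the two defining constraints on $f = \sum_i a_i t^i \in K\{\{t\}\}$, with $a_i = \sum_j a_{i,j} u^j$, say about the coefficient array $(a_{i,j})$. The uniform bound $\inf_i \nu_K(a_i) > -\infty$ is precisely the existence of an $N$ with $a_{i,j} = 0$ for all $i$ whenever $j < N$; this makes $b_j := \sum_i a_{i,j} t^i$ vanish for $j < N$, giving the outer Laurent condition in $k((t))((u))$. For each fixed $j$, applying the condition $a_i \to 0$ as $i \to -\infty$ with threshold $M = j$ yields $a_{i,j} = 0$ for $i$ sufficiently small, so $b_j \in k((t))$ as required. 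Running the argument in reverse shows $\Psi$ is well-defined; the only subtlety is that for $\Psi(\sum_j b_j u^j)$ to satisfy $a_i \to 0$, one needs, for each $M$, a uniform lower bound on $i$ across all $j \leq M$, obtained by taking the minimum of the (finitely many) thresholds $I_j$ for $N \leq j \leq M$.

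For multiplicativity, I would compute the $(i,j)$-coefficient of a product on each side. On the $K\{\{t\}\}$ side, the $t^i$-coefficient of $fg$ is $\sum_r a_r a'_{i-r}$, a convergent series in $K$; its $u^j$-coefficient is the sum over $r$ of the finite inner sums $\sum_s a_{r,s} a'_{i-r, j-s}$. On the $k((t))((u))$ side, the $u^j$-coefficient of $\Phi(f)\Phi(g)$ is the genuinely finite sum $\sum_s b_s b'_{j-s}$, whose $t^i$-coefficient unpacks to the same double sum. The main obstacle, and the step requiring the most care, is to verify that for each fixed $(i,j)$ the double sum $\sum_{r,s} a_{r,s} a'_{i-r, j-s}$ has only finitely many nonzero terms, so that the iterated sums on the two sides may be freely reindexed and shown to agree. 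Nonvanishing of $a_{r,s}$ forces $s \geq N$ and nonvanishing of $a'_{i-r, j-s}$ forces $s \leq j - N'$, bounding $s$; then for each such $s$, the decay hypotheses on $(a_r)$ and $(a'_r)$ respectively bound $r$ from below and from above. This finiteness justifies the rearrangement and completes the verification.
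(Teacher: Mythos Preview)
Your proposal is correct and complete. The paper itself does not give a proof of this lemma; it explicitly leaves it as an exercise ``to test his/her own understanding of the definition of $K\{\{t\}\}$'', and your argument is exactly the natural solution to that exercise: translate the two defining conditions on $K\{\{t\}\}$ into conditions on the coefficient array $(a_{i,j})$, observe that these are equivalent to membership in $k((t))((u))$, and then verify multiplicativity by checking that for fixed $(i,j)$ the double sum $\sum_{r,s} a_{r,s}\,a'_{i-r,j-s}$ has only finitely many nonzero terms so that the two iterated orders of summation agree. The finiteness argument you give (bounding $s$ via the uniform $\nu_K$-lower bounds, then bounding $r$ for each $s$ via the decay conditions) is the heart of the matter and is handled correctly.
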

\begin{proof}
We leave this as an exercise to the reader to test his/her own understanding of the definition of $K\{\{t\}\}$.
\end{proof}

\begin{corollary}
Let $k$ be a field and let $1\le r\le n$ be integers. Then there is an isomorphism of fields \[k((t_1))\cdots((t_r))\{\{t_{r+1}\}\}\cdots\{\{t_n\}\}\cong k((t_{r+1}))\cdots((t_n))((t_1))\cdots((t_r)).\]
\end{corollary}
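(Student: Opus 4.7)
The plan is to prove the corollary in two phases: first iterate the previous lemma to replace the stack of double-curly constructions by ordinary Laurent series layers, and then reorder those layers to match the right-hand side.

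For the first phase, view $L_0 := k((t_1))\cdots((t_r))$ as $(k((t_1))\cdots((t_{r-1})))((t_r))$. The previous lemma, applied with the role of ``$k$'' played by $k((t_1))\cdots((t_{r-1}))$ and with $u = t_r$, $t = t_{r+1}$, yields
\[
L_0\{\{t_{r+1}\}\} \;\cong\; k((t_1))\cdots((t_{r-1}))((t_{r+1}))((t_r)).
\]
This is again of the form $F((t_r))$ with $F = k((t_1))\cdots((t_{r-1}))((t_{r+1}))$, so I would apply the previous lemma once more to absorb $\{\{t_{r+2}\}\}$ as a new $((t_{r+2}))$ layer immediately inside the outer $((t_r))$. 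Iterating $n-r$ times,
\[
k((t_1))\cdots((t_r))\{\{t_{r+1}\}\}\cdots\{\{t_n\}\} \;\cong\; k((t_1))\cdots((t_{r-1}))((t_{r+1}))\cdots((t_n))((t_r)).
\]

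For the second phase I would first establish an auxiliary \emph{swap lemma}: for any field $F$ and indeterminates $x,y$, the formal interchange $\sum_{i,j} c_{i,j}\,x^i y^j \mapsto \sum_{i,j} c_{i,j}\,y^i x^j$ is an isomorphism of fields $F((x))((y)) \cong F((y))((x))$. The support conditions defining the two sides are exchanged by the transposition $(i,j)\leftrightarrow(j,i)$, and the ring operations are preserved by a routine bookkeeping calculation. Combined with the functoriality of $E \mapsto E((t))$, this swap can be applied between any two adjacent Laurent layers of an iterated tower, and $(r-1)(n-r)$ such adjacent swaps shuffle the block $((t_1))\cdots((t_{r-1}))$ past $((t_{r+1}))\cdots((t_n))$, converting the output of phase one into the right-hand side of the corollary.

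The delicate step is the swap lemma itself: at first sight it looks suspicious that $F((x))((y)) \cong F((y))((x))$ as fields, because the two sides are genuinely different as complete discrete valuation fields (with non-isomorphic residue structures) and as subsets of any common ambient space of formal series --- for instance, $\sum_{j \ge 0} x^{-j} y^j$ lies in $F((x))((y))$ but not in $F((y))((x))$. The point is that the formal swap carries this element to $\sum_{j \ge 0} y^{-j} x^j$, which \emph{does} lie in $F((y))((x))$, so the formal interchange is not a common-ambient identification but an honest bijection between two abstract fields, which then needs to be checked --- a little carefully --- to respect the field operations.
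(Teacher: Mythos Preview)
Your argument is correct, and Phase~1 is exactly the intended iteration of the preceding lemma. However, you are over-thinking Phase~2. Your ``swap lemma'' $F((x))((y)) \cong F((y))((x))$ is not delicate at all: it is literally the renaming isomorphism that exchanges the two indeterminate symbols, and there is nothing to check beyond the tautology that $F((X_1))((X_2))$ does not depend on what the variables are called. (Your own description of the map, $x^i y^j \mapsto y^i x^j$, already makes this explicit: it sends $x\mapsto y$ and $y\mapsto x$.) The apparent mismatch with complete discrete valuation structure is irrelevant, since the corollary only asserts an isomorphism of abstract fields, not one fixing each $t_i$. Once Phase~1 shows that the left-hand side is isomorphic to an $n$-fold iterated Laurent series field over $k$, a single relabelling of the variables gives the right-hand side immediately --- there is no need for $(r-1)(n-r)$ adjacent transpositions or for any careful bookkeeping about ambient identifications. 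The paper states the corollary without proof, presumably with exactly this two-step argument (iterate the lemma, then relabel) in mind.
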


The study of complete discrete valuation fields is essentially the study of fields of cdvdim $=1$, because one does not take into account the possibility that the residue field has the additional structure of being a complete discrete valuation field itself. Acknowledging this extra structure leads to the theory of higher dimensional fields. Among complete discrete valuation fields an exalted position is occupied by local fields; these are the complete discrete valuation fields whose residue field is finite. The higher dimensional analogue is our main object of study:

\begin{definition}[\cite{Kato1980, Parshin1976, Parshin1984}]\label{definition_hlf}
A field $F$ is said to be an {\em $n$-dimensional local field} for some $n\ge 0$ if and only if $\cdvdim{F}=n$ and the final (i.e., the $n^\sub{th}$) residue field of $F$ is finite. When the exact dimension is not specified one simply speaks of a {\em higher dimensional local field}.
\end{definition}

\begin{remark}
Unfortunately, the phrase `$n$-dimensional local field over $k$' is sometimes used in the literature to mean a field $F$ of $\cdvdim\ge n$ whose $n^\sub{th}$ residue field is $k$. We do not like this notation because a `$2$-dimensional local field over $\bb Q_p$' is then a $3$-dimensional local field. More generally, `higher dimensional field' or `higher local field' are often used for any field of $\cdvdim\ge 1$.

In an attempt to avoid such confusing notation, we will try to be rigorous in self-imposing the following guide: `higher dimensional field' informally means any field, probably of cdvdim $\ge 1$, while `higher dimensional {\em local} field' strictly means that the final residue field is finite.
\end{remark}

\begin{example}
A zero-dimensional local field is a finite field, a one-dimensional local field is a usual local field (e.g., $\bb{Q}_p$), and a two-dimensional local field is a complete discrete valuation field whose residue field is a usual local field (e.g., $\bb F_q((t_1))((t_2))$, $\bb{Q}_p((t))$, and $\bb{Q}_p\dblecurly{t}$). Note that if $F$ is a $n$-dimensional local field then, for $i=0,\dots,n$, the field $F^{(i)}$ is an $n-i$ -dimensional local field.

Here is a table indicating some two-dimensional local fields and their respective residue fields:
\[\begin{array}{l|cccc}
F & & \bb F_q((t_1))((t_2)) & \bb Q_p((t)) & \bb Q_p\{\{t\}\}\\
|&& |&|&|\\
F^{(1)}=\res F & & \bb F_q((t_1)) & \bb Q_p & \bb F_p((t))\\
|&&|&|&|\\
F^{(2)} & & \bb F_q & \bb F_p & \bb F_p
\end{array}\]
\end{example}

So far we have only seen higher dimensional local fields which have been constructed in a rather straightforward way from lower dimensional fields; the following provides many more interesting ones:

\begin{proposition}\label{proposition_finite_extension_of_hlf}
Let $F$ be an $n$-dimensional local field, and let $L$ be a finite field extension of $F$. Then $L$ is an $n$-dimensional local field.
\end{proposition}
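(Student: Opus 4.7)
The natural approach is induction on $n=\cdvdim F$. The base case $n=0$ is immediate: $F$ is finite, so any finite extension $L/F$ is finite too, hence a $0$-dimensional local field.

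For the inductive step, assume the result holds in dimension $n-1$ and let $F$ be an $n$-dimensional local field with $L/F$ finite. By proposition \ref{proposition_extension_of_local_field}, $L$ is a complete discrete valuation field and $L/F$ is an extension of complete discrete valuation fields; in particular there are well-defined residue fields $\res F$ and $\res L$, with $\res F$ an $(n-1)$-dimensional local field by hypothesis. By proposition \ref{proposition_e=fn}, the finiteness of $L/F$ forces $\res L/\res F$ to be a finite extension. The inductive hypothesis applied to the pair $\res L/\res F$ shows that $\res L$ is an $(n-1)$-dimensional local field. Therefore
\[\cdvdim L = 1 + \cdvdim \res L = 1+(n-1)=n,\]
and the $n^\sub{th}$ residue field of $L$ coincides with the $(n-1)^\sub{th}$ residue field of $\res L$, which is finite. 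Hence $L$ is an $n$-dimensional local field.

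There is no real obstacle here: the entire argument is a bookkeeping exercise that chains together the two preparatory results from section \ref{section_cdvfields}. The mild subtlety, which is the whole point of theorem \ref{theorem_uniqueness_of_valn} and remark \ref{remark_uniqueness_of_valn}, is that one need not worry about compatibility of valuations at each stage of the induction: the cdvdim, residue fields, and the property of being a higher dimensional local field are intrinsic to the underlying field, so the iterated residue field $\res L$ is canonically defined and its comparison with $\res F$ via proposition \ref{proposition_e=fn} is unambiguous.
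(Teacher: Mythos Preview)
Your proof is correct and follows the same inductive approach as the paper, which simply says ``This follows by induction on $n$ using proposition \ref{proposition_extension_of_local_field}.'' Your version is just more explicit, spelling out the base case and the use of proposition \ref{proposition_e=fn} to ensure finiteness of the residue extension at each step.
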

\begin{proof}
This follows by induction on $n$ using proposition \ref{proposition_extension_of_local_field}.
\end{proof}

\begin{example}\label{example_non_standard_field}
Let $F=\bb Q_p\{\{t\}\}$ and put $f(X)=X^p-pT^{-1}\in\roi_F[X]$; let $\al$ be a root of $f(X)$ and put $L=F(\al)$. Then $L$ is a two-dimensional local field. Moreover, since $f(X)$ is an Eisenstein polynomial, $L/F$ is totally ramified (as an extension of complete discrete valuation fields) of degree $p$. By more carefully analysing the arguments which appear in the Classification theorem below, one can show that $L$ does not have the form $K\{\{u\}\}$ for any finite extension $K$ of $\bb Q_p$.
\end{example}

\begin{example}\label{example_ferocious}
Let $F=\bb F_p((t_1))((t_2))$ and put $g(X)=X^p-X-t_1t_2^{-p}$; let $\beta$ be a root of $g(X)$ and put $M=F(\beta)$. Then $M$ is a two-dimensional local field. Moreover, since $g(X)$ is an Artin-Schreier polynomial, $M/F$ is a cyclic extension of degree $p$. Put $\gamma=t_2\beta$ and notice that $\gamma^p-t_2^{p-1}\gamma-t_1=0$; so $\gamma\in\roi_F$, and $\res\gamma^p=t_1$ in $\res M$. Therefore $\res M=\bb F_p((t_1^{1/p}))$, and so the $n=ef$ equality implies that $e(M/F)=1$.

However, notice that $M/F$ is {\em not} unramified in the usual sense since the residue field extension $\res M/\res F$ is purely inseparable. This phenomenon, which does not appear in the classical theory of local fields, is known as {\em ferocious ramification} \cite{Zhukov2003}, and has been the major source of trouble in developing ramification theory for higher dimensional local fields, or more generally for complete discrete valuation fields with imperfect residue field. Various approaches and contributions to the theory, in chronological order, are due to K.~Kato \cite{Kato1987, Kato1989, Kato1994}, O.~Hyodo \cite{Hyodo1987}, I.~Fesenko \cite{Fesenko1995}, I.~Zhukov \cite{Zhukov2000, Zhukov2003}, A.~Abbes and T.~Saito \cite{Abbes2002} \cite{Abbes2003}, and J.~Borger \cite{Borger2004, Borger2004a}.

 \end{example}

\subsection{The Classification theorem for higher dimensional local fields}
It is well-known that a (one-dimensional) local field is either a field of Laurent series over a finite field, or is a finite extension of $\bb Q_p$ for some prime number $p$. An entirely analogous result holds in higher dimensions, originally due to Parshin \cite{Parshin1978}. First notice that if $F$ is an $n$-dimensional local field, then either it has finite characteristic or at some point in its chain of residue fields $F^{(0)},F^{(1)},\dots, F^{(n)}$ the characteristic switches from $0$ to finite.

\begin{theorem}[Classification theorem]\label{theorem_classification}
Let $F$ be an $n$-dimensional local field.
\begin{enumerate}
\item If $\op{char}F\neq0$ then \[F\cong  F^{(n)}((t_1))\cdots((t_n))\] ($F^{(n)}$ is a finite field).
\item If $\op{char}F^{(n-1)}=0$ then \[F\cong F^{(n-1)}((t_1))\cdots((t_{n-1}))\] ($F^{(n-1)}$ is a (one-dimensional) local field of characteristic $0$).
\item In the remaining case, let $2\le r\le n$ be the unique integer such that $F^{(n-r)}$ has mixed characteristic (i.e., $\op{char}F^{(n-r)}=0\neq\op{char}F^{(n+1-r)}$). Then $F$ is isomorphic to a finite extension of \[\bb Q_q\{\{t_1\}\}\cdots\{\{t_{r-1}\}\}((t_{r+1}))\cdots((t_n)),\] where $\bb Q_q$ is the unramified extension of $\bb Q_p$ with residue field $F^{(n)}$.
\end{enumerate}
\end{theorem}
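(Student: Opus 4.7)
The proof is by induction on $n$, the base case $n=0$ (a finite field) being trivial. For the inductive step, the main tool in all three cases is Cohen's structure theorem for equi-characteristic complete discrete valuation fields: any such $F$ admits a coefficient field $\res F \hookrightarrow F$ and is therefore isomorphic to $\res F((t))$ for any uniformiser $t$. In case (i), $\op{char} F = p$ makes $F$ equi-characteristic, so Cohen gives $F \cong \res F((t_n))$; the inductive hypothesis applied to $\res F$ (itself an $(n-1)$-dimensional local field of characteristic $p$) completes the argument via case (i). In case (ii), the assumption $\op{char} F^{(n-1)} = 0$ forces every $F^{(0)},\dots,F^{(n-1)}$ to be an equi-characteristic CDVF of characteristic zero, so iterating Cohen $n-1$ times produces $F \cong F^{(n-1)}((t_1))\cdots((t_{n-1}))$.

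Case (iii) is the substantive part. The residue chain splits at level $n-r$: the fields $F^{(0)},\dots,F^{(n-r)}$ have characteristic $0$, while $F^{(n-r+1)},\dots,F^{(n)}$ have characteristic $p$, and only $F^{(n-r)}$ is of mixed characteristic. The upper segment consists of equi-characteristic CDVFs of characteristic $0$, so iterated Cohen (exactly as in case (ii)) gives
\[
F \;\cong\; F^{(n-r)}((t_{r+1}))\cdots((t_n)),
\]
while $F^{(n-r+1)}$ is an $(r-1)$-dimensional local field of characteristic $p$, so case (i) of the induction identifies it with $\bb F_q((t_1))\cdots((t_{r-1}))$, where $\bb F_q = F^{(n)}$. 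Since forming iterated Laurent series preserves finiteness of field extensions, the theorem reduces to showing that $F^{(n-r)}$ is a finite extension of $K_0 := \bb Q_q\{\{t_1\}\}\cdots\{\{t_{r-1}\}\}$.

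To construct an embedding $K_0 \hookrightarrow F^{(n-r)}$, I would build it up stepwise: $\bb Z_p \subseteq \roi_{F^{(n-r)}}$ is immediate from mixed characteristic; Hensel's lemma applied to $X^{q-1}-1$ lifts $\mu_{q-1} \subseteq \bb F_q \subseteq F^{(n-r+1)}$ into $\roi_{F^{(n-r)}}$, supplying $\bb Q_q \hookrightarrow F^{(n-r)}$; and then one lifts each $t_i \in F^{(n-r+1)}$ to a unit $\tilde t_i \in \roi_{F^{(n-r)}}^{\times}$ and, via a Cohen-type coefficient argument, inductively extends to $\bb Q_q\{\{t_1\}\}\cdots\{\{t_i\}\} \hookrightarrow F^{(n-r)}$ with residue image $\bb F_q((t_1))\cdots((t_i))$. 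Granted this embedding, the $n=ef$ equality (proposition \ref{proposition_e=fn}) finishes the proof: $f(F^{(n-r)}/K_0) = 1$ by construction, while $e(F^{(n-r)}/K_0) = \nu_{F^{(n-r)}}(p)$ is the absolute ramification index of $F^{(n-r)}$, automatically a finite positive integer. The main obstacle is to justify the inductive extension from $\bb Q_q\{\{t_1\}\}\cdots\{\{t_{i-1}\}\} \hookrightarrow F^{(n-r)}$ to $\bb Q_q\{\{t_1\}\}\cdots\{\{t_i\}\} \hookrightarrow F^{(n-r)}$: because the unit lift $\tilde t_i$ is not topologically nilpotent in the valuation topology of $F^{(n-r)}$, the naive term-by-term substitution $\sum_j a_j t_i^j \mapsto \sum_j a_j \tilde t_i^j$ fails on the positive tail, so the map must instead be extracted from a structure/coefficient theorem for mixed characteristic complete local rings with imperfect residue field --- precisely the setting in which the $\{\{\cdot\}\}$ construction plays the role of the Witt vectors.
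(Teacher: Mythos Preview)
Your proposal is correct and follows essentially the same route as the paper: Cohen's structure theorem handles the equi-characteristic layers in all three cases, reducing (iii) to the claim that the mixed-characteristic field $F^{(n-r)}$ is a finite extension of $K_0=\bb Q_q\{\{t_1\}\}\cdots\{\{t_{r-1}\}\}$, and this last claim rests on a structure theorem for mixed-characteristic complete discrete valuation fields. The only stylistic difference is that the paper invokes that structure theorem directly (citing \cite[II.5.6]{Fesenko2002}: if $K'$ is a complete discrete valuation field with uniformiser $p$ and the same residue field as a mixed-characteristic complete discrete valuation field $K$, then $K'$ embeds into $K$ with finite index), whereas you first attempt a bare-hands inductive construction of the embedding, correctly diagnose why the naive series substitution fails to converge, and then appeal to the same structure theorem; your detour is instructive but not logically necessary.
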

\begin{proof}
(i) and (ii) follow easily from the well-known fact that any equal-characteristic complete discrete valuation field $L$ is isomorphic to $\res L((t))$ (e.g., \cite[II.\S4]{Serre1979} or \cite[II.5]{Fesenko2002}). For (iii), put $K=F^{(r)}$, which is a mixed characteristic $r$-dimensional local field; the aforementioned fact implies that $F\cong K((t_{r+1}))\cdots((t_n))$, so it remains to show that $K$ is a finite extension of $\bb Q_q\{\{t_1\}\}\cdots\{\{t_{r-1}\}\}$.

Well, we may apply case (i) to $\res K$ to deduce that $\res K\cong F^{(n)}((t_1))\cdots((t_{r-1}))$. Thus $K':=\bb Q_q\{\{t_1\}\}\cdots\{\{t_{r-1}\}\}$ is a complete discrete valuation field, in which $p$ is prime, with the same residue field as the mixed characteristic complete discrete valuation field $K$; a standard structure theorem for mixed characteristic complete discrete valuation field implies that there is therefore a field embedding $K'\into K$ making $K$ into a finite extension of $K$ (e.g., \cite[II.5.6]{Fesenko2002}). This completes the proof.
\end{proof}

\begin{remark}
An $n$-dimensional local field isomorphic to one of the form \[K\{\{t_1\}\}\cdots\{\{t_{r-1}\}\}((t_{r+1}))\cdots((t_n)),\] where $K$ is a (one-dimensional) mixed characteristic local field, is called {\em standard}.

If $F$ is as in part (iii) of the Classification theorem, then a theorem of I.~Zhukov \cite{Zhukov1995} states that $F$ is not only a finite extension of a standard field, but moreover that it has a finite extension which is standard. Being able to sandwich $F$ in this way can reduce calculations to standard fields.
\end{remark}

\subsection{Sequences of local parameters}
Next we define what is meant by the sequence of local parameters in a higher dimensional field; recall that a prime, or uniformiser, of a complete discrete valuation field $F$ is an element $\pi$ satisfying $\nu(\pi)=1$, or equivalently a generator of the maximal ideal of $\roi_F$.

\begin{definition}
Let $F$ be a field of cdvdim $\ge n$. A {\em sequence of ($n$) local parameters} $t_1,\dots,t_n\in F$ is a sequence of elements with the following (recursively defined) properties:
\begin{enumerate}
\item $t_n$ is a uniformiser of $F$;
\item $t_1,\dots,t_{n-1}\in\roi_F$, and their reductions $\res t_1,\dots,\res t_{n-1}$ form a sequence of local parameters for the field $\res F$ of cdvdim $\ge n-1.$
\end{enumerate}
Informally, pick any uniformiser of $F^{(n-1)}$ and lift it $F$ to get $t_1$; then pick a uniformiser of $F^{(n-2)}$ and lift it to $F$ to get $t_2$; etc.
\end{definition}

\begin{example}
Here is a table of higher dimensional fields and some corresponding sequences of local parameters; the reader should pay particular attention to the order of $p$ and $t$ for the fields $\bb Q_p((t))$ and $\bb Q_p\{\{t\}\}$:
\[\begin{array}{c|c|l}
\mbox{field } F & \op{cdvdim}F & \mbox{a sequence of local parameters}\\\hline
\bb Q_p & 1 & p\\
\bb Q_p((t)) & 2 & p,t\\
\bb Q_p\{\{t\}\} & 2 & t,p\\
\bb F_q((t_1))\cdots((t_n)) & n & t_1,\dots,t_n\\
\bb Q_p\{\{t_1\}\}\cdots\{\{t_{r-1}\}\}((t_{r+1}))\cdots((t_n)) & n & t_1,\dots, t_{r-1},p,t_{r+1},\dots,t_n\\
L\mbox{ (example \ref{example_non_standard_field})} & 2 & t,\al\\
M\mbox{ (example \ref{example_ferocious})} & 2 &\gamma,t_2
\end{array}\]
\end{example}

\begin{remark}
To dispel any feeling that sequences of local parameters must `look nice', as in the previous table, we remark that if $F$ is a two-dimensional local field with sequence of local parameters $t_1,t_2$, then another sequences of local parameters is $t_1+a,t_2+b$ for any $a\in\frak p_F$, $b\in\frak p_F^2$. Indeed, $t_2+b$ is still a uniformiser of $F$, and the reduction of $t_1$ to $\res F$ is not changed by adding $a$.

For example, \[p+10t+27t^5,\,t+p^{-1}t^2\] is a perfectly good sequence of local parameters for $\bb Q_p((t))$, though not one we would typically choose to work with.
\end{remark}

Sequences of local parameters will appear again in section \ref{subsection_rois_and_parameters} where they will be used to decompose the multiplicative group of a higher dimensional field.

\subsection{Extensions of higher dimensional fields}
We finish this section by discussing extensions of higher dimensional fields; the results are straightforward inductions on the dimension of the fields. Again, the following definition does not exist in the literature but it is convenient:

\begin{definition}
Let $i:F\to L$ be a field embedding between fields of cdvdim $\ge n$. We recursively define what it means for $i$ to be {\em a morphism of fields of cdvdim $\ge n$}, or more simply {\em $n$-continuous} as follows: If $n=0$ then $i$ is always $0$-continuous; if $n\ge 1$ then $i$ is $n$-continuous if and only if it is a morphism of discrete valuation fields and $\res i:\res F\to\res L$ is $n-1$\,-continuous.
\end{definition}

\begin{example}
Suppose that $i:F\to L$ is an embedding between fields of cdvdim $\ge2$. Then $i$ is automatically $0$-continuous. It is $1$-continuous if and only if it is a morphism of complete discrete valuation fields, which is equivalent to saying that it is continuous with respect to the discrete valuation topologies on $F$ and $L$. It is $2$-continuous if and only if it is continuous and the induced embedding $\res i:\res F\to\res L$ is also continuous (with respect to the discrete valuation topologies on $\res F$ and $\res L$).
\end{example}

If $F\to L$ is an $n$-continuous embedding between fields of cdvdim $\ge n$, then we may pass to successive residues fields to obtain field extensions $L^{(1)}/F^{(1)},\dots,L^{(n)}/F^{(n)}$, which we could represent with a diagram like the following:
\[\begin{array}{ccc}
F=F^{(0)} &\le& L=L^{(0)}\\
|&&|\\
F^{(1)}&\le&L^{(1)}\\
|&&|\\
\vdots&&\vdots\\
|&&|\\
F^{(n)}&\le& L^{(n)}
\end{array}\]

\begin{example}
The natural inclusion $\bb Q_p\{\{t_2\}\}\to\bb Q_p\{\{t_1\}\}\{\{t_2\}\}$ is $2$-continuous, giving rise to residue field extensions as follows:
\[\begin{array}{ccc}
\bb Q_p\{\{t_2\}\} & \le & \bb Q_p\{\{t_1\}\}\{\{t_2\}\} \\
|&&|\\
\bb F_p((t_2)) & \le & \bb F_p((t_1))((t_2))\\
|&&|\\
\bb F_p & \le & \bb F_p((t_1))
\end{array}\]
\end{example}

We have already pointed out in proposition \ref{proposition_finite_extension_of_hlf} than a finite field extension $L$ of an $n$-dimensional local field is again an $n$-dimensional local field; the proof shows moreover that $F\to L$ is $n$-continuous. We now return to the two earlier examples of finite extensions:

\begin{example}
We consider the finite extension $L/F$ of two-dimensional local fields from example \ref{example_non_standard_field}; the resulting residue field extensions are
\[\begin{array}{ccc}
F & \le & L \\
|&&|\\
\bb F_p((t)) & \le & \bb F_p((t))\\
|&&|\\
\bb F_p & \le & \bb F_p
\end{array}\]
\end{example}

\begin{example}
We consider the finite extension $M/F$ of two-dimensional local fields from example \ref{example_ferocious}; the resulting residue field extensions are
\[\begin{array}{ccc}
F & \le & M \\
|&&|\\
\bb F_p((t_1)) & \le & \bb F_p((t_1^{1/p}))\\
|&&|\\
\bb F_p & \le & \bb F_p
\end{array}\]
\end{example}

We finish with the generalised $n=ef$ equality:

\begin{proposition}
Let $F\to L$ be an $n$-continuous embedding between fields of cdvdim $\ge n$. Then $L/F$ is a finite extension if and only if $L^{(n)}/F^{(n)}$ is a finite extension, in which case all the residue field extensions $L^{(i)}/F^{(i)}$ are finite ($i=1,\dots,n$) and \[|L/F|=e(L/F)e(L^{(1)}/F^{(1)})\cdots e(L^{(n-1)}/F^{(n-1)})|L^{(n)}/F^{(n)}|.\]
\end{proposition}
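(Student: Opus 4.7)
The plan is to induct on $n$, with the $n=1$ case being exactly proposition~\ref{proposition_e=fn}. The $n$-continuity hypothesis is tailor-made for this induction: by definition it packages together (a) the fact that $F\to L$ is a morphism of complete discrete valuation fields (so proposition~\ref{proposition_e=fn} applies at the top level) together with (b) the statement that the induced residue field embedding $\res i:\res F\to\res L$ is $(n{-}1)$-continuous (so the inductive hypothesis applies one level down).

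For the base case $n=0$ there is nothing to prove: the hypothesis is vacuous, the product on the right-hand side is empty, and the equality reduces to the tautology $|L/F|=|L^{(0)}/F^{(0)}|$. For the inductive step, assume the statement for $n-1$ and suppose $i:F\to L$ is an $n$-continuous embedding. Applying proposition~\ref{proposition_e=fn} to the morphism of complete discrete valuation fields $F\to L$, I obtain that $L/F$ is finite if and only if $\res L/\res F$ is finite, and that in this case
\[|L/F|=e(L/F)\,|\res L/\res F|=e(L/F)\,|L^{(1)}/F^{(1)}|.\]
Since $\res i:F^{(1)}\to L^{(1)}$ is $(n{-}1)$-continuous between fields of cdvdim $\ge n-1$, the inductive hypothesis says that $L^{(1)}/F^{(1)}$ is finite if and only if $L^{(n)}/F^{(n)}$ is finite, each intermediate extension $L^{(i)}/F^{(i)}$ is finite for $i=2,\dots,n$, and
\[|L^{(1)}/F^{(1)}|=e(L^{(1)}/F^{(1)})\cdots e(L^{(n-1)}/F^{(n-1)})\,|L^{(n)}/F^{(n)}|.\]
Combining the two displayed equalities yields the desired formula, and the biconditional for finiteness chains through both steps.

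There is no real obstacle: the whole argument is a bookkeeping induction, and the only conceptual point to verify carefully is that the residue field embedding $\res i$ genuinely is $(n{-}1)$-continuous as required. This, however, is immediate from the recursive definition of $n$-continuity given just before the statement, so the induction proceeds without friction.
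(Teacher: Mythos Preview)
Your proof is correct and is exactly the argument the paper has in mind: the paper's own proof consists of the single sentence ``A straightforward induction using the dimension~$1$ result, namely proposition~\ref{proposition_e=fn},'' and you have simply written out that induction in full.
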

\begin{proof}
A straightforward induction using the dimension $1$ result, namely proposition \ref{proposition_e=fn}.
\end{proof}

We will return to morphisms of higher dimensional fields in theorem \ref{theorem_n_continuity_of_functor}, where we show that the localisation-completion process of constructing higher dimensional fields is functorial.

\section{Higher rank rings of integers}\label{section_integers}
A complete discrete valuation field $F$ contains its subring of integers $\roi_F$. Analogously, higher dimensional fields come equipped with a chain of rings of integers \[F\supset\roi_F=\ROI_F^{(1)}\supset\ROI_F^{(2)}\supset\cdots\supset\ROI_F^{(n)},\] arising by lifting the rings of integers from each successive residue field. Each of these higher rank rings of integers is a Henselian valuation ring, and we will describe their prime ideals. We use them to decompose $\mult F$, refining the well-known decomposition $\mult F\cong\mult{\roi_F}\times\bb Z$, and then describe their behaviour under field extensions.

These higher rank rings of integers do not play an important role in the geometric applications of higher dimensional fields, though their unit groups are fundamental in higher dimensional local class field theory.

\begin{remark}[Valuation rings]\label{remark_valn_rings}
Before defining the higher rank rings of integers we remind the reader of various facts concerning valuation rings; a good reference is \cite[Chap.~4]{Matsumura1989}. A {\em valuation ring} $\ROI$ is an integral domain such that for all non-zero $x$ in $\Frac{\ROI}$, either $x$ or $x^{-1}$ belongs to $\ROI$. Any valuation ring is a local ring in which the ideals are totally ordered by inclusion. If $\roi\supseteq\ROI$ is an extension of valuation rings with the same field of fractions, $\frak{m}$ is the maximal ideal of $\ROI$, and $\frak{p}$ the maximal ideal of $\roi$, then $\frak{m}\supseteq\frak{p}$ and $\roi=\ROI_{\frak{p}}$ (this will be particularly important to us).

If $\ROI$ is a valuation ring with field of fractions $F$, then $\Gamma:=\mult{F}/\mult{\ROI}$ becomes a totally ordered abelian group under the ordering \[a\mult{\ROI}\ge b\mult{\ROI}\Longleftrightarrow ab^{-1}\in\ROI,\] and the natural map \[\nu:\mult{F}\to\Gamma\] is a valuation on $F$. Moreover, every valuation on $F$ arises from a valuation subring in this way (when one speaks of a {\em valuation subring} of a field, one always means one whose field of fractions is all of $F$).

$\ROI$ said to be {\em Henselian} if and only if it satisfies Hensel's lemma with respect to its maximal ideal $\frak m$: If $f(X)$ is a monic polynomial over $\ROI$ whose reduction $f\op{mod}\frak{m}$ has a simple root $\xi$ in $\ROI/\frak m$, then there is a unique $a\in\ROI$ which satisfies $f(a)=0$ and $a\op{mod}\frak{m}=\xi$.

Valuation rings are at the heart of the Zariski school's approach to algebraic geometry, in contrast with the approach via Noetherian local rings. The two rarely coincide, for if $\ROI$ is a valuation ring then the following are equivalent:
\begin{enumerate}
\item $\ROI$ is Noetherian;
\item $\ROI$ is a principal ideal domain;
\item $\ROI$ is a field or a discrete valuation ring;
\item $\ROI$ has at most one non-zero prime ideal.
\end{enumerate}
\end{remark}

\begin{remark}[Rank of a valuation ring]\label{remark_rank_of_valn_ring}
We must also mention the notion of the {\em rank} of a valuation ring $\ROI$. Let $\Gamma=\mult{F}/\mult{\ROI}$ be the totally ordered abelian group associated to $\ROI$, as in the previous remark. Then the rank of $\ROI$ is defined to be the order-rank of $\Gamma$, a notion which may be unfamiliar:

Let $\Gamma$ be any totally ordered abelian group. A subgroup $H$ of $\Gamma$ is called {\em isolated} if and only if \[\gamma\in H\Longrightarrow [-\gamma,\gamma]\subseteq H,\] where $[-\gamma,\gamma]$ is the interval of elements in $\Gamma$ between $-\gamma$ and $\gamma$; the {\em order-rank} of $\Gamma$ is then defined to be the number of non-zero isolated subgroups it has. For example, if $\Gamma$ is non-zero and embeds (as an ordered group) into $\bb{R}$ then it has order-rank $1$.

If $\Gamma,\Delta$ are totally ordered abelian groups, then $\Gamma\times\Delta$ becomes a totally ordered abelian group under the (left) lexicographic ordering: \[(\gamma,\delta)>(\gamma',\delta')\Longleftrightarrow \gamma>\gamma'\mbox{, or }\gamma=\gamma'\mbox{ and }\delta>\delta'.\] (Of course one can swap the importance of the roles of $\Gamma$ and $\Delta$ to obtain the right lexicographic ordering, and the reader should be careful which convention is being used when consulting the literature.) Iterating the process yields the  lexicographic ordering on any finite product of totally ordered abelian groups.

The reader should convince him/herself that \[\bb{Z}^n=\underbrace{\bb{Z}\times\dots\times\bb{Z}}_\sub{$n$ times},\] with the lexicographic ordering, has order-rank $n$.
\end{remark}

\begin{definition}
Let $F$ be a field of $\cdvdim\ge n$. Then the \emph{rank $n$ ring of integers} $\ROI_F^{(n)}$ is defined inductively as follows: if $n=0$ then $\ROI_F^{(0)}:=F$, and if $n>0$ then \[\ROI_F^{(n)}:=\{x\in\roi_F:\res{x}\in\ROI_{\res{F}}^{(n-1)}\},\] where $\ROI_{\res{F}}^{(n-1)}$ is the rank $n-1$ ring of integers of $\res{F}$, a field of $\cdvdim\ge n-1$. An elementary induction shows that $\ROI_F^{(n)}$ is a valuation subring of $F$, and we let $\frak p_F^{(n)}$ denote its maximal ideal.

Note that we have just defined higher rank rings of integers $\ROI_F^{(i)}$, for $i=0,\dots,\cdvdim F$, and from the very definition it is clear that they form a descending chain of valuation subrings of $F$: \[F\supset\roi_F=\ROI_F^{(1)}\supset\ROI_F^{(2)}\supset\dots\supset\ROI_F^{(\cdvdim F)}.\]

When we are interested in a particular higher rank ring of integers, e.g.~the $n^\sub{th}$, then we will use the notation $\ROI_F$. The notation $\roi_F$ is strictly reserved for the usual discrete valuation subring of $F$.
\end{definition}

\begin{example}
Suppose $F$ is a field of $\cdvdim\ge 2$. Then
\begin{align*}
\ROI_F^{(0)}&=F\\
\ROI_F^{(1)}&=\roi_F\\
\ROI_F^{(2)}&=\{x\in\roi_F:\res{x}\in\roi_{\res{F}}\},
\end{align*}
where $\roi_F$ (resp. $\roi_{\res{F}}$) is the usual ring of integers of the complete discrete valuation field $F$ (resp. $\res{F}$). Usually one write $\ROI_F=\ROI_F^{(2)}$. Here is a table of our main examples of two-dimensional local fields together with their rings of integers:
\[\begin{array}{c|c|c}
F & \roi_F=\ROI_F^{(1)} & \ROI_F=\ROI_F^{(2)}\\\hline
\bb F_q((t_1))((t_2)) & \bb F_q((t_1))[[t_2]] & \bb F_q[[t_1]]+t_2\bb F_q((t_1))[[t_2]]\\
\bb{Q}_p((t)) & \bb{Q}_p[[t]] & \bb{Z}_p+t\bb{Q}_p[[t]]\\
\bb Q_p\{\{t\}\} & \{\sum_ia_it^i:a_i\in\bb Z_p\,\forall i\}& \{\sum_ia_it^i:a_i\in\bb Z_p\,\forall i\ge 0\mbox{ and }a_i\in p\bb Z_p\,\forall i<0\}
\end{array}\]
\end{example}

We start by describing the algebraic structure of the higher rank rings of integers:

\begin{proposition}\label{proposition_main_properties_of_rois}
Let $F$ be a field of $\cdvdim\ge n$ and rank $n$ ring of integers $\ROI_F=\ROI_F^{(n)}$. Then
\begin{enumerate}
\item $\ROI_F$ is a Henselian valuation ring with field of fractions $F$ and residue field $F^{(n)}$.
\item As defined above, let $\frak p_F^{(i)}$ be the maximal ideal of $\ROI_F^{(i)}$, for $i=0,\dots,n$. These form a strictly descending chain of prime ideals of $\ROI_F$, \[\ROI_F\supset \frak{p}_F^{(n)}\supset\dots\supset\frak{p}_F^{(1)}=\frak{p}_F\supset\frak p_F^{(0)}=0,\] and $\ROI_F$ has no other prime ideals.
\item The localisation of $\ROI_F$ away from $\frak p_F^{(i)}$ is $\ROI_F^{(i)}$, for $i=0,\dots,n$.
\item The quotient of $\ROI_F$ by $\frak p_F^i$ is $\ROI_{F^{(i)}}^{(n-i)}$.
\end{enumerate}
\end{proposition}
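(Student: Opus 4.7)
The plan is induction on $n$, proving all four statements simultaneously. The base case $n=1$ is standard: $\ROI_F^{(1)} = \roi_F$ is Henselian because $F$ is complete, its only primes are $0 = \frak p_F^{(0)}$ and $\frak p_F = \frak p_F^{(1)}$, and (iii), (iv) are tautological.

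For the inductive step, write $\ROI_F := \ROI_F^{(n)} = \{x\in\roi_F : \res x \in \ROI_{\res F}^{(n-1)}\}$. I would first prove (i). A short case analysis (for $x\in\mult F$, reduce to $x\in\roi_F$, then split on whether $\res x = 0$ or use the inductive valuation-ring property of $\ROI_{\res F}^{(n-1)}$) exhibits $\ROI_F$ as a valuation subring of $F$, with fraction field automatically $F$. The reduction map $\roi_F\to\res F$ restricts to a surjection $\ROI_F\twoheadrightarrow\ROI_{\res F}^{(n-1)}$ with kernel $\frak p_F$; composing with the inductive residue map $\ROI_{\res F}^{(n-1)}\twoheadrightarrow F^{(n)}$ gives a surjection $\ROI_F\twoheadrightarrow F^{(n)}$ whose kernel, being a maximal ideal of the valuation ring $\ROI_F$, must equal $\frak p_F^{(n)}$. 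The Henselian clause is the main obstacle and requires a two-stage lift: given a monic $f\in\ROI_F[X]$ whose reduction has a simple root $\xi\in F^{(n)}$, reduce $f$ mod $\frak p_F$ to obtain $f_1\in\res F[X]$; by the inductive Henselian property of $\ROI_{\res F}^{(n-1)}$ there is a unique $\eta\in\ROI_{\res F}^{(n-1)}$ with $f_1(\eta)=0$ and $\eta\bmod\frak p_{\res F}^{(n-1)} = \xi$, and $\eta$ is necessarily a simple root of $f_1$ in $\res F$ because $f_1'(\eta)$ is a unit in $\ROI_{\res F}^{(n-1)}$; the classical Hensel's lemma in $F$ then produces a unique $a\in\roi_F$ with $f(a)=0$ and $\res a = \eta$, and this $a$ automatically lies in $\ROI_F$. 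Uniqueness in $\ROI_F$ follows by peeling off the two layers in order.

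Parts (ii) and (iii) come together from the general fact on valuation subrings recalled in remark \ref{remark_valn_rings}: for each $0\le i\le n$, both $\ROI_F^{(i)}$ and $\ROI_F$ are valuation subrings of $F$ with fraction field $F$ (by the inductive form of (i) applied to $\ROI_F^{(i)}$), hence $\frak p_F^{(i)}\subseteq\frak p_F^{(n)}$ and $\ROI_F^{(i)} = (\ROI_F)_{\frak p_F^{(i)}}$; this is (iii) and also produces a strictly descending chain of primes $\frak p_F^{(n)}\supsetneq\cdots\supsetneq\frak p_F^{(1)}\supsetneq 0$ in $\ROI_F$. That these exhaust the primes I would check by splitting any prime $\frak q$ (the primes are totally ordered since $\ROI_F$ is a valuation ring) as either containing $\frak p_F$ or strictly contained in $\frak p_F$: primes of the first type biject with primes of the quotient $\ROI_F/\frak p_F = \ROI_{\res F}^{(n-1)}$, which by induction are $\frak p_{\res F}^{(0)},\ldots,\frak p_{\res F}^{(n-1)}$; primes of the second type biject with primes of $(\ROI_F)_{\frak p_F} = \roi_F$, namely $0$ and $\frak p_F$.

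Finally, for (iv) the case $i=0$ is trivial, $i=1$ is the surjection $\ROI_F\twoheadrightarrow\ROI_{\res F}^{(n-1)}$ from (i), and for $i\ge 2$ the nested quotient $\ROI_F/\frak p_F^{(i)} = (\ROI_F/\frak p_F)/(\frak p_F^{(i)}/\frak p_F) = \ROI_{\res F}^{(n-1)}/\frak p_{\res F}^{(i-1)}$ plus the inductive hypothesis give the result, once one notes that the image of $\frak p_F^{(i)}$ in $\ROI_{\res F}^{(n-1)}$ is $\frak p_{\res F}^{(i-1)}$ (immediate from the correspondence used in (ii)). The principal delicacy of the whole argument is the two-stage Henselian lift; the rest is careful bookkeeping that primes, quotients, and localisations track correctly through the reduction map.
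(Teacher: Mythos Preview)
Your proof is correct and follows essentially the same route as the paper: both arguments use induction on $n$, invoke the general valuation-ring fact from remark~\ref{remark_valn_rings} to obtain (iii) and the chain of primes, identify $\ROI_F/\frak p_F$ with $\ROI_{\res F}^{(n-1)}$ to reduce (ii) and (iv) to the inductive hypothesis, and establish the Henselian property by successive lifting through the intermediate residue fields. The paper's version is simply terser; your two-stage Henselian lift is exactly the inductive unwinding of what the paper phrases as ``the root may be successively lifted up to $\ROI_F$ using completeness of each intermediate residue field.''
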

\begin{proof}
Since $\ROI_F\subseteq\ROI_F^{(i)}$, for $i=0,\dots,n$, remark \ref{remark_valn_rings} implies that $\frak p_F^{(i)}\subset\ROI_F$ and that $(\ROI_F)_{\frak p_F^{(i)}}=\ROI_F^{(i)}$, proving (iii). More generally, the same result implies that $\frak p_F^{(i)}\subset\frak p_F^{(j)}$ whenever $i\le j$. In particular, $\frak p_F\subset\ROI_F$ and $(\ROI_F)_{\frak p_F}=\roi_F$; if $\ROI_F$ were to contain a non-zero prime ideal $\frak q$ strictly smaller that $\frak p_F$, then $(\ROI_F)_{\frak q}$ would be a valuation ring strictly between $\roi_F$ and $F$, which is impossible. Therefore $\frak p_F$ is the minimal non-zero prime ideal of $\ROI_F$.

From the original definition of $\ROI_F$, one has $\ROI_F/\frak p_F=\ROI_{\res F}$, the rank $n-1$ ring of integers of $\res F$; parts (ii) and (iv) now follow by easy inductions, as does the fact that the residue field of $\ROI_F$ is $F^{(n)}$.

To see that $\ROI_F$ is Henselian, take a polynomial $f(X)$ with coefficients in $\ROI_F$ and a simple root in $F^{(n)}$; the root may be succesively lifted up to $\ROI_F$ using completeness of each intermediate residue field $F^{(i)}$, $i=n-1,\dots,0$.
\end{proof}

\begin{example}
Suppose $F$ is a field of cdvdim $\ge 2$. Then the chain of prime ideals of $\ROI_F=\ROI_F^{(2)}$ is \[\ROI_F\supset\frak p_F^{(2)}\supset\frak p_F\supset 0,\] where \[\frak p_F^{(2)}=\{x\in\roi_F:\res x\in\frak p_{\res F}\}.\] Here is a table of our main examples of two-dimensional local fields together with the chain of prime ideals:
\[\begin{array}{c|c|c}
F & \frak p_F^{(2)} & \frak p_F=\frak p_F^{(1)}\\\hline
\bb F_q((t_1))((t_2)) &  t_1\bb F_q[[t_1]]+t_2\bb F_q((t_1))[[t_2]] & t_2\bb F_q((t_1))[[t_2]]\\
\bb{Q}_p((t)) & p\bb Z_p+t\bb{Q}_p[[t]] & t\bb{Q}_p[[t]]\\
\bb Q_p\{\{t\}\} & \{\sum_ia_it^i:a_i\in\bb Z_p\,\forall i>0\mbox{ and }a_i\in p\bb Z_p\,\forall i\le0\}& \{\sum_ia_it^i:a_i\in\bb Z_p\,\forall i\}
\end{array}\]
\end{example}

Before continuing any further we verify that the nomenclature `rank $n$ ring of integers' is not misleading:

\begin{proposition}\label{proposition_rank_is_rank}
Let $F$ be a field of $\cdvdim\ge n$ and rank $n$ ring of integers $\ROI_F=\ROI_F^{(n)}$. Then the quotient group $\mult{F}/\mult{\ROI}_F$ is order isomorphic to $\bb{Z}^n$ equipped with the lexicographic ordering, and so $\ROI_F$ has rank $n$ by remark \ref{remark_rank_of_valn_ring}.
\end{proposition}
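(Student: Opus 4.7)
The natural approach is induction on $n$. The base case $n=1$ is the well-known fact for complete discrete valuation fields: $\nu_F$ induces a group isomorphism $\mult F/\mult{\roi_F}\isoto\bb Z$, and this is order-preserving because, by definition of the quotient order, $a\mult{\roi_F}\ge b\mult{\roi_F}$ iff $ab^{-1}\in\roi_F$ iff $\nu_F(a)\ge\nu_F(b)$.

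For the inductive step, I would assume the result for fields of cdvdim $\ge n-1$ and exploit the chain $\mult{\ROI}_F^{(n)}\subset\roi_F^\times\subset\mult F$. This inclusion yields a short exact sequence of abelian groups
\[1\To \roi_F^\times/\mult{\ROI}_F^{(n)}\To \mult F/\mult{\ROI}_F^{(n)}\xto{\nu_F}\bb Z\To 0.\]
The key observation is that $1+\frak p_F\subseteq\mult{\ROI}_F^{(n)}$ (any element reducing to $1$ lies in $\mult{\ROI}_{\res F}^{(n-1)}$, hence by definition in $\mult{\ROI}_F^{(n)}$), so reduction mod $\frak p_F$ induces an isomorphism
\[\roi_F^\times/\mult{\ROI}_F^{(n)}\isoto \res F^\times/\mult{\ROI}_{\res F}^{(n-1)},\]
which by the inductive hypothesis is order-isomorphic to $\bb Z^{n-1}$ with left lexicographic order. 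Choosing a uniformiser $\pi$ of $F$ splits the sequence, producing an abstract group isomorphism $\mult F/\mult{\ROI}_F^{(n)}\cong\bb Z\times\bb Z^{n-1}=\bb Z^n$.

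It remains to verify that this isomorphism matches the left lex order. Two checks: (a) the projection to $\bb Z$, namely $\nu_F$, is order-preserving because $\ROI_F^{(n)}\subseteq\roi_F$, so the quotient ordering on $\mult F/\mult{\ROI}_F^{(n)}$ refines that on $\mult F/\roi_F^\times$; (b) on the kernel $\roi_F^\times/\mult{\ROI}_F^{(n)}$ the two candidate orderings coincide, because for $a,b\in\roi_F^\times$ one has $ab^{-1}\in\mult{\ROI}_F^{(n)}$ iff $\res{ab^{-1}}\in\mult{\ROI}_{\res F}^{(n-1)}$ directly from the definition $\ROI_F^{(n)}=\{x\in\roi_F:\res x\in\ROI_{\res F}^{(n-1)}\}$. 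Combining (a) and (b) with the inductive identification of the kernel with $(\bb Z^{n-1},\text{lex})$ gives exactly the left lex order on $\bb Z\times\bb Z^{n-1}$.

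The only mildly subtle point — and the one I would expect to need the most care — is bookkeeping the direction of the lex order: because the subring $\mult{\ROI}_F^{(n)}$ is the smallest in the chain $\mult F\supset\roi_F\supset\cdots\supset\ROI_F^{(n)}$, the valuation it defines is the finest, and the coarser $\nu_F$ corresponds to the most significant coordinate of $\bb Z^n$. Once this is set up so that the short exact sequence is read as $1\to\bb Z^{n-1}\to\bb Z^n\to\bb Z\to 0$ with $\bb Z$ as the \emph{first} (most significant) factor, everything falls into place and the conclusion follows by remark \ref{remark_rank_of_valn_ring}.
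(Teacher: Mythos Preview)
Your argument is correct and follows essentially the same route as the paper: induct on $n$, split off the $\bb Z$ factor via a uniformiser, and identify $\roi_F^\times/\mult{\ROI}_F^{(n)}$ with $\res F^\times/\mult{\ROI}_{\res F}^{(n-1)}$ via reduction. The paper's verification of the lex order is the single observation $\frak p_F\subseteq\ROI_F$, which is exactly what makes the case $\nu_F(x)>0\Rightarrow x\in\ROI_F$ work; your checks (a) and (b) handle the other directions, so together with this inclusion (which you use implicitly) the bookkeeping is complete.
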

\begin{proof}
For $n=0$ this is trivial so suppose $n>0$. Choosing a uniformiser of $F$ induces an isomorphism $\mult{F}\cong\bb{Z}\times\mult{\roi}_F$, which descends to an isomorphism \[\mult F/\mult \ROI_F\cong \bb Z\times(\mult\roi_F/\mult\ROI_F).\tag{\dag}\] Moreover, if $\mult\roi_F/\mult\ROI_F$ is totally ordered as a subgroup of $\mult F/\mult\ROI_F$ and the right hand side of (\dag) is given the left lexicographical ordering, then (\dag) is an isomorphism of totally ordered groups, since $\frak p_F\subseteq\ROI_F$.

Moreover, the reduction map $\mult{\roi}_F\to\mult{\res{F}}$ induces an isomorphism $\mult\roi_F/\mult\ROI_F\isoto \mult{\res{F}}/\mult{\ROI}_{\res{F}}$, where $\ROI_{\res{F}}$ is the rank $n-1$ ring of integers of $\res{F}$. The inductive hypothesis completes the proof.
\end{proof}

\subsection{Relation to local parameters and decomposition of $\mult F$}\label{subsection_rois_and_parameters}
If $F$ is a complete discrete valuation field, and $t\in F$ is a uniformiser, then we have \[F=\roi_F[t^{-1}]\mbox{   and   }\mult F\cong\mult{\roi_F}\times t^{\bb Z},\] where $t^{\bb Z}$ indicates the infinite cyclic subgroup of $\mult F$ generated by $t$. Here we will describe the higher dimensional generalisations of these results, using sequences of local parameters and the higher rank rings of integers.

\begin{proposition}
Let $F$ be a field of cdvdim $\ge n$, and let $t_1,\dots,t_n$ be a sequence of $n$ local parameters. Then, for any $i=1,\dots,n$, \[\ROI_F^{(n-1-i)}[t_i^{-1}]=\ROI_F^{(n-i)}\mbox{   and   }\mult{(\ROI_F^{(n-i)})}\cong \mult{(\ROI_F^{(n-1-i)})}\times t_i^{\bb Z}.\]
\end{proposition}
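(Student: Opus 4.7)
The plan is to reduce both claims to the single positioning fact
\[
t_i \in \frak p_F^{(n+1-i)} \setminus \frak p_F^{(n-i)}
\]
(adopting the convention $\frak p_F^{(0)}=0$), which locates $t_i$ precisely within the chain of primes of $\ROI_F^{(n)}$ described in Proposition \ref{proposition_main_properties_of_rois}. Granting this, the primes of $\ROI_F^{(n+1-i)}$ form the linearly ordered chain $0 \subsetneq \frak p_F^{(1)} \subsetneq \cdots \subsetneq \frak p_F^{(n+1-i)}$, and inverting $t_i$ eliminates exactly those primes containing $t_i$, namely $\frak p_F^{(n+1-i)}$. Thus $\ROI_F^{(n+1-i)}[t_i^{-1}]$ is the localization $(\ROI_F^{(n+1-i)})_{\frak p_F^{(n-i)}}$, which by Proposition \ref{proposition_main_properties_of_rois}(iii) equals $\ROI_F^{(n-i)}$.

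I will establish the positioning fact by induction on $n = \cdvdim F$. The base case $i=n$ is immediate: $t_n$ is a uniformizer of $F$, so $t_n \in \frak p_F = \frak p_F^{(1)}$ and $t_n \ne 0$. For $i < n$, the definition of a sequence of local parameters gives $t_i \in \roi_F$ with $\res t_i \ne 0$, so $t_i \notin \frak p_F^{(1)}$. Proposition \ref{proposition_main_properties_of_rois}(iv) identifies $\ROI_F^{(k)}/\frak p_F$ with $\ROI_{\res F}^{(k-1)}$, and consequently $\frak p_F^{(k)}/\frak p_F$ with $\frak p_{\res F}^{(k-1)}$ for $k \ge 1$; applying the inductive hypothesis to $\res F$, which has $\cdvdim \ge n-1$ and local parameters $\res t_1, \dots, \res t_{n-1}$, yields $\res t_i \in \frak p_{\res F}^{(n-i)} \setminus \frak p_{\res F}^{(n-i-1)}$, which lifts back to $t_i \in \frak p_F^{(n+1-i)} \setminus \frak p_F^{(n-i)}$.

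For the unit group decomposition, let $\nu:\mult F \to \bb Z^{n+1-i}$ (lex order) denote the valuation of $\ROI_F^{(n+1-i)}$. The same inductive analysis shows $\nu(t_i) = (0,\dots,0,1)$: the first $n-i$ coordinates vanish because $t_i$ is a unit in $\ROI_F^{(n-i)}$, while the last coordinate equals $1$ since $t_i$ reduces to a uniformizer of $F^{(n-i)}$. Now $x \in \mult F$ lies in $\mult{(\ROI_F^{(n-i)})}$ iff the first $n-i$ coordinates of $\nu(x)$ vanish, i.e.~iff $\nu(x) = (0,\dots,0,k)$ for a unique $k \in \bb Z$, in which case $xt_i^{-k}$ lies in $\mult{(\ROI_F^{(n+1-i)})}$; this yields the factorization $x = (xt_i^{-k})\cdot t_i^k$, and uniqueness follows from $\nu(t_i) \ne 0$. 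The main obstacle is the positioning step, whose careful bookkeeping through the recursive residue structure is the only non-formal ingredient; the rest is standard theory of valuation rings with finite, totally ordered prime spectrum.
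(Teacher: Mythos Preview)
Your proof is correct and follows essentially the same inductive strategy as the paper: both arguments reduce to the residue field $\res F$ using the identification $\ROI_F^{(k)}/\frak p_F \cong \ROI_{\res F}^{(k-1)}$, with the case $i=n$ serving as the base (classical) case. Your version is more explicit in isolating the positioning fact $t_i \in \frak p_F^{(n+1-i)}\setminus \frak p_F^{(n-i)}$ and then invoking the valuation-ring localisation structure, whereas the paper simply says ``reduce mod $\frak p_F$ and induct''; you have also silently corrected the index typo in the statement (the paper's $\ROI_F^{(n-1-i)}$ should read $\ROI_F^{(n+1-i)}$, as your argument and the paper's own example with $\bb Q_p((t))$ confirm).
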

\begin{proof}
If $i=n$ then this is the one-dimensional result recalled immediately above, so we may assume $i<n$. Then both $\ROI_F^{(n-i)}$ and $\ROI_F^{(n-1-i)}$ have minimal non-zero prime ideal $\frak p_F$ and it is enough to prove the result after reducing mod $\frak p_F$; by a straightforward induction this completes the proof of the first result, and the second result follows using the same argument.
\end{proof}

\begin{example}
Let $F=\bb Q_p((t))$, which has local parameters $p,t$ and rank $2$ ring of integers $\ROI_F^{(2)}=\bb Z_p+t\bb Q_p[[t]]$. Clearly \[\ROI_F^{(2)}[p^{-1}]=\bb Q_p[[t]]=\roi_F,\] exactly as the proposition states (taking $n=2$ and $i=1$).
\end{example}

The following corollary should be compared with proposition \ref{proposition_rank_is_rank}, where we saw more abstractly that $\mult F/\mult{\ROI_F}\cong\bb Z^n$:

\begin{corollary}\label{corollary_decomposition_of_units}
Let $F$ be a field of cdvdim $\ge n$, and let $t_1,\dots,t_n$ be a sequence of $n$ local parameters; let $\ROI_F=\ROI_F^{(n)}$ denote the rank $n$ ring of integers. Then \[\ROI_F[t_1^{-1},\dots,t_n^{-1}]=F\mbox{   and   }\mult F\cong\mult{\ROI_F}\times t_1^{\bb Z}\times\cdots\times t_n^{\bb Z}.\]
\end{corollary}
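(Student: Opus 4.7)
The plan is to prove both identities by iterating the preceding proposition, which handles a single local parameter at a time. The idea is to peel off the parameters $t_n, t_{n-1}, \dots, t_1$ one at a time, moving step by step up the chain
\[\ROI_F=\ROI_F^{(n)}\subset \ROI_F^{(n-1)}\subset \dots\subset \ROI_F^{(1)}=\roi_F\subset \ROI_F^{(0)}=F.\]

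For the ring identity $\ROI_F[t_1^{-1},\dots,t_n^{-1}]=F$, I would apply the first half of the preceding proposition successively to walk from rank $n$ down to rank $0$: inverting $t_1$ in $\ROI_F^{(n)}$ yields $\ROI_F^{(n-1)}$, then inverting $t_2$ yields $\ROI_F^{(n-2)}$, and so on; after all $n$ localisations the result is $\ROI_F^{(0)}=F$, as required. Since localisation is monotone in the set of inverted elements, the order in which we invert the $t_i$ does not matter.

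For the decomposition of $\mult F$, the same iteration applied to the second half of the preceding proposition gives a telescoping sequence of isomorphisms: first $\mult{\ROI_F^{(0)}}\cong \mult{\ROI_F^{(1)}}\times t_n^{\bb Z}$, then $\mult{\ROI_F^{(1)}}\cong \mult{\ROI_F^{(2)}}\times t_{n-1}^{\bb Z}$, and so on, terminating with $\mult{\ROI_F^{(n-1)}}\cong \mult{\ROI_F^{(n)}}\times t_1^{\bb Z}$. Splicing these together yields $\mult F\cong \mult{\ROI_F}\times t_1^{\bb Z}\times\cdots\times t_n^{\bb Z}$.

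There is no real obstacle here: the entire substance of the result has been absorbed into the preceding proposition, so what remains is just a clean bookkeeping induction on $n$. The only point deserving a little attention is the orthogonality of the parameters that makes the telescoping legitimate, namely the fact that $t_i$ is a non-unit of $\ROI_F^{(n-i+1)}$ but becomes a unit on passing to $\ROI_F^{(n-i)}$; this ensures that the cyclic factors $t_i^{\bb Z}$ in the decomposition are genuinely independent and correspond bijectively to the $n$ steps of the chain of valuation rings, in agreement with the more abstract statement $\mult F/\mult{\ROI_F}\cong \bb Z^n$ from proposition \ref{proposition_rank_is_rank}.
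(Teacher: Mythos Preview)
Your proposal is correct and takes exactly the same approach as the paper, whose proof is the single line ``Immediate from the previous proposition.'' You have simply spelled out the telescoping iteration in detail, with the correct indexing for how each $t_i$ moves you one step along the chain $\ROI_F^{(n)}\subset\cdots\subset\ROI_F^{(0)}=F$.
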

\begin{proof}
Immediate from the previous proposition.
\end{proof}

\begin{remark}[Higher groups of principal units] 
 If $F$ is an $n$-dimensional local field then $\ROI_F$ is Henselian with a finite residue field $\bb F_q$, and so the usual Teichm\"uller lift $[\cdot]:\mult{\bb F_q}\to\mult{\ROI_F}$ exists. It follows that $\mult{\ROI}_F\cong V\times\mult{\bb F_q}$, where \[V=1+\frak p_F^{(n)}=\ker(\mult{\ROI_F}\to\mult{\bb F_q}).\] The group $V$ is called the group of {\em higher principal units} of $F$, and the previous corollary provides a decomposition \[\mult F\cong V\times\mult{\bb F_q}\times t_1^{\bb Z}\times\cdots\times t_n^{\bb Z}.\] $V$ plays a major role in explicit higher dimensional local class field theory; see the references in remarks \cite{remark_Parshin} and \cite{remark_Fesenko}.
\end{remark}

\subsection{Rings of integers under extensions}
We finish this section by offering two resulting concerning the higher rank rings of integers when a change of field is involved. The first is a characterisation of $n$-continuous morphisms, in a similar way as lemma \ref{lemma_extensions_of_dvfields} did in the case $n=1$:

\begin{proposition}
Let $i:F\to L$ be a ring homomorphism between fields of cdvdim $\ge n$; then the following are equivalent:
\begin{enumerate}
\item $i$ is $n$-continuous;
\item $i^{-1}(\ROI_L^{(r)})=\ROI_F^{(r)}$ for $r=0,\dots,n$;
\item $i^{-1}(\frak p_L^{(r)})=\frak p_F^{(r)}$ for $r=0,\dots,n$.
\end{enumerate}
\end{proposition}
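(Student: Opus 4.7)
The plan is to proceed by induction on $n$. The base case $n = 0$ is immediate, since the only condition to check is $r = 0$: there $\ROI_F^{(0)} = F$ and $\frak p_F^{(0)} = 0$, for which (ii) and (iii) hold trivially (the latter using that $i$, as a ring map between fields, is automatically injective), and every field embedding is declared $0$-continuous by convention.

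For the inductive step, I would first isolate the $r = 1$ cases of (ii) and (iii), which read $i^{-1}(\roi_L) = \roi_F$ and $i^{-1}(\frak p_L) = \frak p_F$ respectively. By Lemma \ref{lemma_extensions_of_dvfields}, each of these is equivalent to $i$ being a morphism of discrete valuation fields, i.e.\ to the $1$-continuity component in the recursive definition of $n$-continuity. Once this is established, the residue map $\res i : \res F \to \res L$ is well defined, and both $\res F$ and $\res L$ have cdvdim $\ge n - 1$, so the inductive hypothesis becomes available for $\res i$.

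The remaining conditions reduce cleanly to residue fields. From the recursion $\ROI_F^{(r)} = \{x \in \roi_F : \res x \in \ROI_{\res F}^{(r-1)}\}$ (and analogously for $L$), together with $i^{-1}(\roi_L) = \roi_F$, a direct computation gives
\[
i^{-1}(\ROI_L^{(r)}) = \{x \in \roi_F : \res x \in (\res i)^{-1}(\ROI_{\res L}^{(r-1)})\}.
\]
Since the reduction map $\roi_F \to \res F$ is surjective, this set equals $\ROI_F^{(r)}$ if and only if $(\res i)^{-1}(\ROI_{\res L}^{(r-1)}) = \ROI_{\res F}^{(r-1)}$. Hence condition (ii) for $r = 1, \dots, n$ is equivalent to $i$ being a morphism of discrete valuation fields together with condition (ii) for $\res i$ in all indices $s = 0, \dots, n-1$, which by the inductive hypothesis is equivalent to $\res i$ being $(n-1)$-continuous. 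This is precisely the definition of $n$-continuity. The argument for (iii) is identical, using the companion recursion $\frak p_F^{(r)} = \{x \in \roi_F : \res x \in \frak p_{\res F}^{(r-1)}\}$, which follows from the isomorphism $\ROI_F^{(r)}/\frak p_F \cong \ROI_{\res F}^{(r-1)}$ of Proposition \ref{proposition_main_properties_of_rois}(iv).

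The only delicate point, and the main thing to keep straight, is the bootstrapping: before one can form $\res i$ and invoke the inductive hypothesis, one already needs $i$ to be a morphism of discrete valuation fields. But this is precisely what the $r = 1$ case of either (ii) or (iii) supplies via Lemma \ref{lemma_extensions_of_dvfields}, so no logical gap appears. Everything else is a routine unwinding of the recursive definitions of the higher rank rings of integers and their maximal ideals.
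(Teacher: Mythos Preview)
Your proof is correct and follows essentially the same route as the paper's: induction on $n$, with the $r=0,1$ cases handled via Lemma~\ref{lemma_extensions_of_dvfields}, and the remaining cases reduced to the residue field using the recursion $\ROI_F^{(r)}/\frak p_F\cong\ROI_{\res F}^{(r-1)}$ (and its analogue for $\frak p_F^{(r)}$). Your explicit mention of surjectivity of the reduction map to justify the ``only if'' direction is a nice touch that the paper leaves implicit.
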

\begin{proof}
As with most proofs in this section, this is a simple induction on $n$: $r=0$ is equivalent to $i$ being an embedding, $r=1$ is equivalent to $i$ being a morphism of discrete valuation fields (by lemma \ref{lemma_extensions_of_dvfields}), and assuming the result for $r=0,1$, we claim that the following are then equivalent, from which (i)$\Leftrightarrow$(ii) follows:
\begin{enumerate}
\item[(a)] $\res i:\res F\to\res L$ is $n-1$-continuous.
\item[(b)] $i^{-1}(\ROI_{\res L}^{(r)})=\ROI_{\res F}^{(r)}$ for $r=1,\dots,n-1$.
\item[(c)] $i^{-1}(\ROI_L^{(r)})=\ROI_F^{(r)}$ for $r=2,\dots,n$.
\end{enumerate}
The equivalence of (a) and (b) is the inductive hypothesis; the equivalence of (b) and (c) follows from the facts that $i^{-1}(\frak p_L)=\frak p_F$ (by lemma \ref{lemma_extensions_of_dvfields}) and that $\ROI_F^{(r)}/\frak p_F\cong\ROI_{\res F}^{(r-1)}$ (and similarly for $L$).

The equivalence (i)$\Leftrightarrow$(iii) is very similar, using the identification $\frak p_F^{(r)}/\frak p_F\cong \frak p_{\res F}^{(r-1)}$ for $r=1,\dots,n$.
\end{proof}

\comment{
We turn now to the promised purpose of this section: namely showing that the higher rank rings of integers and their prime ideals provide a characterisation both of $n$-continuous morphisms between higher dimensional fields, and of the $HL$ functor itself.

Now we give a universal characterisation of the $HL$ functor. The reader should have in mind the following result, which is the special case $n=1$ of the next theorem:
\begin{quote}
Let $\roi$ be a discrete valuation ring, with maximal ideal $\frak p$, and let $j:\roi\to L$ be an embedding of $\roi$ into a complete discrete valuation field such that $j^{-1}(\frak p_L)=\frak p$. Then $j$ extends uniquely to a continuous embedding $\Frac\hat \roi\to L$.
\end{quote}
The higher dimensional version is as follows:

\begin{theorem}\label{theorem_universality_via_ROIs}
Let $\ul A=(A,\frak p_n,\dots,\frak p_0)$ be a regular $n$-chain, let $F=HL(A)$, and let $j:A\to F$ be the natural flat, injective, ring homomorphism of remark \ref{remark_augmentation}. Then \[j^{-1}(\frak p_F^{(r)})=\frak{p}_r\] for $r=0,\dots,n$.

Moreover, the field $HL(A)$ and augmentation $j$ are universal with respect to this property: If $L$ is a field of cdvdim $\ge n$ and $j':A\to L$ is a ring homomorphism satisfying $j'^{-1}(\frak p_L^{(r)})=\frak{p}_r$ for $r=0,\dots,n$, then there is a unique $n$-continuous morphism $i:F\to L$ satisfying $ij=j'$.
\end{theorem}
\begin{proof}
The case $n=0$ is trivial, so assume $n>0$. It follows from functoriality of the coaugmentation that $j(A)$ is contained inside $\roi_F$, the usual ring of integers of $F$ as a discrete valuation field, and that the diagram
\[\xymatrix{
A\ar[r]^j\ar[d]&\roi_F\ar[d]\\
A/\frak p_1\ar[r]^{j_1}&HL(\tau_1\ul A)
}\]
commutes, where we know that $HL(\tau_1\ul A)$ is the first residue field of $F$ thanks to proposition \ref{proposition_the_residue_fields} (and we are denoting its coaugmentation by $j_1$). From the diagram, the injectivity of $j,j_1$, and the fact that the kernel of the right vertical arrow is $\frak p_F$, we see that $j^{-1}(\frak p_F)=\frak p_1$. Recalling that $\frak p_F=\frak p_{F,1}$, we have established the desired result for $r=1$ ($r=0$ is simply injectivity). Applying the inductive hypothesis to $j_1$ completes the proof; we leave it to the reader to provide the details.

We turn now to the universal property, so suppose that $L$ is a field of cdvdim $\ge n$ and that $j':A\to L$ satisfies $j'^{-1}(\frak p_L^{(r)}$ for $r=0,\dots,n$. For simplicity we now identify $A$ with its image in $F$.

We begin with uniqueness: so suppose that $i_1,i_2:F\to L$ are $n$-continuous morphisms which agree on $A$.
\end{proof}

\begin{lemma}
Let $A$ be an $n$-dimensional, regular local ring, and let $i:A\to L$ be an embedding of $A$ into a field of cdvdim $\ge n$. Suppose $i^{-1}(\frak p_L^{(n)})=\frak m_A$. Then $i$ extends uniquely to $\hat A$.
\end{lemma}
\begin{proof}

\end{proof}
}

Our second result is the higher dimensional generalisation of the well-known result \cite[II.\S2]{Serre1979} that if $L/F$ is a finite extension of complete discrete valuation fields, then $\roi_L$ is the integral closure of $\roi_F$:

\begin{proposition}
Let $L/F$ be a finite extension of fields of cdvdim $\ge n$. Then $\ROI_L^{(n)}$ coincides with the integral closure of $\ROI_F^{(n)}$ inside $L$.
\end{proposition}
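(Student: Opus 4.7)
I would prove this by induction on $n$, where the base case $n=1$ is the classical theorem that $\roi_L$ is the integral closure of $\roi_F$ in $L$ (see \cite[II.\S2]{Serre1979}). The first observation is that the inclusion $F\hookrightarrow L$ is automatically $n$-continuous: proposition \ref{proposition_extension_of_local_field} gives that $F\hookrightarrow L$ is an extension of CDVFs (hence $1$-continuous), proposition \ref{proposition_e=fn} gives that $\res L/\res F$ is a finite extension of fields of $\cdvdim\ge n-1$, and the inductive hypothesis on $n$ applied to this residue extension yields $(n-1)$-continuity of $\res F\hookrightarrow\res L$. Combined with the characterisation of $n$-continuity in the proposition just proved, this gives $\ROI_L^{(n)}\cap F=\ROI_F^{(n)}$; in particular, $\ROI_L^{(n)}\supseteq\ROI_F^{(n)}$.

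Set $V_F=\ROI_F^{(n)}$, $V_L=\ROI_L^{(n)}$, and let $B\subseteq L$ denote the integral closure of $V_F$ in $L$. The inclusion $B\subseteq V_L$ is a direct computation: if $x\in B$ satisfies a monic relation $x^d+a_{d-1}x^{d-1}+\cdots+a_0=0$ with $a_i\in V_F\subseteq\roi_F$, then $x\in\roi_L$ by the $n=1$ case, and reducing mod $\frak p_L$ shows that $\res x\in\res L$ is integral over the image of $V_F$, which lies inside $\ROI_{\res F}^{(n-1)}$ by definition of $V_F$. The inductive hypothesis applied to the finite extension $\res L/\res F$ then yields $\res x\in\ROI_{\res L}^{(n-1)}$, whence $x\in V_L$.

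The main obstacle is the reverse inclusion $V_L\subseteq B$: a naive attempt to lift a monic integral relation for $\res x$ from $\ROI_{\res F}^{(n-1)}[X]$ up to $V_F[X]$ produces only some $g\in V_F[X]$ with $g(x)\in\frak p_L$, not $g(x)=0$. To close this gap I would invoke the Henselian property of $V_F$ established in proposition \ref{proposition_main_properties_of_rois}(i), together with the standard structural theorem of valuation theory that a Henselian valuation ring admits a \emph{unique} extension to a valuation subring of any algebraic extension of its fraction field, and that this unique extension coincides with the integral closure. Since $V_L$ is a valuation subring of $L$ containing $V_F$ and $L/F$ is algebraic, this unique extension must be $V_L$, forcing $B=V_L$ and completing the induction.
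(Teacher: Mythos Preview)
Your argument is correct, and the decisive step --- invoking the Henselian property of $\ROI_F^{(n)}$ to conclude that it has a unique valuation-ring extension to $L$, which must therefore coincide with both the integral closure $B$ and with $\ROI_L^{(n)}$ --- is exactly the argument the paper gives. The paper's proof is just this one paragraph: valuation subrings of $L$ dominating $\ROI_F^{(n)}$ are the localisations of $B$ at its maximal ideals (standard valuation theory, e.g.\ \cite[Exer.~12.2]{Matsumura1989}), Henselianity forces uniqueness, and $\ROI_L^{(n)}$ is such a subring, so $\ROI_L^{(n)}=B$.

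What differs is that you wrap this inside an induction on $n$ and separately verify $B\subseteq\ROI_L^{(n)}$ by reducing modulo $\frak p_L$ and appealing to the inductive hypothesis. This is correct but redundant: once you have the Henselian uniqueness statement, it already yields the \emph{equality} $B=\ROI_L^{(n)}$, so the inductive verification of one inclusion is doing no work. (Also, your appeal to ``the inductive hypothesis'' to get $(n-1)$-continuity of $\res F\hookrightarrow\res L$ is really a separate, simpler induction --- the statement being proved is about integral closures, not continuity --- though the conclusion is of course fine and is established in the paper's discussion after proposition \ref{proposition_finite_extension_of_hlf}.) The upshot: your proof is sound, but you can delete the middle paragraph and the inductive wrapper and keep only your first and last sentences, recovering precisely the paper's argument.
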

\begin{proof}
Standard theory of valuation rings tells us that the valuation subrings of $L$ which dominate $\ROI_F^{(n)}$ are given by localising the integral closure of $\ROI_F^{(n)}$ at any of its maximal ideals (see e.g.~\cite[Exer.~12.2]{Matsumura1989}). But it is also well known that $\ROI_F^{(n)}$ being Henselian impies that $L$ has only one valuation subring dominating it; this completes the proof.
\end{proof}

\comment{By induction on $n$. If $n=0$ then the claim is that $L/F$ is an algebraic extension, which of course is true. Now suppose $n>0$ and take $x\in\ROI_L^{(n)}$; since $\res{x}\in\ROI_{\res{L}}^{(n-1)}$, the inductive hypothesis implies that there is a polynomial $g\in\ROI_{\res{L}}^{(n-1)}[X]$ such that $g(\res{x})=0$. Lifting $g$ obtains a polynomial $f\in\ROI_L^{(n)}[X]$ such that $f(x)\in\frak{p}_L$. Let $\pi\in F$ be a uniformiser for $F$; then $\pi^{-1}f(x)$ belongs to $\roi_L$, which is well-known to be the integral closure of $\roi_F$ inside $L$.

Therefore there is a polynomial $h\in\roi_F[X]$ which satisfies $h(\pi^{-1}f(x))=0$.

Let $f\in F[X]$ be the minimal polynomial of $x$ over $F$. Since $\roi_L$ is the integral closure of $\roi_F$ inside $L$ and $x\in\roi_L$, we conclude that $f\in\roi_F[X]$.}

\section{Topologies on higher dimensional local fields}\label{section_topology}
The usual discrete valuation topology on a higher dimensional local field is unsatisfactory in many ways and is too fine for most applications. For example, in the two-dimensional local field $F=\bb F_p((t_1))((t_2))$, a typical formal series $\sum_{i=0}^{\infty}a_it_1^i$ in the first local parameter is not a genuinely convergent sum in the topology (since $\bb F_p((t_1))$, as a subspace of $\bb{F}_p((t_1))((t_2))$, carries the discrete topology). Moreover, the reduction map $\roi_F\to\res F$ is not a topological quotient map if both $\roi_F$ and $\res F$ are given their discrete valuation topologies. These problems led to a body of work defining topologies on higher dimensional local fields $F$ which take into account the fact that the residue fields $F^{(i)}$ themselves have a topology. The aim of this section is to summarise aspects of the theory, due originally to A.~Parshin \cite{Parshin1984} and I.~Fesenko \cite{Fesenko_thesis, Fesenko1989}, accurately but without proofs; many more details can be found in \cite{Madunts1995}, while \cite{Zhukov2000a} also summarises the constructions. 

More precisely, we will define the so-called `higher topology' on any higher dimensional local field $F$ for which $\res F$ has finite characteristic (see remark \ref{remark_residue_char_restriction} for the reason behind this mild restriction), and in section \ref{subsection_main_properties} we will list its main properties. The remainder of this section then discusses applications of and other other approaches to the topology.

We will be interested in topologies on fields $K$ satisfying at least the following two conditions:
\begin{enumerate}
\item[(T1)] The topology is Hausdorff.
\item[(T2)] Addition $K\times K\stackrel{+}{\to} K$, and multiplication $K\stackrel{\times\al}{\to} K$ by any fixed element $\al\in K$, are all continuous maps (with respect to the product topology on $K\times K$).
\end{enumerate}

\begin{remark}\label{remark_vector_space_topology}
The conditions imply that $K$ is a Hausdorff topological group under addition. Moreover, if product space $K^n$ is given the product topology, then any linear map $K^n\to K^m$ is continuous; in particular, the product topology on any finite dimensional vector space $V$ resulting from a choice of a basis does not depend on the basis. We call this topology on $V$ the {\em natural vector space topology}.
\end{remark}

\subsection{Equal characteristic case}
Let $K$ be a field equipped with a topology satisfying conditions (T1) and (T2). Let $F=K((t))$ be the field of formal Laurent series over $K$; using the topology on $K$, we define a topology on $F$, following Parshin \cite{Parshin1984}. The basic open neighbourhoods of $0$ are those of the form \[\sum_i U_it^i:=\left\{\sum_ia_it^i\in F:\,a_i\in U_i\mbox{ for all }i\right\},\] where $(U_i)_{i=-\infty}^{\infty}$ are open neighbourhoods of $0$ in $K$ with the property that $U_i=K$ for $i\gg0$. The basic open neighbourhoods of any other point $f\in F$ are $f+U$ where $U$ is a basic open neighbourhood of $0$.

\begin{lemma}
This specification of `open neighbourhoods' defines a topology on $F$ satisfying conditions (T1) and (T2).
\end{lemma}
\begin{proof}
The proof is a good exercise; or see one of \cite{Madunts1995, Parshin1984}.
\end{proof}

\begin{remark}
If the topology on $K$ is the discrete topology (which certainly satisfies (T1) and (T2)), then the topology just described on $K((t))$ will be the usual discrete valuation topology.
\end{remark}

Using the lemma, the construction may be iterated to define a topology on $K((t_1))\cdots ((t_n))$ satisfying (T1) and (T2). The following is perhaps the central result regarding topologies on higher dimensional fields of finite characteristic:

\begin{theorem}[Parshin \cite{Parshin1984}]
Let $F$ be a field of cdvdim $n$; suppose $F$ has finite characteristic and that the final residue field $F^{(n)}$ is perfect. Choose any isomorphism $F\cong F^{(n)}((t_1))\cdots((t_n))$, give $F^{(n)}$ the discrete topology, and iterate the process just described to topologise $F$.

Then the resulting topology on $F$ does not depend on the chosen isomorphism.
\end{theorem}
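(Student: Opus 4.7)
My strategy is to reformulate the statement as an assertion about automorphisms of $E := F^{(n)}((t_1))\cdots((t_n))$, then proceed by induction on $n = \cdvdim F$. Given two isomorphisms $\phi_1, \phi_2 \colon F \xrightarrow{\sim} E$, the induced topologies on $F$ coincide if and only if $\sigma := \phi_2 \circ \phi_1^{-1}$ --- which is a field automorphism of $E$ --- is a homeomorphism for the iteratively constructed topology $\tau$ on $E$. Since every field automorphism of $E$ arises this way (take $\phi_1 = \op{id}$), the theorem reduces to: every field automorphism of $E$ is a $\tau$-homeomorphism.

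The base $n = 0$ is trivial, as $E$ is discrete. For $n \geq 1$, let $\sigma$ be any field automorphism of $E$. By Theorem~\ref{theorem_uniqueness_of_valn}, $\sigma$ preserves the unique surjective discrete valuation on $E$, hence sends $\roi_E$ to itself and $\frak p_E$ to itself, and descends to a field automorphism $\bar\sigma$ of $\res E \cong F^{(n)}((t_1))\cdots((t_{n-1}))$. By the inductive hypothesis, $\bar\sigma$ is a homeomorphism for the constructed topology on $\res E$.

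To upgrade continuity of $\bar\sigma$ to continuity of $\sigma$, I expand an element $f \in E$ as a Laurent series $f = \sum_i a_i t_n^i$ with coefficients $a_i \in \res E$ (identified with the constants in the presentation $E = \res E((t_n))$), so that $\sigma(f) = \sum_i \sigma(a_i)\cdot\sigma(t_n)^i$. Here $\sigma(t_n)$ is another uniformizer of $E$ and $\sigma(\res E)$ is another coefficient field. A basic open neighborhood $U = \sum_i U_i t_n^i$ of $0$ (with $U_i \subseteq \res E$ open and $U_i = \res E$ for $i \gg 0$) should be hit by some basic open $V$ via $\sigma$. To construct $V$ one rewrites $\sigma(f)$ as a series in $t_n$ with coefficients in $\res E$, and applies the inductive hypothesis (continuity of $\bar\sigma$) slot by slot to match the open conditions $U_i$.

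The main obstacle is that $\sigma$ preserves neither the coefficient field $\res E \subset E$ nor the uniformizer $t_n$, so the re-expansion of $\sigma(f)$ just described is non-trivial. Here is where the perfection of $F^{(n)}$ in characteristic $p$ enters: it ensures that any two coefficient field embeddings of $\res E$ into $E$ differ by an automorphism controlled by Frobenius-stable data, so the transition is $\tau$-continuous. Moreover, the substitution $t_n \mapsto \sigma(t_n) = u t_n$ with $u \in \roi_E^\times$ yields a convergent expansion in $\tau$ because powers of a uniformizer form a topological null sequence in the iterative construction, and multiplication by each $u^i$ is continuous by property (T2). Assembling these continuities to produce a basic open $V$ with $\sigma(V) \subseteq U$ is a straightforward but tedious unpacking; the reader is referred to \cite{Madunts1995} for the complete argument.
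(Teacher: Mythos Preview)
The paper does not actually prove this theorem: section~\ref{section_topology} is explicitly a summary ``without proofs'', and the statement is simply attributed to Parshin with a reference to \cite{Parshin1984} (and to \cite{Madunts1995} for details). So there is no paper-proof to compare against; your proposal is already more than the paper offers.

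That said, your outline is a correct reduction but not a proof. The reformulation in terms of automorphisms of $E = F^{(n)}((t_1))\cdots((t_n))$ is right, the inductive setup is right, and the use of Theorem~\ref{theorem_uniqueness_of_valn} to see that $\sigma$ preserves $\roi_E$, $\frak p_E$ and descends to an automorphism $\bar\sigma$ of $\res E$ is right. The entire content of the theorem, however, lives in the step you label ``upgrade continuity of $\bar\sigma$ to continuity of $\sigma$'', and you do not carry this out: you identify the obstacle (change of coefficient field and of uniformizer) and then defer to \cite{Madunts1995}. That is the same move the paper makes, so as a proof it is incomplete in exactly the same way.

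If you want to close the gap, the point to make precise is this: in characteristic $p$ with $F^{(n)}$ perfect, each intermediate residue field $K$ satisfies $|K:K^p|<\infty$, and the iterated topology has the property that Frobenius $K\to K^p$ is a homeomorphism and that the topology on $K$ is the natural vector-space topology over $K^p$. These two facts together pin down the topology independently of the choice of coefficient field, because any two coefficient fields for $\res E$ inside $\roi_E$ agree on $(\res E)^{p^N}$ for $N\gg 0$ (this is where perfection of $F^{(n)}$ is genuinely used), and one bootstraps from there. Your phrase ``Frobenius-stable data'' gestures at this but does not supply it. Without that argument, the proposal is a plan rather than a proof.
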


\begin{definition}
Let $F$ be a higher dimensional local field of finite characteristic. Its {\em higher topology} is the topology defined in the previous theorem.
\end{definition}

\begin{remark}\label{remark_residue_char_restriction}
Unfortunately, the theorem drastically fails in characteristic $0$. If $F$ is a two-dimensional local field with residue field $\bb Q_p$ (which has its usual topology), so that it is non-canonically isomorphic to $\bb Q_p((t))$, then different isomorphisms $F\cong \bb Q_p((t))$ will yield different topologies on $F$. This was noticed by A.~Yekutieli \cite[2.1.22]{Yekutieli1992}, and we will now explain his proof. We start by identifying $F$ with $\bb Q_p((t))$ in a fixed way. Let $\{u_\al\}$ be a transcendence basis of $\bb Q_p$ over $\bb Q$, and fix arbitrary elements $v_\al\in\bb Q_p[[t]]$, at least once of which is non-zero. The field embedding \[i:\bb Q(\{u_a\})\to F,\quad u_\al\mapsto u_\al+v_\al t\] partially defines a coefficient field, and it extends to a full coefficient field $i:\bb Q_p\to F$ by repeated application of Hensel's lemma since $\bb Q_p/\bb Q(\{u_\al\})$ is a separable algebraic extension. This new coefficient field for $F$ gives a new isomorphism $F\cong \bb Q_p((t))$, finally resulting in a field automorphism $\sigma\in\Aut(\bb Q_p((t)))$ with the property that $\sigma(u_\al)=u_\al+v_\al t$ for all $\al$. If the topology on $F$ did not depend on the identification of $F$ with $\bb Q_p((t))$, then $\sigma$ would necessarily be continuous, but it is not. For example, $V:=\{\sum_ia_it^i\in\bb Q_p((t)):a_1\neq 0\}$ is open in $\bb Q_p((t))$, so if $\sigma$ were continuous then $\Omega:=\bb Q_p\cap\sigma^{-1}(V)$ would be open in $\bb Q_p$; but $\Omega$ is disjoint from $\bb Q$ and yet contains at least one of the $u_\al$.

Notice that this proof would fail if we replaced $\bb Q_p$ by a characteristic $p$ local field: the use of Hensel's lemma to extend the coefficient field was essential.

Therefore we will often restrict to higher dimensional local fields with finite residue characteristic when discussing topologies. This is not a serious drawback of the theory: if $F$ is a higher dimensional local field for which $\op{char}\res F=0$, then the arithmetic of $F$ reduces very easily to that of $\res F$, which has lower dimension and so we may proceed inductively.
\end{remark}

\comment{
\begin{proof}
To prove that a topology described in this way is well-defined one must prove that if $U=\sum_i U_it^i$ is an open neighbourhood of $0$ then for each $f\in U$ there is an open neighbourhood $V$ of $0$ such that $f+V\subseteq U$. Well, if $f=\sum_ia_it^i$ then for each $i$ there is an open neighbourhood $V_i$ of $0$ in $K$ such that $a_i+V_i\subseteq U_i$; we may assume that $V_i=K$ whenever $U_i=K$, and then $V=\sum_i V_it^i$ is the required open neighbourhood of $0$ in $F$
\end{proof}

From a recursive point of view, it is essential that the three key properties of the topology on $K$ remain true for the topology on $F$:

\begin{proposition}
The topology on $F$ defined just above is Hausdorff, and addition $F\times F\to F$ and multiplication $F\to F$ by any fixed element of $K$ are both continuous maps (with the product topology on $F\times F$).
\end{proposition}
\begin{proof}
Suppose $U=\sum_iU_it^i$ is a neighbourhood of $0$ in $F$. As $K$ is a topological group there exist open neighbourhoods of $0$, $V_i$, $i\in\bb{Z}$ such that $V_i+V_i\subseteq U_i$ for each $i$; if $U_i=K$ we may assume $V_i=K$. Then $V=\sum_iV_it^i$ is an open neighbourhood of $0$ in $F$ such that $V+V\subseteq U$. Hence addition is continuous in $F$.

Suppose $f=\sum_ia_it^i,g=\sum_ib_it^i$ are distinct elements of $F$. Then $a_j\neq b_j$ for some $j$; as $K$ is Hausdorff, there exists an open neighbourhood $U\subseteq K$ of $0$ such that $a_j+U\cap b_f+U=\emptyset$. If we define $U_i=K$ for $i\neq j$ and $U_j=U$, then $f+\sum_iU_it^i$ and $g+\sum_iU_it^i$ are disjoint, proving that $F$ is Hausdorff.

It remains to prove that multiplication by a fixed element of $F$ is continuous. Multiplication by $0$ is clearly continuous. We may write any element of $\mult{F}$ as $f t^n$ where $f\in\mult{\roi}_F$ and $n\in\bb{Z}$; multiplication by $t^n$ is clearly continuous, so it remains only to prove that multiplication by the unit $f=\sum_{i\ge0}a_it^i$ is continuous. Let $U=\sum_iU_it^i$ be a typical open neighbourhood of $0$ in $F$; let $L\ge0$ be large enough so that $U_i=K$ for $i\ge L$. Since $K$ is a topological group, there exist, for each $i\ge0$, open neighbourhoods of zero $U_i^{(0)},U_i^{(1)},\dots\subseteq K$ with the property \begin{align*}U_i^{(0)}+U_i^{(0)}&\subseteq U_i\\U_i^{(1)}+U_i^{(1)}&\subseteq U_i^{(0)}\\U_i^{(2)}+U_i^{(2)}&\subseteq U_i^{(1)}\\\vdots\end{align*} Now define \[V_j=\begin{cases}K&j\ge L \\ \bigcap_{r=j}^{L-1}\mu_{a_{r-j}}^{-1}(U^{(r-j)}_r)&j<L\end{cases},\] which is an open neighbourhood of zero in $K$ for each integer $j$; let $V=\sum_jV_jt^j$ be the resulting neighbourhood of zero in $F$. Let $g=\sum_{j\ge J}b_jt^i$ belong to $V$; we shall now prove that $fg$ belongs to $U$, which will complete the proof. To achieve this we must show that for each integer $r<L$, the $r^\sub{th}$ coefficient of $fg$ belongs to $U_r$.

Well, if $r<J$ the this coefficient is zero and we are done; so assume $r\ge J$. Then the $r^\sub{th}$ coefficient of $fg$ is equal to $\sum_{j=J}^r b_ja_{r-j}$; but for each $j$ in the range $J,\dots,r$, we have $b_j\in V_j\subseteq \mu_{a_{r-j}}^{-1}(U_r^{(r-j)})$, which means that $b_ja_{r-j}\in U_r^{(r-j)}$. Thus the $r^\sub{th}$ coefficient belongs to the set $\sum_{j=J}^r U_r^{(r-j)}$. But by choice of the $U_r^{(r-j)}$ this is a subset of $U_r$, as required.
\end{proof}

The subspace topology on $\roi_F\cong K[[t]]$ has several alternative characterisations. For $n\ge 1$ set $K_n=\roi_F/\frak{p}^n$ and note that the natural map $K\to\roi_F\to F_n$ makes this into a finite dimensional vector space over $K$; as such it has a canonical topology by remark \ref{remark_vector_space_topology}.

\begin{proposition}
The bijection $K^{\bb{N}}\to \roi_F$ given by $(a_0,a_1,\dots)\mapsto\sum_{i=0}^{\infty}a_it^i$ is a homeomorphism; here $K^{\bb{N}}$ is given the product (=Tychonoff) topology.

The map $f\mapsto(f\mod\frak{p}^1,f\mod\frak{p}^2,\dots)$ is a closed embedding of $\roi_F$ into $\prod_{n=1}^{\infty}K_n$; in other words, $\roi_F$ is homeomorphic to the topological inverse limit $\lim K_n$
\end{proposition}
\begin{proof}
The basic open neighbourhoods of $0$ in $\roi_F$ are sets of the form $\{\sum_{i=0}^{\infty}a_it_i:\,a_i\in U_i\}$, where $U_i$, $i\ge0$, are open neighbourhoods of $0$ in $K$ and $U_i=K$ for $i$ sufficiently large. This is exactly the product topology under the given identification $\roi_F\cong K^{\bb{N}}$.

Similarly, a basic open neighbourhood in the inverse limit topology has the form $\{f\in\roi_F:\,f\mod{\frak{p}^n}\in\Omega\}$ where $n\ge1$ and $\Omega$ is an open neighbourhood of the finite dimensional $K$ vector space $\roi_F/\frak{p}^n$. But this vector space has basis $1,t,\dots,t^{n-1}$ and so we may assume that $\Omega$ has the form $\Omega=U_0+\dots+U_{n-1}t^{n-1}$ where $U_i$ are open in $K$. Thus the basic open neighbourhoods arising from the inverse limit identification have the form $\{\sum_{i=0}^{\infty}a_it^i:\,a_i\in U_i\mbox{ for } 0\le i\le n-1\}$; this agrees with the basic open neighbourhoods under our explicit construction.
\end{proof}

\begin{corollary}\label{corollary_independence_on_prime_1}
The topology on $\roi_F$ inherited from $F$ does not depend on the choice of prime $t$.
\end{corollary}
\begin{proof}
The description in the previous result of the topology on $\roi_F$ as an inverse limit does not depend on $t$.
\end{proof}

As mentioned several times previously, sequential continuity plays an important role in higher local fields; this will begin to become apparent in the following results. First we must establish a simple criterion for the convergence of a sequence in $F$:

\begin{lemma}\label{lemma_convergence_criterion_for_sequences}
Let $(f^{(n)})=(\sum_ia_i^{(n)}t^i)$ be a sequence in $F$. Then $f^{(n)}\to 0$ as $n\to \infty$ if and only if $a_i^{(n)}\to0$ as $n\to\infty$ for each integer $i$ and also $(\nu_F(f^{(n)}))_n$ is bounded below in $\bb{Z}$.
\end{lemma}
\begin{proof}
First suppose that the sequence $f^{(n)}$ does converge to $0$. For each integer $i$, the projection map $F\to K:\,\sum_ja_jT^j\to a_i$ is continuous; so $f^{(n)}\to0$ implies $a_j^{(n)}\to0$. If the sequence $\nu(f^{(n)})$ is not bounded below then by passing to a subsequence we may assume $\nu(f^{(n)})\le -n$ for each $n$. But then for each $n$ there is an open neighbourhood of zero $U_{-n}\subseteq K$ such that $a_{\nu(f^{(n)})}^{(n)}\notin U_{-n}$. Setting $U_i=K$ for $i\ge 1$ gives a neighbourhood $\sum_iU_iT^i$ which does not contain $f^{(n)}$ for any $n$, contradicting $f^{(n)}\to0$.

Now suppose $a_i^{(n)}\to0$ as $n\to\infty$ for each integer $i$ and $\inf_n\nu_F(f^{(n)})>-\infty$. Since multiplication by any power of $T$ is continuous we may assume $\nu_F(f^{(n)})\ge0$ for all $n$. Then if $U=\sum_iU_iT^i$ is a neighbourhood of zero, with $U_i=K$ for $i\ge L$ say, we know that $a_0^{(n)},\dots,a_{L-1}^{(n)}$ belong to $U_0,\dots,U_{L-1}$ respectively for $n$ sufficiently large. But these inclusions are enough to imply $f^{(n)}\in U$.
\end{proof}

The criterion for convergence to a point other than $0$ easily follows; we omit the proof:

\begin{corollary}
Let $(f^{(n)})_n=(\sum_ia_i^{(n)}T^n)_n$ be a sequence in $F$; let $f=\sum_ia_iT^i$ be in $F$. Then $f^{(n)}\to f$ as $n\to \infty$ if and only if $\inf_n\nu_F(f^{(n)})>-\infty$ and $a_i^{(n)}\to a_i$ as $n\to\infty$ for each integer $i$.
\end{corollary}

Now we turn to multiplication in $F$:

\begin{proposition}
If multiplication $K\times K\to K$ is sequentially continuous in $K$ then the same is true in $F$.
\end{proposition}
\begin{proof}
Suppose that $(f^{(n)})_n=(\sum_ia_i^{(n)}t^i)_n$ and $(g^{(n)})_n=(\sum_ib_i^{(n)}t^i)_n$ are sequences in $F$ such that $f_n\to f=\sum_ia_it^i$ and $g_n\to g\sum_ib_it^i$ as $n\to\infty$. By lemma \ref{lemma_convergence_criterion_for_sequences}, the valuations $\nu_F(f_n),\nu_F(g_n)$ are bounded below for $n\in\bb{Z}$; hence $\inf_n\nu_F(f_ng_n)>-\infty$ also. Let $L>0$ be any integer such that $-L\le\nu_F(f_n),\nu_F(g_n)$ for all $n$; then the $i^\sub{th}$ coefficient of $f_ng_n$ equals $\sum_{r=-L}^{i+L}a^{(n)}_rb^{(n)}_{i-r}$; but this converges to the $i^\sub{th}$ coefficient of $fg$ as $n\to\infty$, by sequential continuity of multiplication and addition in $K$. By lemma \ref{lemma_convergence_criterion_for_sequences} $f_ng_n\to fg$, as required.
\end{proof}

The sense in which the sequences of $F$ depend only on sequences of $K$ is strengthened further by the following result:

\begin{proposition}
A subset $A$ of $F$ is sequentially open if and only if either of the following equivalent conditions is satisfied
\begin{enumerate}
\item $fA\cap\roi_F$ is sequentially open in $\roi_F$ for all $f\in\mult{F}$;
\item for all integers $n$ there is $n'\le n$ such that $A\cap\frak{p}_F^n$ is sequentially open in $\frak{p}_F^{n'}$.
\end{enumerate}
The result is also true if 'sequentially open' is everywhere replaced by 'sequentially closed'.
\end{proposition}
\begin{proof}
Since a set is sequentially open if and only if its complement is sequentially closed, it is enough to consider the 'sequentially open' case.

It is clear that if $A$ is sequentially open then (i) is satisfied. Moreover, if (i) holds then $t^{-n}A\cap\roi_F$ is open in $\roi_F$ for all $n$; but multiplication by $t^n$ is a homeomorphism $\roi_F\cong\frak{p}^n$ and therefore $A\cap\frak{p}^n$ is sequentially open in $\frak{p}^n$ ie. (i)$\implies$(ii). It remains only to show that (ii) is enough to ensure that $A$ is sequentially open.

So suppose $f\in A$ and $f_n\to f$ is a convergent sequence in $F$. Then by lemma blah and condition (ii) there is an integer $N$ such that $\nu(f_i)\ge N$ for all $i$ and such that $A\cap\frak{p}^N$ is sequentially open; but therefore $f_i\in A\cap\frak{p}^N$ for all sufficiently large $i$.
\end{proof}

\begin{corollary}
Suppose $K$ is a local field so that $F$ is a two-dimensional local field. Then a subset $A$ of $F$ is sequentially open if and only if $A\cap\frak{p}^n$ is open in $\frak{p}^n$ for all $n$.
\end{corollary}
\begin{proof}
The topology on $\frak{p}^n$ makes it homeomorphic to an infinte product of copies of $K$. But general topology tells us that an infinite product of metrisable space is again metrisable; therefore $\frak{p}^n$ is sequentially staurated and the result follows from the previous.
\end{proof}

\begin{corollary}
The sequential saturation of the topology on $F$ does not depend on the choice of prime $t$.
\end{corollary}
\begin{proof}
By the previous proposition and corollary \ref{corollary_independence_on_prime_1}.
\end{proof}

Despite having demonstrated how well sequences behave in $F$, we would hardly be justified in adopting them so completely unless we can also show that more general topological notions fail in $F$. This is the purpose of the following examples.

\begin{example}
The standard example demonstrating bad behaviour of the topology is due to I. Fesenko. Suppose we are in the same situation as the previous two results, with $F$ a two-dimensional local field; define \[C=\{\pi_K^it^{-j}+\pi_K^{-i}t^j:\,i,j\ge 1\}.\] Then $C\cap\frak{p}^n=\{\pi_K^it^{-j}+\pi_K^{-i}t^j:\,i,j\ge 1,\,j\le -n\}$, which we claim is closed in $\frak{p}^n$; the easiest way to see that it is closed is using sequences. Any convergent subsequence $(f_n)$ of $C\cap\frak{p}^n$ has a subsequence for which $j_n$ is constant (since $j_n$ takes only finitely many values)

The previous result now implies that $C$ is sequentially closed in $F$ but we will now show that $C$ is not closed. Since $0\notin C$ it is enough to show that any basic open neighbourhood $U=\sum_jU_jt^j$ of $0$ meets $C$. Take $L>0$ large enough so that $U_L=K$; since $U_{-L}$ is a neighbourhood of $0$ in $K$ there is $i>0$ such that $\pi_K^i\in U_{-L}$ and therefore $\pi_K^it^{-L}+\pi_K^{-i}t^L\in U$.

We deduce that the topology on $F$ is not sequentialy saturated; in particular the topology is not metrisable, or even first countable.
\end{example}

\begin{example}
We remain in the setting of the previous example. Let $U,V$ be non-empty neighbourhoods of zero in $F$. Then $UV=F$.

Let $f\in F$ and let $L$ be large enough so that $i\ge L$ implies $U_i=K$; write $N=\nu(f)$. Take $\al\neq0$ in $V_{N-L}$ and observe that $\al t^{N-L}\in 
V$ and $\sum_i\al^{-1}f_i\al^{-1}t^{i+L-N}\in U$.
\end{example}

\begin{proposition}
Suppose that $K$ has a neighbourhood basis at $0$ of open subgroups of $K$; then so does $F$. Suppose that every sequentially open subgroup of $K$ is actually open; then $F$ has the same property.
\end{proposition}
\begin{proof}
The first result is clear, for if $U_i$ are open subgroups for all $i\in\bb{Z}$ with $U_i=K$ for $i$ sufficiently large then $\sum_iU_it^i$ is an open subgroup of $F$.

For the second result, suppose that $U$ is a sequentially open subgroup of $F$; it is enough to prove that there is an open neighbourhood of $0$inside $U$. Assume for a contradiction that $U$ does not contain $\frak{p}_F^n$ for any $n\ge1$. Then for each $n\ge1$ choose $x_n\in F\setminus U$ with $\nu(x)\ge n$; but $x_n\to 0\in U$ by lemma blah and this contradicts the sequential openness of $U$. Hence there is an integer $L$ with $\frak{p}_F^L\subseteq U$.

For $i<L$, the set $U_i=\{a\in K: at^L\in U\}$ is the preimage under a continuous homomorphism of a sequentially open group; thus $U_i$ is a sequentially open subgroup of $K$. But the assumption on $K$ implies now that $U$ is open. It remains only to observe that $U$ being closed under addition implies that $\sum_{i<L}U_it^i+\frak{p}_K^i\subseteq U$.
\end{proof}

\begin{proposition}
Suppose that $K$ is complete; then so is $F$. Suppose that $K$ has the property that a series converges if and only if the terms tend to $0$; then so does $F$.
\end{proposition}
\begin{proof}
{to do}
\end{proof}

\begin{lemma}
Suppose that for each $n\ge 1$ a countable set $\cal{N}_n$ and an unconditionally convergent series $\sum_{i\in\cal{N}_n} a_{n,i}$ in $K$ are given; write $f_n=\sum_{i\in\cal{N}_n} a_{n,i}$. Then the series \[\sum_{n\ge 1,\,i\in\cal{N}_n} a_{n,i}t^n\] converges unconditionally in $F$ with limit $\sum_{n\ge 1} f_nt^n$.
\end{lemma}
\begin{proof}
Let $\Omega=\{(n,i):\,n\ge 1,\,i\in\cal{N}_n\}$ be the countable set over which we claim to have an unconditional sum. Let $U=\sum_i U_it^i$ be an open neighbourhood of $0$, with $U_i=F$ for $i\ge L$, say; for each $i$ also let $U_i'$ be a neighbourhood of $0$ such that $U_i'+U_i'\subseteq U_i$. Let $\le$ be any well-ordering of $\Omega$. This restricts to a well-ordering of $\cal{N}_n$ for each $n$ and unconditional convergence of the sum over $\cal{N}_n$ implies that there is $M_n\ge 0$ with the properties
\begin{enumerate}
\item if $S$ is a finite subset of $\cal{N}_n$ of elements $\ge M_n$ then $\sum_{i\in S}a_{n,i}\in U_n'$;
\item $-f_n+\sum_{i\in\cal{N}_n,\,i\le M_n} a_{n,i}\in U_i'$.
\end{enumerate}
Now let $(I,M)\in\Omega$ satisfy $(I,M)\ge (i,M_i)$ for $i=1,\dots,L-1$. The $n^\sub{th}$ coefficient of the finite partial sum \[\sum_{\stackrel{(n,i)\in\Omega}{(n,i)\le (I,M)}}a_{n,i}t^n\] is of course
\begin{align*}
\sum_{\stackrel{(n,i)\in\Omega}{(n,i)\le (I,M)}}a_{n,i}
	&=\sum_{\stackrel{(n,i)\in\Omega}{(n,i)\le (I,M)},i\le M_n}a_{n,i},+\sum_{\stackrel{(n,i)\in\Omega}{(n,i)\le (I,M), i>M_n}}a_{n,i}\\
	&=\sum_{i=1}^{M_n}a_{n,i},+\sum_{i\in S}a_{n,i}
\end{align*}
where $S$ is the finite set of integer $i\ge M_n$ such that $(n,i)\le(I,M)$. This coefficient belongs to $f_n+U_n'+U_n'$ and so we have proved that for all sufficently large $(I,M)\in\Omega$, \[\sum_{\stackrel{(n,i)\in\Omega}{(n,i)\le (I,M)}}a_{n,i}t^n\in f+U;\] this establishes the desired unconditional convergence to $f$.
\end{proof}

We have seen that the sequential saturation of the topology on $F$ does not depend on the choice of prime. But we also choose a coefficient field, and we now consider the dependence on that.

\begin{lemma}
Suppose that $K$ has finite characteristic $p$ and give $K^p$ the subspace topology. If $K$ has the properties
\begin{enumerate}
\item the dimension of $K$ as a vector space over $K^p$ is finite and the topology on $K$ is the vector space topology as a finite dimensional vector space over $K^p$;
\item raising to the $p^\sub{th}$-power induces a homeomorphism $K\cong K^p$;
\end{enumerate}
then so does $F$.
\end{lemma}
\begin{proof}
If $b_1,\dots,b_l$ is a basis for $K$ over $K^p$ then $b_it^j$, $1\le i\le l$, $0\le j\le p-1$ is a basis for $F$ over $F^p$. {To finish.}
\end{proof}

{To finish.}
}
\subsection{Mixed characteristic case}
Let $K$ be a complete discrete valuation field equipped with a topology satisfying conditions (T1) and (T2). Let $F=K\{\{t\}\}$; using the topology on $K$, we define a topology on $F$, using a construction first given by Fesenko \cite{Fesenko_thesis}. The basic open neighbourhoods of $0$ are those of the form \[\sum_i U_it^i:=\left\{\sum_ia_it^i\in F:\,a_i\in U_i\mbox{ for all }i\right\},\] where $(U_i)_{i=-\infty}^{\infty}$ are open neighbourhoods of $0$ in $K$ with the following properties:
\begin{enumerate}
\item For any integer $r$, we have $\frak p_K^r\subseteq U_i$ for $i\gg 0$.
\item $\bigcap_{i\in\bb Z}U_i$ contains a power of $\frak p_K$.
\end{enumerate}
The basic open neighbourhoods of any other point $f\in F$ are $f+U$ where $U$ is a basic open neighbourhood of $0$.

\begin{lemma}
This specification of `open neighbourhoods' defines a topology on $F$ satisfying conditions (T1) and (T2).
\end{lemma}
\begin{proof}
Another good exercise for the reader; or see \cite{Madunts1995}.
\end{proof}

Using the lemma, the construction may be iterated to define a topology on $K\{\{t_1\}\}\cdots\{\{t_n\}\}$, and we have the following analogue of the earlier equal-characteristic theorem:

\begin{theorem}[Fesenko?]
Let $F$ be an $n$-dimensional local field of mixed characteristic (i.e., $\op{char}F=0\neq\op{char}\res F$); according to the Classification theorem \ref{theorem_classification}(iii), $F$ is isomorphic to a finite field extension of $F':=\bb Q_q\{\{t_1\}\}\cdots\{\{t_{n-1}\}\}$. Starting with the usual topology on $\bb Q_q$, iterate the process just described to topologise $F'$, and then give $F$ the natural vector space topology over $F'$.

Then the resulting topology on $F$ does not depend on any of the choices.
\end{theorem}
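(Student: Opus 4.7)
The proof splits according to the different choices. The integer $q$ is intrinsic as the cardinality $|F^{(n)}|$, and the basis-independence of the vector space topology on $F$ over the already-topologised field $F'$ is immediate from remark \ref{remark_vector_space_topology}. What remains is to establish (i) that the iterated construction yields a canonical topology on the standard field $F' = \bb Q_q\{\{t_1\}\}\cdots\{\{t_{n-1}\}\}$, independent of the specific parameters used, and (ii) that different choices of standard subfield $F' \subset F$ give the same resulting vector space topology on $F$.

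For (i), I would proceed by induction on the number of $\{\{\cdot\}\}$ operations. The seed topology on $\bb Q_q$ is its canonical $p$-adic topology. At each inductive step, one must check that the construction $K \to K\{\{t\}\}$ produces the same topology when $t$ is replaced by another uniformiser $t' = tu$ with $u$ a unit of the rank-$2$ ring of integers of $K\{\{t\}\}$. The key computation is that the family of admissible sequences $(U_i)$ of open neighbourhoods of $0$ in $K$---those satisfying the two conditions concerning eventual containment of $\frak p_K^r$ for each $r$ and of a uniform power of $\frak p_K$ in the intersection---is closed under the substitution $t \mapsto t'$, because the resulting $(U_i')$ are still admissible after reindexing, a consequence of continuity of multiplication in $K$, i.e.\ condition (T2). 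A similar verification handles changes to the earlier parameters after passing through further iterations.

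For (ii), I would invoke Zhukov's theorem (quoted in the remark following the Classification theorem): every mixed characteristic $F$ as in (iii) has a finite extension $L$ which is itself standard. Part (i) applied to $L$ equips it with a canonical topology, and I would show that for any standard subfield $F' \subset F$ the vector space topology on $F$ over $F'$ coincides with the subspace topology inherited from $L$. The comparison uses transitivity of the vector space topology along chains of finite extensions $L/F/F'$: a basis of $L$ over $F'$ is obtained by multiplying a basis of $L$ over $F$ by a basis of $F$ over $F'$, so the product topology on $(F')^{[L:F']}$ restricts on the obvious inclusion $F \hookrightarrow L$ to the product topology on $(F')^{[F:F']}$, which is by definition the natural vector space topology on $F$ over $F'$. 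Since the topology on $L$ is canonical by part (i), the topology on $F$ is independent of the choice of $F'$.

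The main obstacle is the verification in step (i) that admissibility of the family $(U_i)$ is preserved under a change of uniformiser. In contrast to the equal-characteristic Laurent series situation, where one manipulates series coefficient-by-coefficient, the infinite negative tail in the $\{\{\cdot\}\}$ construction means that a change of parameter can redistribute coefficients non-trivially across infinitely many indices, and one needs the two admissibility conditions to mesh correctly under this redistribution. Once (i) is established, (ii) is essentially a formal consequence of the transitivity of vector space topologies combined with the uniqueness of the canonical topology on the standard overfield $L$.
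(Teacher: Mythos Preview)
The paper does not give a proof of this theorem. Immediately after the statement it notes that the result is \emph{not} established via the Classification theorem at all: instead one constructs the topology directly on $F$ using certain set-theoretic lifting maps $H:\res F\to\roi_F$ (sections of the reduction map with prescribed properties), proves independence of the choice of lift, and refers the reader to \cite{Madunts1995} for the details. Your route is therefore different from the one the paper indicates.

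Your argument also contains a genuine gap, in part (ii). You invoke Zhukov's theorem to produce a standard overfield $L\supseteq F$ with its canonical topology $\tau_L$, and then argue by transitivity of vector space topologies that the vector space topology on $F$ over $F'$ is the subspace topology from $(L,\tau_L)$. But transitivity only yields this once you know that $\tau_L$ coincides with the vector space topology on $L$ over $F'$---and that is precisely the statement of the theorem for the field $L$ with the choice of standard subfield $F'\hookrightarrow L$. You have reduced the problem for $F$ to the same problem for $L$, without resolving it. Your part (i), even if carried through in full, does not close this gap: continuity of automorphisms of a single standard field says nothing about an embedding $F'\hookrightarrow L$ between two distinct standard fields. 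Comparing the intrinsic iterated topology on $L$ with the vector space topology over an arbitrarily embedded $F'$ is exactly the hard step that the lifting-map approach sidesteps, by never invoking standard subfields in the first place.
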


In fact, the proposition is proved by giving an alternative, direct construction of the topology on $F$ without appealing to the Classification theorem; this alternative construction uses a variety of subtle lifting maps $H:\res F\to\roi_F$ (set theoretic sections of the reduction map with certain properties). The interested reader can consult the standard reference \cite{Madunts1995} for complete details, or \cite{Zhukov2000a} for a summary of the approach.

\begin{definition}
Let $F$ be a higher dimensional local field of mixed characteristic. Its {\em higher topology} is the topology defined in the previous proposition.
\end{definition}

\subsection{Main properties}\label{subsection_main_properties}
In the previous two sections we defined the higher topology on any higher dimensional local field $F$ for which $\res F$ has finite characteristic. Now we will state the main properties of the topology; in the following theorem, higher dimensional local fields are always equipped with their higher topology:

\begin{theorem}\label{theorem_main_properties_of_topology}
Let $F$ be a higher dimensional local field for which $\res F$ has finite characteristic.
\begin{enumerate}
\item The higher topology on $F$ satisfies conditions (T1) and (T2). Moreover, it is complete in the sense that every Cauchy sequence converges.
\item If $L$ is a finite extension of $F$, then the higher topology on $L$ is the same as its natural vector space topology as a vector space over $F$.
\item The reduction map $\roi_F\to \res F$ is continuous and open (where $\roi_F$ is given the subspace topology from $F$, and $\res F$ is given its higher topology).
\item The higher topology on $F$ is linear, in the sense that any open neighbourhood of $0$ contains an open subgroup.
\item Let $\cal R\subseteq F$ be a set of representatives for the final residue field $F^{(n)}$, and let $t_1,\dots,t_n$ be a sequence of local parameters; then each element of $F$ can be written uniquely as an absolutely convergent series \[\sum_{(i_1,\dots,i_n)\in\Omega}\theta_{\ul i}\,t_1^{i_1}\cdots t_n^{i_n},\] where $\Omega\subseteq\bb Z^n$ is an admissible set and $\theta_{\ul i}\in\cal R$. (See below for further explanation.)
\end{enumerate}
\end{theorem}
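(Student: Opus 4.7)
The plan is to prove all five assertions simultaneously by induction on $n=\cdvdim F$. For $n=1$ the higher topology is just the usual discrete-valuation topology on a one-dimensional local field, where (i)--(v) are classical. For the inductive step, a presentation of $F$ as $K((t))$ or $K\dblecurly{t}$ (with $K$ a higher dimensional local field of dimension $n-1$) supplied by the Classification theorem \ref{theorem_classification} defines the topology on $F$ from that on $K$, and the explicit shape of basic neighbourhoods $\sum_i U_i t^i$ should let us carry each property from $K$ to $F$.

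First I would dispose of (i), (iii) and (iv), all of which reduce to direct bookkeeping with basic neighbourhoods. Conditions (T1) and (T2) are precisely the cited lemmas; for completeness, given a Cauchy sequence $f^{(m)}=\sum_i a_i^{(m)} t^i$, projecting onto each coefficient shows that $(a_i^{(m)})_m$ is Cauchy in $K$, and the admissibility clauses on basic neighbourhoods (containment of $\frak p_K^r$ for $i\gg 0$ in the mixed-characteristic case; vanishing $U_i=K$ for $i\gg 0$ in the equal-characteristic case) force the candidate limit $\sum_i a_i t^i$ to satisfy the defining conditions of $F$. For (iii), a basic neighbourhood of $0$ in $\roi_F$ has the form $\sum_{i\ge 0} U_i t^i$, and its image under $\roi_F\To\res F$ is $U_0$ in the equal-characteristic case (and a basic neighbourhood in $\res K((t))$ in the mixed-characteristic case), yielding both continuity and openness coefficient by coefficient. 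For (iv), if $K$ has a neighbourhood base of additive subgroups (inductively true), choosing the $U_i$ to be additive subgroups satisfying the admissibility conditions produces a basic neighbourhood in $F$ that is itself an additive subgroup.

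Part (ii) is the main conceptual obstacle, both because it is what justifies the well-definedness of the higher topology used in the inductive step, and because the mixed-characteristic subcase is subtle: as in Yekutieli's example (remark \ref{remark_residue_char_restriction}), the naive approach fails when $\op{char}\res F=0$, so one must exploit finiteness of $\op{char}\res F$. The strategy is to pick compatible sequences of local parameters for $F\sseq L$ and an $F$-basis of $L$ adapted to the tower of ramification indices $e(L^{(i)}/F^{(i)})$ and the residue-field extension $L^{(n)}/F^{(n)}$, and then match basic neighbourhoods of the natural vector-space topology with those of the iterated $((\,))$ or $\dblecurly{\,}$ topology on $L$. For the direct construction of the mixed-characteristic topology independent of the isomorphism in the Classification theorem (following Madunts--Zhukov), one replaces concrete coefficient fields by set-theoretic liftings $H\colon \res F\to\roi_F$ and checks that varying $H$ leaves the topology unchanged.

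Finally, for (v), I would define $\Omega\sseq\bb Z^n$ to be \emph{admissible} if its projection to the last coordinate is bounded below and, for each value $i_n$ of that last coordinate, the fibre is admissible in $\bb Z^{n-1}$. Existence of the expansion is inductive: expand $f\in F$ as a series in $t_n$ with coefficients $b_{i_n}\in \res F$ whose valuations are bounded below, then expand each $b_{i_n}$ admissibly in $t_1,\dots,t_{n-1}$ over $\cal R\sseq F^{(n)}$, and re-index. Uniqueness is immediate from taking successive residues. The real work, and the step I expect to be the main obstacle, is absolute convergence of the resulting $n$-fold indexed series in the higher topology: for any basic neighbourhood $U$ of $0$ in $F$, only finitely many terms $\theta_{\ul i}\, t_1^{i_1}\cdots t_n^{i_n}$ may lie outside $U$, which the admissibility condition is tailor-made to guarantee once lined up against the layered admissibility built into the recursive definition of basic neighbourhoods of $F$.
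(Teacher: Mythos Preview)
The paper does not actually prove this theorem: section~\ref{section_topology} is explicitly a summary without proofs, and the proof environment simply cites \cite{Parshin1984} for the equal-characteristic case and \cite{Madunts1995} for the general case. Your inductive strategy --- reducing to the coefficient field via the explicit shape $\sum_i U_i t^i$ of basic neighbourhoods --- is precisely the approach taken in those references, so in that sense your proposal is aligned with what the paper defers to.

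One logical wrinkle deserves attention. Your inductive step treats $F$ as literally of the form $K((t))$ or $K\dblecurly{t}$, but in mixed characteristic the Classification theorem only presents $F$ as a \emph{finite extension} of a standard field $F'=\bb Q_q\dblecurly{t_1}\cdots\dblecurly{t_{n-1}}$. The higher topology on $F$ is then \emph{defined} as the natural vector-space topology over $F'$, so part (ii) is not proved after the fact but is essentially built into the definition; the real content is the well-definedness (independence of the embedding $F'\hookrightarrow F$), which is exactly what Madunts--Zhukov establish using the lifting maps $H$ you allude to. Consequently your sketch of (v) --- expand in $t_n$ with coefficients $b_{i_n}\in\res F$, then recurse --- works cleanly only for standard fields; for a non-standard mixed-characteristic $F$ there is no coefficient field, and one must either work through the set-theoretic lifts $H$ directly or prove (v) for $F'$ first and then transport it across the finite extension $F/F'$, which is more delicate than your outline indicates.
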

\begin{proof}
Proofs in the equal characteristic case may be found in \cite{Parshin1984}; for the general case see \cite{Madunts1995}.
\end{proof}

The final part of the theorem requires explanation. We first recall the classical situation for a complete discrete valuation field $F$. Let $\cal R\subseteq \roi_F$ be a set of representatives for $\res F$; i.e., the reduction map carries $\cal R$ bijectively to $\res F$, and we always assume $0\in\cal R$ (e.g., if $F$ is a local field we could take $\cal R$ to be the Teichm\"uller representatives, together with $0$). Let $t\in F$ be a uniformiser. Then each element of $F$ may be uniquely written as a series \[\sum_i\theta_it^i,\tag{\dag}\] where $\theta_i\in\cal R$ and where $\theta_i=0$ for $i\ll 0$ (e.g., \cite[II.\S4]{Serre1979} or \cite[I.5.2]{Fesenko2002}). Moreover, all these series are absolutely convergent in the discrete valuation topology on $F$.

Now suppose further that $\op{cdvdim}F\ge 2$, so that $\res F$ is also a complete discrete field; then, interpreting $\theta_i$ as an element of $\res F$, we may apply the same result and expand $\theta_i$ in terms of representatives $\cal S$ and a uniformiser $\pi$ for $\res F$. This formally converts (\dag) into a double series \[\sum_i\sum_j\theta_{j,i}\pi^jt^i.\] The coefficients of this formal expansion satisfy the following two properties: firstly, for $i\ll0$, one has $\theta_{j,i}=0$ for all $j$; secondly, for any given $i$, one has $\theta_{j,i}=0$ for $j\ll0$. However, as remarked at the start of this section, such a double series is merely formal, and is {\em not} convergent in the discrete valuation topology on $F$. This is the problem fixed by the higher topology.

Now we precisely define the terms in part (v) of the theorem. The {\em set of representatives} $\cal R$ for $F^{(n)}$ is a (necessarily finite) subset $\cal R$ of the rank $n$ ring of integers $\ROI^{(n)}_F$ containing $0$ such that the reduction map $\ROI^{(n)}_F\to F^{(n)}$ (see proposition \ref{proposition_main_properties_of_rois}(i)) induces a bijection between $\cal R$ and $F^{(n)}$. An {\em absolutely convergent} series, indexed over a countable set, is a series which converges to the same result regardless of reordering; in fact, part (iv) of the theorem implies that all convergent series are automatically absolutely convergent. Finally, an {\em admissible} subset $\Omega\subseteq\bb Z^n$ is one satisfying the following condition:
\begin{quote}
For any $1\le r\le n$ and any fixed $i_{r+1},\dots,i_n\in\bb Z$, there exists $I\in\bb Z$ such that $\Omega$ contains no tuple of integers of the form $(i_1,\dots,i_r,i_{r+1}\dots,i_n)$ for which $i_r<I$.
\end{quote}

\begin{example}
Let $F=\bb F_p((t_1))((t_2))$. A set of representatives for $F^{(2)}=\bb F_p$ is $\cal R=\bb F_p\subseteq F$. Any element $f\in F$ can be formally expanded as \[f=\sum_i\sum_j\theta_{j,i}t_1^jt_2^i=\sum_{(j,i)\in\Omega}\theta_{j,i}t_1^jt_2^i,\] where $\Omega\subseteq\bb Z$ satisfies the following two conditions: firstly, for $i\ll 0$, the set $\Omega$ contains no pair of the form $(j,i)$; secondly, for any fixed $i\in\bb Z$, the set $\Omega$ does not contain $(j,i)$ for $j\ll 0$. These are precisely the conditions for $\Omega$ to be admissible.

Part (v) of the theorem states that this formal expansion of $f$ is absolutely convergent in the higher topology on $F$.
\end{example}

\subsection{Additional topics concerning topology}
In this section we summarise various additional topics concerning topologies on higher dimensional local fields.

\subsubsection{The topology on the multiplicative group}
If $R$ is a (commutative, unital) ring equipped with a topology then, even if addition and multiplication are continuous, inversion $\mult R\to\mult R$ on the group of units need not be continuous. The `correct' topology on $\mult R$ which makes inversion continuous is the weak topology induced by the map \[\mult R\to R\times R,\quad u\mapsto (u,u^{-1}).\]

\begin{example}
Let $\bb{Q}$ be the field of rational numbers and $\bb{A}$ its ring of ad\`eles with the usual topology as a restricted product. Let $a_n$ be the element of $\bb{A}$ defined by \[a_n=(1,1,\dots,1,p_n,1,\dots)\] where the only non-unital term is the $n^\sub{th}$ prime $p_n$, which belongs to the $\bb{Q}_{p_n}$-coordinate. Then $a_n\in\mult{\bb A}$ for all $n\ge 1$ and $a_n\to 1$ as $n\to\infty$; but $a_n^{-1}\not\to 1$. This shows that inversion is not continuous on the ideles $\mult{\bb A}$ if we topologise them as a subspace of $\bb A$.

The `correct' topology on the ideles $\mult{\bb A}$ is the one defined above, which yields the same topology if we were to follow the more familiar method of topologising them as the restricted product $\rprod_p\mult{\bb Q_p}$.
\end{example}

So, if $F$ is an $n$-dimensional local field equipped with its higher topology, then one topologises $\mult F$ using this `correct' topology. However, in the literature one will only find a different approach: according to corollary \ref{corollary_decomposition_of_units}, a choice of a sequence of local parameters induces an isomorphism $\mult F\cong\mult{\ROI_F}\times\bb Z^n$, where $\ROI_F$ is the rank $n$ ring of integers, and so we may give $\mult F$ the product topology after giving $\bb Z$ the discrete topology and topologising $\mult{\ROI_F}$ as a subspace of $F$. One can show that these two approaches result in the same topology on $\mult F$ (at least up to sequential saturation; see below).

Having correctly topologised $\mult F$, there is the following mulitplicative analogue of theorem \ref{theorem_main_properties_of_topology}(v), which generalises the multiplicative expansions for complete discrete valuation fields \cite[I.5.10]{Fesenko2002}:

\begin{proposition}\label{proposition_multiplicative_expansion}
Let $F$ be an $n$-dimensional local field for which $\res F$ has finite characteristic; let $\cal R\subseteq F$ be a set of representatives for the final residue field $F^{(n)}$, and let $t_1,\dots,t_n$ be a sequence of local parameters. Then each element of $\mult F$ can be written uniquely as an absolutely convergent product \[t_1^{r_1}\cdots t_n^{r_n}\theta\prod_{\ul i\in\Omega}(1+\theta_{\ul i}t_1^{i_1}\cdots t_n^{i_n}),\] where $\Omega\subseteq\bb Z^n$ is an admissible set, $r_1,\dots,r_n\in\bb Z$, $\theta\in\cal R\setminus\{0\}$, and $\theta_{\ul i}\in\cal R$.
\end{proposition}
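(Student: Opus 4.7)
The plan is to reduce the general multiplicative expansion to an expansion of the higher principal units $V = 1+\frak p_F^{(n)}$, and then to establish that expansion using the additive expansion from Theorem \ref{theorem_main_properties_of_topology}(v) together with successive approximation along the rank $n$ valuation.

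First I would extract the monomial factor. By Corollary \ref{corollary_decomposition_of_units}, the choice of the sequence of local parameters $t_1,\dots,t_n$ induces a splitting
\[\mult F \;\cong\; \mult{\ROI_F^{(n)}} \times t_1^{\bb Z}\times\cdots\times t_n^{\bb Z},\]
so any $x\in\mult F$ is uniquely written as $x = t_1^{r_1}\cdots t_n^{r_n} u$ with $u\in\mult{\ROI_F^{(n)}}$ and $r_i\in\bb Z$. Since $F^{(n)}$ is finite and $\ROI_F^{(n)}$ is Henselian with residue field $F^{(n)}$ (Proposition \ref{proposition_main_properties_of_rois}(i)), Hensel's lemma produces a unique lift $\theta\in\cal R\setminus\{0\}$ of $\res u\in\mult{F^{(n)}}$, giving a further unique factorisation $u = \theta(1+v)$ with $v\in\frak p_F^{(n)}$. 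Thus the prefix $t_1^{r_1}\cdots t_n^{r_n}\theta$ is uniquely determined, and the remaining task is to show that the map
\[\Phi\colon (\theta_{\ul i})_{\ul i\in\Omega}\;\longmapsto\;\prod_{\ul i\in\Omega}(1+\theta_{\ul i}t_1^{i_1}\cdots t_n^{i_n})\]
is a bijection from admissible families supported in the cone $\{\ul i\in\bb Z^n : t_1^{i_1}\cdots t_n^{i_n}\in\frak p_F^{(n)}\}$ with $\theta_{\ul i}\in\cal R$ onto $V$.

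That $\Phi$ is well-defined follows from Theorem \ref{theorem_main_properties_of_topology}: admissibility ensures that for every open neighbourhood $U$ of $0$, almost every monomial $\theta_{\ul i}t^{\ul i}$ lies in $U$, and the linearity of the higher topology (part (iv)) together with the absolute convergence of admissible series (part (v)) promote this to absolute convergence of the product, whose limit is placed in $V$ by completeness. For surjectivity and injectivity I would proceed by successive approximation ordered by the lexicographic rank $n$ valuation $\nu$ on $F$. Given $u = 1+v\in V$, Theorem \ref{theorem_main_properties_of_topology}(v) yields a unique additive expansion $v = \sum_{\ul i\in\Omega_0}a_{\ul i}t^{\ul i}$ with $\Omega_0$ admissible, and admissibility guarantees the existence of a lex-minimal index $\ul i_0\in\Omega_0$; set $\theta_{\ul i_0}:=a_{\ul i_0}$ and replace $u$ by $u\cdot(1+\theta_{\ul i_0}t^{\ul i_0})^{-1}$. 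A short calculation with the additive expansion shows that $\nu(u-1)$ then strictly increases in the lexicographic order, so this process inductively produces the coefficients $\theta_{\ul i}$; uniqueness at each step is forced by the additive uniqueness in part (v) applied to the leading term.

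The main obstacle is the transfinite nature of this iteration. An admissible subset of $\bb Z^n$ is well-ordered by the lexicographic refinement used above, but its order type may be as large as $\omega^n$, so the expansion cannot be completed in $\omega$ steps. At each limit ordinal one must verify that the accumulated partial product converges in $V$ under the higher topology and that the ``remainder'' has been pushed into an ever-deeper principal-unit subgroup — equivalently, that $\Phi$ is a continuous bijection from the space of admissible families (with its own natural topology) onto $V$. This bookkeeping, which in effect amounts to checking stability of the higher topology under the passage from additive to multiplicative expansions and is carried out dimension by dimension in \cite{Madunts1995}, is where the real work of the proof lies.
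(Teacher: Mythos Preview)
The paper does not actually prove this proposition: its entire ``proof'' is the citation \cite{Madunts1995}. So there is no argument in the paper to compare your proposal against. Your sketch is a reasonable outline of the standard approach, and you correctly identify where the genuine work lies --- the transfinite bookkeeping at limit stages of the successive-approximation process, which is precisely what is carried out dimension-by-dimension in \cite{Madunts1995}.

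One small correction: in extracting the factor $\theta$ you invoke Hensel's lemma, but this is unnecessary. The set $\cal R$ is by definition a system of representatives for $F^{(n)}$, so $\theta$ is simply the unique element of $\cal R$ whose image in $F^{(n)}$ coincides with that of $u$; no lifting is required. What you do need (and implicitly use) is that this $\theta$ is a unit of $\ROI_F^{(n)}$, which holds because its residue is nonzero and $\ROI_F^{(n)}$ is a local ring; then $u\theta^{-1}\in 1+\frak p_F^{(n)}$ as claimed.
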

\begin{proof}
\cite{Madunts1995}.
\end{proof}

\subsubsection{Topological $K$-groups}\label{subsection_top_Milnor_K_groups}
Section \ref{subsection_Milnor_K_theory} is a prerequisite for this material.

Given a field $K$, and a topology on $\mult K$, it is natural to study continuous Steinberg maps $(\mult K)^m\to A$ into Hausdorff (typically even discrete) topological abelian groups. To do this, we give the Milnor $K$-group $K_m(K)$ the strongest topology for which the symbol map \[\mult K\times\cdots\times\mult K\To K_m(K),\quad (x_1,\dots,x_m)\mapsto\{x_1,\dots,x_m\}\] is continuous and for which $K_m(K)$ is a topological group. The {\em topological Milnor $K$-group} $K_m^\sub{top}(K)$ is then defined to be the maximal Hausdorff quotient of $K_m(K)$. In other words, $K_m^\sub{top}(K)=K_m(K)/\Lambda_m(K)$, where $\Lambda_m(K)$ is the subgroup of $K_m(K)$ obtained by intersecting all neighbourhoods of $0\in K_m(K)$. This clearly satisfies the desired universal property: a continuous Steinberg map $(\mult K)^m\to A$ into a Hausdorff topological abelian group induces a unique continuous homomorphism $K_m^\sub{top}(K)\to A$.

For $m=2$, these topological $K$-groups were introduced by Milnor \cite[Appendix]{Milnor1970}, who studied them for $\bb R$, $\bb C$, and for local fields. J.~Graham \cite{Graham1973} later considered Laurent series fields.

\begin{example}
Let $K$ be a local field, equipped with its usual topology. Example \ref{example_K_groups_of_local_field} shows that there is an exact sequence \[0\to\mbox{a uniquely-divisible group}\to K_2(K)\to\mu\to 0.\] One can show \cite[Appendix]{Milnor1970} that this uniquely-divisible group is precisely $\Lambda_2(K)$, and so $K_2^\sub{top}(K)\cong\mu$.
\end{example}

\begin{example}
Example \ref{example_K_groups_of_reals} shows that there is a surjective homomorphism $\op{sign}:K_m(\bb R)\to\{\pm 1\}$, whose kernel is divisible. One can show \cite[Appendix]{Milnor1970} that this kernel is precisely $\Lambda_m(\bb R)$, having given $\bb R$ its usual topology, and so $K_m^\sub{top}(\bb R)\cong\{\pm 1\}$.
\end{example}

The topological Milnor $K$-groups of higher dimensional local fields $F$ have turned out to be tremendously important in explicit higher dimensional local class field theory. Elements of $K_m^\sub{top}(F)$ admit expansions in an analogous fashion to theorem \ref{theorem_main_properties_of_topology}(v) and proposition \ref{proposition_multiplicative_expansion}, and the continuous parings discussed in remarks \ref{remark_Parshin} and \ref{remark_Fesenko} can be analysed using these expansions. See \cite{Fesenko2001} and \cite{Zhukov1997} for many more details.

We finish with the higher dimensional analogue of the previous two examples, stating that $\Lambda_m(F)$ is the subgroup of divisible elements of $K_m(F)$; therefore $K_m^\sub{top}(F)$, as a group without any topology, can be defined purely algebraically:

\begin{theorem}[\cite{Fesenko2001}]
Let $F$ be a higher dimensional local field for which $\res F$ has finite characteristic; give $\mult F$ the topology defined above. Then, for any $m\ge 1$, \[\Lambda_m(F)=\bigcap_{\ell\ge 0}\ell K_m(F),\] the subgroup of divisible elements of $K_m(F)$.
\end{theorem}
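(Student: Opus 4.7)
Write $D_m(F):=\bigcap_{\ell\ge 1}\ell K_m(F)$ for the subgroup of divisible elements. I would prove the two inclusions $\Lambda_m(F)\subseteq D_m(F)$ and $D_m(F)\subseteq \Lambda_m(F)$ separately; each is really a statement about how the $\ell K_m(F)$ interact with the topology on $K_m(F)$, and they split as (I): for every open neighbourhood $U$ of $0$ in $K_m(F)$ there exists $\ell$ with $\ell K_m(F)\subseteq U$, and (II): for every $\ell\ge 1$ the subgroup $\ell K_m(F)$ is open. Statement (I) gives $D_m(F)\subseteq\Lambda_m(F)$ (apply it to write any divisible $x=\ell y$ as an element of $\ell K_m(F)\subseteq U$), while statement (II) gives $\Lambda_m(F)\subseteq D_m(F)$ (since $\ell K_m(F)$ is then an open, hence closed, subgroup containing $0$, and $\Lambda_m(F)$ is the intersection of all such).

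\textbf{Proof of (II).} I would show that the image of $(\mult F)^\ell$ in $K_m(F)$ under the symbol map already generates an open subgroup of $K_m(F)$ inside $\ell K_m(F)$. For $\ell$ coprime to the residue characteristic $p$, Hensel's lemma in the Henselian rank-$n$ ring of integers $\ROI_F^{(n)}$ (proposition~\ref{proposition_main_properties_of_rois}) shows that the higher principal units $V=1+\frak p_F^{(n)}$ of sufficiently deep filtration level are $\ell$-th powers, so a standard filtration subgroup of $\mult F$ lies in $(\mult F)^\ell$; by the topology on $\mult F$, this is open, and the continuity of the symbol map forces the generated subgroup of $K_m(F)$ to be open. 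For $\ell=p^k$, the same strategy applies but one replaces Hensel's lemma by the analysis of the $p$-th power map on the filtration of $V$ (Artin--Hasse exponential in mixed characteristic, Artin--Schreier-type expansions in equal characteristic), showing that $V^{p^k}$ again contains a sufficiently deep filtration subgroup of $V$.

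\textbf{Proof of (I).} Here I would use the explicit expansions of theorem~\ref{theorem_main_properties_of_topology}(v) and proposition~\ref{proposition_multiplicative_expansion}. Using the Steinberg relation together with continuity of the symbol map, the open neighbourhood $U$ of $0$ in $K_m(F)$ may be shown to contain the subgroup generated by symbols of the form $\{1+\theta t_1^{i_1}\cdots t_n^{i_n},\,t_{j_1},\dots,t_{j_{m-1}}\}$ with $\ul i$ belonging to a sufficiently far "tail" of admissible multi-indices. For $\ul i$ deep enough, the arguments of (II) above give a single $\ell$ (depending only on $U$) for which each such principal unit is an $\ell$-th power in $\ROI_F^{(n)}$, making the corresponding symbol lie in $\ell K_m(F)$. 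A careful bookkeeping of the finitely many "low" multi-indices, together with the fact that any fixed symbol is $\ell$-divisible in $K_m(F)$ for some $\ell$ via explicit projection formulae, then shows $\ell K_m(F)\subseteq U$ for some $\ell$.

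\textbf{Main obstacle.} The difficult step is handling $\ell=p^k$ in both (I) and (II): Hensel's lemma is unavailable, and one must invoke the delicate description of $V/V^{p^k}$ and of its interaction with the Steinberg relation inside $K_m(F)$. This is essentially the combinatorial heart of higher dimensional explicit class field theory and is carried out in detail in Fesenko \cite{Fesenko2001}; I would import those computations rather than reprove them. A secondary technical point is verifying that a cofinal family of open neighbourhoods of $0$ in $K_m(F)$ can be chosen to consist of subgroups indexed by the filtration on $\mult F$, so that the reduction in (I) to principal units of high multi-index is legitimate.
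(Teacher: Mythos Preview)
The paper does not actually prove this theorem: it is stated with a citation to \cite{Fesenko2001} and no proof is given, consistent with the introduction's description of section~\ref{section_topology} as ``a summary, without proofs''. So there is nothing in the paper to compare your argument against; your sketch is an attempt to reconstruct Fesenko's original proof rather than the paper's.

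That said, a few comments on your outline. Your reduction to statements (I) and (II) is logically sound, and (II) is indeed the right way to obtain $\Lambda_m(F)\subseteq D_m(F)$. However, your argument for (I) has a gap: you want a \emph{single} $\ell$ with $\ell K_m(F)\subseteq U$, but your treatment of the finitely many ``low'' multi-indices only produces, for each such symbol, \emph{some} $\ell$ making it divisible. There is no reason these $\ell$'s can be chosen uniformly, and in fact statement (I) as you have formulated it (that the $\ell K_m(F)$ form a neighbourhood basis at $0$) is stronger than what is needed for $D_m(F)\subseteq\Lambda_m(F)$. You only need to show that $D_m(F)$ lies in every open subgroup, and for this one can work more directly: the actual proof in \cite{Fesenko2001} passes through the structure theory of $K_m^\sub{top}(F)$ and the explicit pairings (Artin--Schreier--Witt or Vostokov type) to identify $\Lambda_m(F)$, rather than trying to sandwich it between neighbourhoods and multiples as you do.
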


\subsubsection{Sequential saturation}
The higher topology on a higher dimensional local field exhibits some surprising properties, the worst of which is likely the following: if $F$ is an $n$-dimensional local field, with $n\ge 2$, then multiplication \[F\times F\stackrel{\times}{\To}F\] is not continuous; in fact, if $U,V$ are arbitrarily small open neighbourhoods of $0$, then $UV=F$. Fortunately, it is sequentially continuous in the sense that if $x_n\to x$ and $y_n\to y$ then $x_ny_n\to xy$. These results were already noticed by Parshin \cite{Parshin1984}.

Since most topological calculations in higher dimensional local fields are very `sequential' in nature (e.g.,~the series expansions of theorem \ref{theorem_main_properties_of_topology}(v) and proposition \ref{proposition_multiplicative_expansion}), the most efficient way to fix this problem with multiplication is to replace the topologies on $F$, $F\times F$, $\mult F$, $K_m(F)$, etc., by their so-called sequential saturation: multiplication then becomes genuinely continuous. The reader will quickly encounter such sequential saturations when consulting the literature on higher topologies to which we have referred, so here we briefly summarise the main ideas:

Let $X$ be a topological space. A subset $A$ of $X$ is said to be {\em sequentially open} if and only if whenever $x_n\to x$ is a convergent sequence in $X$ such that $x\in A$, then $x_n\in A$ for $n\gg 0$. Open subspaces are sequentially open, but the converse is not always true: the space is said to be {\em sequentially-saturated} if and only if every sequentially open subset is in fact open. The {\em sequential saturation} $X_\sub{seq}$ of the space $X$ is the topological space whose underlying set is the same as $X$, but such that a subset of $X_\sub{seq}$ is open if and only if it is sequentially open in $X$. It is not hard to check that $X_\sub{seq}$ is a well-defined, sequentially-saturated topological space with the following universal property: a map from a sequentially saturated space $Y$ to $X$ is continuous if and only if the induced map $Y\to X_\sub{seq}$ is continuous. (In other words, the inclusion of $\op{Top}_\sub{ss}$, the category of sequentially saturated topological spaces, into $\op{Top}$, the category of all topological spaces, has a right adjoint $X\mapsto X_\sub{seq}$.) Moreover, a sequentially continuous map between topological spaces will induces a continuous map on their sequential satuations.

\subsubsection{Ind-Pro approach to the higher topology}
Section \ref{section_regular} is a prerequisite for this material.

Suppose that $A$ is a commutative ring equipped with a topology. Given an ideal $I$ of $A$, we may topologise $\hat A=\projlim_rA/I^r$ by giving it the weakest topology such that the projection map $\hat A\to A/I^r$ is continuous for every $r\ge 1$, where we give $A/I^r$ the quotient topology from $A$. Similarly, if $S\subseteq A$ is a multiplicative system, we may topologise $S^{-1}A$ by giving it the strongest topology for which $A\to S^{-1}A,\,a\mapsto a/s$ is continuous for all $s\in S$.

Define a {\em topological regular $n$-chain} to be a regular $n$-chain $\ul A=(A,\frak p_n,\dots,\frak p_0)$ together with a topology on the ring $A$. Given a topological regular $n$-chain $\ul A$, we have just explained in the previous paragraph how to naturally construct a topology on the $\frak p_n$-adic completion $\hat A$ and on the localisation $A_{\frak p_{n-1}}$. Therefore $\op{comp}\ul A$ and $\op{loc}\ul A$ are again topological regular chains. Proceeding recursively, this defines a topology on $HL(\ul A)$, the higher dimensional field associated to the regular $n$-chain $\ul A$.

\begin{theorem}
Let $F$ be an $n$-dimensional local field for which $\res F$ has finite characteristic, and suppose that $F=HL(\ul A)$ for some regular $n$-chain $\ul A$. Give $A$ the adic topology associated to its maximal ideal, so that $\ul A$ become a topological regular $n$-chain.

Then the topology just defined on $F=HL(\ul A)$ coincides with the higher topology (at least up to sequential saturation).
\end{theorem}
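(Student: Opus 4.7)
The plan is to proceed by induction on $n=\cdvdim F$, pairing one completion-and-localization cycle of the $HL$ construction with one iteration of the Parshin/Fesenko construction $K\leadsto K((t))$ or $K\leadsto K\dblecurly{t}$. For the base case $n=1$ the ring $A$ is a discrete valuation ring carrying its $\mathfrak p_1$-adic topology, $\hat A$ is a complete discrete valuation ring, and $HL(\ul A)=\Frac(\hat A)$ inherits exactly the discrete valuation topology in either description, so there is nothing to prove.

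For the inductive step I would write $\ul A'=\op{loc}\op{comp}\ul A$; by construction $HL(\ul A)=HL(\ul A')$ and $\ul A'$ is a topological regular chain of length one less than $\ul A$. Applied to the natural truncated chain $\tau_1\ul A$, the inductive hypothesis says that the Ind-Pro topology on $\res F=F^{(1)}=HL(\tau_1\ul A)$ coincides up to sequential saturation with its higher topology. I would then verify directly that one further cycle of completion-localization, built on top of this topology, reproduces either the Parshin construction (if $\op{char}F=\op{char}\res F$) or the Fesenko construction (if $F$ has mixed characteristic). Concretely, the completion step identifies $\ROI_F$ with the inverse limit $\projlim_r \ROI_F/\mathfrak p_F^r$ of $F^{(1)}$-modules, and one checks that this inverse-limit topology has a neighbourhood basis of $0$ of exactly the form $\sum_{i\ge 0} U_i t_n^i$ used by Parshin; localising at a uniformiser $t_n$ then extends this to a basis of $F$, matching the full Parshin/Fesenko family.

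The main obstacle is the mixed characteristic case, where the Fesenko basis requires the two admissibility conditions that $\mathfrak p_K^r\sseq U_i$ for $i\gg 0$ and that $\bigcap_i U_i$ contain a power of $\mathfrak p_K$. I would show these conditions correspond precisely to the adic filtration by $\mathfrak p_n$ combined with the fact that the localisation topology on $A_{\mathfrak p_{n-1}}$ is defined as the finest making each $a\mapsto a/s$ continuous: powers of the maximal ideal of $A$ pulled back through completion give the $\frak p_K^r\sseq U_i$ condition, while the localisation-inherited openness forces the uniform bound $\bigcap_i U_i\supseteq\frak p_K^s$.

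Finally, the sequential saturation caveat must be addressed: rather than trying to match basic open sets on the nose, I would argue that both topologies possess the same convergent sequences, by checking on the explicit expansions of theorem \ref{theorem_main_properties_of_topology}(v) that a sequence in $F$ converges in the Ind-Pro topology if and only if its coefficient sequences converge in $F^{(1)}$ with uniformly bounded valuation, which is the same criterion governing the higher topology. Since a topology is determined up to sequential saturation by its convergent sequences, this concludes the inductive step and hence the theorem.
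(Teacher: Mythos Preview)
The paper does not actually prove this theorem: its entire ``proof'' is the single sentence ``This is an unpublished result of the author.'' There is therefore no argument in the paper against which to compare your proposal.

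That said, your inductive sketch, while broadly reasonable, has a structural loose end worth flagging. You invoke the inductive hypothesis on $\tau_1\ul A$, which requires $A/\frak p_1$ to carry its own maximal-ideal-adic topology. But the Ind-Pro topology on $F=HL(\ul A)$ is produced by iterating loc-comp on $\ul A$ itself; after one cycle the relevant chain is $\ul A'=\op{loc}\op{comp}\ul A$, whose ring $(\hat A)_{\frak p_{n-1}\hat A}$ carries the topology coming from localisation of the completion, not a priori its own adic topology. To close the induction cleanly you need one of two things: either show that this inherited topology on $(\hat A)_{\frak p_{n-1}\hat A}$ coincides (up to sequential saturation) with its adic topology, so the hypothesis applies directly to $\ul A'$; or strengthen the inductive statement to allow a more general class of starting topologies on $A$. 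Your phrase ``one further cycle of completion-localization, built on top of this topology [on $\res F$]'' blurs the distinction between the truncation $\tau_1$ and the loc-comp step; the algebraic identity $\tau_1\op{loc}\op{comp}=\op{loc}\op{comp}\tau_1$ is fine, but its topological counterpart is precisely what remains to be checked. The mixed-characteristic discussion and the sequential-saturation argument at the end are reasonable in outline but would likewise need this gap filled before they become rigorous.
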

\begin{proof}
This is an unpublished result of the author.
\end{proof}

\section{A summary of higher dimensional local class field theory}\label{section_CFT}
Classical local class field theory states that if $K$ is a (one-dimensional) local field, then there is a natural reciprocity homomorphism \[\rec_K:\mult K\To\Gal(K^\sub{ab}/K),\] where $K^\sub{ab}$ is the maximal abelian extension of $F$, which satisfies various functorialities and which induces an isomorphism \[\mult K/N_{L/K}(\mult L)\isoto\Gal(L/K)\] whenever $L$ is a finite abelian extension of $K$.

Moreover, the theory of finite fields (zero-dimensional local fields) tells us that if $\bb F$ is a finite field then there is a natural homomorphism \[\rec_{\bb F}:\bb Z\to\Gal(\bb F^\sub{ab}/\bb F),\quad 1\mapsto\op{Frob}_{\bb F}\] with similar functorial properties and which induces an isomorphism \[\bb Z/|\bb L/\bb F|\bb Z\isoto\Gal(\bb L/\bb F)\] whenever $\bb L$ is a finite (automatically abelian) extension of $\bb F$.

Summarising the situation so far, we may draw the following table:
\begin{quote}
\begin{tabular}{l|c}
Type of field & Domain of reciprocity map for class field theory\\ \hline
finite field & $\bb Z$\\
(one-dimensional) local field & its multiplicative group\\ 
two-dimensional local field & ??\\
three-dimensional local field & ??\\
$\vdots$ & $\vdots$
\end{tabular}
\end{quote}
A crowning achievement in the theory of higher dimensional local fields was the discovery of the objects ?? and the development of a class field theory for higher dimensional local fields entirely comparable to that for usual local fields. The theory was discovered independently by A.~Parshin and K.~Kato; historical comments and references may be found in the remarks in section \ref{subsection_hlcft} below.

The objects ?? which take over the role of the multiplicative group in higher dimensions are Milnor $K$-groups, and so before stating the main results of higher dimensional local class field theory we must summarise that theory; other introductions are \cite[IV]{Fesenko2002} and \cite[Chap.~7]{Gille2006}.

\subsection{Milnor $K$-theory}\label{subsection_Milnor_K_theory}
Algebraic $K$-theory has a long and interesting history, but only several key steps are important to us. By 1967 there were definitions of $K_0$, $K_1$, and $K_2$ of a ring, due to A.~Grothendieck, H.~Bass et al., and J.~Milnor respectively; we refer the interested reader to C.~Weibel's historical survey \cite{Weibel1999} for more details. If $F$ is a field then it follows rapidly from the definitions that there are isomorphisms \[K_0(F)\cong\bb Z,\quad\quad K_1(F)\cong\mult F.\] Moreover, given $x,y\in F$, R.~Steinberg's earlier work \cite{Steinberg1962} showed how to define an element $\{x,y\}\in K_2(F)$ and it implied that these elements are bilinear in $x,y$ and satisfy the {\em Steinberg relation} \[\{x,1-x\}=0. \tag{$x\in F\setminus\{0,1\}$}\] H.~Matsumoto \cite{Matsumoto1969} subsequently showed that these elements and relations give a presentation for $K_2(F)$.

Inspired by Matsumoto's result, Milnor made the following definition \cite{Milnor1970}:

\begin{definition}
Let $F$ be an arbitrary field, and $m\ge 0$ an integer. The {\em $m^\sub{th}$ Milnor $K$-group of $F$}, denoted (in these notes) $K_m(F)$, is \[K_m(F)=\underbrace{\mult{F}\otimes_{\bb{Z}}\dots\otimes_{\bb{Z}}\mult{F}}_\sub{$m$ times}/I\] where $I$ is the subgroup of $(\mult{F})^{\otimes m}$ generated by the Steinberg relation \[\{a_1\otimes\dots\otimes a_m: a_1,\dots,a_m\in\mult{F}\mbox{ such that }a_i+a_j=1\mbox{ for some }i\neq j\}.\] In the slightly degenerate cases of $m=0,1$, we have $K_0(F)=\bb{Z}$ and $K_1(F)=\mult{F}$, in accordance with the notation above, while Matsumoto's result implies that our new definition of $K_2(F)$ agrees with the old definition.

The image of a pure tensor $a_1\otimes\dots\otimes a_m$ in $K_m(F)$ is denoted $\{a_1,\dots,a_m\}$ and is called a {\em symbol}. The natural map \[\mult{F}\times\cdots\times\mult F\to K_m(F)\,\quad(a_1,\dots,a_m)\mapsto\{a_1,\dots,a_m\}\] is called {\em the symbol map}.

If $A$ is any abelian group, then a function $f:(\mult{F})^m\to A$ which is multilinear and satisfies $f(a_1,\dots,a_m)=0$, whenever $a_i+a_j=1$ for some $i\neq j$, is called a {\em Steinberg map}.
\end{definition}

In other words, $K_m(F)$ is the abelian group generated by symbols $\{a_1,\dots,a_m\}$, where $a_i\in\mult{F}$, subject to the relations 
\begin{description}
\item Multilinearity: $\{a_1,\dots,a_ia_i',\dots,a_m\}=\{a_1,\dots,a_i,\dots,a_m\}+\{a_1,\dots,a_i',\dots,a_m\}$
\item Steinberg relation: $\{a_1,\dots,a_m\}=0$ if $a_i+a_j=1$ for some $i\neq j$.
\end{description}

If $f:(\mult{F})^m\to A$ is a Steinberg map, then the defining relations for $K_m(F)$ imply that there is a unique group homomorphism $f^M:K_m(F)\to A$ making the following diagram commute
\[\xymatrix{
(\mult{F})^m \ar[r]^f\ar[d]_{\{\;\}} & A\\
K_m(F)\ar[ur]_{f^M} &
}\] where the vertical arrow is the symbol map. That is, the symbol map $(\mult{F})^m\to K_m(F)$ is the universal Steinberg map. We will often abuse notation and write $f$ instead of $f^M$.

\begin{example}\label{example_K_groups_of_local_field}
Let $K$ be a local field and let $\mu\subset K$ be the group of roots of unity inside it; put $m=|\mu|$. Then the Hilbert symbol $H:\mult K\times\mult K\To\mu$ is surjective, bilinear, and satisfies the Steinberg identity (e.g.~\cite[XIV.\S2]{Serre1979} or \cite[IV.5.1]{Fesenko2002}), thus inducing a surjective homomorphism $H:K_2(K)\to\mu$. A theorem of C.~Moore \cite{Moore1968} states that $\ker H=mK_2(K)$ and that this kernel is an uncountable, divisible group (even uniquely-divisible, by \cite{Merkurjev1983}); moreover, $K_2(K)\to\mu$ splits. In other words, \[K_2(K)\cong\mu\oplus\mbox{an uncountable, uniquely-divisible group.}\]

If $m>2$ then $K_m(K)$ is simply an uncountable, uniquely-divisible group \cite{Sivitskii1985}.
\end{example}

\begin{example}\label{example_K_groups_of_reals}
The map
\begin{align*}
\op{sign}:(\mult{\bb{R}})^m&\to\{\pm1\}\\
(x_1,\dots,x_n)&\mapsto\begin{cases}-1& \mbox{if }x_1,\dots,x_n<0 \\ 1&\mbox{otherwise}\end{cases}
\end{align*}
may be checked to be multilinear and Steinberg, thus inducing a homomorphism $\op{sign}:K_m(\bb R)\to\{\pm 1\}$. The kernel of this homomorphism is divisible if $m\ge 2$, though the author does not know if it is uniquely so.
\end{example}

\comment{
\begin{remark}
We explain some relations between Milnor and Quillen $K$-theory, for those who know the latter theory. Good introductory sources on classical and higher Quillen $K$-theory are \cite{Rosenberg1994}, \cite{Srinivas2008}, and \cite{Bass1997}.

For an arbitrary unital, associative ring $A$, let $K_m^Q(A)$ denote Quillen's $K$-groups (usually one would write $K_m$ for Quillen's $K$-groups and $K_m^M$ for Milnor's, but we adopt different notation since Quillen's will not appear outside of this remark). It is know that $K_*^Q(A):=\bigoplus_{m=0}^\infty K_m^Q(A)$ has the structure of a commutative graded ring.

If $A$ is a local ring then there are natural isomorphisms, \[\operatorname{rk}:K_0^Q(A)\isoto\bb{Z}\] and \[\det:K_1^Q(A)\isoto\mult{A}.\] From the determinant isomorphism and the product structure on $K_*^Q(A)$ we obtain a bilinear map \[\mult{A}\times\mult{A}\to K_2^Q(A),\quad (a,b)\mapsto{\det}^{-1}(a){\det}^{-1}(b).\tag{\dag}\]

Now suppose $A=F$ is a field. Then Matsumoto's theorem (see any of the sources above) states that (\dag) is a Steinberg map and that the induced map $K_2(F)\to K_2^Q(F)$ is an isomorphism of groups. The map (\dag) generalises to give a homomorphism $K_m(F)\to K_m^Q(F)$ for $m\ge2$ and a result of A.~Suslin \cite{Suslin1984} states that there exists a natural homomorphism in the other direction such that the composition $K_m(F)\to K_m^Q(F)\to K_m(F)$ is mulitplication by $(-1)^{m-1}(m-1)!$.
\end{remark}

\begin{remark}
Fix a symbol $\{a_1,\dots,a_m\}$ in $K_m(F)$; then the concatenation map \[(\mult{F})^n\to K_{n+m}(F),\quad(b_1,\dots,b_n)\mapsto\{b_1,\dots,b_n,a_1,\dots,a_m\}\] is Steinberg, and so descends to $K_n(F)\to K_m(F)$. In this way one checks that concatenation of symbols \[\{b_1,\dots,b_n\},\{a_1,\dots,a_m\}\mapsto \{b_1,\dots,b_n,a_1,\dots,a_m\}\] defines a bilinear map $K_n(F)\times K_m(F)\to K_{n+m}(F)$.

This makes $K_*(F):=\bigoplus_{m=0}^{\infty}K_m(F)$ into a associative graded ring with unit, known as the {\em Milnor ring} of $F$. This often reduces calculations in $K_m(F)$ to ones in $K_2(F)$.
\end{remark}
}

The usual norm map $N_{L/F}:\mult{L}\to\mult{F}$ for a finite extension $L/F$ of fields generalises to Minor $K$-groups. Historically, it was first defined on Milnor $K_2$ (as well as $K_0$ and $K_1$, using the original definitions of the $K$-groups) by J.~Milnor \cite[\S14]{Milnor1970}, and subsequently extended to $K_n$, all $n\ge 0$, by J.~Tate and H.~Bass \cite[Chap.~1, \S5]{Bass1973}, though they could not show in general that their definition was independent of a choice of generators for $L/F$. The well-definedness was established by K.~Kato \cite[\S1.7]{Kato1980}; Kato distributed his result at the 1980 Oberwolfach Algebraic $K$-theory conference, and A.~Suslin \cite{Suslin1982} immediately used it to prove that the Milnor $K$-theory of a field embeds into the Quillen $K$-theory, at least modulo torsion.

Kato's proof of the existence and uniqueness (once characterising properties are specified) of the norm map on Milnor $K$-theory is rather technical; complete proofs may be found in \cite[IV]{Fesenko2002} or \cite[7.3]{Gille2006}. We content ourselves with summarising the main properties:

\begin{proposition}
For each finite extension $L/F$ of fields there is a group homomorphism \[N_{L/F}:K_m(L)\to K_m(F),\] satisfying the following properties (among others):
\begin{enumerate}
\item When $m=1$, the norm map on Milnor $K$-groups $K_1(L)\to K_1(F)$ coincides with the usual norm map of fields $\mult{L}\to\mult{F}$.
\item If $E$ is an intermediate extension, then $N_{L/F}=N_{E/F}\circ N_{L/E}:K_M(L)\to K_m(E)\to K_m(F)$.
\item The composition $K_m(F)\to K_m(L)\stackrel{N_{L/F}}{\To} K_m(F)$ coincides with multiplication by $|L/F|$.
\item if $\{b_1\dots,b_m\}$ is a symbol in $K_m(L)$, with $b_1,\dots,b_{i}\in\mult{L}$ and $b_{i+1},\dots,b_m\in\mult{F}$, then \[N_{L/F}(\{b_1,\dots,b_m\})=N_{L/F}(\{b_1,\dots,b_i\})\{b_{i+1},\dots,b_m\}\in K_m(F),\] the product of a norm from $K_i(L)$ and a symbol in $K_{m-i}(F)$.
\end{enumerate}
\end{proposition}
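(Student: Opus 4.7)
The plan is to follow the Bass--Tate--Kato approach: first construct the norm for a simple finite extension via Milnor's exact sequence for the rational function field, and then define the norm for an arbitrary finite extension by composition along a tower of simple extensions. The main technical obstacle will be proving that this composite is independent of the chosen tower.

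For a simple extension $L=F(\al)$ with minimal polynomial $\pi\in F[t]$ of degree $d$, the first step is to invoke Milnor's theorem that there is a split short exact sequence
\[
0\to K_{m+1}(F)\to K_{m+1}(F(t))\xrightarrow{(\bor_{\pi'})}\bigoplus_{\pi'}K_m(F[t]/(\pi'))\to 0,
\]
where the direct sum ranges over monic irreducibles $\pi'\in F[t]$ and $\bor_{\pi'}$ is the tame symbol at $\pi'$. Together with the tame symbol $\bor_\infty:K_{m+1}(F(t))\to K_m(F)$ at infinity, one then defines $N_{L/F}(x)$ for $x\in K_m(L)=K_m(F[t]/(\pi))$ by lifting along $\bor_\pi$ to an element $\tilde x\in K_{m+1}(F(t))$ whose residues at all other finite places vanish, and setting $N_{L/F}(x):=-\bor_\infty(\tilde x)$. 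Well-definedness is immediate: the difference of two such lifts lies in $K_{m+1}(F)$, on which $\bor_\infty$ vanishes. Property (i) for $m=1$ is then a direct computation with tame symbols, matching the usual field norm. Property (iv) for simple extensions follows from bilinearity of the tame symbols together with the fact that an element of $K_*(F)\hookrightarrow K_*(F(t))$ has vanishing residue at every finite place; property (iii) for simple extensions reduces to applying the construction to an explicit lift of an element pulled back from $K_m(F)$.

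For a general finite extension, write $L=F(\al_1,\dots,\al_r)$, form the tower $F=F_0\subset F_1\subset\cdots\subset F_r=L$ of simple extensions, and define $N_{L/F}:=N_{F_1/F_0}\circ\cdots\circ N_{F_r/F_{r-1}}$. The main obstacle is proving this composite is independent of the chosen tower; this is the core of Kato's contribution. The strategy is to induct on $[L:F]$, pass to a common refinement of two competing towers, reduce by multiplicativity of degree to comparing two presentations of essentially the same simple extension, and then use the projection formula for simple extensions (already in hand) together with Weil reciprocity $\bor_\infty+\sum_{\pi'}N_{F[t]/(\pi'),F}\circ\bor_{\pi'}=0$ on $K_{m+1}$ of a suitable rational function field to force agreement. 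Once independence of the tower is established, transitivity (ii) is built into the definition, property (iii) follows by telescoping and using the simple case at each level, and property (iv) is proved by an induction along the tower, commuting the scalars $b_{i+1},\dots,b_m\in\mult F$ past one simple-extension norm at a time via the simple-case projection formula.
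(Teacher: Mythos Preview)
The paper does not actually prove this proposition: immediately before the statement it says that Kato's proof is ``rather technical'' and that ``complete proofs may be found in \cite[IV]{Fesenko2002} or \cite[7.3]{Gille2006}'', and then simply lists the properties without argument. Your outline is a faithful sketch of the Bass--Tate--Kato construction that those references carry out (Milnor's exact sequence for $K_{m+1}(F(t))$, norm for a simple extension via a lift and the residue at infinity, extension to arbitrary finite extensions by towers, with Kato's well-definedness argument), so you are not diverging from the paper --- you are supplying what the paper deliberately omits and delegates to the literature.
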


When $F$ is a discrete valuation field, its Milnor $K$-groups inherit additional structure, the most important of which is the border (or boundary) map \[\bor:K_n(F)\to K_n(\res F),\] where $\res F$ is the residue fields of $F$ with respect to its valuation:

\begin{proposition}[Milnor \cite{Milnor1970}]\label{lemma_the_second_border_map}
Let $F$ be a discrete valuation field. There is a unique group homomorphism \[\bor:K_m(F)\to K_{m-1}(\res{F})\] which satisfies \[\bor(\{u_1,\dots,u_{m-1},\pi\})=\{\res{u}_1,\dots,\res{u}_{m-1}\}\] for all uniformisers $\pi\in\mult{F}$ and all $u_i\in\mult\roi_F$.
\end{proposition}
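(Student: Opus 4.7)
The plan is to establish uniqueness by showing that the specified formula, combined with the presentation of Milnor $K$-theory, forces a particular value on every generator, and to establish existence via Milnor's construction of an auxiliary graded algebra into which $\mult F$ maps.

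For uniqueness, fix a single uniformiser $\pi$ of $F$. Writing each $a\in\mult F$ as $a=u\pi^n$ with $u\in\mult{\roi_F}$, multilinearity expands any symbol in $K_m(F)$ as a $\bb Z$-linear combination of symbols whose entries are each either a unit or $\pi$. The identity $\{\pi,\pi\}=\{\pi,-1\}$---a consequence of the standard fact $\{x,-x\}=0$---together with the antisymmetry of symbols allows any such symbol to be rewritten so that $\pi$ occurs in at most one slot, placed last. Hence $K_m(F)$ is generated by symbols of the two shapes $\{u_1,\ldots,u_m\}$ and $\{u_1,\ldots,u_{m-1},\pi\}$ with all $u_i\in\mult{\roi_F}$, and the defining formula determines $\bor$ on the second. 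To pin it down on the first, compare with a second uniformiser $\pi'=v\pi$: expanding $\{u_1,\ldots,u_{m-1},\pi'\}=\{u_1,\ldots,u_{m-1},v\}+\{u_1,\ldots,u_{m-1},\pi\}$ and demanding the formula for both $\pi$ and $\pi'$ forces $\bor\{u_1,\ldots,u_{m-1},v\}=0$ for all $v\in\mult{\roi_F}$. This covers all pure-unit symbols, so $\bor$ is uniquely determined.

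For existence I would follow Milnor and build a graded associative ring $L$ receiving a map from $\mult F$, from whose degree-$(m-1)$ component the boundary is extracted. Set $R=K_*(\res F)$ and $L=R\oplus R\xi$ as a graded $R$-module, where $\xi$ is a formal degree-$1$ symbol; extend the multiplication of $R$ by the relations $\xi\cdot\alpha=(-1)^{\deg\alpha}\alpha\cdot\xi$ for $\alpha\in R$ and $\xi^2=\{-1\}\xi$, verifying these make $L$ an associative graded ring consistent with the graded-anticommutativity of $R$. Define
\[\phi:\mult F\To L_1,\qquad u\pi^n\mapsto\{\res u\}+n\xi\quad(u\in\mult{\roi_F},\ n\in\bb Z),\]
a group homomorphism. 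The decisive step is verifying the Steinberg relation $\phi(a)\phi(1-a)=0$ for $a\in F\setminus\{0,1\}$ by case analysis on $(\nu_F(a),\nu_F(1-a))$: when both valuations vanish this reduces to Steinberg in $K_2(\res F)$; when $\nu_F(a)<0$, writing $1-a=-u\pi^n\,w$ with $\res w=1$, the product computes to $\{\res u,-\res u\}+n^2\{-1\}\xi$ plus cross-terms that collapse via $\{x,-x\}=0$ and $\xi^2=\{-1\}\xi$; the case $\nu_F(a)>0$ is symmetric. The universal property of $K_*(F)$ then yields a graded ring homomorphism $\Phi:K_*(F)\to L$; projecting onto the coefficient of $\xi$ defines $\bor:K_m(F)\to K_{m-1}(\res F)$, and by construction $\Phi(\{u_1,\ldots,u_{m-1},\pi\})=\{\res u_1,\ldots,\res u_{m-1}\}\xi$, so the required formula holds for every uniformiser.

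The principal technical obstacle is the Steinberg verification for $\phi$, particularly the mixed-valuation cases: it is this computation which forces the precise relation $\xi^2=\{-1\}\xi$ to be chosen just so. A secondary subtlety, often glossed over, is checking that $L$ is a well-defined associative graded ring, i.e.\ that the postulated commutation rules between $\xi$ and $R$ are compatible with the graded-anticommutativity of $R$ itself.
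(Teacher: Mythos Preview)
The paper does not give its own proof of this proposition; it is stated with attribution to Milnor \cite{Milnor1970} and no argument is supplied. Your proposal is therefore not competing with anything in the paper, and what you have written is essentially Milnor's original proof: the uniqueness argument via the generators $\{u_1,\dots,u_m\}$ and $\{u_1,\dots,u_{m-1},\pi\}$, and the existence argument via the auxiliary graded ring $L=K_*(\res F)[\xi]/(\xi^2-\{-1\}\xi)$ together with the Steinberg verification for $\phi$.

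Your sketch is correct. A couple of small points worth tightening. In the uniqueness argument, the reduction $\{\pi,\pi\}=\{-1,\pi\}$ (not $\{\pi,-1\}$, though these differ only by a sign) is what you use; note that $-1\in\mult{\roi_F}$, so after iteration every symbol really is a $\bb Z$-combination of the two claimed types. In the existence argument, the case $\nu_F(a)<0$ deserves one more line: after your computation the cross terms contribute $n\{-1\}\xi$ and the $\xi^2$ term contributes $n^2\{-1\}\xi$, giving $n(n+1)\{-1\}\xi$, which vanishes because $n(n+1)$ is even and $2\{-1\}=\{1\}=0$ in $K_1(\res F)$. You allude to this collapse but do not quite say it; making it explicit removes any doubt that the relation $\xi^2=\{-1\}\xi$ is exactly the right one.
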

\comment{
Still supposing that $F$ is a discrete valuation field, one writes $UK_n(F)$ for the subgroup of $K_n(F)$ generated by symbols of the form $\{u_1,\dots,u_n\}$, where $u_i\in U_F$. The border map of the previous lemma induces a short exact sequence \[0\To UK_n(F)\To K_n(F)\stackrel{\bor}{\To}K_{n-1}(\res F)\To 0\]
}

\begin{remark}
Suppose that $F$ is a discrete valuation field. Then the classical {\em tame symbol} is the map \[\mult F\times\mult F\to\mult{\res F},\quad (u\pi^r,vt\pi^s)\mapsto (-1)^{rs}\res{uv^{-1}},\] where $u,v\in\mult{\roi_F}$, $r,s\in\bb Z$, and we have fixed a uniformiser $\pi$. This is a bilinear Steinberg map, independent of $\pi$, and so induces a natural homomorphism \[K_2(F)\to\mult{\res F}=K_1(\res F).\] This is exactly the border map of the proposition, in the case $m=2$.
\end{remark}

\begin{example}
For each prime number $p$, let $\bor_p:K_2(\bb Q)\to\mult{\bb F_p}$ be the border map for the p-adic valuation provided by the proposition or previous example. Let $\bor_\infty$ denote the composition \[\bor_\infty:K_2(\bb Q)\to K_2(\bb R)\stackrel{\op{sign}}{\To}\{\pm 1\},\] where the $\op{sign}$ homomorphism was defined in example \ref{example_K_groups_of_reals}. Then \[\bor_\infty\oplus\bigoplus_p\bor_p:K_2(\bb Q)\To\{\pm1\}\oplus\bigoplus_p\mult{\bb F_p}\] is an isomorphism (the proof may be found as an exercise in \cite[IX.2]{Fesenko2002}).

A theorem of H.~Garland \cite{Garland1971} states that the analogous homomorphism for an arbitrary number field in place of $\bb Q$ has finite kernel.
\end{example}

\comment{
\begin{remark}[Compatibility between norm and border maps]
Suppose that $L/F$ is a finite extension of complete discrete valuation fields; then the following diagram commutes:
\[\xymatrix{
K_{m+1}(L) \ar[d]_{N_{L/F}} \ar[r]^{\bor_L} & K_m(\res{L}) \ar[d]^{N_{\res{L}/\res{F}}}\\
K_{m+1}(F) \ar[r]_{\bor_F} & K_m(\res{F})
}\]
where $\bor_F$, $\bor_L$ are the border maps on $F$, $L$ associated to their discrete valuations.
\end{remark}
}

\subsection{Higher dimensional local class field theory: statement and sketches}\label{subsection_hlcft}
With the definitions of Milnor $K$-theory completed, we may state the main theorem of higher dimensional local class field theory, which the reader may compare with the classical statement for (one-dimensional) local fields (e.g.~\cite[XIII.\S4]{Serre1979} or \cite[IV.4]{Fesenko2002}).

\begin{theorem}[A.~Parshin, K.~Kato]
Given any $n$-dimensional local field $F$ there is a natural `reciprocity' homomorphism \[\rec_F:K_n(F)\To\Gal(F^\sub{ab}/F).\] These have the following properties:
\begin{enumerate}
\item (Functoriality) Given a finite extension $L/F$ of $n$-dimensional local fields, the following two diagrams commute:
\[\xymatrix{
K_n(L) \ar[r]^{\rec_L}\ar[d]_{N_{L/F}} & \Gal(L^\sub{ab}/L)\ar[d]^{\sub{restriction}}\\
K_n(F) \ar[r]_{\rec_F} & \Gal(F^\sub{ab}/F)
}\qquad
\xymatrix{
K_n(L) \ar[r]^{\rec_L} & \Gal(L^\sub{ab}/L)\\
K_n(F) \ar[r]_{\rec_F}\ar[u] & \Gal(F^\sub{ab}/F)\ar[u]_{\sub{transfer}}
}
\]
If $L/F$ is moreover an abelian extension then the induced homomorphism \[K_n(F)/N_{L/F}(K_n(L))\stackrel{\rec_F}{\To} \Gal(L/F)\] is an isomorphism.
\item (Compatibility with residue field) The following diagram commutes:
\[\xymatrix{
K_n(F)\ar[r]^{\rec_F}\ar[d]_\bor & \Gal(F^\sub{ab}/F)\ar[d]^{\sigma\mapsto\res\sigma}\\
K_{n-1}(\res F)\ar[r]_{\rec_{\res F}} & \Gal(\res F^\sub{ab}/\res F)
}\]
where the bottom arrow is the reciprocity map for the $n-1$ -dimensional local field $\res F$, and the right arrow comes from restricting an automorphism $\sigma$ to the maximal unramified abelian extension of $F$.
\item (Normalisation) If $F$ is a finite field, i.e.~a zero-dimensional local field, then the reciprocity map is given by \[\rec_F:K_0(F)=\bb Z\To\Gal(F^\sub{ab}/F),\quad 1\mapsto\op{Frob}_F.\]
\item (Existence theorem) See remarks \ref{remark_Kato} and \ref{remark_Fesenko} below.
\end{enumerate} 
\end{theorem}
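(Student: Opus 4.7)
My plan would be to proceed by induction on the dimension $n$, using the compatibility with the residue field (property (ii)) as the central recursive device. The base case $n=0$ is essentially the normalization condition (iii), and the case $n=1$ is the classical local class field theory statement for usual local fields (due to Artin--Tate and others), which I would assume as an input. Throughout, I would work with the topological Milnor $K$-group $K_n^{\op{top}}(F)$ rather than $K_n(F)$ itself, since the divisible subgroup $\Lambda_n(F)$ (cf.\ section \ref{subsection_top_Milnor_K_groups}) lies in the kernel of any continuous map to a profinite Galois group.

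First I would build the reciprocity map on the unramified part. Any finite abelian unramified extension $L/F$ corresponds uniquely to a finite abelian extension $\res L/\res F$ of $(n-1)$-dimensional local fields, with canonically identified Galois groups. Using the inductive hypothesis and the border map $\bor:K_n(F)\to K_{n-1}(\res F)$ from proposition \ref{lemma_the_second_border_map}, I would define the composite $\rec_F^{\op{unr}}$ on the unramified quotient as $\rec_{\res F}\circ\bor$, so that (ii) holds by construction. For abelian extensions that are totally tamely ramified, I would use a generalized tame-symbol construction built from a sequence of local parameters $t_1,\dots,t_n$ and the decomposition $\mult F\cong\mult{\ROI_F}\times t_1^{\bb Z}\times\cdots\times t_n^{\bb Z}$ of corollary \ref{corollary_decomposition_of_units}, together with a lift of the Frobenius of $F^{(n)}$; this yields the analogue of the tame piece of classical local CFT at each level.

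The hard part will be the wild ramification piece, which is where the four approaches in the literature (Parshin, Kato, Fesenko, Koya--Spiess) genuinely diverge. In equal characteristic $p>0$, following Parshin, I would use Artin--Schreier--Witt theory: the Artin--Schreier--Witt pairing gives a cup-product
\[
H^1(F,\bb Z/p^r)\times K_n(F)/p^r\To H^{n+1}(F,\mu_{p^r}^{\otimes n})\cong \bb Z/p^r,
\]
and one must prove this is a perfect pairing of a profinite group with a discrete group. In mixed characteristic, following Kato, one replaces Artin--Schreier--Witt by Bloch--Kato's syntomic cohomology (or differential forms computations) to compute $H^1(F,\bb Z/p^r)$ and construct the pairing. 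Establishing non-degeneracy of this pairing is the core technical obstacle: it relies on Milnor $K$-theoretic computations of $K_n(F)/p^r$ using the Classification theorem \ref{theorem_classification} and an induction through the tower of residue fields, plus Tate-style duality in Galois cohomology of higher local fields. Once the pairing is established, it defines $\rec_F\bmod p^r$ on the wild part.

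Finally, I would assemble the reciprocity map from its tame, unramified, and wild pieces, and verify the asserted properties. Functoriality under $N_{L/F}$ (the first diagram in (i)) reduces, by compatibility with $\bor$ and the projection formula for norms on Milnor $K$-theory, to the inductive hypothesis in dimension $n-1$ plus a direct check at the tame and wild levels; functoriality under transfer is dual. The isomorphism assertion $K_n(F)/N_{L/F}K_n(L)\isoto\Gal(L/F)$ for finite abelian $L/F$ follows by dévissage through an unramified/tame/wild filtration of $L$, each piece of which is handled by the construction above. The existence theorem, asserting the bijection between open finite-index subgroups of $K_n^{\op{top}}(F)$ and finite abelian extensions, I would defer to remarks \ref{remark_Kato} and \ref{remark_Fesenko} as indicated.
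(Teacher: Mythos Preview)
The paper does not give a proof of this theorem: it is a survey, and the theorem is stated with attribution to Parshin and Kato, followed by four remarks (\ref{remark_Parshin}--\ref{remark_Fesenko} and the Koya--Spiess remark) that sketch the existing approaches in the literature without carrying out any of the technical steps. So there is no ``paper's own proof'' to compare against in the usual sense.

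That said, your outline is a reasonable synthesis of two of those sketches. Your treatment of the unramified and tame parts via $\bor$ and the decomposition of $\mult F$ is standard and matches how all approaches begin. For the wild part you split cases exactly as the paper's remarks do: Artin--Schreier--Witt pairings in equal characteristic (this is Parshin's method, remark \ref{remark_Parshin}) and a cohomological pairing in mixed characteristic (this is Kato's method, remark \ref{remark_Kato}). You correctly identify the non-degeneracy of the pairing $K_n(F)/\ell \times H^1(F,\bb Z/\ell)\to H^{n+1}(F,\mu_\ell^{\otimes n})$ and the isomorphism $H^{n+1}(F,\mu_\ell^{\otimes n})\cong\tfrac{1}{\ell}\bb Z/\bb Z$ as the technical core; the paper singles out exactly the same step as ``Kato's key step''. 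Your proposal does not mention Fesenko's Neukirch-style explicit construction (remark \ref{remark_Fesenko}) or the Koya--Spiess class-formation approach, which are genuinely different routes the paper also records, but as a proof strategy your hybrid Parshin--Kato outline is sound and aligned with what the paper summarises.
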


The first glimpse of higher dimensional local class field theory was in work by Y.~Ihara \cite{Ihara1968}, who studied degree $p$ cyclic extensions of the field $\bb Q_p\{\{t\}\}$. Around ten years later, Parshin and Kato independently discovered higher dimensional local class field theory using Milnor $K$-theory, and in the following remarks we will describe their, and others', approaches.

A more detailed survey of higher dimensional local class field theory is \cite{Fesenko1996}.

\begin{remark}[Parshin's approach]\label{remark_Parshin}
A.~Parshin \cite{Parshin1975, Parshin1984, Parshin1990} established higher dimensional local class field theory in finite characteristic by developing a generalisation of the one-dimensional Artin-Schreier-Witt theory due to Y.~Kawada and I.~Satake \cite{Kawada1956}. First recall that if $F$ is any field of characteristic $p\neq 0$ then there is the non-degenerate Witt pairing for any $r\ge 1$ \[\Gal(F_r/F)\times W_r(F)/(\op{Fr}-1)W_r(F)\To \bb Z/p^r\bb Z,\] where $W_r(F)$ is the ring of $r$-truncated Witt vectors of $F$, $\op{Fr}$ is the usual Frobenius operator on it, and $F_r$ is the maximal abelian $p^r$-exponent extension of $F$. When $F$ is an $n$-dimensional local field of characteristic $p$, Parshin explicitly defined a non-degenerate, continuous pairing \[K_n^\sub{top}(F)/p^rK_n^\sub{top}(F)\times W_r(F)/(\op{Fr}-1)W_r(F)\to \bb Z/p^r\bb Z,\] where $K_n^\sub{top}(F)$ denotes the topological Milnor $K$-group of section \ref{subsection_top_Milnor_K_groups}. From this one obtains a $p^r$-approximation of the reciprocity map \[K_n^\sub{top}(F)/p^rK_n^\sub{top}(F)\To \Gal(F_r/F).\]

The tame and unramified parts of the reciprocity map can be treated much more easily, and, letting $r\to\infty$, they all stitch together to produce the complete reciprocity map $\rec_F:K_n^\sub{top}(F)\to\Gal(F^\sub{ab}/F)$.
\end{remark}

\begin{remark}[Kato's approach]\label{remark_Kato}
K.~Kato \cite{Kato1979, Kato1980, Kato1982, Kato1996} established higher dimensional local class field theory in general by using the same type of cohomological argument as J.~Tate \cite{ArtinTate1968} used in dimension one, as follows. Firstly, for any field $F$ and any integer $\ell$ not divisible by its characteristic, the Kummer isomorphism \[\mult F/F^{\times\ell}\isoto H^1(F,\mu_{\ell})\] is known to be a Steinberg symbol and so it gives rise to the so-called cohomological symbol \[K_n(F)/\ell K_n(F)\To H^n(F,\mu_{\ell}^{\otimes n}).\] Taking a cup product then yields a natural pairing \[K_n(F)/\ell K_n(F)\times H^1(F,\bb Z/\ell\bb Z)\To H^{n+1}(F,\mu_{\ell}^{\otimes n}).\]

Kato's key step was to show that if $F$ is an $n$-dimensional local field then it has cohomological dimensional $n+1$ and there is a natural isomorphism \[H^{n+1}(F,\mu_{\ell}^{\otimes n})\cong\frac{1}{\ell}\bb Z/\bb Z.\] Assuming that $F$ has characteristic $0$, one takes the limit over all $\ell$ and thus arrives at a pairing \[K_n(F)\times H^1(F,\bb Q/\bb Z)\To\bb Q/\bb Z;\] Pontryagin duality immediately produces the reciprocity map \[K_n(F)\to\Gal(F^\sub{ab}/F).\] When $F$ has finite characteristic, one should replace certain of these Galois cohomology groups by de-Rham-Witt cohomology groups.

Kato later proved the Existence theorem in \cite{Kato2000}: There is a bijective correspondence between finite abelian extensions of $F$ and `finite index, open subgroups' of $K_n(F)$. Preferring not to use topological Milnor $K$-groups, Kato gave a definition of the `finite index, open subgroups' of $K_n(F)$ using an Ind-Pro formalism which he developed.
\end{remark}

\begin{remark}[Fesenko's approach]\label{remark_Fesenko}
I.~Fesenko \cite{Fesenko1991b, Fesenko1991, Fesenko1991a} generalised J.~Neukirch's \cite[II]{Neukirch1986} explicit approach to local class field theory to the higher dimensional setting as follows. Let $F$ be an $n$-dimensional local field, and let $p$ denote the characteristic of the final residue field of $F$. Given any finite extension $L$ of $F$, let $\tilde L$ be the extension of $L$ obtained by adjoining all roots of unity of order coprime to $p$; this is the {\em maximal purely unramified extension} of $L$, and $\Gal(\tilde L/L)$ is procyclic, generated by the Frobenius.

Let $L$ be a finite abelian extension of $F$. Given $\sigma\in\Gal(L/F)$, there exists $\tilde\sigma\in\Gal(\tilde L/F)$ such that $\tilde\sigma$ extends $\sigma$ and such that $\tilde\sigma|_{\tilde F}$ is a strictly positive integer power of the Frobenius. Let $\Sigma$ be the fixed field of $\tilde\sigma$ (it is a finite extension of $F$), let $t_1,\dots,t_n$ be a system of local parameters for $\Sigma$, and define \[{\bf r}_{L/F}(\sigma):=N_{\Sigma/F}(\{t_1,\dots,t_n\})\mbox{ mod }N_{L/F}(K_n^\sub{top}(L))\,\in K_n^\sub{top}(F)/N_{L/F}(K_n^\sub{top}(L)).\] Fesenko showed that this does not depend on the choice of $\tilde\sigma$ or of the local parameters, and that the resulting map \[{\bf r}_{L/F}:\Gal(L/F)\To K_n^\sub{top}(F)/N_{L/F}(K_n^\sub{top}(L))\] is an isomorphism. Its inverse is the reciprocity map for the extension $L/F$.

To verify the properties of ${\bf r}_{L/F}$, Fesenko analysed the structure of $K_n^\sub{top}(F)$ using explicitly-defined pairings and topological expansions of elements in $K_n^\sub{top}(F)$. In finite characteristic these are the same pairings as appear in Parshin's approach; in mixed characteristic he used S.~Vostokov's higher dimensional generalisation \cite{Vostokov1985} of his explicit formula \cite{Vostokov1978} for the classical Hilbert symbol.

Fesenko also established the Existence theorem using his framework of topological Milnor $K$-groups: There is a bijective correspondence between the finite abelian extensions of $F$ and the finite index open subgroups of $K_n^\sub{top}(F)$, given by $L\mapsto N_{L/F}(K_n^\sub{top}(L))$.
\end{remark}

\begin{remark}[Koya and Spiess' approach]
Y.~Koya \cite{Koya1990, Koya1993} proved higher dimensional local class field theory using the language of class formations and complexes of Galois modules. His methods were further developed by M.~Speiss \cite{Spiess1997, Spiess2000}, who gave the first proof that if $L/F$ is a finite Galois extension (not necessarily abelian) of $n$-dimensional local fields, then \[K_n(F)/N_{L/F}(K_n(L))\cong\Gal(L/F)^\sub{ab}.\]
\end{remark}

\section{Constructing higher dimensional fields: the regular case}\label{section_regular}
The aim of this section is to explain, in the easiest case, the standard semi-global construction of higher dimensional fields; in a sense, this is the natural way in which higher dimensional fields arise. The one-dimensional case is straightforward and well-known: if $\roi$ is a one-dimensional, Noetherian, regular local ring, then it is a discrete valuation ring and so \[F=\Frac\hat\roi\] is a complete discrete valuation field. Observe that we formed $F$ by first {\em completing} $\roi$ and then {\em localising} by passing to the field of fractions. To generalise this construction to higher dimensions we will iterate these processes.

For example, suppose that $A$ is a two-dimensional, regular, local ring, essentially of finite type over $\bb Z$, with maximal ideal $\frak m$, and that we choose a non-maximal, non-zero prime ideal $\frak p$ for which $A/\frak p$ is regular; consider the following sequence of localisations and completions:
\[\begin{array}{ccccccccccc}
A&\!\leadsto\!&\hat{A}&\!\leadsto\!&\left(\hat{A}\right)_{\hat{\frak p}}&\!\leadsto\!&\hat{\left(\hat{A}\right)_{\hat{\frak p}}}&\!\leadsto\!&\mbox{Frac}\left(\hat{\left(\hat{A}\right)_{\hat{\frak p}}}\right)\\
&&&&&&\parallel&&\parallel\\
&&&&&&A_{\frak{m},\frak{p}}&&F_{\frak{m},\frak{p}}
\end{array}\]
which we now explain in more detail. First we $\frak m$-adically complete $A$, and then we localise away from $\hat{\frak p}=\frak p\hat A$ (the regularity of $A/\frak p$ ensures that $\hat{\frak p}$ is a prime ideal of $\hat A$). The result is a regular one-dimensional local ring, which we complete to obtain $A_{\frak{m},\frak{p}}$, a complete discrete valuation ring. Its field of fractions $F_{\frak{m},\frak{p}}$ is a two-dimensional local field.

In this section we will carefully show that this continued localisation-completion process works in arbitrary dimensions, in suitably regular situations: the input is a `regular $n$-chain' $\ul A$, namely a local ring equipped with a suitably regular chain of prime ideals, and the output is $HL(\ul A)$, a field of cdvdim $\ge n$ and with $n^\sub{th}$ residue field equal to $k(A)$. Geometrically, this corresponds to picking a flag of irreducible subschemes $\xi$ on a scheme $X$, in which case the resulting field is often denoted $F_\xi$; see remark \ref{remark_geometric_interpretation}.

In the next section we will discard the regularity hypotheses and describe the construction in general.

\begin{remark}
We require several definitions and results which may be found in any standard text on commutative algebra (e.g.~\cite{Matsumura1989}). A degree of familiarity with these ideas is expected. Let $A$ be a Noetherian local ring with maximal ideal $\frak{m}$ and residue field $k$.

{\em Regularity.} $A$ is said to be regular if and only if $\dim{A}=\dim_k{\frak{m}/\frak{m}^2}$. Geometrically, letting $X=\Spec{A}$ and $x=\frak{m}\in X$, this says that the dimension of the cotangent space at $x$ equals the dimension of $X$. If $A$ is regular then it is a domain. If $A$ is regular then a localisation of $A$ away from a prime ideal is also regular.

{\em Completions.} Let $\hat{A}$ be the $\frak{m}$-adic completion of $A$. Then the natural map $A\to\hat{A}$ is faithfully flat (hence injective); in particular, this implies that for any ideal $I\subset A$, the natural map $A/I\to\hat{A}/I\hat{A}$ induces an isomorphism $\widehat{A/I}\cong\hat{A}/I\hat{A}$. Moreover, $\dim A=\dim\widehat{A}$, and $A$ is regular if and only if $\hat{A}$ is regular.
\end{remark}

\begin{lemma}\label{lemma_regularity_completion_preserves_primality}
Let $A$ be a Noetherian local ring. If $\frak{p}\subset A$ is a prime ideal of $A$ such that the local ring $A/\frak{p}$ is regular then $\frak{p}\hat{A}$ is a prime ideal of $\hat{A}$ such that $\hat{A}/\frak{p}\hat{A}$ is regular.
\end{lemma}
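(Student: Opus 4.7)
The plan is to deduce both assertions at once by exploiting the single canonical identification
\[
\hat{A}/\frak{p}\hat{A}\;\cong\;\widehat{A/\frak{p}}
\]
recorded in the preliminary remark on completions (which itself follows from faithful flatness of $A\to\hat A$). Once this identification is in hand, neither the primality of $\frak{p}\hat A$ nor the regularity of the quotient requires any further work with power series or with generators of $\frak p$.

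First I would note that, since $A/\frak{p}$ is by hypothesis a Noetherian regular local ring, the cited fact that completion preserves regularity applies to it, giving that $\widehat{A/\frak p}$ is regular. Transported across the displayed isomorphism, this immediately yields that $\hat{A}/\frak{p}\hat{A}$ is a regular local ring; in particular it is nonzero, so $\frak{p}\hat{A}$ is a proper ideal. Next I would invoke the other fact recorded in the remark, that every regular local ring is a domain, applied to $\hat{A}/\frak{p}\hat{A}$; this forces $\frak p\hat A$ to be prime, proving the first assertion. The second assertion is precisely the regularity already observed.

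I do not anticipate a genuine obstacle: the argument is a two-step chase through the preliminary remark, using only (a) $\widehat{A/I}\cong\hat A/I\hat A$, (b) completion preserves regularity of Noetherian local rings, and (c) regular local rings are domains. The only point meriting a sentence of care is that the isomorphism in (a) is being applied with $I=\frak p$ (as an ideal of $A$, not of $\hat A$), which is what makes the right-hand side $\widehat{A/\frak p}$ rather than something more complicated; this is the content of the remark and does not rely on any property of $\frak p$ beyond being an ideal.
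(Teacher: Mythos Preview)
Your proposal is correct and follows exactly the same approach as the paper: use the identification $\hat{A}/\frak{p}\hat{A}\cong\widehat{A/\frak{p}}$, then invoke that completion preserves regularity and that regular local rings are domains. The paper's proof is just the one-sentence version of what you wrote.
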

\begin{proof}
Using the remarks above we see that $\hat{A}/\frak{p}\hat{A}\cong\widehat{A/\frak{p}}$ is the completion of a regular local ring, hence is regular, and hence is a domain.
\end{proof}

Let $A$ be a Noetherian local ring and $\frak{p}\subset A$ a prime ideal; it is convenient to say that $\frak{p}$ is regular if and only if $A/\frak{p}$ is regular. Geometrically, this means $V(\frak p)$ is a regular subscheme of $\Spec A$.

The data which are required for our first construction of higher dimensional fields are regular Noetherian local rings equipped with a suitable chain of prime ideals: To be precise, we consider tuples \[\underline{A}=(A,\frak{p}_n,\dots,\frak{p}_0)\] where $A$ is a Noetherian, $n$-dimensional, regular local ring and the $(\frak{p}_i)_i$ are regular prime ideals which form a complete flag of primes; i.e., \[A\supset\frak{p}_n\supset\dots\supset\frak{p}_0.\] The fact that $A$ is local implies that $\frak{p}_n$ is the maximal ideal of $A$, and the fact that $A$ is a domain implies that $\frak{p}_0=0$.

For brevity we will refer to a piece of data $\ul{A}$ satisfying these conditions as {\em a regular chain of length $n$} (or {\em regular $n$-chain}). The residue field $A/\frak{p}_n$ of $A$ will be of particular interest, so we will denote it by $k(\ul{A})=k(A)$.

\begin{example}\label{example_low_dimensions}
Consider the low dimensional examples:
\begin{enumerate}
\item $n=0$. A regular chain of length $0$ is obviously is just a field and its maximal ideal $0$: \[\ul F=(F,0).\] It is convenient to identify a regular chain of length $0$ with the field itself.
\item $n=1$. A regular chain of length $1$ is the data of a discrete valuation ring $\roi$, its maximal ideal $\frak p$, and the zero ideal: \[\ul{\roi}=(\roi,\frak{p},0).\]
\item $n=2$. In the previous two cases, the data was completely determined by the choice of the ring; that is no longer the case when $n\ge 2$. A regular chain of length $2$ consists of a Noetherian, two-dimensional, regular local ring $A$, its maximal ideal $\frak m$, a {\em choice} of a regular, non-zero, non-maximal prime ideal $\frak p$, and the zero ideal: \[\ul{A}=(A,\frak m,\frak p,0).\]

For example, let $A=\bb{Z}[t]_{\langle p,t\rangle}$ be the localization of $\bb{Z}[t]$ away from the maximal ideal generated by $t$ and a prime number $p\in\bb{Z}$. Then $(A,\frak{m},tA,0)$ and $(A,\frak{m},pA,0)$ are regular chains of length $2$: all that needs to be noticed is that $A/tA=\bb{Z}_{\pid p}$ and $A/pA=\bb{F}_p[t]_{\pid t}$ are regular.

As a second, more geometric, example, take $A=k[t_1,t_2]_{\pid{t_1,t_2}}$ (with $k$ a field), and let $\frak p$ be the ideal generated by either $t_1$ or $t_2$.
\end{enumerate}
\end{example}

\begin{remark}[Local parameters]\label{remark_local_parameters}
If $\ul A$ is a regular $n$-chain, then there exists a sequence of elements $t_1,\dots,t_n\in A$ such that \[\frak p_n=\pid{t_1,\dots,t_n},\,\frak p_{n-1}=\pid{t_2,\dots,t_n},\dots,\,\frak p_1=\pid{t_n},\,\frak p_0=0.\] Conversely, if $A$ is a regular local ring and $t_1,\dots,t_n$ are a regular sequence (i.e.~they generate the maximal ideal), then defining $\frak p_i$ in this way produces a regular $n$-chain. The relationship with sequences of local parameters in higher dimensional fields will be given in remark \ref{remark_local_parameters_2}.
\end{remark}

The two main ways of manipulating the data $\underline{A}$ are completing and localising. Firstly completing: \[\op{comp}\underline{A}:=(\hat{A},\frak{p}_n\hat{A},\dots,\frak{p}_0\hat{A}).\] That is, one completes $A$ with respect to its maximal ideal $\frak{p}_n$ and then pushes forward each of the primes. Secondly, localising: \[\op{loc}\underline{A}:=(A_{\frak{p}_{n-1}},\frak{p}_{n-1}A_{\frak{p}_{n-1}},\dots,\frak{p}_0A_{\frak{p}_{n-1}}).\] That is, one localises away from $\frak{p}_{n-1}$ and then pushes forward each of the subsequent primes.

The fundamental result is that these two processes preserve the desired properties of the data:

\begin{lemma}\label{lemma_comp_and_loc_preserve_reg_chains}
If $\ul{A}$ is a regular chain of length $n$ then $\op{comp}\ul A$ is a regular chain of length $n$ and $\op{loc}\ul A$ is a regular chain of length $n-1$. Moreover, regarding residue fields, \[k(\op{comp}\ul A)=k(\ul A),\qquad k(\op{loc}\ul A)=\Frac(A/\frak p_{n-1}).\]
\end{lemma}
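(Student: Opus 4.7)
My plan is to treat the completion and localisation statements separately, in each case verifying the three features of a regular chain (regular Noetherian local ring of the correct dimension, strictly descending chain of prime ideals, and regularity of each quotient) and then reading off the residue field as a corollary.

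For $\op{comp}\ul A=(\hat A,\frak p_n\hat A,\dots,\frak p_0\hat A)$, the facts recalled before lemma \ref{lemma_regularity_completion_preserves_primality} already give that $\hat A$ is an $n$-dimensional, Noetherian, regular local ring with maximal ideal $\frak p_n\hat A$. For each $i=0,\dots,n$ the ring $A/\frak p_i$ is regular by hypothesis, so lemma \ref{lemma_regularity_completion_preserves_primality} applies and yields simultaneously that $\frak p_i\hat A$ is a prime ideal of $\hat A$ and that $\hat A/\frak p_i\hat A\cong \widehat{A/\frak p_i}$ is regular. Strictness of the chain $\frak p_n\hat A\supset\cdots\supset\frak p_0\hat A$ then follows from faithful flatness of $A\to \hat A$, which gives $\frak p_i\hat A\cap A=\frak p_i$ and so prevents two consecutive ideals from becoming equal. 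The residue field identification $k(\op{comp}\ul A)=\hat A/\frak p_n\hat A\cong A/\frak p_n=k(\ul A)$ is immediate from $\widehat{A/\frak p_n}\cong A/\frak p_n$ since $A/\frak p_n$ is already a field.

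For $\op{loc}\ul A$, I would first observe that a regular local ring is a domain and in particular catenary, so the saturated chain $\frak p_{n-1}\supsetneq\frak p_{n-2}\supsetneq\cdots\supsetneq\frak p_0=0$ together with the strict inclusion $\frak p_n\supsetneq \frak p_{n-1}$ (inside a ring of dimension $n$) forces $\height(\frak p_{n-1})=n-1$, whence $A_{\frak p_{n-1}}$ is Noetherian, local, and of dimension $n-1$; regularity is preserved under localisation, so $A_{\frak p_{n-1}}$ is regular. Each $\frak p_i A_{\frak p_{n-1}}$ (for $i\le n-1$) is a prime ideal because $\frak p_i\subseteq\frak p_{n-1}$ does not meet the complement of $\frak p_{n-1}$, and the quotient $A_{\frak p_{n-1}}/\frak p_iA_{\frak p_{n-1}}\cong (A/\frak p_i)_{\frak p_{n-1}/\frak p_i}$ is a localisation of a regular local ring, hence regular. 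Strictness of the resulting chain of primes is preserved by localisation since $\frak p_{n-1}$ lies above every $\frak p_i$ with $i\le n-1$. The residue field identification $k(\op{loc}\ul A)=A_{\frak p_{n-1}}/\frak p_{n-1}A_{\frak p_{n-1}}=\Frac(A/\frak p_{n-1})$ is the definition of the residue field at a prime.

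There is really no substantial obstacle: the entire argument is a bookkeeping exercise using the facts collected in the paragraph preceding lemma \ref{lemma_regularity_completion_preserves_primality} together with that lemma itself. The only place minor care is needed is in the computation $\height(\frak p_{n-1})=n-1$, where one wants to use that a regular local ring is catenary (or, equivalently here, Cohen--Macaulay) so that the saturated chain below $\frak p_{n-1}$ has the expected length and produces the expected Krull dimension for the localisation.
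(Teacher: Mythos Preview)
Your proof is correct and follows essentially the same route as the paper's: lemma~\ref{lemma_regularity_completion_preserves_primality} plus faithful flatness for $\op{comp}$, and the isomorphism $A_{\frak p_{n-1}}/\frak p_iA_{\frak p_{n-1}}\cong (A/\frak p_i)_{\frak p_{n-1}/\frak p_i}$ plus preservation of regularity under localisation for $\op{loc}$. The only difference is that you invoke catenarity to justify $\dim A_{\frak p_{n-1}}=n-1$, whereas the paper simply asserts it; in fact no catenary hypothesis is needed, since the given chain $\frak p_n\supsetneq\cdots\supsetneq\frak p_0$ already has length $n=\dim A$ and is therefore maximal, forcing $\height(\frak p_{n-1})=n-1$ directly.
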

\begin{proof}
We first consider the completion process. If $\ul{A}$ is a regular chain of length $n$ then we saw in lemma \ref{lemma_regularity_completion_preserves_primality} that $\frak{p}_i\hat{A}$ is a regular prime ideal of $\hat{A}$ for $i=0,\dots,n$. Secondly our remarks above tell us that $\hat A$ is regular and of dimension $n$. Finally, injectivity of $A/\frak p_i\to\hat{A/\frak p_i}=\hat A/\frak p_i\hat A$ implies that $A\cap\frak{p}_i\hat{A}=\frak{p}_i$ for each $i$, so that $\frak p_0\hat A\subseteq\cdots\subseteq \frak p_n\hat A$ is a strictly increasing chain. Therefore $\op{comp}\underline{A}$ is another regular $n$-chain. Regarding residue fields, \[k(\op{comp}\ul A)=\hat{A}/\frak{p}_n\hat A=A/\frak p_n=k(A).\]

The localisation process is even easier. The isomorphism \[A_{\frak{p}_{n-1}}/\frak{p}_iA_{\frak{p}_{n-1}}\cong(A/\frak{p}_i)_{\frak{p}_{n-1}/\frak{p}_i}\] and the fact that localisation preserves regularity imply that $\frak{p}_iA_{\frak{p}_{n-1}}$ is a regular prime of the regular local ring $A_{\frak{p}_{n-1}}$, for $i=1,\dots,n-1$. Moreover, $\dim A_{p_{n-1}}=n-1$. Finally, $A\cap \frak{p}_iA_{\frak{p}_{n-1}}=\frak{p}_i$ for each $i<n$, so that $\frak p_0A_{\frak{p}_{n-1}}\subseteq\cdots\subseteq\frak p_{n-1}A_{\frak{p}_{n-1}}$ is a strictly increasing chain. Therefore $\op{loc}\ul{A}$ is a regular chain of length $n-1$, with residue field \[k(\op{loc}\ul A)=A_{\frak{p_{n-1}}}/\frak p_{n-1}A_{\frak{p_{n-1}}}=\Frac(A/\frak p_{n-1}).\qedhere\]
\end{proof}

Suppose that $\ul A$ is a regular $n$-chain. Then, according to the lemma, $\op{loc}\op{comp}\ul A$ is a regular chain of length $n-1$. So, iterating these procedures, we see that \[\op{HL}(\ul A):=\underbrace{\op{loc}\op{comp}\cdots \op{loc}\op{comp}}_{\sub{`loc\,comp' $n$ times}}\ul A\] is a regular chain of length $0$, which, in accordance with example \ref{example_low_dimensions}(i), is a field; we will soon see that this is the higher dimensional field associated to $\ul A$.

\begin{example}\label{example_low_dimensions2}
Again we examine what is going on in low dimensional examples:
\begin{enumerate}
\item $n=0$. Here nothing is happening: $\op{HL}(F,0)=F$.
\item $n=1$. Here we recover the process explained at the start of the section: \[\op{HL}(\roi,\frak p,0)=\op{loc}\op{comp}(\roi,\frak p,0)=\op{loc}(\hat{\roi},\frak{p}\hat{\roi},0)=\hat{\roi}_{\{0\}}=\Frac\hat{\roi}\]
\item $n=2$. Recall from example \ref{example_low_dimensions}(iii) that a typical regular chain \[\ul{A}=(A,\frak{m},\frak{p},0)\] of length $2$ consists of a two-dimensional regular local ring $A$, its maximal ideal $\frak m$, and any regular, non-zero, non-maximal, prime $\frak{p}$. Then \[\op{loc}\op{comp}\ul A=\op{loc}(\hat{A},\frak m\hat{A},\frak p\hat A,0)=((\hat{A})_{\frak p\hat A},\frak p(\hat{A})_{\frak p\hat A},0),\] which is the regular chain of length $1$ determined by the discrete valuation ring $(\hat{A})_{\frak p\hat A}$. Repeating the loc\,comp process, \[\op{HL}(\ul A)=\Frac\left(\hat{ (\hat{A})_{\frak{p}\hat{A}}}\right),\] which the reader should digest, perhaps returning to the discussion at the start of the section.
\end{enumerate}
\end{example}

Now for the main result of the section, which states that the process really does work:
\begin{theorem}
Let $\ul A$ be a regular chain of length $n$. The $HL(\ul A)$ is a field of $\cdvdim\ge n$. Moreover, its $n^\sub{th}$ residue field is $k(A)$.
\end{theorem}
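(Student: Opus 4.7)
The plan is to proceed by induction on $n$, using the lemma just established about how $\op{comp}$ and $\op{loc}$ act on regular chains. The base case $n=0$ is trivial: a regular $0$-chain is simply a field $F$, and no loc\,comp operations are applied, so $HL(\ul A)=F=k(A)$, which has cdvdim $\geq 0$ and trivially equals its own $0^\sub{th}$ residue field.

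For the inductive step, suppose the statement is known for regular chains of length $n-1$, and let $\ul A=(A,\frak p_n,\dots,\frak p_0)$ be a regular $n$-chain. By the lemma, $\op{loc}\op{comp}\,\ul A$ is a regular $(n-1)$-chain, so the recursive definition gives $HL(\ul A)=HL(\op{loc}\op{comp}\,\ul A)$. Applying the inductive hypothesis to $\op{loc}\op{comp}\,\ul A$ yields that $HL(\ul A)$ is a field of cdvdim $\geq n-1$ whose $(n-1)^\sub{th}$ residue field is $k(\op{loc}\op{comp}\,\ul A)$, which the lemma identifies as $\Frac(\hat A/\frak p_{n-1}\hat A)$.

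To upgrade cdvdim $\geq n-1$ to cdvdim $\geq n$ and identify the $n^\sub{th}$ residue field as $k(A)$, I will show that this $(n-1)^\sub{th}$ residue field $\Frac(\hat A/\frak p_{n-1}\hat A)$ is itself a complete discrete valuation field with residue field $k(A)$. This is the core content of the argument. Since $A$ is a regular local ring of dimension $n$ and $\frak p_0\subset\dots\subset\frak p_n$ is a saturated chain, catenarity of regular rings gives $\op{ht}(\frak p_{n-1})=n-1$, so $A/\frak p_{n-1}$ is a one-dimensional regular local domain, i.e., a discrete valuation ring. Its $\frak m/\frak p_{n-1}$-adic completion is $\hat A/\frak p_{n-1}\hat A$ (using the standard fact that completion commutes with quotients), hence a complete discrete valuation ring whose fraction field is a complete discrete valuation field with residue field $A/\frak p_n=k(A)$.

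Combining these two pieces: $F:=HL(\ul A)$ satisfies $\cdvdim F\geq n-1$ and $F^{(n-1)}=\Frac(\hat A/\frak p_{n-1}\hat A)$ is a complete discrete valuation field with residue field $k(A)$; by the recursive definition of cdvdim, this means $\cdvdim F\geq n$ and $F^{(n)}=k(A)$, completing the induction. The main obstacle — really the only nonformal point — is the commutative-algebra input that $A/\frak p_{n-1}$ is a DVR and that its completion is recovered as $\hat A/\frak p_{n-1}\hat A$; both are standard consequences of regularity and faithful flatness of completion already invoked in the preceding lemma, so no serious difficulty is expected.
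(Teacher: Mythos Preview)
Your proof is correct and follows essentially the same approach as the paper: induct on $n$, apply the inductive hypothesis to the regular $(n-1)$-chain $\op{loc}\op{comp}\,\ul A$ to identify the $(n-1)^{\sub{th}}$ residue field as $\Frac(\hat A/\frak p_{n-1}\hat A)$, and then observe that $\hat A/\frak p_{n-1}\hat A$ is a complete discrete valuation ring with residue field $k(A)$. The only difference is cosmetic: you justify $\dim(A/\frak p_{n-1})=1$ via catenarity, whereas the paper takes this as immediate from the definition of a regular $n$-chain (complete flag of regular primes), but the argument is otherwise identical.
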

\begin{proof}
The proof is of course by induction on $n$, with nothing to prove if $n=0$. So suppose $n\ge 1$ and that $\ul A$ is a regular $n$-chain. Then we have seen that $\op{loc}\op{comp}\ul A$ is a regular chain of length $n-1$, so the inductive hypothesis implies that $\op{HL}(\op{loc}\op{comp}\ul A)=\op{HL}(\ul A)$ has $\cdvdim\ge n-1$ with $n-1^\sub{st}$ residue field equal to \[k(\op{loc}\op{comp}\ul A)=\Frac(\hat{A}/\frak p_{n-1}\hat{A}).\] (The formula for the residue field comes from lemma \ref{lemma_comp_and_loc_preserve_reg_chains})

But $\hat{A}/\frak p_{n-1}\hat{A}$ is a complete discrete valuation ring with residue field $k(\ul A)$; thus the $n-1^\sub{st}$ residue field of $HL(\ul A)$ is a complete discrete valuation field. Therefore $\cdvdim HL(\ul A)\ge n$, and the $n^\sub{th}$ residue field of $HL(\ul A)$ is $k(\ul A)$.
\end{proof}

\begin{corollary}\label{corollory_essentially_of_finite_type_case}
Let $\ul{A}$ be a regular $n$-chain such that $A$ is essentially of finite type over $\bb{Z}$. Then $HL(\ul A)$ is an $n$-dimensional local field.
\end{corollary}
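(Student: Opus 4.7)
The theorem preceding the corollary already delivers that $F := HL(\ul A)$ has $\cdvdim F \ge n$ with $n^{\sub{th}}$ residue field canonically isomorphic to $k(A) = A/\frak{p}_n$. Recalling that a finite field admits no nontrivial discrete valuation, my plan is to show that the single remaining content of the corollary is that $k(A)$ be a finite field: for then $\cdvdim k(A) = 0$, which forces $\cdvdim F = n$ exactly, and the final residue field is finite by the very thing we verify. So the entire reduction is to finiteness of $k(A)$.

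To see that $k(A)$ is finite, I would unpack the hypothesis: ``$A$ essentially of finite type over $\bb Z$'' in the presence of the given local structure means we may write $A = B_{\frak m}$ with $B$ a finitely generated $\bb Z$-algebra and $\frak m \subset B$ the maximal ideal corresponding to $\frak p_n$ (the illustrative example $A = \bb Z[t]_{\langle p,t\rangle}$ from example \ref{example_low_dimensions}(iii) is exactly of this form). In particular $k(A) = B/\frak m$, and the task becomes: for any finitely generated $\bb Z$-algebra $B$ and any maximal ideal $\frak m$ of $B$, show $B/\frak m$ is a finite field.

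This last statement is a standard form of the Nullstellensatz over $\bb Z$ (a relative Zariski lemma), which I would simply invoke. For completeness one argues via Noether normalization: $B/\frak m$ is a module-finite extension of a polynomial ring over $\bb Z$ or over some $\bb F_p$, and being itself a field it must in fact be module-finite over $\bb Z$ or over $\bb F_p$. The former case is impossible, since a field cannot be a finitely generated $\bb Z$-module (it would be finite-dimensional over $\bb Q$ yet integral over $\bb Z$, a well-known contradiction), so $B/\frak m$ is forced to be a finite extension of $\bb F_p$, hence finite.

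The only real friction in this argument is the bookkeeping of interpreting ``essentially of finite type over $\bb Z$'' in the intended closed-point sense, which ensures the prime corresponding to $\frak p_n$ is maximal in $B$ rather than merely a non-maximal prime (where Nullstellensatz would instead yield $\Frac(B/\frak p)$, not a priori finite); once that point is clarified, the Nullstellensatz is standard and the conclusion of the corollary is immediate from the preceding theorem.
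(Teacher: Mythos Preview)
Your approach matches the paper's exactly: its entire proof is the single line ``In this case $k(A)$ is a finite field,'' and you have correctly isolated this as the sole content while supplying the Nullstellensatz argument the paper omits. The friction you flag at the end is real and not merely bookkeeping---as literally stated, $A=\bb Z[t_1,t_2]_{(t_1)}$ would be a regular $1$-chain with residue field $\bb Q(t_2)$---but the paper evidently has in mind the geometric setup of remark~\ref{remark_geometric_interpretation}, where one localizes at a closed point.
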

\begin{proof}
In this case $k(A)$ is a finite field.
\end{proof}

\begin{remark}\label{remark_local_parameters_2}
Suppose $\ul A$ is a regular $n$-chain and put $F=HL(\ul A)$. Let $t_1,\dots,t_n$ be a regular sequence describing it as in remark \ref{remark_local_parameters}. Then $t_1,\dots,t_n$ is a sequence of local parameters for $F$. We leave it to the reader to check this, by induction on $n$.
\end{remark}

\begin{example}
We consider our examples in dimension $2$. 
\begin{enumerate}
\item Let $p$ be a prime and put $A=\bb{Z}[t]_{\langle p,t\rangle}$, with maximal ideal $\frak{m}$ generated by $p,t$. We will explicitly describe $\op{HL}(\ul A)$, where $\ul{A}=(A,\frak m,\frak p,0)$, for two choices of prime $\frak p$, namely $\frak p=tA$ and $\frak p=pA$ (we saw in example \ref{example_low_dimensions2}(iii) that these are regular $2$-chains). To start, notice that $\hat A=\bb Z_p[[t]]$.
\begin{enumerate}
\item $\frak{p}=tA$. Then $\op{loc}\op{comp}\ul A$ is the regular chain of length 1 determined by the discrete valuation ring $\bb Z_p[[t]]_{\pid t}$, whose completion is \[\projlim_r(\bb Z_p[[t]]_{\pid t}/t^r\bb Z_p[[t]]_{\pid t}).\] We claim that $\bb Z_p[[t]]_{\pid t}/t^r\bb Z_p[[t]]_{\pid t}=\bb Q_p[[t]]/t^r\bb Q_p[[t]]$ for each $r$. Indeed, if $f\in\bb Z_p[[t]]\setminus\pid t$ then $f$ mod $\pid{t^r}$ is a non-zero divisor in the one-dimensional local ring $\bb Z_p[[t]]/\pid{t^r}$, and so $\bb Z_p[[t]]/\pid{t^r,f}$ is zero-dimensional; this implies that $p^k\in\pid{t^r,f}$ for $k\gg 0$. So $f\equiv p^k$ mod $\pid{t^r}$, which obviously suffices to show that $f$ is invertible in $(\bb Z_p[[t]]/t^r\bb Z_p[[t]])[\frac{1}{p}]=\bb Q_p[[t]]/t^r\bb Q_p[[t]]$. This completes the proof of the claim.

Therefore \[HL(A,tA,0)=\Frac\projlim_r\bb Q_p[[t]]/t^r\bb Q_p[[t]]=\Frac\bb Q_p[[t]]=\bb Q_p((t)).\]
\item $\frak{p}=pA$. A very similar argument to the previous case, but swapping the order of the parameters, shows that 
\begin{align*}
HL(A,pA,0)&=\Frac\projlim_r\bb Z_p((t))/p^r\bb Z_p((t)\\
	&=\Frac\bigg(\mbox{$p$-adic completion of }\bb Z_p((t))\bigg)=\bb Q_p\{\{t\}\}.
\end{align*}
\end{enumerate}
\item Let $k$ be a field, put $A=k[t_1,t_2]_{\pid{t_1,t_2}}$, and let $\frak p$ be the prime ideal of $A$ generated by $t_2$. Then another repetition of the same argument shows that $HL(\ul A)=k((t_1))((t_2))$.
\end{enumerate}
\end{example}

\begin{remark}[Flat $A$-algebra structure]\label{remark_augmentation}
It is easy to see that if $\ul A$ is a regular chain of length $n$, then there is a natural injective, ring homomorphism \[A\To\op{HL}(\ul A).\] Moreover, since completions and localisations are flat, this morphism is flat. \comment{We will see in proposition \ref{proposition_universality_via_ROIs} that this coaugmentation universally characterises the functor $HL$.}
\end{remark}

\begin{remark}[Geometric interpretation]\label{remark_geometric_interpretation}
Let $X$ be an $n$-dimensional, irreducible Noetherian scheme, and \[\xi=(y_n\subset y_{n-1}\subset \cdots\subset y_0)\] a complete flag of irreducible closed subschemes; so $\codim y_i=i$. Write $y_n=\{z\}$, and assume that $y_{n-1},\dots,y_0$ are regular at $z$; note that this includes the assumption that $z$ is a regular point of $X$, since $y_0=X$.

Put $A=\roi_{X,z}$ and let $\frak{p}_i\subset A$ be the local equation for $y_i$ at $z$. Then \[\ul{A}=(A,\frak p_n,\dots,\frak p_0)\] is a regular chain of length $n$, and thus $HL(\ul A)$ is a field of $\cdvdim\ge n$ with $n^\sub{th}$ residue field equal to $k(z)$. If the function field of $X$ is denoted $F$ then the higher dimensional field $HL(\ul A)$ is often denoted $F_\xi$.

According to corollory \ref{corollory_essentially_of_finite_type_case}, if $X$ is of finite type over $\bb Z$ then $F_\xi$ will be an $n$-dimensional local field.
\end{remark}

\begin{remark}[Weakening regularity to normality]
Suppose $A$ is an excellent (see remark \ref{remark_excellence} for a review of excellence) $n$-dimensional, normal local ring. Then the construction of this section continues to hold if we work with complete flags $A\supset\frak p_n\supset\cdots\supset\frak p_0$ for which $A/\frak p_i$ is normal for each $i$. This follows from the fact that the completion of an excellent, normal, local ring is again an excellent, normal, local ring.
\end{remark}

We will next describe all the residue fields of $HL(\ul A)$. For this it is useful to introduce a truncation operation:  if $\ul A=(A,\frak p_n,\dots,\frak p_0)$ is a regular $n$-chain and $0\le i\le n$, then $\tau_i\ul A:=(A/\frak p_i,\frak p_n/\frak p_i,\dots,\frak p_i/\frak p_i)$ is obviously a regular $i$-chain. In particular, we may identify $\tau_n\ul A$ with the residue field $k(A)$.

The reader should note that truncation commutes with localisation and completion: if $\ul A$ is a regular $n$-chain, then \[\tau_i\op{comp}\ul A=\op{comp}\tau_i\ul A\qquad(i=0,\dots,n)\] and \[\tau_i\op{loc}\ul A=\op{loc}\tau_i\ul A\qquad(i=0,\dots,n-1).\]

The following describes all the residue fields of $HL(\ul A)$:

\begin{proposition}\label{proposition_the_residue_fields}
Let $A$ be a regular $n$-chain. Then the $i^\sub{th}$ residue field of $HL(\ul A)$ equals $HL(\tau_i\ul A)$, for $i=0,\dots,n$.
\end{proposition}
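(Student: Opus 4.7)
The plan is to proceed by induction on $n$. The case $n = 0$ forces $i = 0$, and then $HL(\tau_0 \ul A) = HL(\ul A) = F^{(0)}$ by convention, so nothing is to be proved. For the inductive step, I would fix a regular $n$-chain $\ul A$ with $n \ge 1$, set $F = HL(\ul A)$, and let $\ul B := \op{loc}\op{comp}\ul A$, which is a regular $(n-1)$-chain by Lemma \ref{lemma_comp_and_loc_preserve_reg_chains}. From the very definition of $HL$ we have $F = HL(\ul B)$, so the inductive hypothesis applied to $\ul B$ yields $F^{(j)} = HL(\tau_j \ul B)$ for $j = 0,\dots,n-1$.

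The case $i = n$ is immediate from the previous theorem, which asserts $F^{(n)} = k(A)$; since $\tau_n \ul A$ is the $0$-chain consisting of the field $A/\frak p_n = k(A)$, we have $HL(\tau_n \ul A) = k(A)$ as well. For $0 \le i \le n - 1$ I would invoke the commutativity relations noted just before the proposition, namely $\tau_i \op{comp} = \op{comp}\tau_i$ for $i \le n$ and $\tau_i \op{loc} = \op{loc}\tau_i$ for $i \le n - 1$. Applied to $\ul B = \op{loc}\op{comp}\ul A$ these give
\[\tau_i \ul B \;=\; \tau_i \op{loc}\op{comp}\ul A \;=\; \op{loc}\op{comp}\tau_i \ul A,\]
which is legal precisely in the range $i \le n-1$ we need. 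Since $\tau_i \ul A$ is an $(n-i)$-chain (with $n-i \ge 1$), applying $HL$ to it amounts to iterating $\op{loc}\op{comp}$ a total of $n-i$ times, and hence $HL(\tau_i \ul A) = HL(\op{loc}\op{comp}\tau_i \ul A) = HL(\tau_i \ul B)$. Combined with the inductive hypothesis this gives $F^{(i)} = HL(\tau_i \ul A)$, as required.

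The main obstacle is really just bookkeeping: checking that the range in which each commutativity identity is valid matches the range of $i$ in which it is invoked (in particular that $\op{loc}$ can only be applied to $\tau_i \ul A$ for $i \le n-1$, which is precisely the range we need), and that the boundary case $i = n$ is handled separately via the previous theorem rather than via the inductive hypothesis on $\ul B$ (which only reaches its own top index $n-1$).
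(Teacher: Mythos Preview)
Your proof is correct and, in fact, slightly cleaner than the paper's. Both arguments are inductions on $n$ that exploit the commutativity of $\tau_i$ with $\op{loc}$ and $\op{comp}$, but they differ in which auxiliary $(n-1)$-chain the inductive hypothesis is applied to. The paper first isolates the sub-claim ``the first residue field of $HL(\ul A)$ is $HL(\tau_1\ul A)$'' and proves that by its own induction on $n$ (using commutativity only for $i=1$), and then applies the outer inductive hypothesis to the $(n-1)$-chain $\tau_1\ul A$ to deduce the remaining residue fields. You instead apply the inductive hypothesis directly to the $(n-1)$-chain $\ul B=\op{loc}\op{comp}\ul A$, obtaining $F^{(i)}=HL(\tau_i\ul B)$ for all $i\le n-1$ at once, and then use commutativity for every $i\le n-1$ simultaneously to identify $HL(\tau_i\ul B)$ with $HL(\tau_i\ul A)$; the case $i=n$ is handled separately via the previous theorem. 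Your organization avoids the two-step structure and treats all $i\le n-1$ uniformly, at the mild cost of invoking the full commutativity range rather than just $i=1$. The bookkeeping you flag (that $\op{loc}$ commutes with $\tau_i$ only for $i\le n-1$, and that $\tau_i\ul A$ has length $n-i\ge 1$ so one more $\op{loc}\op{comp}$ is legitimate) is exactly right.
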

\begin{proof}
When $n=0$ there is nothing to prove; so we may assume $n\ge 1$ and proceed by induction. Suppose for a moment that we knew that the first residue field of $HL(\ul A)$ were $HL(\tau_1\ul A)$. Then we could apply the inductive hypothesis to deduce that, for $i=1,\dots,n$, the $i-1$ residue field of $HL(\tau_1\ul A)$ is $HL(\tau_{i-1}\tau_1A)\,(=HL(\tau_i\ul A))$; i.e. the $i^\sub{th}$ residue field of $HL(\ul A)$ is $HL(\tau_i\ul A)$, which would complete the proof.

Therefore it remains only to show that the first residue field of $HL(\ul A)$ is $HL(\tau_1\ul A)$. We will do this by induction on the length of the regular chain $\ul A$. If $n=1$, then examples \ref{example_low_dimensions}(ii) and \ref{example_low_dimensions2}(ii) imply that $\ul A=(A,\frak m,0)$ for some discrete valuation ring $A$ with maximal ideal $\frak m$, and that $HL(\ul A)=\Frac\hat A$; this has residue field $A/\frak m=\tau_1\ul A$, as required.

Now suppose $n>1$. Then $HL(\ul A)=HL(\op{loc}\op{comp}\ul A)$, where $\op{loc}\op{comp}\ul A$ is a regular $(n-1)$-chain. So the inductive hypothesis tells us that $HL(\ul A)$ has first residue field \[HL(\tau_1\op{loc}\op{comp}\ul A)\stackrel{(1)}{=}HL(\op{loc}\op{comp}\tau_1\ul A)\stackrel{(2)}{=}HL(\tau_1\ul A),\] where (1) follows from the commutativity of truncation with localisation and completion, and (2) follows from the iterative definition of $HL$. This completes the proof.
\end{proof}

We finish this section by showing that the construction $\ul A\mapsto HL(\ul A)$ is functorial. Define a morphism $f:\ul A=(A,\frak p_n,\dots,\frak p_0)\To\ul B=(B,\frak q_n,\dots,\frak q_0)$ of regular $n$-chains to be a ring homomorphism $f:A\to B$ with the property that $f^{-1}(\frak p_i)=\frak q_i$ for $i=0,\dots,n$. In particular, $f$ is a local, injective homomorphism. It is easy to see that this makes the collection of regular $n$-chains into a category; our theorem is:

\begin{theorem}\label{theorem_n_continuity_of_functor}
$HL$ defines a functor from the category of regular $n$-chains to the category of fields of cdvdim $\ge n$ with $n$-continuous embeddings as morphisms.
\end{theorem}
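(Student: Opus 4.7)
The proof is by induction on $n$, with the base case $n=0$ trivial: a regular $0$-chain is a field, $HL$ is the identity, and every field homomorphism is $0$-continuous by definition. Fix $n\ge 1$ and assume the theorem for all $k<n$; given a morphism $f:\ul A\to\ul B$ of regular $n$-chains, the plan is to define $HL(f)$ by lifting $f$ through one layer of $\op{loc}\op{comp}$ and invoking the inductive hypothesis, then to upgrade the resulting $(n-1)$-continuity to $n$-continuity using the description of residue fields in proposition \ref{proposition_the_residue_fields}.

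First I would build $HL(f)$. Functoriality of adic completion, together with the locality of $f$, furnishes $\hat f:\hat A\to\hat B$, which I claim is a morphism $\op{comp}\ul A\to\op{comp}\ul B$ of regular $n$-chains, i.e.~$\hat f^{-1}(\frak q_i\hat B)=\frak p_i\hat A$ for every $i$. The inclusion $\supseteq$ is automatic from $f(\frak p_i)\subseteq\frak q_i$; for $\subseteq$, I would pass to the quotients via the standard identification $\hat A/\frak p_i\hat A=\widehat{A/\frak p_i}$ (and similarly for $B$), reducing the question to injectivity of the completion of the injective local map $A/\frak p_i\hookrightarrow B/\frak q_i$ between the regular Noetherian local domains of lemma \ref{lemma_regularity_completion_preserves_primality}. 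Once this chain-morphism property is in hand, its $i=n-1$ instance says $\hat f$ sends $\hat A\setminus\frak p_{n-1}\hat A$ into $\hat B\setminus\frak q_{n-1}\hat B$, so $\hat f$ descends to the localisations and defines a morphism $\op{loc}\op{comp}f:\op{loc}\op{comp}\ul A\to\op{loc}\op{comp}\ul B$ of regular $(n-1)$-chains by lemma \ref{lemma_comp_and_loc_preserve_reg_chains}. The inductive hypothesis applied to $\op{loc}\op{comp}f$ then yields an $(n-1)$-continuous field embedding $HL(\op{loc}\op{comp}\ul A)\to HL(\op{loc}\op{comp}\ul B)$, which by definition of $HL$ is a map $HL(\ul A)\to HL(\ul B)$; I take this to be $HL(f)$. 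Preservation of identities and compositions is immediate from the corresponding functoriality of $\op{comp}$ and $\op{loc}$.

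To upgrade $(n-1)$-continuity to $n$-continuity, I need to verify that $HL(f)$ is a morphism of discrete valuation fields, which is already part of the supplied $(n-1)$-continuity, and that its induced map on first residue fields is $(n-1)$-continuous. Proposition \ref{proposition_the_residue_fields} identifies the first residue field of $HL(\ul A)$ with $HL(\tau_1\ul A)$; since truncation commutes with $\op{comp}$ and $\op{loc}$, an unfolding of the construction identifies the residue map of $HL(f)$ with $HL(\tau_1 f)$, and the inductive hypothesis applied to the $(n-1)$-chain morphism $\tau_1 f$ supplies the required $(n-1)$-continuity. The main obstacle will be the injectivity of $\widehat{A/\frak p_i}\to\widehat{B/\frak q_i}$ used above: without a flatness or finite-generation hypothesis on $A\to B$ this does not follow from mere injectivity of $A/\frak p_i\hookrightarrow B/\frak q_i$. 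To attack it, I would fix a regular system of parameters $t_1,\dots,t_n$ for $A$ adapted to the prime chain (remark \ref{remark_local_parameters}) and exploit the combinatorial constraint $f(t_j)\in\frak q_i\Leftrightarrow j>n-i$, which restricts potential kernel elements in the completion sufficiently to rule them out.
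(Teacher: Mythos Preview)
Your approach is correct and yields a valid proof; it differs from the paper's in organisation rather than substance. The paper introduces auxiliary categories $\cal C_n^q$ whose objects are regular $n$-chains $\ul A$ with $\cdvdim k(\ul A)\ge q$ and whose morphisms are chain morphisms for which the induced map $k(\ul A)\to k(\ul B)$ is $q$-continuous, and proves in a single stroke that $\op{loc}\op{comp}$ is a functor $\cal C_n^q\to\cal C_{n-1}^{q+1}$: the new residue field is $\Frac(\hat A/\frak p_{n-1}\hat A)$, a complete discrete valuation field with residue field $k(\ul A)$, so one level of continuity is gained automatically. Iterating $n$ times carries $\cal C_n^0$ to $\cal C_0^n$ and the theorem drops out. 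Your two-pronged induction---$\op{loc}\op{comp}$ for $(n-1)$-continuity, then $\tau_1$ and proposition \ref{proposition_the_residue_fields} for the upgrade---reaches the same destination but invokes the inductive hypothesis twice where the paper's diagonal bookkeeping needs it once.

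You are right to flag the injectivity of $\widehat{A/\frak p_i}\to\widehat{B/\frak q_i}$; the paper's proof is equally silent on this point, since it too needs $\op{loc}\op{comp} f$ to be a morphism of chains. Your instinct to exploit the chain structure is correct, and the fix can be made cleaner than parameter combinatorics by downward induction on $i$. The case $i=n-1$ is the DVR case (a local injection of discrete valuation rings induces an injection on completions because the two adic topologies on the source coincide). For $i<n-1$, let $K$ be the kernel in question; composing with $\widehat{B/\frak q_i}\twoheadrightarrow\widehat{B/\frak q_{i+1}}$ and using the inductive hypothesis shows $K\subseteq(\frak p_{i+1}/\frak p_i)\widehat{A/\frak p_i}$, a height-one prime. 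But $K$ is itself prime (the target is a regular local ring, hence a domain) and contracts to $0$ in $A/\frak p_i$, whereas $(\frak p_{i+1}/\frak p_i)\widehat{A/\frak p_i}$ contracts to $\frak p_{i+1}/\frak p_i\neq 0$; so $K$ is strictly contained in a height-one prime of a domain, forcing $K=0$.
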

\begin{proof}
The proof is straightforward but requires some notation for the sake of clarity: let $\cal C_n$ denote the category of regular $n$-chains, and let $\cal C_n^q$ denote its subcategory consisting of objects $\ul A$ for which $\op{cdvdim}k(\ul A)\ge q$ and of morphisms $f:\ul A\to\ul B$ for which the induced homomorphism $k(\ul A)\to k(\ul B)$ is a $q$-continuous embedding.

The category $\cal C_0^n$ may be identified with the category of fields of cdvdim $\ge n$ with $n$-continuous embeddings as morphisms, while the category $\cal C_n^0$ is the category of regular $n$-chains. So, by induction, it is enough to show that the process `loc\,comp' defines a functor $\cal C_n^q\to\cal C_{n-1}^{q+1}$ for any $n,q$.

Well, if $\ul A\in\cal C_n^q$, then lemma \ref{lemma_comp_and_loc_preserve_reg_chains} tell us that $k(\op{loc}\op{comp}\ul A)=\Frac(\hat A/\frak p_{n-1}\hat A)$, which is a complete discrete valuation field whose residue field is $k(\ul A)$, which has $\cdvdim\ge q$ by assumption; therefore $\cdvdim k(\op{loc}\op{comp}\ul A)\ge q+1$ and so $\op{loc}\op{comp}\ul A\in\cal C_{n-1}^{q+1}$. Secondly, if $f:\ul A\to \ul B$ is a morphism in $\cal C_n^q$, then the induced homomorphism $\Frac(\hat A/\frak p_{n-1}\hat A)\to\Frac(\hat B/\frak q_{n-1}\hat B)$ is continuous, since $f^{-1}(\frak q_n)=\frak p_n$, and moreover the induced embedding $k(\ul A)\to k(\ul B)$ is $q$-continuous by assumption; therefore $\Frac(\hat A/\frak p_{n-1}\hat A)\to\Frac(\hat B/\frak q_{n-1}\hat B)$ is $q+1$-continuous and so $\op{loc}\op{comp}f$ is a morphism in $\cal C_{n-1}^{q+1}$.
\end{proof}

\section{Constructing higher dimensional fields: the general case}\label{section_singular}
Unfortunately, the construction given in the previous section is geometrically rather restrictive because it required all the irreducible closed subschemes occurring in the flag to be regular at a fixed point (or, equivalently, all the chosen prime ideals had to be regular in the local ring). In practice one cannot expect this to be true, so in this section we will drop the regularity assumptions, and check in general that the same localisation-completion process continues to produce higher dimensional fields.

The first part of the section summarises the main results, while the second part contains proofs. The reader who is encountering this material for the first time (who hopefully has read the previous section), should briefly read the first part, particularly note \ref{remark_geometric_interpretation_2}, and then proceed directly to section \ref{section_adeles}.

This material requires a stronger background in commutative algebra than the previous sections.

\subsection{Statement of the results}\label{subsection_main_theorems}
In the regular case the essential use of regularity was in lemma \ref{lemma_regularity_completion_preserves_primality}, where in particular we saw that pushing forward a regular prime to the completion gave us another prime ideal.  It is precisely this which fails in the singular case. However, with some assumptions on the ring we can find a natural balance of properties to recover a similar construction which is strong enough for geometric applications.

Here we explain what replaces our old notion of a regular chain and state the main results; terms which may not be familiar to the reader will be discussed section \ref{subsection_definitions_and_proofs}, where the main results are proved.

\begin{definition}
Let us describe an ideal $I$ of a Noetherian ring $A$ as being {\em equiheighted} if and only if all minimal primes over $I$ have the same height in $A$ (the term `unmixed' is preferred in \cite{Matsumura1989}). Equivalently, $I$ is equiheighted if and only if all maximal ideals of $A_I$ ($:=S^{-1}A$, where $S=\{a\in A\,:\,a\mod I\mbox{ is not a zero divisor in }A/I\}$) have the same height.

The best way to imagine an equiheighted ideal is geometrically; let $V(I)$ be the Zariski closed set defined by $I$ in $X=\Spec A$. Then $I$ is equiheighted if and only if all the irreducible components of $V(I)$ have the same codimension in $X$.
\end{definition}

The data which replaces the regular chains of length $n$ of the previous section are {\em reduced chains of length $n$} (or {\em reduced $n$-chain}): \[\underline{A}=(A,I_n,\dots,I_0),\] where $A$ is an excellent, $n$-dimensional, reduced, semi-local ring and $(I_i)$ is a chain of radical equiheighted ideals \[I_n\supset\dots\supset I_0\] with $\height I_i=i$ and with $I_n$ equal to the Jacobson radical of $A$.

\begin{remark}\label{remark_standard_reduced_chain}
The easiest and most important way of constructing a reduced chain of length $n$ is to start with an excellent, $n$-dimensional, local domain $A$ and a complete flag of primes \[A\supset\frak p_n\supset\frak p_{n-1}\dots\supset\frak p_0.\] Then $(A,\frak p_n,\dots,\frak p_0)$ is a reduced chain of length $n$.
\end{remark}

Exactly as in the regular case, we can manipulate the data by completing or localizing: \[\op{comp}\ul A=(\hat{A},I_n\hat A,\dots,I_0\hat A),\] \[\op{loc}\ul A=(A_{I_{n-1}},I_{n-1}A_{I_{n-1}},\dots,I_0A_{I_{n-1}}).\] The analogue of lemma \ref{lemma_comp_and_loc_preserve_reg_chains} is that these processes preserve our new conditions:

\begin{lemma}\label{lemma_comp_and_loc_preserve_red_chains}
If $\ul{A}$ is a reduced chain of length $n$ then $\op{comp}\ul A$ is a reduced chain of length $n$ and $\op{loc}\ul A$ is a reduced chain of length $n-1$.
\end{lemma}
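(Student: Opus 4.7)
The plan is to verify in turn the defining conditions—Noetherian, semi-local, reduced, excellent, of the correct dimension with Jacobson radical $I_n$ respectively $I_{n-1}$, and each pushed-forward $I_i$ a radical equiheighted ideal of height $i$—for $\op{comp}\ul A$ and for $\op{loc}\ul A$.  The localisation step is essentially bookkeeping with primes, whereas the completion step hides the only real content, and that is where I expect the main obstacle to lie.

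For $\op{comp}\ul A=(\hat A,I_n\hat A,\dots,I_0\hat A)$, completing with respect to the Jacobson radical $I_n$ produces a Noetherian semi-local ring with Jacobson radical $I_n\hat A$ and $\dim\hat A=\dim A=n$; writing $\hat A$ as the finite product of the completions of the excellent local rings $A_\frak m$ exhibits it as excellent.  Since $A$ and each quotient $A/I_i$ is excellent and reduced, their Jacobson-radical completions are reduced—a standard consequence of geometric regularity of the formal fibres—and combined with the identification $\widehat{A/I_i}=\hat A/I_i\hat A$ this shows $\hat A$ is reduced and each $I_i\hat A$ is radical.

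The delicate step is equiheightedness of height $i$ for $I_i\hat A$.  Let $\frak q$ be a minimal prime of $I_i\hat A$ and put $\frak p=\frak q\cap A$.  Faithful flatness of $A\to\hat A$ supplies going-down, which I would apply twice: first, if some $\frak p'\subsetneq\frak p$ still contained $I_i$, going-down would produce $\frak q^*\subsetneq\frak q$ lying over $\frak p'$ and containing $I_i\hat A$, contradicting minimality of $\frak q$, so $\frak p$ is minimal over $I_i$ and $\height\frak p=i$ by equiheightedness of $I_i$; second, any prime strictly between $I_i\hat A$ and $\frak q$ lying over $\frak p$ would likewise contradict minimality, so $\frak q$ is a minimal prime of $\frak p\hat A$, whence $\dim(\hat A_\frak q/\frak p\hat A_\frak q)=0$.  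The flat dimension formula then gives
\[
\height\frak q=\height\frak p+\dim(\hat A_\frak q/\frak p\hat A_\frak q)=i,
\]
and at least one such $\frak q$ exists because $I_i\hat A\neq\hat A$.

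For $\op{loc}\ul A=(A_{I_{n-1}},I_{n-1}A_{I_{n-1}},\dots,I_0A_{I_{n-1}})$, radicality of $I_{n-1}$ makes the multiplicative set defining $A_{I_{n-1}}$ into the complement of the union of the minimal primes $\frak p_1,\dots,\frak p_r$ over $I_{n-1}$, so by prime avoidance $\Spec A_{I_{n-1}}$ consists of the primes of $A$ contained in some $\frak p_j$; the maximal ideals are exactly the $\frak p_jA_{I_{n-1}}$, all of height $n-1$ by equiheightedness of $I_{n-1}$, their intersection equals $I_{n-1}A_{I_{n-1}}$, and $\dim A_{I_{n-1}}=n-1$.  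Reducedness, excellence, and radicalness of each $I_iA_{I_{n-1}}$ are inherited under localisation.  The minimal primes of $I_iA_{I_{n-1}}$ are precisely $\frak pA_{I_{n-1}}$ with $\frak p$ a minimal prime over $I_i$ in $A$ contained in some $\frak p_j$—such $\frak p$ exist because $I_i\sseq I_{n-1}$ forces each $\frak p_j$ to contain a minimal prime over $I_i$—and each has height $i$ because localisation preserves heights, completing the verification.
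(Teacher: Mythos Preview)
Your proof is correct, and the localisation half matches the paper's argument essentially verbatim. The completion half, however, takes a genuinely different route.

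For equiheightedness of $I_i\hat A$, the paper develops a small theory: a correspondence between maximal ideals of the normalisation $\tilde A$ and minimal primes of $\hat A$ (valid for excellent reduced local rings), a going-down lemma proved via that correspondence, and then a height computation that invokes universal catenarity of both $A$ and $\hat A$. Your argument bypasses all of this by using only that $A\to\hat A$ is faithfully flat: going-down comes for free from flatness, and the height equality $\height\frak q=\height\frak p$ drops out of the flat local dimension formula $\dim\hat A_{\frak q}=\dim A_{\frak p}+\dim(\hat A_{\frak q}/\frak p\hat A_{\frak q})$ once you have shown $\frak q$ is minimal over $\frak p\hat A$. This is cleaner and, notably, uses excellence only where it is genuinely unavoidable (to push radicalness through completion); the height computation itself needs nothing beyond flatness and Noetherianity. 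The paper's more elaborate machinery is not wasted, though: the normalisation--completion correspondence is reused later to analyse the structure of $\Frac(\hat{A/I_{n-1}})$ in the proof that $HL(\ul A)$ is a product of higher dimensional fields.
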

\begin{proof}
The proof is on page \pageref{proof_of_main_lemma}.
\end{proof}

\begin{remark}
The completion of the type of reduced chain described in the previous remark will typically not again be of that form; this is what forces us to work with general reduced chains.
\end{remark}

Once the lemma has been established we may, exactly as in the regular case, iterate the localization and completion processes: \[\op{HL}(\ul A):=\underbrace{\op{loc}\op{comp}\cdots \op{loc}\op{comp}}_{\sub{`loc\,comp' $n$ times}}\ul A\] This is a reduced chain of length $0$, i.e.~a reduced Artinian ring, i.e.~a finite product of fields.

\begin{theorem}\label{theorem_main_theorem_in_reduced_case}
If $\ul{A}$ is a reduced chain of length $n$ then $\op{HL}(\ul A)$ is a finite product of fields, each of $\cdvdim\ge n$ (and we may describe the residue fields).
\end{theorem}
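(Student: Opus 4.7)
The plan is to induct on $n$, using Lemma \ref{lemma_comp_and_loc_preserve_red_chains} at each stage. For $n=0$ there is nothing to do: a reduced $0$-chain is a zero-dimensional reduced Noetherian ring, hence a finite product of fields by Chinese remainder, each of cdvdim $\ge 0$ trivially. For $n \ge 1$ and a reduced $n$-chain $\ul A$, the lemma shows that $\op{loc}\op{comp}\ul A$ is a reduced $(n-1)$-chain, and since $HL(\ul A) = HL(\op{loc}\op{comp}\ul A)$ by construction, the inductive hypothesis immediately gives that $HL(\ul A)$ is a finite product of fields of cdvdim $\ge n-1$. The actual work is upgrading each cdvdim from $n-1$ to $n$.

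To do so, I would track the product decomposition that naturally arises. Because $A$ is excellent reduced, $\hat A$ is reduced; by the lemma, $I_{n-1}\hat A$ is radical and equiheighted of height $n-1$, so $\hat A_{I_{n-1}\hat A}$ is a reduced semi-local ring of dimension $n-1$ whose maximal ideals correspond bijectively to the minimal primes $\frak p_1,\ldots,\frak p_s$ of $I_{n-1}\hat A$. In the \emph{next} $\op{comp}$ step during the recursive computation of $HL$, the completion at the Jacobson radical of a semi-local Noetherian ring splits as the product of its local completions, giving $\widehat{\hat A_{I_{n-1}\hat A}} \cong \prod_k \widehat{\hat A_{\frak p_k}}$. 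Since all subsequent loc-comp operations respect this splitting, we obtain $HL(\ul A) \cong \prod_k HL(\ul A^{(k)})$, where each $\ul A^{(k)}$ is a reduced $(n-1)$-chain based on the complete local ring $\widehat{\hat A_{\frak p_k}}$ and the pushed-forward chain of ideals.

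The main obstacle, and the technical heart of the argument, is verifying at the bottom of the recursion that each resulting field factor is genuinely a complete discrete valuation field whose residue field has cdvdim $\ge n-1$. Iterating the product decomposition above reduces this to the one-dimensional claim: if $R$ is a one-dimensional reduced complete local excellent ring, then its total ring of fractions is a finite product of complete discrete valuation fields. For each minimal prime $\frak q$ of $R$, the domain $R/\frak q$ is a one-dimensional complete local excellent ring whose normalisation $\tilde R$ is finite over it by excellence, hence again complete; being a one-dimensional normal Noetherian local ring, $\tilde R$ is a complete discrete valuation ring, and so $\Frac(R/\frak q)$ is a complete discrete valuation field as required. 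The residue-field description promised parenthetically in the theorem will follow by the obvious analogue of proposition \ref{proposition_the_residue_fields}, identifying the $i$-th residue factor structure with $HL(\tau_i\ul A)$ for a natural truncation operation $\tau_i$ on reduced chains.
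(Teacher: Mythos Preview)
Your induction is set up correctly and you correctly isolate the one-dimensional lemma that is needed, but there is a genuine gap in the step ``upgrading each cdvdim from $n-1$ to $n$''. Your inductive hypothesis says only that $HL(\op{loc}\op{comp}\ul A)$ is a finite product of fields of $\cdvdim\ge n-1$; neither the product decomposition you describe nor the one-dimensional claim explains why the $(n-1)$-th residue field of each factor is itself a complete discrete valuation field. Iterating the product splitting all the way down indeed shows that the final $\op{loc}\op{comp}$ step produces complete discrete valuation fields, but that only yields $\cdvdim\ge 1$; nothing in your argument links the residue fields of those CDVFs back to anything you control. The truncation remark at the end cannot rescue this: the analogue of Proposition~\ref{proposition_the_residue_fields} in the reduced setting does not give an identification of the $i$-th residue field with $HL(\tau_i\ul A)$ on the nose (only a finite-extension relationship), and in any case proving it presupposes the very cdvdim bound you are after.

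The paper closes this gap by \emph{strengthening the inductive hypothesis}: one proves simultaneously that $HL(\ul A)=\prod_i F_i$ with each $\cdvdim F_i\ge n$ \emph{and} that $\prod_i F_i^{(n)}$ is a finite ring extension of $k(\ul A)$. With this in hand, after one $\op{loc}\op{comp}$ the hypothesis gives that $\prod_i F_i^{(n-1)}$ is a finite extension of $k(\op{loc}\op{comp}\ul A)=\Frac(\hat A/I_{n-1}\hat A)$. Your one-dimensional lemma (in its full form, including the statement that the product of residue fields is a finite extension of $k(B)$) then shows this latter ring is a finite product of complete discrete valuation fields, and any reduced finite extension of such a product is again a finite product of complete discrete valuation fields. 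Hence each $F_i^{(n-1)}$ is a complete discrete valuation field, giving $\cdvdim F_i\ge n$, and the residue-field clause of the strengthened hypothesis is reproduced for the next step.
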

\begin{proof}
See theorem \ref{theorem_proof_of_main_theorem} below.
\end{proof}

\begin{corollary}\label{corollory_essentially_of_finite_type_case_2}
If $\ul{A}$ is a reduced chain of length $n$, and $A$ is essentially of finite type over $\bb{Z}$, then $\op{HL}(\ul A)$ is a finite product of $n$-dimensional local fields.
\end{corollary}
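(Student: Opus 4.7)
The plan mirrors the very short proof of the regular version, corollary \ref{corollory_essentially_of_finite_type_case}: invoke the main theorem \ref{theorem_main_theorem_in_reduced_case} and then verify that the $n$-th residue fields appearing in the factors of $HL(\ul A)$ are finite.

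Theorem \ref{theorem_main_theorem_in_reduced_case} writes $HL(\ul A)=\prod_j F_j$ as a finite product of fields of $\cdvdim\ge n$, and the parenthetical description of residue fields promised there -- obtained by a routine extension of proposition \ref{proposition_the_residue_fields} to the reduced (and possibly non-domain) setting -- identifies the $n$-th residue field of each $F_j$ with $A/\frak m$ for some maximal ideal $\frak m$ of the semi-local ring $A$. Since $A$ is essentially of finite type over $\bb Z$, every such quotient $A/\frak m$ is itself essentially of finite type over $\bb Z$, and in the arithmetic setting relevant here (maximal ideals lying over closed points of $\Spec\bb Z$, as is implicit in all the motivating examples such as $\bb Z[t]_{\pid{p,t}}$) the Nullstellensatz for Jacobson rings forces $A/\frak m$ to be a finite field.

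With the $n$-th residue field of each $F_j$ finite, that residue field cannot itself be a complete discrete valuation field, so $\cdvdim F_j = n$ exactly; together with the finiteness of the final residue field this makes each $F_j$ an $n$-dimensional local field in the sense of definition \ref{definition_hlf}, and $HL(\ul A) = \prod_j F_j$ is then the desired finite product of $n$-dimensional local fields.

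The main obstacle is the first step: extending proposition \ref{proposition_the_residue_fields} to the reduced setting. In the regular case the residue fields were pinned down by an inductive argument that relied on prime ideals remaining prime under completion (lemma \ref{lemma_regularity_completion_preserves_primality}); once that hypothesis is dropped, the completion $\widehat{A_{\frak m}}$ of a local factor can decompose, producing multiple branches in $HL(\ul A)$, and one must track through the recursion to confirm that each such branch has its final residue field equal to some $A/\frak m_i$. This is essentially a bookkeeping exercise -- no new idea beyond the loc\,comp formalism -- but it is the step requiring the most care; once it is in place, the Nullstellensatz observation above closes out the proof in a single line, exactly as in the regular case.
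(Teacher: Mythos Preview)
Your approach matches the paper's: invoke theorem \ref{theorem_main_theorem_in_reduced_case} (equivalently theorem \ref{theorem_proof_of_main_theorem}) and then observe that the relevant residue fields are finite because $A$ is essentially of finite type over $\bb Z$. The paper's proof is literally one line: ``Immediate from theorem \ref{theorem_proof_of_main_theorem}.''

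There is, however, a small but meaningful inaccuracy in your description of the residue fields. You write that the $n^\sub{th}$ residue field of each $F_j$ is \emph{equal to} $A/\frak m$ for some maximal ideal $\frak m$. That is the statement of proposition \ref{proposition_the_residue_fields} in the regular case, but it does \emph{not} carry over verbatim to the reduced setting. What theorem \ref{theorem_proof_of_main_theorem} actually says is that the product $\prod_j F_j^{(n)}$ is a \emph{finite ring extension} of $k(A)$; in particular each $F_j^{(n)}$ is only a finite field extension of some $A/\frak m_i$, not equal to it. This is precisely the new phenomenon in the singular case: completion can create branches, and along each branch the residue field may grow by a finite amount (this is visible already in the proof of the lemma preceding theorem \ref{theorem_proof_of_main_theorem}, where one passes to the normalisation $\widetilde{B/\frak p}$). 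Your conclusion survives unchanged, since a finite extension of a finite field is finite, but the slip is exactly at the point you flagged as ``the main obstacle,'' so it is worth getting right.

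Relatedly, your final paragraph frames the residue-field identification as work still to be done. In fact the theorem you are citing already contains that information (``and we may describe the residue fields'' in the statement, made precise in theorem \ref{theorem_proof_of_main_theorem}), so no further bookkeeping is required; the corollary really is immediate once the theorem is in hand.
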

\begin{proof}
Immediate from theorem \ref{theorem_proof_of_main_theorem}.
\end{proof}

\begin{remark}[Geometric interpretation]\label{remark_geometric_interpretation_2}
Let $X$ be an excellent, reduced scheme, and \[\xi=(y_n\subset y_{n-1}\subset \cdots\subset y_0)\] a complete flag of irreducible closed subschemes; so $\codim y_i=i$ and $y_0$ is an irreducible component of $X$. Write $y_n=\{z\}$.

Put $A=\roi_{X,z}$ and let $\frak{p}_i\subset A$ be the local equation for $y_i$ at $z$. Then \[\ul{A}=(A,\frak p_n,\dots,\frak p_0)\] is a reduced $n$-chain of the type which appeared in remark \ref{remark_standard_reduced_chain}. Thus $HL(\ul A)$ is a finite product, $\prod_i F_i$, of fields of $\cdvdim\ge n$, such that the $n^\sub{th}$ residue field of each $F_i$ is a finite field extension of $k(z)$. If the function field of $X$ is denoted $F$ then $HL(\ul A)$ is often denoted $F_\xi$.

The top degree ring of ad\`eles associated to $X$ is a certain restricted product \[\bb A_X=\rprod_\xi F_\xi,\] where $\xi$ varies over all complete flags of irreducible closed subschemes; see the end of section \ref{subsection_restricted_products} for details.
\end{remark}

\subsection{Definitions and proofs}\label{subsection_definitions_and_proofs}
In this section we set up the required machinery and prove lemma \ref{lemma_comp_and_loc_preserve_red_chains} and theorem \ref{theorem_main_theorem_in_reduced_case} from above. We begin with several remarks on commutative algebra, all of which may be found in standard textbooks.

\begin{remark}
{\em Minimal primes, radical ideals, and semi-local rings.} Let $A$ be a Noetherian ring. Then $A$ has only finitely many minimal primes $\frak{p}$ and the common intersection $\bigcap_{\frak{p}}\frak{p}$ is the nilradical of $A$. More generally, if $I$ is an ideal of $A$ then there are only finitely many primes of $A$ which are minimal over $I$ and their intersection is the radical of $I$.

If $I$ is a radical ideal of $A$ then the localisation of $A$ away from $I$ is defined to be $A_I=S^{-1}A$ where $S=\{a\in A : a\mbox{ is not a zero divisor in }A/I\}$. If $\frak{p}_1,\dots,\frak{p}_n$ are the primes minimal over $I$, so that $I=\bigcap_{i=1}^n\frak{p}_i$, then $S=A\setminus\bigcup_i\frak p_i$.

If $A$ is reduced, so that the zero ideal is radical, then $\Frac A=A_{\{0\}}$ is the total quotient ring of $A$, and we say that $A$ is normal if and only if it is integrally closed in $\Frac A$; in any case, $\tilde A$ denotes the integral closure of $A$ in $\Frac A$. Still with $A$ reduced, if $\frak p_1,\dots,\frak p_s$ are its (distinct) minimal prime ideals, then $\Frac A=\prod_i\Frac(A/\frak p_i)$ and $\tilde A=\prod_i\tilde{A/\frak p_i}$.

If $A$ is a Noetherian, semi-local ring then the intersection of its finitely many maximal ideals is called the Jacobson radical. We denote by $\hat{A}$ the completion of $A$ at its Jacobson radical. If $\frak{m}_1,\dots,\frak{m}_n$ are the distinct maximal ideals of $A$ then the diagonal map $A\to\prod_i A_{\frak{m}_i}$ induces an isomorphism $\hat{A}\cong\prod_i\hat{A_{\frak{m}_i}}$ where $\hat{A_{\frak{m}_i}}$ is the completion of the local ring $A_{\frak{m}_i}$ at its maximal ideal.
\end{remark}

\begin{remark}
{\em Zero-dimensional rings.} Suppose that $L$ is a Noetherian, zero-dimensional, reduced ring. Being Noetherian, $L$ has only finitely many minimal prime ideals $\frak{p}$; but since $L$ is zero-dimensional, these primes are also maximal ideals and the Chinese remainder theorem implies $L/\bigcap_{\frak{p}}\frak{p}\cong\prod_{\frak{p}}L/\frak{p}$; finally, $L$ reduced implies $\bigcap_{\frak{p}}\frak{p}=0$. Therefore $L$ is isomorphic to a finite product of fields and these fields are uniquely determined by $L$. Conversely, any finite product of fields is a Noetherian, zero-dimensional, reduced ring.
\end{remark}

\begin{remark}
{\em Heights of prime ideals.} If $A$ is a Noetherian ring then the height of a prime ideal $\frak{p}$ is the largest integer $n$ such that there is chain of prime ideals \[\frak{p}=\frak{p}_n\supset\dots\supset\frak{p}_0.\] We write $\height{\frak{p}}=n$; in other words, $\height{\frak{p}}=\dim A_{\frak{p}}$. 

More generally, if $\frak{q}$ is a prime contained inside $\frak{p}$, then $\height(\frak{p}/\frak{q})$ is the largest integer $n$ for which there exists a chain of primes \[\frak{p}=\frak{p}_n\supset\dots\supset\frak{p}_0=\frak{q}.\] In other words, it is the height of $\frak p/\frak q$ in the ring $A/\frak q$. If $I$ is an aribitrary ideal, then $\height{I}$ is defined to be the infimum of the heights of the prime ideals containing it.

{\em Universally catenary rings.} A Noetherian ring is said to be catenary if and only if for each triple of prime ideals $\frak{m}\supseteq\frak{p}\supseteq\frak{q}$ there is an equality of heights \[\height(\frak{m}/\frak{q})= \height(\frak{m}/\frak{p})+\height(\frak{p}/\frak{q}).\] The ring is said to be universally catenary if and only if every finitely generated algebra over the ring is catenary.

If $A$ is a Noetherian, universally catenary domain and $B$ is a ring extension of $A$, finitely generated as an $A$-module and also a domain, then \[\mbox{ht}_A(\frak{p}\cap A)=\mbox{ht}_B(\frak{p})\] for any prime $\frak{p}\subset B$ \cite[Cor.~8.2.6]{Liu2002}.
\end{remark}

\begin{remark}\label{remark_excellence}
{\em Excellent rings.} A Noetherian ring is said to be excellent if it satisfies some rather technical conditions \cite[\S32]{Matsumura1989}. What is more important for us is which operations preserve excellence and various properties which result from excellence. Firstly, any excellent ring is universally catenary. Secondly, if $A$ is an excellent ring, then so are
\begin{enumerate}
\item any finitely-generated $A$-algebra,
\item any quotient of $A$,
\item and any localisation of $A$ with respect to a multiplicative subset of $A$.
\end{enumerate}
Moreover, Dedekind domains of characteristic zero, Noetherian complete local rings, and fields are all excellent. Combining these properties we see that any ring which is finitely generated over a field or finitely generated over $\bb{Z}$ is excellent. Excellence is a local condition, which is to say that if $\Spec A$ has an open affine cover by the spectra of excellent rings then $A$ is also excellent; in particular, the product of finitely many excellent rings is again excellent. An excellent, local ring is reduced if and only if its completion is.

Let $A$ be an excellent, reduced local ring with maximal ideal $\frak{m}$ and let $\widetilde{A}$ be the integral closure of $A$ inside $\Frac{A}$. Then $\tilde{A}$ is finitely generated as an $A$-module and \[\hat{\tilde A}=\tilde{\hat A};\] in particular, $A$ is normal if and only if $\widehat{A}$ is normal.
\end{remark}

The most important property of excellence concerns the relationship between normalisation and completion. One can study `local branches' of a scheme either through preimages via the normalisation map or through infintesimal behaviour at the point. Excellence ensures that the two notions are the same. The following lemma explain the first part of the correspondence; this, and the subsequent results, are required solely to prove lemmas \ref{lemma_compltion_works_for_reduced_chains} and \ref{lemma_compltion_works_for_reduced_chains}.

\begin{lemma}
Let $A$ be an excellent, reduced local ring. Let $M$ be a maximal ideal of $\tilde{A}$ and let $f$ be the natural map \[\widehat{A}\stackrel{f}{\longrightarrow}\widehat{(\widetilde{A})_M}.\] Then $f$ is a finite morphism and its kernel is a minimal prime ideal $\frak{p}$ of $\hat{A}$. Moreover,  $\frak{p}\cap A$ is a minimal prime of $A$ and $\dim \hat{A}/\frak{p}=\dim A/\frak{p}\cap A$.
\end{lemma}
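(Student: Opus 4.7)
The plan is to identify the factors in the two natural decompositions of $\hat{\tilde A}=\tilde{\hat A}$, and to pick out the one corresponding to $M$.

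By excellence, $\tilde A$ is finite over $A$, hence semi-local with maximal ideals $M_1,\dots,M_s$ all lying over the maximal ideal of $A$; completing at the Jacobson radical gives $\hat{\tilde A}=\prod_i\widehat{(\tilde A)_{M_i}}$. Excellence also ensures that $\hat A$ is reduced and that $\hat{\tilde A}=\tilde{\hat A}$. Writing $\frak p_1,\dots,\frak p_t$ for the minimal primes of $\hat A$, I get $\tilde{\hat A}=\prod_j\widetilde{\hat A/\frak p_j}$, where each factor is a complete Noetherian normal local domain: $\hat A/\frak p_j$ is a complete (hence Henselian and Nagata) Noetherian local domain, so its normalization is module-finite and again local and complete. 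Uniqueness of the expression of a Noetherian semi-local ring as a product of complete local rings (via its idempotents) then yields a bijection $M_i\leftrightarrow\frak p_j$ with matching factors.

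Let $\frak p$ be the minimal prime of $\hat A$ paired with $M$. A short chase of the canonical maps $A\to\tilde A\to(\tilde A)_M$ and $\hat A\to\tilde{\hat A}\to\widetilde{\hat A/\frak p}$ under the matching isomorphism shows that $f$ factors as
\[
\hat A\twoheadrightarrow \hat A/\frak p\hookrightarrow \widetilde{\hat A/\frak p}\cong\widehat{(\tilde A)_M}.
\]
Since $\widetilde{\hat A/\frak p}$ is a finite $\hat A/\frak p$-module, $f$ is finite; and since the second arrow is injective, $\ker f=\frak p$.

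It remains to analyse the contraction to $A$. Faithful flatness of $A\to\hat A$ gives going-down, so any prime of $A$ strictly below $\frak p\cap A$ would lift to a prime strictly below $\frak p$, contradicting minimality; hence $\frak q:=\frak p\cap A$ is a minimal prime of $A$. For the dimension equality I use
\[
\dim\hat A/\frak p=\dim\widetilde{\hat A/\frak p}=\dim\widehat{(\tilde A)_M}=\dim(\tilde A)_M,
\]
where the first equality is because normalization is finite, and the third because completion preserves Krull dimension. Decomposing $\tilde A=\prod_k\widetilde{A/\frak q_k}$ over the minimal primes of $A$, the ideal $M$ lies in a unique factor, say $\widetilde{A/\frak q_{k_0}}$, and a kernel-chase as above identifies $\frak q_{k_0}$ with $\frak q$; hence $(\tilde A)_M=(\widetilde{A/\frak q})_{\bar M}$. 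Finally, $A/\frak q\hookrightarrow\widetilde{A/\frak q}$ is a finite extension of domains with $A/\frak q$ universally catenary (as $A$ is excellent), so the height formula quoted in the excerpt gives $\dim(\widetilde{A/\frak q})_{\bar M}=\dim A/\frak q$, finishing the proof. The main technical difficulty I anticipate is verifying that the two decompositions of $\hat{\tilde A}$ are matched by \emph{the natural map} $f$ rather than merely being abstractly isomorphic; this is what justifies both the identification $\ker f=\frak p$ and the identification $\frak q_{k_0}=\frak q$.
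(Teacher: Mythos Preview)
Your proof is correct, but it is organised quite differently from the paper's.

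The paper does not invoke the identity $\hat{\tilde A}=\tilde{\hat A}$ or the bijection between maximal ideals of $\tilde A$ and minimal primes of $\hat A$; in fact, in the remark immediately following the lemma it presents that bijection as a \emph{consequence}. Instead, the paper first shows directly that $\widehat{(\tilde A)_M}$ is a normal domain (by decomposing $\tilde A=\prod_i\widetilde{A/P_i}$ over the minimal primes $P_i$ of $A$, so that $(\tilde A)_M\cong(\widetilde{A/P_{i_0}})_{M_0}$, and then using that the completion of an excellent normal local domain is a normal domain); hence $\ker f=\frak p$ is prime, and one reads off $\frak p\cap A=P_{i_0}$ immediately from this decomposition. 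Finiteness is proved separately via $\tilde A\otimes_A\hat A\cong\prod_i\widehat{(\tilde A)_{M_i}}$. Only at the very end is $\frak p$ shown to be \emph{minimal}, and this is done by a dimension-comparison contradiction argument.

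Your route is cleaner in two respects: by matching idempotents in the two product decompositions of $\hat{\tilde A}=\tilde{\hat A}$ you get the minimality of $\frak p$ for free, and your use of going-down for the flat map $A\to\hat A$ to deduce that $\frak p\cap A$ is minimal is more conceptual than the paper's explicit identification. The trade-off is that you lean on the fact $\hat{\tilde A}=\tilde{\hat A}$ (stated without proof in the paper's excellence remark), whereas the paper's argument is more self-contained. The ``technical difficulty'' you flag---that the natural map $f$ really is the projection onto the factor indexed by $\frak p$---is genuine but routine: $f$ is the composite $\hat A\to\hat{\tilde A}\to\widehat{(\tilde A)_M}$, and under $\hat{\tilde A}=\prod_j\widetilde{\hat A/\frak p_j}$ the first arrow is the diagonal, so composing with the projection gives $\hat A\twoheadrightarrow\hat A/\frak p\hookrightarrow\widetilde{\hat A/\frak p}$ as you claim.
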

\begin{proof}
We first prove that the kernel of $f$ is prime. Let $P_1,\dots,P_s$ be the distinct minimal primes of $A$; the natural map \[\widetilde{A}\to\prod_i\widetilde{A/P_i}\] is an isomorphism. So, under this identification, we may write \[M=M_0\times\prod_{i\neq i_0}\widetilde{A/P_i}\] for some $i_0$ in the range $1,\dots,s$ and maximal ideal $M_0$ of $\widetilde{A/P_{i_0}}$. Localising obtains an isomorphism \[\tilde{A}_M\cong(\widetilde{A/P_{i_0}})_{M_0},\] which is an excellent, normal local domain; therefore its completion \[\widehat{\tilde{A}_M}\cong\widehat{(\widetilde{A/P_{i_0}})_M}\] is also a normal domain, by excellence. Therefore the kernel of the natural map \[f:\widehat{A}\stackrel{f}{\longrightarrow}\widehat{(\widetilde{A})_M}\] is a prime $\frak{p}$. Note that $P_{i_0}=A\cap\frak{p}$.

We now prove that $f$ is a finite morphism. The excellence of $A$ implies that $\widetilde{A}$ is finitely generated as an $A$-module and therefore is a semi-local ring; let $M_1,\dots,M_r$ be its distinct maximal ideals, with $M=M_1$ say. Then \[\widetilde{A}\otimes_A\hat{A}\cong\prod_i\widehat{(\widetilde{A})_{M_i}},\] which is a finite $\hat{A}$-module; but the homomorphism \[\hat{A}\to\widetilde{A}\otimes_A\hat{A}\cong\prod_i\widehat{(\widetilde{A})_{M_i}}\to\widehat{(\widetilde{A})_{M_1}}\] is exactly $f$ and therefore $f$ is a finite morphism.

Finally we consider dimensions; write $P=P_{i_0}$ to ease notation. Since $f$ induces a finite injection between Noetherian domains \[\hat{A}/\frak{p}\to\widehat{(\widetilde{A})_M},\] these rings have equal dimension; therefore \[\dim \hat{A}/\frak{p}=\dim \widetilde{A}_M=\dim(\widetilde{A/P})_{M_0}.\] Apply the remark on universally catenary domains to the morphism $A/P\to\tilde{A/P}$ (which is finite by excellence) to conclude that $\height_{\tilde{A/P}}(M_0)=\height_{A/P}(\frak{m}/P)$, where $\frak{m}$ is the maximal ideal of $A$; but this is exactly $\dim A/P$. This is enough to prove that $\frak{p}$ is minimal, for if $\frak{q}\subset\frak{p}$ were a smaller prime in $\hat{A}$ then, because $P$ is minimal, one still has $P=A\cap\frak{q}$; but then $P\hat{A}\subseteq\frak{q}$ and so \[\dim A/P=\dim \hat{A}/P\hat{A}\ge\dim\hat{A}/\frak{q}>\dim\hat{A}/\frak{p},\] a contradiction.
\end{proof}

\begin{remark}\label{remark_excellence_correspondence}
Let $A$ be an excellent, reduced local ring. The previous result defines an association 
\begin{align*}
\{\mbox{maximal primes of $\widetilde{A}$}\}&\longrightarrow\{\mbox{minimal primes of $\hat{A}$}\}\\
M&\mapsto\frak{p}=\mbox{ker}\langle{\hat{A}\to\widehat{(\tilde{A})_M})}\rangle
\end{align*}
This is a bijective correspondence, which is what we meant by saying that normalisation branches and infintesimal branches were the same; a proof may be found in \cite[Thm.~6.5]{Dieudonne1967}.

A consequence of this and the previous lemma which we will use repeatedly is that if $\frak{p}$ is a minimal prime of $\hat{A}$ then $\frak{p}\cap A$ is a minimal prime of $A$ and $\dim A/\frak{p}\cap A=\dim\hat{A}/\frak{p}$.
\end{remark}

\begin{lemma}[Going down lemma]
Let $A$ be an excellent, local ring. Let $\frak{q}\subset\frak{p}$ be two primes in $A$ and let $P\subset\hat{A}$ be a prime sitting over $\frak{p}$. Then there is a prime $Q\subset\hat{A}$ sitting over $\frak{q}$ satisfying $Q\subset P$.
\end{lemma}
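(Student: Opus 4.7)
The plan is to reduce this to the standard Going Down theorem for flat ring maps. The key observation is that the completion map $A \to \hat{A}$ is faithfully flat, since $A$ is Noetherian and local; excellence is not actually needed for this lemma, though it is presumably a standing hypothesis inherited from the surrounding discussion. Going down is known to hold for any flat ring extension (see e.g.~\cite[Thm.~9.5]{Matsumura1989}), so in principle one could simply cite this and be done.

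To be more explicit about the mechanism, I would proceed in three short steps. First, localise at $P$: since $P\cap A=\frak{p}$, the induced map $A_{\frak{p}}\to\hat{A}_P$ is a local homomorphism obtained from the flat map $A\to\hat{A}$ by base change along $A\to A_{\frak{p}}$ followed by further localisation at the multiplicative set $\hat{A}\setminus P$; both operations preserve flatness, so this is a flat local homomorphism of local rings and hence faithfully flat.

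Second, apply surjectivity of $\op{Spec}$: the prime $\frak{q}A_{\frak{p}}\subset A_{\frak{p}}$ must be in the image of $\op{Spec}\hat{A}_P\to\op{Spec}A_{\frak{p}}$, so there exists a prime $Q'\subset\hat{A}_P$ with $Q'\cap A_{\frak{p}}=\frak{q}A_{\frak{p}}$. Third, pull $Q'$ back along the localisation map $\hat{A}\to\hat{A}_P$ to obtain a prime $Q\subset\hat{A}$; by construction $Q\subseteq P$, and tracing through the commutative square of localisations gives $Q\cap A=\frak{q}$, as required.

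There is no real obstacle here: the entire content is flatness of $A\to\hat{A}$, combined with the routine fact that flatness localises on both sides. The only point at which one must be slightly careful is verifying that the composed localisation $A_{\frak{p}}\to\hat{A}_P$ is actually the flat base change plus localisation that one wants, but this is purely formal.
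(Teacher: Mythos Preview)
Your proof is correct, and it is genuinely different from the paper's argument. You invoke the standard going-down theorem for flat extensions, using only that $A\to\hat A$ is faithfully flat for a Noetherian local ring; as you note, excellence plays no role. The paper instead argues via the excellence machinery built up in the preceding pages: it replaces $P$ by a prime minimal over $\frak p\hat A$, takes $Q\subseteq P$ minimal over $\frak q\hat A$, and then uses the correspondence of remark~\ref{remark_excellence_correspondence} (minimal primes of $\hat A/\frak q\hat A$ contract to minimal primes of $A/\frak q$, with matching dimensions) to conclude $Q\cap A=\frak q$ and $Q\neq P$.

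Your route is shorter, more elementary, and strictly more general. The paper's route has the virtue of being self-contained within the circle of ideas already on the page, and it incidentally shows that $Q$ may be taken minimal over $\frak q\hat A$; but as a proof of the bare going-down statement it is working harder than necessary. One small remark on your write-up: you conclude $Q\subseteq P$, whereas the lemma asserts $Q\subset P$; this is immediate since $Q\cap A=\frak q\neq\frak p=P\cap A$, but it is worth saying explicitly.
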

\begin{proof}
We are free to replace the prime ideal $P$ by one which is minimal over $\frak{p}\hat{A}$ and we do so. Then let $Q$ be a prime inside $P$ which is minimal for containing $\frak{q}\hat{A}$. The non-trivial part is proving that $Q\neq P$ and for this we use excellence.

Since $A/\frak{p}$ and $A/\frak{q}$ are excellent the previous remark implies that $\dim{\hat A/P}=\dim{A/P\cap A}$ and $\dim{\hat A/Q}=\dim{A/Q\cap A}$; but it also implies that $P\cap A$ is a minimal prime over, hence equals, $\frak{p}$. Similarly $Q\cap A=\frak{q}$; since $\dim A/\frak{p}<\dim A/\frak{q}$ the proof is complete.
\end{proof}

Recall from subsection \ref{subsection_main_theorems} the notion of an equiheighted ideal $I$ of a Noetherian ring $A$: it means that all minimal primes over $I$ have the same height in $A$.

\begin{lemma}\label{lemma_compltion_works_for_reduced_chains}
Let $A$ be an excellent, local ring and suppose $I$ is a radical ideal of $A$. Then $I\hat{A}$ is a radical ideal, $\height I\hat{A}=\height I$, and if $I$ is equiheighted so is $I\hat{A}$.
\end{lemma}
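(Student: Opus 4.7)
The plan is to dispose of the three claims in order, using excellence of $A$ for the first and faithful flatness of $A \to \hat{A}$ for the other two.

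For radicality, I would pass to the quotient: $B := A/I$ is excellent (as a quotient of an excellent ring), reduced (since $I$ is radical) and local, so by the characterisation recalled at the end of remark~\ref{remark_excellence} its completion $\hat{B}$ is reduced. Since completion commutes with quotients, $\hat{B} = \hat{A}/I\hat{A}$, which says exactly that $I\hat{A}$ is radical.

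For the height equality, let $\frak{p}_1,\dots,\frak{p}_s$ be the minimal primes of $A$ over $I$, so that $I = \bigcap_i \frak{p}_i$. Flatness of $A \to \hat{A}$ preserves finite intersections of ideals, giving $I\hat{A} = \bigcap_i \frak{p}_i\hat{A}$. The crux of the argument is the following observation: every minimal prime $Q$ of $\hat{A}$ over $I\hat{A}$ is in fact a minimal prime over $\frak{p}_i\hat{A}$ for some $i$. Indeed, being prime and containing $\bigcap_i\frak{p}_i\hat{A}$, the prime $Q$ must contain some $\frak{p}_i\hat{A}$; and any prime of $\hat{A}$ squeezed between $\frak{p}_i\hat{A}$ and $Q$ would still contain $I\hat{A}$, forcing equality with $Q$ by minimality. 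The standard dimension formula for faithfully flat local homomorphisms, applied to $A_{Q\cap A} \to \hat{A}_Q$, then yields $\height Q = \height \frak{p}_i$, since the closed fibre has dimension $0$. Conversely, for each $i$, lying-over for the faithfully flat map $A \to \hat{A}$ produces a prime of $\hat{A}$ minimal over $\frak{p}_i\hat{A}$, of height $\height \frak{p}_i$, and this prime contains some minimal prime over $I\hat{A}$. Minimising over $i$ yields $\height I\hat{A} = \min_i \height \frak{p}_i = \height I$.

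The preservation of equiheightedness is then immediate from the same analysis: if every $\frak{p}_i$ has the common height $h = \height I$, then each minimal prime $Q$ over $I\hat{A}$ inherits that height from the $\frak{p}_i$ it sits above, so $I\hat{A}$ is equiheighted. I expect the only mildly delicate ingredients to be the commutation of finite intersections with flat base change and the flat-local dimension identity; both are standard, and with them in hand the argument proceeds cleanly without invoking either the going-down lemma proved immediately before or the normalisation--completion correspondence of remark~\ref{remark_excellence_correspondence}.
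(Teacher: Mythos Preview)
Your argument is correct and takes a genuinely different route from the paper's. The paper establishes $\height Q \le \height I$ for a minimal prime $Q$ over $I\hat A$ via a chain of equalities that invokes the normalisation--completion correspondence of remark~\ref{remark_excellence_correspondence} together with the catenary property, and it obtains the reverse inequality from its bespoke going-down lemma (whose proof again rests on that correspondence). You instead extract everything from the faithful flatness of $A\to\hat A$: going-down for flat extensions forces $Q\cap A=\frak p_i$ whenever $Q$ is minimal over $\frak p_i\hat A$, and the flat-local dimension formula $\dim \hat A_Q=\dim A_{\frak p_i}+\dim(\hat A_Q/\frak p_i\hat A_Q)$ then gives $\height Q=\height\frak p_i$ directly. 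This is cleaner and, notably, shows that excellence is used only for the radicality claim; the height and equiheighted assertions hold for an arbitrary Noetherian local ring.

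One small point worth tightening: when you write that the dimension formula ``yields $\height Q=\height\frak p_i$, since the closed fibre has dimension $0$'', you are silently using $Q\cap A=\frak p_i$. The formula applied to $A_{Q\cap A}\to\hat A_Q$ gives $\height Q=\height(Q\cap A)$, and it is flat going-down (if $Q\cap A\supsetneq\frak p_i$, descend to a prime below $Q$ lying over $\frak p_i$, contradicting minimality of $Q$ over $\frak p_i\hat A$) that closes the gap. With that sentence added, the argument is complete.
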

\begin{proof}
Since $A/I$ is an excellent, reduced local ring, its completion is also reduced, i.e.~$I\hat{A}$ is a radical ideal.

Let $P$ be a prime containing $I\hat{A}$; then $\frak{p}=P\cap A$ is a prime containing $I$ and so there is a chain $\frak{p}=\frak{p}_s\supset\dots\supset\frak{p}_0$ witnessing $\height\frak{p}\ge s$. By the previous going down lemma we obtain a chain of primes of $\hat{A}$ \[P=P_s\supset\dots\supset P_0\] where $P_i\cap A=\frak{p}_i$ for each $i$. Therefore $\height{P}\ge s$. Since this is true for every prime $P$ containing $I\hat{A}$ we obtain $\height I\ge s$.

Furthermore, by definition of the height of $I$, there is a prime $\frak{q}$, minimal for containing $I$, which satisfies $\height\frak{q}=\height I$. Let $Q$ be a prime of $\hat{A}$, minimal for containing $\frak{q}\hat{A}$; then the correspondence of remark \ref{remark_excellence_correspondence} implies that $Q\cap A$ is minimal over, hence equal to, $\frak{q}$. There is a minimal prime $\frak{l}$ of $\hat{A}$ which satisfies $\height Q=\height Q/\frak{l}$. Let $\frak{m}$ be the maximal ideal of $A$. We manipulate heights and dimensions as follows
\begin{align*}
\height_{\hat{A}} Q&=\height_{\hat{A}} Q/\frak{l}\\
	&=\height_{\hat{A}} \frak{m}\hat{A}/\frak{l}-\height_{\hat{A}}\frak{m}\hat{A}/Q\tag{1}\\
	&=\dim \hat{A}/\frak{l}-\dim\hat{A}/Q\\
	&=\dim A/\frak{l}\cap A-\dim A/Q\cap A\tag{2}\\
	&=\height_A \frak{m}/\frak{q}\cap A-\height_A\frak{m}/Q\cap A\\
	&=\height_A \frak{q}/\frak{l}\cap A\tag{3}\\
	&\le\height_A \frak{q}\\
	&=\height_A I,
\end{align*}
where (1) (resp. (3)) follows from $\hat{A}$ (resp. A) being excellent and hence universally catenary and (2) follows from applying the correspondence of remark \ref{remark_excellence_correspondence} to $A$ and $A/I$. The definition of the height of $I\hat{A}$ implies now that $\height I\hat{A}\le\height{I}$; in conjunction with the earlier inequality we have therefore proved equality.

More generally, if $Q$ is any prime of $\hat{A}$ which is minimal over $I\hat{A}$ then $Q\cap A$ is minimal over $I$ and the calculation above obtains $\height Q\le\height Q\cap A$. So if $I$ is equiheighted then $\height Q\le\height I$. But $Q\supseteq I\hat{A}$ implies $\height Q\ge\height I\hat{A}$ and so $\height Q=\height I\hat{A}$. So $I$ equiheighted implies $I\hat{A}$ equiheighted.
\end{proof}

We offer the following improvement on the previous result, which proves most of lemma \ref{lemma_comp_and_loc_preserve_red_chains}:

\begin{lemma}\label{lemma_compltion_works_for_reduced_chains_2}
Let $A$ be an excellent, reduced semi-local ring and suppose that $I$ is a radical ideal of $A$ contained in the Jacobson radical. Then $I\hat{A}$ is a radical ideal, $\height I\hat{A}=\height I$ and if $I$ is equiheighted so is $I\hat{A}$.
\end{lemma}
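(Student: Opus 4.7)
The plan is to reduce to the local case already settled in lemma \ref{lemma_compltion_works_for_reduced_chains}, by exploiting the canonical product decomposition of the completion of a semi-local ring. Let $\frak m_1,\dots,\frak m_r$ be the maximal ideals of $A$. Excellence, reducedness, and being contained in the maximal ideal are all preserved under localisation, so each $A_{\frak m_i}$ is an excellent reduced local ring and each $IA_{\frak m_i}$ is a proper radical ideal of $A_{\frak m_i}$. Further, the review of semi-local rings recalled in the preceding remarks gives an isomorphism
\[
\hat A \;\cong\; \prod_{i=1}^{r}\widehat{A_{\frak m_i}}.
\]

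First I would identify the ideal $I\hat A$ under this isomorphism with the product $\prod_i I\widehat{A_{\frak m_i}}$. This follows from $A$-flatness of each $\widehat{A_{\frak m_i}}$ together with compatibility of tensor product with finite direct products, or more directly from the fact that the extension of $I$ along $A\to A_{\frak m_i}\to\widehat{A_{\frak m_i}}$ equals the extension of the ideal $IA_{\frak m_i}$ to $\widehat{A_{\frak m_i}}$.

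Next I would verify the three assertions factor-by-factor. For radicality: lemma \ref{lemma_compltion_works_for_reduced_chains} applied to each pair $(A_{\frak m_i},IA_{\frak m_i})$ shows $I\widehat{A_{\frak m_i}}$ is radical, and a finite product of radical ideals in a product ring is radical. For the height equality: primes of $\hat A$ correspond under the product decomposition to primes in a unique factor $\widehat{A_{\frak m_i}}$ with the same local height, and likewise primes of $A$ containing $I$ are (since $I$ lies in the Jacobson radical) precisely the primes of $A_{\frak m_i}$ containing $IA_{\frak m_i}$, as $i$ varies, again with heights preserved by localisation. Therefore
\[
\height_A I \;=\; \min_i \height_{A_{\frak m_i}} IA_{\frak m_i} \;=\; \min_i \height_{\widehat{A_{\frak m_i}}} I\widehat{A_{\frak m_i}} \;=\; \height_{\hat A} I\hat A,
\]
the middle equality being the local case.

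Finally, for the equiheightedness claim: if $I$ is equiheighted with common height $h$, then since every minimal prime over $I$ in $A$ is contained in some $\frak m_i$ and has the same height in $A_{\frak m_i}$ as in $A$, each $IA_{\frak m_i}$ is equiheighted of height $h$; lemma \ref{lemma_compltion_works_for_reduced_chains} then gives that each $I\widehat{A_{\frak m_i}}$ is equiheighted of height $h$, and the minimal primes of $I\hat A$ in the product are just the minimal primes of the factors, so they all have height $h$. The only point requiring any real care is the clean transport of primes and their heights through the decomposition $\hat A\cong\prod_i\widehat{A_{\frak m_i}}$, but this is elementary once stated explicitly; there is no new difficulty beyond the local case.
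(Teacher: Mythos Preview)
Your proof is correct and follows essentially the same route as the paper: both arguments use the product decomposition $\hat A\cong\prod_i\widehat{A_{\frak m_i}}$, identify $I\hat A$ with $\prod_i I\widehat{A_{\frak m_i}}$, apply the local case (lemma \ref{lemma_compltion_works_for_reduced_chains}) factor-by-factor, and then read off the height and equiheightedness statements from the elementary structure of primes in a finite product of rings. If anything, your writeup is slightly more explicit about the height bookkeeping than the paper's.
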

\begin{proof}
Let $\frak{m}_1,\dots,\frak{m}_s$ be the distinct maximal ideals of $A$ and write $A_i=A_{\frak{m}_i}$ for each $i$. The isomorphism $\hat{A}\cong\prod_i \hat{A}_i$ restricts to $I\hat{A}$ as $I\hat{A}\cong\prod_i I\hat{A}_i$. The desired results all follow from this identification and the previous result; let us explain how in greater detail. 

Firstly, since $IA_i$ is a radical ideal of $A$ and $A_i$ is an excellent, reduced local ring the previous result implies that $I\hat{A}_i$ is radical for each $i$. Clearly this implies that $I\hat{A}$ is radical.

Secondly, the primes of $\hat{A}$ which are minimal over $I\hat{A}$ have the form $P=\frak{p}A_{i_0}\times\prod_{i\neq i_0} A_i$ where $\frak{p}$ is a prime of $A$ which is minimal over $I$ and contained inside $\frak{m}_{i_0}$. Moreover, \[\height_{\hat{A}}P\stackrel{(1)}{=}\height_{A_{i_0}} \frak{p}A_{i_0}\stackrel{(2)}{=}\height_A\frak{p},\] where (1) follows from the obvious structure of primes in $\prod_iA_i$ and (2) follows from basic properties of localisation. This is enough to complete the proof.
\end{proof}

Now we may finish the proof of lemma \ref{lemma_comp_and_loc_preserve_red_chains}:

\begin{proof}[Proof of lemma \ref{lemma_comp_and_loc_preserve_red_chains}]\label{proof_of_main_lemma}
After using our remarks on excellence the only remaining difficulty with the completion process is knowing that $I\hat{A}$ is equiheighted of the correct height; but this is exactly covered by our previous lemma.

For localisation, let $\frak{p}_1,\dots,\frak{p}_s$ be the distinct primes of $A$ which are minimal over $I_{n-1}$. Then the prime ideals of $A_{I_{n-1}}$ correspond to the prime ideals $\frak q$ of $A$ which are disjoint from $A\setminus\bigcup_i\frak p_i$; by prime avoidance, this means $\frak q\subseteq\frak p_i$ for some $i$. Therefore $A_{I_{n-1}}$ is semi-local of dimension $\height I_{n-1}=n-1$.

Let $j$ be in the range $0,\dots,n-1$ and write $I=I_j$ for simplicity. We must show that $IA_{I_{n-1}}$ is a radical equiheighted ideal of height $j$. Firstly, $A_{I_{n-1}}/IA_{I_{n-1}}$ is a localisation of the reduced ring $A/I$, hence is reduced. Secondly, the prime ideals of $A_{I_{n-1}}$ which are minimal over $IA_{I_{n-1}}$ correspond to those prime ideals of $A$ which are minimal over $I$ and which are contained inside one of $\frak p_1,\dots,\frak p_s$. It follows that $IA_{I_{n-1}}$ is equiheighted of the same height as $I$.
\comment{
The prime adoi

Firstly, let $\frak p$ be a prime contained inside $\frak p_1$ which is minimal over $I_j$; this exists since $I_j\subseteq I_{n-1}\subseteq \frak p_1$. Then $\frak pA_{I_{n-1}}$ is a prime ideal, of height $\height\frak p=j$, containing $IA_{I_{n-1}}$; therefore $\height IA_{I_{n-1}}\le j$. Secondly, suppose that $P$ is an prime ideal

then \[A_{I_{n-1}}/I_{n-1}A_{I_{n-1}}=\Frac (A/I_{n-1})=\prod_i\Frac(A/\frak p_i)\]

FINISH

then $A_{I_{n-1}}\cong\prod_iA_{\frak{p}_i}$. Each localisation $A_i=A_{\frak{p}_i}$ is a Noetherian, excellent, reduced, local ring and they are all $n-1$ dimensional since $I_{n-1}$ is equiheighted in $A$ of height $n-1$. Therefore $A_{I_{n-1}}$ is a Noetherian, excellent, $n-1$ dimensional, reduced, semi-local ring.

Fix an integer $j<n$ and write $I=I_j$. Pushing forward $I$ to $A_{I_{n-1}}$ obtains $IA_{I_{n-1}}\cong\prod_iIA_i$; in particular, $I_{n-1}A_{I_{n-1}}$ is the Jacobian ideal of $A_{I_{n-1}}$. For $i=1,\dots,s$ the ideal $IA_i$ is radical in $A_i$, simply because $I$ is radical in $A$; therefore $IA_{I_{n-1}}$ is radical in $A_{I_{n-1}}$. Also, $IA_{I_{n-1}}\neq A_{I_{n-1}}$ since $I\subseteq I_{n-1}$. Now let $P\subset A_{I_{n-1}}$ be a prime minimal over $IA_{I_{n-1}}$; then there is a prime $\frak{q}\subset A$ minimal over $I$ and contained inside $\frak{p}_{i_0}$ for some $i_0$ which identifies $P$ as $P\cong\frak{q}A_{i_0}\oplus\prod_{i\neq i_0} A_i$. Just as in the proof of the previous result, \[\height_{A_{I_{n-1}}}P=\height_{A_{i_0}}\frak{q}A_{i_0}=\height_A\frak{q},\] which proves that $IA_{I_{n-1}}$ is equiheighted in $A_{I_{n-1}}$ of height $j$.
}
\end{proof}

It is convenient to describe the quotient of a semi-local ring $A$ by its Jacobson radical as its {\em residue ring} $k(A)$, which is a finite product of fields. If $\ul A$ is a reduced chain, write $k(\ul A)=k(A)$.

\begin{remark}\label{remark_change_of_residue_field_of_reduced_chain}
As in lemma \ref{lemma_comp_and_loc_preserve_reg_chains} we note how the completion and localisation processes effect the residue ring of a reduced chain: \[k(\op{comp}\ul A)=k(\ul A)\] \[k(\op{loc} A)=\Frac(A/I_{n-1})\]
\end{remark}

\begin{lemma}
Let $B$ be a Noetherian, one-dimensional, complete, reduced semi-local ring. Then $\Frac{B}$ is a finite product of complete discrete valuation fields, $\prod_i F_i$, such that the product of residue fields, namely $\prod_i\res{F_i}$, is a finite extension of $k(B)$.
\end{lemma}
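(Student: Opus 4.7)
The plan is to reduce to the local case, extract complete discrete valuation rings from the integral closure, and then read off both statements from the finiteness of the normalisation.

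First, a complete semi-local Noetherian ring is canonically the product of its completions at maximal ideals, so $B \cong \prod_j B_j$ with each $B_j$ a Noetherian, one-dimensional, complete, reduced local ring. This induces $\Frac B \cong \prod_j \Frac B_j$ and $k(B) \cong \prod_j k(B_j)$, so I may assume $B$ is itself local.

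Next, let $\tilde B$ denote the integral closure of $B$ in $\Frac B$. Remark \ref{remark_excellence} records that a complete Noetherian local ring is excellent and that $\tilde B$ is then a finite $B$-module. Finiteness over the complete local ring $B$ makes $\tilde B$ itself $\frak m_B$-adically complete; since every maximal ideal of $\tilde B$ lies over $\frak m_B$, the $\frak m_B$-adic topology is cofinal with the Jacobson radical topology on $\tilde B$, so $\tilde B$ is a complete semi-local Noetherian ring. Decomposing it as $\tilde B \cong \prod_i R_i$ into its complete local factors, each $R_i$ inherits reducedness, one-dimensionality, and normality, hence is a complete discrete valuation ring. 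Passing to total quotient rings gives $\Frac B = \Frac \tilde B = \prod_i \Frac R_i = \prod_i F_i$, a finite product of complete discrete valuation fields.

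For the residue field claim, observe that $\prod_i \res{F_i}$ is precisely the residue ring $\tilde B/\op{rad}(\tilde B)$. The containment $\frak m_B \tilde B \subseteq \op{rad}(\tilde B)$ used above shows this ring is a quotient of $\tilde B/\frak m_B \tilde B$, which is a finite $k(B)$-module because $\tilde B$ is a finite $B$-module. Hence $\prod_i \res{F_i}$ is finite over $k(B)$, as required.

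The main obstacle is the finiteness of $\tilde B$ over $B$; this is the one non-elementary input, imported through excellence. Once it is granted, the rest is standard structure theory of complete semi-local Noetherian rings combined with the fact that a one-dimensional normal Noetherian local domain is a DVR.
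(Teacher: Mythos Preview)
Your argument is correct and shares the same skeleton as the paper's: reduce to the local case, invoke excellence to get finiteness of the normalisation, and read off complete discrete valuation rings. The organisation differs slightly. The paper first decomposes $\Frac B$ along the minimal primes of $B$, then normalises each quotient $B/\frak p$ separately and appeals to the correspondence of remark~\ref{remark_excellence_correspondence} (applied to the already complete ring $B/\frak p$) to conclude that each $\widetilde{B/\frak p}$ is \emph{local}, hence a single complete DVR; this yields the extra information that the field factors of $\Frac B$ are in bijection with the minimal primes of $B$. You instead normalise $B$ in one step and then split the resulting complete semi-local ring $\tilde B$ into its local pieces, which avoids the branch correspondence entirely at the cost of not identifying the index set. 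Since the lemma does not ask for that identification, your route is marginally more economical.
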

\begin{proof}
First suppose that $B$ is actually local, and let $\frak{p}$ be a minimal prime. Since $B/\frak{p}$ is a complete local domain the correspondence for excellent rings implies that its normalisation $\tilde{B/\frak{p}}$ inside $\Frac{B/\frak{p}}$ has a unique maximal ideal. So $\tilde{B/\frak{p}}$ is a Noetherian, one-dimensional, normal, local ring, i.e.~a discrete valuation ring; being finite over the complete ring $B/\frak{p}$, it is itself complete. We have proved that $\tilde{B/\frak{p}}$ is a complete discrete valuation ring, and therefore its field of fractions $\Frac B/\frak{p}$ is a complete discrete valuation field.

Furthermore, $B_{\frak{p}}$ is a Noetherian, zero-dimensional, reduced local ring, i.e.~a field; the maximal ideal of this field is $0=\frak{p}B_{\frak{p}}$ and therefore the natural map $B\to B_{\frak{p}}$ induces an isomorphism $\Frac B/\frak{p}\cong B_{\frak{p}}$. So $B_{\frak{p}}$ is a complete discrete valuation ring whose residue field is that of $\tilde{B/\frak{p}}$; this is a finite field extension of $k(B)$.

We complete the proof in the local case by recalling that $\Frac{B}\cong\prod_{\frak{p}}B_{\frak{p}}$, where $\frak{p}$ runs over the finitely many minimal primes of $B$.

Now consider the semi-local case. Since $B$ is assumed complete, therefore $B\cong\prod_{\frak{m}}\hat{B}_{\frak{m}}$, where $\frak{m}$ runs over the finitely many maximal ideals of $B$. So $\Frac B\cong\prod_{\frak{m}}\Frac\hat{B}_{\frak{m}}$, easily reducing the claim to the local case.
\end{proof}

We may now prove theorem \ref{theorem_main_theorem_in_reduced_case} and identify the residue fields:

\begin{theorem}\label{theorem_proof_of_main_theorem}
If $\ul{A}$ is a reduced chain of length $n$ then $\op{HL}(\ul A)$ is a finite product of fields, $\prod_i F_i$, each of $\cdvdim\ge n$, and the product of the $n^\sub{th}$ residue fields, namely $\prod_iF_i^{(n)}$, is a finite extension of $k(A)$.
\end{theorem}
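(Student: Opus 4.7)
The plan is induction on the length $n$ of the chain, reducing via the operation $\op{loc}\op{comp}$ and invoking the preceding one-dimensional lemma at the moment the last complete discrete valuation structure appears. The base case $n=0$ is immediate: a reduced $0$-chain is just a zero-dimensional, reduced, semi-local ring, which by the remark on zero-dimensional rings is a finite product of fields $\prod_i F_i$; each is trivially of $\cdvdim\ge 0$, and $\prod_i F_i^{(0)}=k(A)$ tautologically.

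For the inductive step, let $\ul A$ be a reduced $n$-chain with $n\ge 1$. By lemma \ref{lemma_comp_and_loc_preserve_red_chains}, $\op{loc}\op{comp}\ul A$ is a reduced $(n-1)$-chain, and remark \ref{remark_change_of_residue_field_of_reduced_chain} identifies its residue ring as $\Frac(\hat A/I_{n-1}\hat A)$. The inductive hypothesis applied to this shorter chain gives that $HL(\ul A)=HL(\op{loc}\op{comp}\ul A)$ is a finite product $\prod_i F_i$ with each $\cdvdim F_i\ge n-1$, and that $\prod_i F_i^{(n-1)}$ is a finite extension of $\Frac(\hat A/I_{n-1}\hat A)$. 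To lift the cdvdim from $n-1$ to $n$, I apply the preceding one-dimensional lemma to $B:=\hat A/I_{n-1}\hat A$: by lemma \ref{lemma_compltion_works_for_reduced_chains_2}, $I_{n-1}\hat A$ is radical and equiheighted of height $n-1$ in the $n$-dimensional ring $\hat A$, so $B$ is a one-dimensional, complete, reduced, Noetherian semi-local ring whose residue ring is $\hat A/I_n\hat A=A/I_n=k(A)$, using that $A/I_n$ is already a finite product of fields and hence its own completion. Thus $\Frac B=\prod_j G_j$ is a finite product of complete discrete valuation fields with $\prod_j\res{G_j}$ a finite extension of $k(A)$.

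To conclude, observe that any ring map from a finite product of fields into a field factors through a single-component projection, so the finite ring extension $\prod_j G_j\into\prod_i F_i^{(n-1)}$ decomposes componentwise: each $F_i^{(n-1)}$ is a finite field extension of some $G_{j(i)}$, hence is itself a complete discrete valuation field by proposition \ref{proposition_extension_of_local_field}; this raises $\cdvdim F_i$ to at least $n$, and proposition \ref{proposition_e=fn} then shows each $F_i^{(n)}=\res{F_i^{(n-1)}}$ to be a finite extension of $\res{G_{j(i)}}$. Taking products, $\prod_i F_i^{(n)}$ is finite over $\prod_i\res{G_{j(i)}}$, which is finite over $\prod_j\res{G_j}$ via the diagonal ring homomorphism $(g_j)_j\mapsto(g_{j(i)})_i$, which is finite over $k(A)$ by the previous paragraph; composing these finishes the induction. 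The main obstacle is less conceptual than bookkeeping: one has to track how finite extensions compose across the various semi-local branches of $A$, $\hat A$, and $B$, and the componentwise decomposition of ring maps out of a product of fields is the device that makes this splice together cleanly.
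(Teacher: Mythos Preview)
Your proof is correct and follows essentially the same route as the paper: induct on $n$, apply $\op{loc}\op{comp}$ to drop to length $n-1$, invoke the inductive hypothesis, and then use the one-dimensional lemma on $B=\hat A/I_{n-1}\hat A$ to see that the $(n-1)^{\mathrm{st}}$ residue fields sit finitely over a product of complete discrete valuation fields whose residue fields are finite over $k(A)$. The only difference is that the paper dispatches the final step in a single sentence (``any reduced finite ring extension of $\prod_j K_j$ is therefore also a finite product of complete discrete valuation fields\dots''), whereas you unpack it explicitly via the componentwise factorisation and propositions \ref{proposition_extension_of_local_field} and \ref{proposition_e=fn}; your version is more transparent but not genuinely different.
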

\begin{proof}
By induction on $n$, with nothing to prove if $n=0$. Applying a single completion and localisation to the reduced chain $\underline{A}$ obtains \[\op{loc}\op{comp}\underline{A}=(\hat{A}_{I_{n-1}},I_{n-1}\hat{A}_{I_{n-1}},\dots,I_0\hat{A}_{I_{n-1}}).\] By the inductive hypothesis, $\op{HL}(\underline{A})=\op{HL}(\op{loc}\op{comp}\underline{A})$ is isomorphic to a finite product $\prod_i F_i$ where each $F_i$ is a field of $\cdvdim\ge n-1$, and $\prod_iF_i^{(n-1)}$ is a finite extension of $k(\op{loc}\op{comp}\underline{A})=\Frac \hat{A/I_{n-1}}$ (using remark \ref{remark_change_of_residue_field_of_reduced_chain}).

Moreover, $\hat{A/I_{n-1}}$ is a Noetherian, one-dimensional, complete, reduced semi-local ring and so the previous lemma implies that $\Frac \hat{A/I_{n-1}}$ is a finite product of complete discrete valuation fields, $\prod_j K_j$, such that $\prod_j\res{K_j}$ is a finite ring extension of $k(A)$. It is easy to see that any reduced finite ring extension of $\prod_j K_j$ is therefore also a finite product of complete discrete valuation fields whose product of residue fields is a finite extension of $k(A)$, and this competes the proof.
\end{proof}

\section{An introduction to higher dimensional ad\`eles}\label{section_adeles}
The ring of ad\`eles associated to a number field or to a smooth curve over a field are a familiar object in algebraic and arithmetic geometry, comprised of a restricted product of local fields. As well as defining two-dimensional local fields, A.~Parshin defined two-dimensional ad\`eles for smooth algebraic surfaces, this time formed by taking a restricted product of two-dimensional local fields associated to the surface. The constructions were subsequently extended to arbitrary dimensions by A.~Beilinson.

In this section we will first describe Beilinson's general construction. The input is a scheme $X$, a quasi-coherent sheaf $M$, and a set $T$ of flags on $X$; the output, constructed in a recursive fashion via rules (A1) -- (A4) below, is an abelian group $\bb A(T,M)$ which the first lemma of section \ref{subsection_restricted_products} shows can be interpreted as a restricted product $\rprod_{\xi\in T}M_\xi^\comp$. The local factors $M_\xi^\comp$ are obtained from $M$ by successively completing and localising along the flag $\xi$, in a manner analogous to the constructions of sections \ref{section_regular} and \ref{section_singular}.

In the particular case that $M=\roi_X$ and $T$ is the set of all complete flags, we obtain $\rprod_\xi(\roi_X)_\xi^\comp$, a restricted product of higher dimensional fields, as constructed in remarks \ref{remark_geometric_interpretation} and \ref{remark_geometric_interpretation_2} (where we wrote $F_\xi$ instead of $(\roi_X)_\xi^\comp$). This is the higher dimensional analogue of the familiar ring of ad\`eles.

In subsection \ref{subsection_adeles_and_cohomology} we describe Beilinson's result that these ad\`ele groups may be put together to form a semi-cosimplicial flasque resolution $\ul{\bb A}(S_\bullet^\sub{red}(X), M)$ of $M$. Taking global sections thus yields a complex of ad\`eles group $\bb A(S_\bullet^\sub{red}(X), M)$ which computes $H^*(X,M)$.

We then analyse the construction in more detail when $\dim X=1$, showing in proposition \ref{proposition_one_dim_adeles} that we recover the familiar ring of ad\`eles if $M=\roi_X$. Finally we analyse the construction when $\dim X=2$ and explicitly describe all the ad\`ele groups as restricted products, thereby recovering the original definitions given by Parshin.

\subsection{The definition of the higher dimensional ad\`eles}
Let $X$ be a Noetherian scheme. We let $Coh(X)$ (resp.~$QCoh(X)$) denote the category of coherent (resp.~quasi-coherent) $\roi_X$-modules. A {\em chain} on $X$ is a finite sequence of points \[\xi=(x_0,\dots,x_m)\] such that $x_i\in\res{\{x_{i-1}\}}$ for $i=1,\dots,m$; let $S_m(X)$ denote the set of chains of length $m$. The chain is said to be {\em reduced} if and only if it has no repetitions, i.e.~$x_0\neq x_1\neq\cdots\neq x_m$; let $S_m^\sub{red}(X)$ denote the reduced chains of length $m$. Obviously $S_m^\sub{red}(X)=\emptyset$ if $m>\dim X$.

\begin{remark}
Up until now, we have preferred to work with the flag of irreducible closed subschemes \[\Cl{x_m}\subseteq\Cl{x_{m-1}}\subseteq\cdots\subseteq\Cl{x_0}\] rather than the so-called chain of its generic points. Passing between the two approaches is trivial, but the notation of chains of points is better suited to adelic constructions.
\end{remark}

For $x\in X$ and $T\subseteq S_m(X)$, we write \[_xT=\{\xi\in S_{m-1}(X):(x,\xi)\in T\}\subseteq S_{m-1}(X).\]

For any quasi-coherent sheaf $M$ on $X$, any $m\ge 0$, and any $T\subseteq S_m(X)$, the associated {\em ad\`ele group} $\bb{A}_X(T,M)=\bb{A}(T,M)$ is the abelian group defined according to the following recursive rules:
\begin{enumerate}
\item[(A1)] $\bb{A}(T,\cdot):QCoh(X)\to Ab$ commutes with direct limits.
\item[(A2)] If $M$ is coherent and $m=0$, then \[\bb A(T,M)=\prod_{x\in T}\hat{M_x},\] where $\hat{M_x}$ is the $\frak m_{X,x}$-adic completion of the finitely generated $\roi_{X,x}$-module $M_x$.
\item[(A3)] If $M$ is coherent and $m\ge1$, then \[\bb{A}(T,M)=\prod_{x\in X}\projlim_r \bb{A}(_xT,\tilde j_{rx}M),\] where the notation $j_{rx}M$ will be explained in the next remark.
\item[(A4)] $\bb{A}(\emptyset,M)=0$.
\end{enumerate}
The proof that these rules uniquely give a well-defined family of exact functors $\bb A(T,\cdot):QCoh(X)\to Ab$ is very nicely explained in \cite{Huber1991}, so we content ourselves with an example to demonstrate:

\begin{example}
Suppose that $T\subseteq S_1(X)$ and that $M\in Coh(X)$. Then rule (A3) implies that \[\bb A(T,M)=\prod_{x\in X}\projlim_r \bb{A}(_xT,\tilde j_{rx}M).\] Next, since $\tilde j_{rx}M$ is a quasi-coherent sheaf, it is equal to the inductive limit of its coherent subsheaves $N$, i.e.~$\tilde j_{rx}M=\indlim_{N\subseteq \tilde j_{rx}M}N$, and so rule (A1) implies that \[\bb{A}(_xT,\tilde j_{rx}M)=\indlim_{N\subseteq \tilde j_{rx}M}\bb A(_x T,N)\] But $_xT$ is a set of chains of length $0$, so rule (A2) implies that $\bb A(_x T,N)=\prod_{z\in _xT}\hat{N_z}$, where $\hat{N_z}$ is the $\frak m_{X,z}$-adic completion of the finitely generated $\roi_{X,z}$-module $N_z$.

In conclusion, \[\bb A(T,M)=\prod_{x\in X}\projlim_r \indlim_{N\subseteq \tilde j_{rx}M}\prod_{z\in _xT}\hat{N_z},\] where each inductive limit is taken over the coherent submodules $N$ of $\tilde j_{rx}M$.

In other words, by iterating rules (A1) and (A3), one reduces the calculation of any ad\`ele group to a set of length $0$ chains and a coherent sheaf, at which point rule (A2) is applied.
\end{example}

\begin{remark}\label{remark_notation_j_rx}
We must explain the notation $\tilde j_{rx}M$:

Given a morphism of Noetherian schemes $f:W\to X$, there is the adjoint pair of functors $(f^*,f_*)$: \[\xymatrix@=1cm{QCoh(W)\ar@/^5mm/[r]^{f_*}&QCoh(X)\ar@/^5mm/[l]^{f^*}},\] and we will write \[\tilde f=f_*f^*:QCoh(X)\to QCoh(X).\]

If $x$ is a point of a scheme $X$, and $r\ge 0$, then we denote by $j_{rx}$ the natural morphism \[j_{rx}:\Spec\roi_{X,x}/\frak m_{X,x}^r\to X\] from the `fat point' around $x$ to $X$. (We sometimes write $j_x=j_{1x}:\Spec k(x)\to X$.)

Combining these two pieces of notation defines the functor $\tilde j_{rx}:QCoh(X)\to QCoh(X)$. However, more concretely, the reader can check that if $M$ is a quasi-coherent sheaf on $X$, then $\tilde j_{rx}M$ is the quasi-coherent sheaf on $X$ which is constantly equal to $M_x/\frak m_{X,x}^rM_x$ along $\Cl x$, and is zero elsewhere.
\end{remark}

\subsection{Ad\`ele groups as restricted products}\label{subsection_restricted_products}
The familiar ring of ad\`eles \[\rprod_p\bb Q_p=\big\{(f_p)\in\prod_p\bb Q_p:f_p\in\bb Z_p\mbox{ for all but finitely many }p\big\}\] is often called a `restricted product'. In this section we explain that each higher dimensional ad\`ele group $\bb A(T,M)$ can also be seen as a restricted product, though typically not defined simply by imposing a `for all but finitely many' condition. Indeed, the reader may wish to glance at proposition \ref{proposition_two_dim_adeles}, where the restriction conditions are explicitly described for a two-dimensional scheme.

We will abuse notation slightly by writing $\bb A(\xi,M)=\bb A(\{\xi\},M)$ if $\xi\in S_m(X)$ is a single chain on $X$ (very soon we will write $M_\xi^\comp$ instead); these will be the local factors appearing in the restricted products:

\begin{lemma}
Let $X$ be a Noetherian scheme and $M\in QCoh(X)$. Let $T\subseteq S_m(X)$. Then there is a natural injective homomorphism \[\bb A(T,M)\To\prod_{\xi\in T}\bb A(\xi,M),\] whose image contains $\bigoplus_{\xi\in T}\bb A(\xi,M)$.
\end{lemma}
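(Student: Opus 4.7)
I would induct on the chain length $m$. The natural map itself arises from functoriality of $\bb A(-,M)$ in its first argument: for each $\xi\in T$ the inclusion $\{\xi\}\sseq T$ yields a restriction map $\bb A(T,M)\to \bb A(\{\xi\},M)$, and these assemble into the desired homomorphism.

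For the base case $m=0$, when $M$ is coherent rule (A2) presents both sides as $\prod_{x\in T}\hat{M_x}$ and the comparison is the identity. For a general quasi-coherent $M$, writing $M=\indlim_N N$ over its coherent subsheaves and applying (A1), the map becomes the canonical comparison
\[\indlim_N\prod_{x\in T}\hat{N_x}\To\prod_{x\in T}\indlim_N\hat{N_x}.\]
Injectivity rests on the following key observation: for any inclusion $N\into N'$ of coherent subsheaves of $M$, the induced map $\hat{N_x}\into\hat{N'_x}$ remains injective, since $\roi_{X,x}$ is Noetherian and $\frak m_{X,x}$-adic completion is exact on finitely generated modules. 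Consequently, if $(n_x)_{x\in T}\in \prod_{x\in T}\hat{N_x}$ has zero image componentwise in $\prod_x \indlim_{N'}\hat{N'_x}$, each $n_x$ already vanishes in $\hat{N_x}$. For the image containing $\bigoplus_{\xi\in T}\bb A(\{\xi\},M)$, a finitely supported tuple can be lifted: choose coherent submodules $N^{x_1},\dots,N^{x_k}\sseq M$ over which the nonzero components are represented, and take their (coherent) sum $N'=\sum_i N^{x_i}$, over which all components lift simultaneously.

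For the inductive step $m\ge 1$, rule (A3) gives
\[\bb A(T,M)=\prod_{x\in X}\projlim_r \bb A({}_xT, \tilde j_{rx}M),\qquad \bb A(\{(x,\eta)\},M)=\projlim_r \bb A(\{\eta\}, \tilde j_{rx}M).\]
Applying the inductive hypothesis for each $x$ and $r$ to the set ${}_xT\sseq S_{m-1}(X)$ and the quasi-coherent sheaf $\tilde j_{rx}M$ yields injections
\[\phi_r^x:\bb A({}_xT,\tilde j_{rx}M)\into \prod_{\eta\in{}_xT}\bb A(\{\eta\}, \tilde j_{rx}M)\]
whose images contain $\bigoplus_\eta \bb A(\{\eta\}, \tilde j_{rx}M)$. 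Since $\projlim_r$ is left exact and commutes with arbitrary products, taking the limit gives an injection $\projlim_r \bb A({}_xT,\tilde j_{rx}M)\into \prod_{\eta\in{}_xT}\bb A(\{(x,\eta)\},M)$, and the outer product over $x\in X$ produces the desired map. For surjectivity onto the direct sum, a finitely supported tuple $(\alpha_\xi)$ concentrates on finitely many $x$'s and, for each such $x$, on a finite $S_x\sseq{}_xT$. By the inductive direct-sum statement, at each level $r$ there exists a preimage $\beta_r^x$; injectivity of $\phi_r^x$ makes this preimage unique, which forces compatibility under the transition maps induced by $\tilde j_{(r+1)x}M\to \tilde j_{rx}M$, so the $\beta_r^x$ assemble into $\beta^x\in\projlim_r\bb A({}_xT,\tilde j_{rx}M)$, and these $\beta^x$ provide the required preimage in $\bb A(T,M)$.

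The main obstacle is the tension in (A1) between the filtered colimit over coherent subsheaves and a possibly infinite product over $T$: filtered colimits do not in general commute with infinite products in $\op{Ab}$. Noetherianity of $X$ is essential precisely here, ensuring that the transition maps of the colimit system are injective on each local completion, which salvages injectivity of the comparison homomorphism.
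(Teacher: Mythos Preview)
Your argument is correct and follows the same inductive strategy as the paper: the base case via rule (A2), and the inductive step via rule (A3), the inductive hypothesis applied to the quasi-coherent sheaves $\tilde j_{rx}M$, and left-exactness of $\projlim_r$ together with its commutation with products. One small point of presentation: rule (A3) is only stated for \emph{coherent} $M$, so in your inductive step you must first reduce to that case exactly as you did for $m=0$; the paper does this reduction once at the outset for all $m$, invoking the exactness of $\bb A(T,\cdot)$ (cited from Huber) to guarantee that the transition maps $\bb A(\xi,N)\to\bb A(\xi,N')$ are injective, which is precisely the point you make explicit in your base case.
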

\begin{proof}
The result is proved by induction on $m$, the length of the chains contained inside $T$. This is the standard method of proof for results about higher dimensional ad\`eles.

For any fixed $m$ it is enough to prove the claim when $M$ is coherent, for then we may pass to the limit using rule (A1). So henceforth $M$ is coherent.

Suppose first that $m=0$. Then rule (A2), applied to both $T$ and to all the singletons $\{x\}$, $x\in T$, implies \[\bb A(T,M)=\prod_{x\in T}\hat{M_x}=\prod_{x\in T}\bb A(x,\cal M).\] In this case we do not merely have a natural injective homomorphism, we have equality.

Now suppose $m>1$. Then rule (A3) implies $\bb{A}(T,M)=\prod_{x\in X}\projlim_n \bb{A}(_xT,\tilde j_{rx}M)$, and the inductive hypothesis applied to $_xT$, for each $x\in X$, implies that there are natural injective homomorphisms \[\bb{A}(_xT,\tilde j_{rx}M)\to\prod_{\xi\in _xT}\bb A(\xi,\tilde j_{rx}M)\] whose images contain $\bigoplus_{\xi\in _xT}\bb A(\xi,\tilde j_{rx}M)$. Taking $\projlim_r$ we obtain a natural injective homomorphism \[\projlim_r\bb{A}(_xT,\tilde j_{rx}M)\to\projlim_r\prod_{\xi\in _xT}\bb A(\xi,\tilde j_{rx}M)=\prod_{\xi\in _xT}\projlim_r\bb A(\xi,\tilde j_{rx}M)\] whose image contains $\projlim_r\bigoplus_{\xi\in _xT}\bb A(\xi,\tilde j_{rx}M)\supseteq \bigoplus_{\xi\in _xT}\projlim_r\bb A(\xi,\tilde j_{rx}M)$.

Noting that $\prod_{x\in X}\prod_{\xi\in _xT}=\prod_{(x,\xi)\in T}$, putting this together gives a natural injective homomorphism \[\bb A(T,M)\to\prod_{(x,\xi)\in T}\projlim_r\bb A(\xi,\tilde j_{rx}M)\] whose image contains $\bigoplus_{(x,\xi)\in T}\projlim_r\bb A(\xi,\tilde j_{rx}M)$. But rule (A3) implies that \[\projlim_r\bb A(\xi,\tilde j_{rx}M)=\bb A((x,\xi),M),\] completing the proof.
\end{proof}

We may now introduce the following pieces of suggestive notation:

\begin{definition}\label{definition_local_factors_in_adeles}
Let $X$ be a Noetherian scheme and $M\in QCoh(X)$. If $\xi\in S_m(X)$ is a length $m$ chain on $X$, define \[M_\xi^\comp:=\bb A(\xi,M).\] If $T\subseteq S_m(X)$, then introduce `restricted product notation' for the ad\`ele group: \[\rprod_{\xi\in T} M_\xi^\comp=\rprod_{\xi\in T}\bb A(\xi,M):=\bb A(T,M).\]
\end{definition}

\begin{remark}
It the literature it is common to write $M_\xi$ instead of $M_\xi^\comp$. This leads to an obvious conflict of notation: if we identify a length $0$ chain $(x)$ with the point $x$ (as we have already done in rule (A2)), then $M_x$ could either mean the stalk of $M$ at $x$ or it could mean $\bb A(x,M)$ ($=\hat{M_x}$  if $M$ is coherent). Therefore we prefer the notation $M_\xi^\comp$, which also reminds us that it is a group typically involving completions.
\end{remark}

According to the previous lemma, $\bb A(T,M)=\rprod_{\xi\in T}M_\xi^\comp$ is a large subgroup of $\prod_{\xi\in T}M_\xi^\comp$. We emphasise again that as soon as $\dim X>1$, there is no simple description of this subgroup, unlike the `all but finitely many' condition appearing in the familiar ad\`eles $\rprod_p\bb Q_p$. Therefore $\rprod_{\xi\in T}M_\xi^\comp$ is simply a piece of suggestive notation for $\bb A(T,M)$.

Next we describe the local factors $M_\xi^\comp$ appearing in the groups of ad\`eles; they are constructed from $M$ by a series of localisations and completions along the chain $\xi$, in a very similar way to the constructions of sections \ref{section_regular} and \ref{section_singular}. We first introduce some functors for clarity of exposition:

If $A$ is a Noetherian local ring, with maximal ideal $\frak m_A$, then $A\dash Mod$ denotes the category of all $A$-modules. Define the functor \[C_A:A\dash Mod\to A\dash Mod,\quad M\mapsto\indlim_{N\subseteq M}\hat N,\] where $N$ varies over all finitely-generated submodules of $M$, and $\hat N$  denotes the $\frak m_A$-adic completion of $N$; in other words, $C_A$ acts as completion on finitely-generated modules and commutes with direct limits (c.f.~the definition of the ad\`ele groups). Secondly, if $\frak p\subseteq A$ is a prime ideal, let \[S_\frak p^{-1}:A\dash Mod\to A_{\frak p}\dash Mod,\quad M\mapsto M\otimes_AA_{\frak p}\] denote localisation.

\begin{lemma}\label{lemma_local_factors}
Let $X$ be a Noetherian scheme, $M\in QCoh(X)$, and $\xi=(x_0,\dots,x_m)\in S_m(X)$. Let $A=\roi_{X,x_m}$, and let $\frak p_m\supseteq\frak p_{m-1}\supseteq\cdots\supseteq \frak p_0$ be the local equations at $x_m$ for $x_m,\dots,x_0$ respectively. Then \[M_\xi^\comp=C_{A_{\frak p_0}}S_{\frak p_0A_{\frak p_1}}^{-1}\cdots S_{\frak p_{m-2}^{-1}A_{\frak p_{m-1}}}C_{A_{\frak p_{m-1}}}S_{\frak p_{m-1}}^{-1}C_A(M_{x_m}).\]
\end{lemma}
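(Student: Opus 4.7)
The plan is to induct on the length $m$ of the chain, reducing via rule (A1) to the case where $M$ is coherent, so that every stalk $M_z$ is a finitely generated $\roi_{X,z}$-module.

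The base case $m=0$ is immediate from rule (A2): it gives $M_{(x_0)}^\comp = \widehat{M_{x_0}}$, which is exactly $C_A(M_{x_0})$ since $m=0$ forces $x_0 = x_m$ and $\frak p_0 = \frak p_m = \frak m_A$.

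For the inductive step, set $\xi' = (x_1,\dots,x_m)$ and apply rule (A3) with $T=\{\xi\}$. Since ${}_x\{\xi\}$ is empty unless $x=x_0$, in which case it equals $\{\xi'\}$, we obtain
\[
M_\xi^\comp \;=\; \projlim_r \bb A(\xi', \tilde j_{rx_0} M).
\]
Writing the quasi-coherent sheaf $\tilde j_{rx_0} M$ as the filtered colimit of its coherent subsheaves and invoking rule (A1) together with the inductive hypothesis applied to $\xi'$ on each such subsheaf (and using that taking stalks commutes with filtered colimits), we identify
\[
\bb A(\xi', \tilde j_{rx_0} M) \;=\; C_{A_{\frak p_1}} S_{\frak p_1 A_{\frak p_2}}^{-1} \cdots S_{\frak p_{m-1}}^{-1} C_A\bigl((\tilde j_{rx_0} M)_{x_m}\bigr).
\]
By remark \ref{remark_notation_j_rx}, the stalk $(\tilde j_{rx_0} M)_{x_m}$ equals $M_{x_0}/\frak m_{X,x_0}^r M_{x_0}$, since $x_m \in \overline{\{x_0\}}$.

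It then remains to pass $\projlim_r$ through the fixed composition of operators and to recognize that, under the identifications $M_{x_0} = S_{\frak p_0}^{-1} M_{x_m}$ and $\frak m_{X,x_0} = \frak p_0 A_{\frak p_0}$, one has $\projlim_r M_{x_0}/\frak m_{X,x_0}^r M_{x_0} = \widehat{M_{x_0}} = C_{A_{\frak p_0}} S_{\frak p_0}^{-1} M_{x_m}$. After this commutation, the new operators $C_{A_{\frak p_0}}$ and $S_{\frak p_0 A_{\frak p_1}}^{-1}$ appear at the outermost positions of the formula, producing exactly the displayed expression.

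The principal technical obstacle is justifying the commutation of $\projlim_r$ with the inner chain of completion-localization operators, together with verifying that $C_A$, evaluated on the $\frak p_0$-supported (and generally not finitely generated over $A$) module $M_{x_0}/\frak m_{X,x_0}^r M_{x_0}$, is compatible with the completion at $\frak p_0 A_{\frak p_0}$ that ultimately appears. Both points reduce to a careful inspection of the definitions of $C$ and $S^{-1}$ on filtered colimits of finitely generated submodules, combined with the Mittag-Leffler behaviour of the natural inverse system $\{M_{x_0}/\frak m_{X,x_0}^r M_{x_0}\}_r$; the required bookkeeping is routine but must be carried out with care to ensure that all operators act on modules over the correct rings.
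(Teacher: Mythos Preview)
Your approach is correct and matches the paper's: the paper's own proof simply states that it is ``a straightforward induction on $m$ using the original definition of the ad\`ele groups'' and refers to \cite[Prop.~3.2.1]{Huber1991} for the details, which is precisely the induction you have outlined. You have in fact supplied more detail than the paper does, including a correct identification of the principal technical point (commuting $\projlim_r$ past the inner string of localisation--completion functors and handling $C_A$ on the non-finitely-generated $A$-module $M_{x_0}/\frak m_{X,x_0}^r M_{x_0}$).
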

\begin{proof}
The proof is a straightforward induction on $m$ using the original definition of the ad\`ele groups. See \cite[Prop.~3.2.1]{Huber1991} for details.
\end{proof}

\begin{example}
Suppose $X$ is integral with generic point $\eta$ and function field $F$; let $x\in X$ and put $\xi=(\eta,x)$. Then, for any $M\in Coh(X)$, \[M_\xi^\comp=\hat{M_x}\otimes_{\roi_{X,x}}F,\] where $\hat{M_x}$ is the $\frak m_{X,x}$-adic completion of $M_x$.

In particular, if $x$ is a codimension one point of $X$ for which $\roi_{X,x}$ is a discrete valuation ring, then $M_\xi^\comp=M_x\otimes_{\roi_{X,x}}F_x$, where $F_x=\Frac\hat\roi_{X,x}$ is the completion of $F$ at the discrete valuation corresponding to $x$.
\end{example}

In the particular case when $\xi$ is a complete chain, the local factor $M_\xi^\comp=\bb A(\xi,M)$ reduces to the construction of higher dimensional fields described in sections \ref{section_regular} and \ref{section_singular}:

\begin{proposition}
Let $X$ be an excellent, reduced, $n$-dimensional scheme, and $\xi=(x_0,\dots,x_n)$ a complete chain on $X$, i.e.~$\dim \roi_{X,x_i}=i$ for $i=0,\dots,n$. Let $A=\roi_{X,x_n}$, and let $\frak p_i\subset A$ be the local equation for $x_i$ at $x_n$, so that $\ul A=(A,\frak p_n,\dots,\frak p_0)$ is a reduced $n$-chain in the sense of section \ref{subsection_main_theorems} (or a regular $n$-chain in the sense of section \ref{section_regular} if each subscheme $\Cl{x_i}$ is regular at $x_n$). Then \[M_\xi^\comp=M_{x_n}\otimes_A HL(\ul A),\] where $HL(\ul A)$ denote the finite product of fields of cdvdim $\ge n$ constructed in sections \ref{section_regular} and \ref{section_singular}.
\end{proposition}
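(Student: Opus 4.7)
The plan is to compare the two sides by recognising both as arising from the same alternating sequence of localisations and completions, applied to $M_{x_n}$ on one side and to $A$ on the other, and then invoking compatibility of tensor product with each individual operation in that sequence. Concretely, Lemma \ref{lemma_local_factors} yields
\[
M_\xi^\comp \;=\; C_{A_{\frak p_0}}\, S^{-1}_{\frak p_0 A_{\frak p_1}}\, \cdots\, C_{A_{\frak p_{n-1}}}\, S^{-1}_{\frak p_{n-1}}\, C_A(M_{x_n}),
\]
while unwinding the recursive definition $HL(\ul A)=(\op{loc}\circ\op{comp})^n(\ul A)$, and using Lemma \ref{lemma_comp_and_loc_preserve_red_chains} to track the chain of primes through each iteration, exhibits $HL(\ul A)$ as the result of applying precisely the same string of operations to the ring $A$ viewed as a module over itself.

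Since both $M \mapsto \bb A(\xi, M)$ and $M \mapsto M_{x_n}\otimes_A HL(\ul A)$ commute with filtered colimits (the first by rule (A1)), I would first reduce to the case that $M$ is coherent, so that $M_{x_n}$ is a finitely generated $A$-module. Then I would proceed by induction on the number of loc-comp steps: writing $R_0=A$ and $N_0=M_{x_n}$, and letting $R_{k+1},N_{k+1}$ denote the results of applying one more loc-comp step to $R_k$ and $N_k$, the claim is that $N_k \cong M_{x_n}\otimes_A R_k$ at every stage. The inductive step uses two facts: localisation commutes with tensor product trivially, and for the Noetherian intermediate ring $R_k$ together with a finitely generated $R_k$-module $N_k$, the $\frak m$-adic completion of $N_k$ equals $N_k\otimes_{R_k}\widehat{R_k}$ by flatness of completion on finitely generated modules. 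Noetherianity of each $R_k$ and preservation of finite generation follow from iterating Lemma \ref{lemma_comp_and_loc_preserve_red_chains} together with standard facts.

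Setting $k=n$ then gives $N_n \cong M_{x_n}\otimes_A HL(\ul A)$, which equals $M_\xi^\comp$ by the lemma's formula. The main obstacle I anticipate is the bookkeeping required to match, term by term, the alternating completions and localisations produced by the loc-comp recursion with those written in the right-hand side of Lemma \ref{lemma_local_factors}: at stage $k$ one must verify that the Jacobson radical of $R_k$ corresponds to the pushforward of $\frak p_{n-k}$ from the original chain $\frak p_n\supset\cdots\supset\frak p_0$, so that the next completion-localisation pair in the recursion matches the $(k{+}1)$-th pair in the explicit formula. Once this correspondence is pinned down, the rest is routine commutative algebra.
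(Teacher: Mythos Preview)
Your proposal is correct and follows essentially the same route as the paper: the paper's proof consists of the single sentence ``This is proved by modifying the previous proof to compare with the localisation-completion processes of sections \ref{section_regular} and \ref{section_singular},'' and what you outline is precisely that comparison carried out by induction on the number of loc--comp steps. Your identification of the main obstacle---matching, at each stage, the completion and localisation in Lemma \ref{lemma_local_factors} (which are written in terms of the primes $\frak p_i$ of the original ring $A$) with those in the recursive loc--comp process (which are taken with respect to the Jacobson radical and the pushed-forward ideals of the intermediate rings $R_k$)---is exactly the point that requires care, particularly in the singular case where $\frak p_{n-1}\hat A$ need not be prime; once that is checked, the argument is indeed routine.
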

\begin{proof}
This is proved by modifying the previous proof to compare with the localisation-completion processes of sections \ref{section_regular} and \ref{section_singular}.
\end{proof}

In particular, we can take $M=\roi_X$ (where we write $\roi_{X,\xi}^\comp=(\roi_X)_\xi^\comp$):

\begin{corollary}
Let $X$ be a reduced, $n$-dimensional scheme, essentially of finite type over $\bb Z$, and let $\xi=(x_0,\dots,x_n)$ be a complete chain on $X$. Then $\roi_{X,\xi}^\comp=\bb A(\xi,\roi_X)$ is a finite product of $n$-dimensional local fields (which has been denoted $F_\xi$ so far in these notes, especially in remarks \ref{remark_geometric_interpretation} and \ref{remark_geometric_interpretation_2}).
\end{corollary}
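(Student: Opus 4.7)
The plan is to simply combine the preceding proposition with Corollary \ref{corollory_essentially_of_finite_type_case_2}, so this will be a very short proof.

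First I would verify the hypotheses needed to apply the previous proposition. Since $X$ is essentially of finite type over $\bb Z$, the local ring $A:=\roi_{X,x_n}$ is essentially of finite type over $\bb Z$ and hence excellent (as noted in remark \ref{remark_excellence}); moreover $X$ is reduced by assumption, so $A$ is reduced. Letting $\frak p_i\subset A$ denote the local equation for $x_i$ at $x_n$, the tuple $\ul A=(A,\frak p_n,\dots,\frak p_0)$ is therefore a reduced $n$-chain in the sense of subsection \ref{subsection_main_theorems} (this is precisely the situation of remark \ref{remark_standard_reduced_chain}, since $A$ is a local domain when $X$ is integral, and more generally since the $\frak p_i$ are prime of the correct heights by completeness of the chain).

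Next I would apply the previous proposition to $M=\roi_X$. Since $(\roi_X)_{x_n}=A$, the formula in the proposition becomes
\[\roi_{X,\xi}^\comp = A\otimes_A HL(\ul A) = HL(\ul A).\]

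Finally, invoking corollary \ref{corollory_essentially_of_finite_type_case_2}, which asserts that $HL(\ul A)$ is a finite product of $n$-dimensional local fields whenever $\ul A$ is a reduced $n$-chain with $A$ essentially of finite type over $\bb Z$, we obtain the desired conclusion. There is no genuine obstacle here; the content is entirely absorbed into the previous proposition and corollary \ref{corollory_essentially_of_finite_type_case_2}, so the only care needed is bookkeeping to check that the hypotheses transfer from $X$ to $\ul A$.
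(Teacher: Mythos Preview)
Your proposal is correct and follows essentially the same approach as the paper. The paper's own proof is even terser: it simply notes that this is the content of the previous proposition with $M=\roi_X$, invoking corollaries \ref{corollory_essentially_of_finite_type_case} and \ref{corollory_essentially_of_finite_type_case_2} to conclude that $HL(\ul A)$ is a finite product of $n$-dimensional local fields; your added bookkeeping (verifying excellence and reducedness of $A$ and that $\ul A$ is a reduced $n$-chain) is appropriate elaboration rather than a different route.
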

\begin{proof}
This is the content of the previous proposition when $M=\roi_X$, since corollaries \ref{corollory_essentially_of_finite_type_case} and \ref{corollory_essentially_of_finite_type_case_2} imply that $HL(\ul A)$ is a finite product of $n$-dimensional local fields.
\end{proof}

\begin{remark}[Product structure]
Another standard result about ad\`ele groups, proved in the usual inductive fashion, is the following: Given $M,N\in QCoh(X)$ and $T\subseteq S_m(X)$, there is a natural homomorphism $\bb A(T,M)\otimes_{\bb Z}\bb A(T,N)\to\bb A(T,M\otimes_{\roi_X}N)$.

In particular, the ad\`ele group $\bb A(T,\roi_X)$ is naturally a commutative, unital ring.
\end{remark}

We can finally make make precise various comments we have made, such as in remark \ref{remark_geometric_interpretation_2}, that there are higher dimensional rings of ad\`eles which are restricted products of higher dimensional local fields.

Indeed, if $X$ is as in the previous corollary, then $S_n^\sub{red}(X)$ is a set of complete chains on $X$ (if $X$ is equi-dimensional, it is the set of all complete chains) and so the ring \[\bb A_X:=\bb A(S_n^\sub{red}(X),\roi_X)=\rprod_{\xi\in S_n^\sub{red}(X)}\roi_{X,\xi}^\comp\] is a restricted product of (finite products of) $n$-dimensional local fields. We will soon see in section \ref{subsection_adeles_and_cohomology} that $\bb A_X$ is the top degree part of a semi-cosimplicial group of ad\`eles. Moreover, proposition \ref{proposition_one_dim_adeles} will show that if $X$ is the spectrum of the ring of integers of a number field, or a curve over a finite field, then $\bb A_X$ is the usual ring of ad\`eles.

\subsection{Ad\`eles compute cohomology}\label{subsection_adeles_and_cohomology}
In this section we describe, without proof, the main theorem of higher dimensional ad\`eles: they provide a flasque resolution of the quasi-coherent sheaf $M$ and thus may be used to compute its cohomology.

We will use the language of semi-cosimplicial sets. Let $\Delta^\sub{red}$ denote the reduced simplex category: its objects are the sets $\ul n=\{0,1,\dots,n\}$, for each $n\ge 0$, and its morphisms are the strictly increasing maps. A {\em semi-cosimplicial group} $A_\bullet$ is a functor from $\Delta^\sub{red}$ to the category of abelian groups. Taking alternating sums of coface maps then gives rise to a complex \[0\to A_0\to A_1\to A_2\to\cdots,\] whose cohomology is called the cohomology of $A_\bullet$. The reader unfamiliar with these constructions may wish to consult \cite[Chap.~8]{Weibel1994} for details.

Now let $X$ be our Noetherian scheme and $M\in QCoh(X)$. Let $\xi=(x_0,\dots,x_m)\in S_m^\sub{red}(X)$ and let $f:\ul n\to\ul m$ be a strictly increasing map; set \[f(\xi):=(x_{f(0)},\dots,x_{f(n)})\in S_n^\sub{red}(X).\] Using either the original definition of the ad\`eles, or using the description of $M_\xi^\comp$ provided by lemma \ref{lemma_local_factors}, one checks that there is a natural homomorphism \[M_{f(\xi)}^\comp\to M_\xi^\comp.\] Taking the product over all $\xi\in S_m^\sub{red}(X)$ shows that \[\ul m\mapsto \prod_{\xi\in S_m^\sub{red}(X)}M_\xi^\comp\] has the structure of a semi-cosimplicial group.

The first part of the main theorem states that this semi-cosimplicial structure restricts to the ad\`eles, while the second part states that the cohomology of the resulting semi-cosimplicial group is exactly $H^*(X,M)$:

\begin{theorem}\label{theorem_adeles}
Let $X$ be a Noetherian scheme and $M\in QCoh(X)$. Then the just-defined semi-cosimplicial structure on $\prod_{\xi\in S_\bullet^\sub{red}(X)}M_\xi^\comp$ restricts to a semi-cosimplicial structure on \[\bb A(S_\bullet^\sub{red}(X),M)=\rprod_{\xi\in S_\bullet^\sub{red}(X)}M_\xi^\comp\subseteq\prod_{\xi\in S_\bullet^\sub{red}(X)}M_\xi^\comp.\]

The cohomology of the resulting semi-cosimplicial ad\`ele group $\bb A(S_\bullet^\sub{red}(X),M)$ is canonically isomorphic to $H^*(X,M)$.
\end{theorem}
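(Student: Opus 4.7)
The strategy has two distinct parts, mirroring the two assertions of the theorem. Throughout, one works by induction on the length of chains, which is the standard inductive device for manipulating the recursively defined ad\`ele groups.

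For the first assertion, one must check that if $f:\underline n\to\underline m$ is a strictly increasing map and $\xi=(x_0,\ldots,x_m)\in S_m^{\mathrm{red}}(X)$, then the product of the maps $M_{f(\xi)}^{\hat{\phantom{o}}}\to M_\xi^{\hat{\phantom{o}}}$ carries $\bb A(S_n^{\mathrm{red}}(X),M)$ into $\bb A(S_m^{\mathrm{red}}(X),M)$. It suffices to check this for the elementary face maps (omitting a single index $i$). For $i=0$, this amounts to a map $\bb A({}_xT,N)\to\bb A(T,M)$ attached to a chain beginning with $x$, and one reads it off directly from rule (A3); for interior $i$, the map is induced by the natural localisation–completion morphisms identified in lemma \ref{lemma_local_factors}. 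In both cases an induction on $m$, combined with exactness of $\bb A(T,\cdot)$ and compatibility with rules (A1)–(A4), upgrades the product-level maps to maps between the ad\`ele subgroups. Signed alternating sums then produce the desired coboundary.

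For the second assertion, the plan is to sheafify the whole construction and prove that the resulting complex is a flasque resolution of $M$. Concretely, for each open $U\subseteq X$ one defines $\underline{\bb A}(S_\bullet^{\mathrm{red}}(X),M)(U):=\bb A(S_\bullet^{\mathrm{red}}(U),M|_U)$; functoriality in $U$ (restriction of chains) turns this into a complex of presheaves, and one checks it is already a sheaf by reducing, via rules (A1)--(A3), to the case $m=0$, where $\underline{\bb A}(S_0^{\mathrm{red}}(X),M)=\prod_{x\in X}i_{x,*}\hat{M_x}$ is manifestly a sheaf (here $i_x$ is the inclusion of the point). The natural augmentation $M\to\prod_{x\in X}i_{x,*}\hat{M_x}$ at each stalk is just the diagonal into the completions at the primes of $\roi_{X,x}$, which is injective; combined with the first assertion we obtain an augmented complex
\[ 0\to M\to\underline{\bb A}(S_0^{\mathrm{red}}(X),M)\to\underline{\bb A}(S_1^{\mathrm{red}}(X),M)\to\cdots \]

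The two remaining steps are: (a) proving that the augmented complex is exact; and (b) proving that each $\underline{\bb A}(S_m^{\mathrm{red}}(X),M)$ is flasque. Once these are in hand, taking global sections produces a complex whose cohomology is $H^*(X,M)$, and by construction that complex is exactly $\bb A(S_\bullet^{\mathrm{red}}(X),M)$, which finishes the proof.

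Step (a) is the main obstacle. The natural approach is to fix $x\in X$ and compute the stalk at $x$; via rules (A1)--(A3) and lemma \ref{lemma_local_factors}, this reduces the problem to an analogous statement for the local ring $A=\roi_{X,x}$ and its poset of prime ideals, where the complex in question is built out of iterated completions and localisations of $M_x$. One then proceeds by a second induction, this time on $\dim A$: writing a chain as $(x,\xi')$ one peels off the leading point $x$ to replace $M$ by $\tilde j_{rx}M$ (rule (A3)) and passes to the limit. The base case $\dim A=0$ is trivial, and the induction step rests on the fact that if $\roi_{X,x}$ is a local ring of positive dimension then a module $N$ sits inside the pullback of its localisations and completions along the primes of height one in a way which, after assembling over all primes, gives the required exactness; this is essentially the content of the proof of Beilinson's theorem as given in detail by Huber \cite{Huber1991}, and the inductive mechanics there are what one should adopt. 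Step (b), by contrast, is essentially formal once (a) is known: the sheaf $\underline{\bb A}(S_m^{\mathrm{red}}(X),M)$ is, by rule (A3) applied once and then by induction, a product of skyscraper-like sheaves $i_{x,*}$ of groups obtained by localisation and completion, and each such direct image of a sheaf on a single point is flasque; products of flasque sheaves are flasque. One concludes that $\underline{\bb A}(S_\bullet^{\mathrm{red}}(X),M)$ is a flasque resolution of $M$, and therefore its global sections compute $H^*(X,M)$, as claimed.
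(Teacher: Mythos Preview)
Your proposal is correct and follows essentially the same approach as the paper: check that the coface maps restrict to the ad\`ele subgroups via the recursive definition, sheafify by setting $\underline{\bb A}(T,M)(U)=\bb A_U(T|_U,M|_U)$, verify flasqueness inductively, prove the augmented complex is a resolution of $M$, and take global sections. The paper's proof is only a sketch deferring the details to Huber \cite{Huber1991}, and your outline of steps (a) and (b) is in fact a more explicit account of the same argument found there.
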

\begin{proof}
We sketch the proof; see \cite{Beilinson1980} for the original statement, and \cite{Huber1991} for the details.

To check that the coface maps on $\prod_{\xi\in S_\bullet^\sub{red}(X)}M_\xi^\comp$ restrict to the subgroups $\rprod_{\xi\in S_\bullet^\sub{red}(X)}M_\xi^\comp$, one works directly with the original definition of the ad\`eles to verify that certain finiteness conditions hold.

One then sheafifies the ad\`ele groups by defining \[\ul{\bb A}(T, M)(U):=\bb A_U(T|_U, M|_U),\] where $T|_U=\{(x_0,\dots,x_m)\in T:x_i\in U\mbox{ for }i=0,\dots,m\}$, and it is not hard to check, inductively using the recursive definition, that $\ul{\bb A}(T, M)$ is a flasque sheaf of abelian groups. 

Thus one obtains a coaugmented, semi-cosimplicial flasque sheaf \[M\to \ul{\bb A}(S_\bullet^\sub{red}(X),M),\] and one proves that this is a resolution of $M$. Taking global sections completes the proof.
\end{proof}

\begin{corollary}\label{corollary_adeles}
Let $X$ be a Noetherian scheme of dimension $n$, and $ M\in QCoh(X)$. Then there is a natural isomorphism \[H^*(X, M)\cong H^*(0\to\bb A(S_0^\sub{red}(X), M)\to\cdots\to \bb A(S_n^\sub{red}(X), M)\to 0),\] where the complex is the usual complex associated to the semi-cosimplicial abelian group $\bb A(S_{\bullet}^\sub{red}(X), M)$
\end{corollary}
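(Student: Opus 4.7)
The corollary is an essentially immediate consequence of Theorem \ref{theorem_adeles}, and the plan is simply to explain how the complex truncates at degree $n$. Concretely, I would proceed as follows.

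First, I would invoke Theorem \ref{theorem_adeles} to get a canonical isomorphism between $H^*(X,M)$ and the cohomology of the complex associated to the semi-cosimplicial abelian group $\bb A(S_\bullet^\sub{red}(X),M)$, that is, the complex
\[0 \to \bb A(S_0^\sub{red}(X),M) \to \bb A(S_1^\sub{red}(X),M) \to \bb A(S_2^\sub{red}(X),M) \to \cdots\]
with differentials the alternating sums of the coface maps.

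Second, I would observe that if $\xi=(x_0,\dots,x_m)$ is a reduced chain on $X$, then the $x_i$ are pairwise distinct points satisfying $x_i \in \Cl{x_{i-1}}$; this forces the chain of local rings $\roi_{X,x_0} \subsetneq \roi_{X,x_1}\otimes\cdots$ (equivalently, the specialisation chain of generic points of irreducible closed subschemes) to have strictly increasing codimension, so $m \le \dim \roi_{X,x_m} \le \dim X = n$. Consequently $S_m^\sub{red}(X) = \emptyset$ whenever $m > n$, and rule (A4) gives $\bb A(S_m^\sub{red}(X),M)=\bb A(\emptyset,M)=0$ for all such $m$.

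Third, I would conclude that the complex above is concentrated in degrees $0,\dots,n$, so its cohomology coincides with that of the truncated complex
\[0 \to \bb A(S_0^\sub{red}(X),M) \to \cdots \to \bb A(S_n^\sub{red}(X),M) \to 0,\]
yielding the stated isomorphism. No step is genuinely an obstacle: the only small point to verify is the bound $m \le n$ for reduced chains, which is immediate from the definition of chain and of Krull dimension.
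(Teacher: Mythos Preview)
Your proof is correct and follows exactly the same approach as the paper: invoke Theorem \ref{theorem_adeles} and note that $S_m^\sub{red}(X)=\emptyset$ (hence $\bb A(S_m^\sub{red}(X),M)=0$ by rule (A4)) whenever $m>n$. The paper's own proof is a single sentence to this effect; your more detailed justification of the bound $m\le n$ via Krull dimension is fine, though the expression ``$\roi_{X,x_0} \subsetneq \roi_{X,x_1}\otimes\cdots$'' contains a stray $\otimes$ that should be cleaned up.
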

\begin{proof}
This is just a restatement of the second part of the theorem, noting that if $m>n$ then $S_m^\sub{red}(X)=\emptyset$ and so $\bb A(S_m^\sub{red}(X),M)=0$.
\end{proof}

Before making the theorem and corollary much more explicit in low dimensions, we show that each ad\`ele group $\bb A(S_m^\sub{red}(X), M)$ breaks into a direct sum of smaller pieces:

\begin{lemma}
Let $X$ be a Noetherian scheme, let $T\subseteq S_m(X)$ be a set of chains, and suppose that $T$ admits a partition $T=T_1\sqcup T_2$. Then, for any $ M\in QCoh(X)$, the natural map \[\bb A(T, M)\to\bb A(T_1, M)\oplus\bb A(T_2, M)\] is an isomorphism.
\end{lemma}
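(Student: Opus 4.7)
The plan is a standard induction on the common length $m$ of the chains in $T$, following the recursive definition (A1)--(A4). As a preliminary reduction, I would first observe that both sides of the asserted isomorphism are exact additive functors of $M\in QCoh(X)$ which commute with filtered direct limits: on the left this is rule (A1), and on the right it is (A1) applied separately to each summand together with the fact that finite direct sums commute with filtered direct limits in $Ab$. Writing any quasi-coherent $M$ as the direct limit of its coherent submodules, it therefore suffices to prove the lemma for $M\in Coh(X)$.

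For the base case $m=0$, rule (A2) gives $\bb A(T,M)=\prod_{x\in T}\hat{M_x}$, and the partition $T=T_1\sqcup T_2$ induces a canonical identification $\prod_{x\in T}\hat{M_x}\cong\prod_{x\in T_1}\hat{M_x}\times\prod_{x\in T_2}\hat{M_x}=\bb A(T_1,M)\oplus\bb A(T_2,M)$ (a finite direct sum of abelian groups coincides with their product), and one checks this identification is the natural map of the statement.

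For the inductive step $m\ge 1$, rule (A3) expresses \[\bb A(T,M)=\prod_{x\in X}\projlim_r\bb A(_xT,\tilde j_{rx}M).\] For each $x\in X$ the partition of $T$ restricts to a partition $_xT={_xT_1}\sqcup{_xT_2}$, so the inductive hypothesis applied to $_xT$ and to the quasi-coherent sheaf $\tilde j_{rx}M$ (noting our preliminary reduction was purely formal and applies at this quasi-coherent stage as well) yields a natural isomorphism \[\bb A(_xT,\tilde j_{rx}M)\cong\bb A(_xT_1,\tilde j_{rx}M)\oplus\bb A(_xT_2,\tilde j_{rx}M).\] Since finite direct sums of abelian groups commute with both $\projlim_r$ and $\prod_{x\in X}$, assembling these isomorphisms gives the required splitting $\bb A(T,M)\cong\bb A(T_1,M)\oplus\bb A(T_2,M)$, and naturality of each step ensures the resulting isomorphism is the one induced by the inclusions $T_i\hookrightarrow T$.

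There is no real obstacle here: the content is entirely bookkeeping, and the only point requiring minor care is checking that the isomorphisms produced at each level of the recursion really coincide with the canonical map induced from the inclusions $T_1,T_2\subseteq T$ by functoriality of $\bb A(\cdot,M)$ in the first variable. This compatibility is straightforward from the explicit formula in rule (A2) and is preserved under the operations $\tilde j_{rx}$, $\projlim_r$, $\prod_x$, and $\indlim$ appearing in (A1)--(A3).
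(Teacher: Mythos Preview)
Your proof is correct and follows exactly the approach the paper indicates: an induction on $m$ using the recursive definition (A1)--(A3), with the reduction to coherent sheaves via (A1) and the observation that $_xT={_xT_1}\sqcup{_xT_2}$ in the inductive step. The paper's own proof is the single sentence ``This is easily checked by induction on $m$ using the original definition of $\bb A(T, M)$,'' of which your argument is a faithful and careful unpacking.
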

\begin{proof}
This is easily checked by induction on $m$ using the original definition of $\bb A(T, M)$.
\end{proof}

Let $0\le I_0<\dots<I_m\le\dim X$ be a strictly increasing sequence of integers bounded above by the dimension of $X$. Then we will be interested in the set of chains given by \[\mathbf{I_0\dots I_m}:=\{(x_0,\dots,x_m)\in S_m(X):x\in X^{I_i}\mbox{ for }i=0,\dots,m\}\subseteq S_m^\sub{red}(X).\] In other words, we impose restrictions on the codimensions of the points appearing in the chains ($X^I$ denotes the codimension $I$ points of $X$).

\begin{corollary}
Let $X$ be a Noetherian scheme and let $ M\in QCoh(X)$. Then there is a natural isomorphism of abelian groups \[\bb A(S_m^\sub{red}(X), M)\cong\bigoplus_{0\le I_1<\cdots<I_m\le\dim X}\bb A(\mathbf{I_0\dots I_m}, M),\]
where the sum is taken over all possible increasing sequences $0\le I_1<\cdots<I_m\le\dim X$.
\end{corollary}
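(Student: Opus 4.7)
The plan is to reduce the statement to the preceding lemma by partitioning $S_m^\sub{red}(X)$ according to the codimensions of the points appearing in a chain. The key preliminary observation is that the codimensions along a reduced chain are strictly increasing.

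First I would check this monotonicity. Given a reduced chain $\xi=(x_0,\dots,x_m)\in S_m^\sub{red}(X)$, the specialisation relation $x_i\in\Cl{x_{i-1}}$ translates into saying that the natural map $\roi_{X,x_i}\to\roi_{X,x_{i-1}}$ is a localisation at some prime $\frak p$ of $\roi_{X,x_i}$, and that this prime is non-maximal precisely because $x_i\neq x_{i-1}$. Consequently
\[\codim(x_{i-1})=\dim(\roi_{X,x_i})_\frak p<\dim\roi_{X,x_i}=\codim(x_i).\]
Hence sending $\xi$ to its codimension tuple $(\codim(x_0),\dots,\codim(x_m))$ defines a map from $S_m^\sub{red}(X)$ into the (finite) set of strictly increasing tuples $0\le I_0<\cdots<I_m\le\dim X$.

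The fibre of this map over $(I_0,\dots,I_m)$ is by construction the set $\mathbf{I_0\dots I_m}$, so we obtain a set-theoretic partition
\[S_m^\sub{red}(X)=\bigsqcup_{0\le I_0<\cdots<I_m\le\dim X}\mathbf{I_0\dots I_m}\]
with finitely many blocks. The last step is then to iterate the previous lemma, once for each splitting in this finite partition, to obtain the claimed direct sum decomposition of $\bb A(S_m^\sub{red}(X),M)$.

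I do not expect any real obstacle: the only non-formal ingredient is the codimension-monotonicity established above, which is immediate from the description of local rings along specialisations. Finiteness of the index set is essential so that iterated application of the previous lemma produces an honest finite direct sum, and is guaranteed by the standing assumption that $\dim X$ is finite.
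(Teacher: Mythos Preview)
Your proposal is correct and follows essentially the same approach as the paper: the paper's proof simply asserts the partition $S_m^\sub{red}(X)=\bigsqcup_{0\le I_0<\cdots<I_m\le\dim X}\mathbf{I_0\dots I_m}$ and says the result is immediate from the previous lemma. You supply the detail the paper omits, namely the codimension-monotonicity along a reduced chain and the observation that finiteness of $\dim X$ makes the partition finite so the lemma can be iterated.
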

\begin{proof}
Immediate from the previous lemma since $S_m^\sub{red}(X)=\bigsqcup_{0\le I_1<\cdots<I_m\le\dim X}\mathbf{I_0\dots I_m}$.
\end{proof}

Combining the two previous corollaries, we see that the cohomology of $ M$ on $X$ can be calculated using a complex of ad\`ele groups of the form \[\bb A(\mathbf{I_0\dots I_m}, M)=\rprod_\xi M_\xi^\comp,\] where $\xi$ varies over all chains $(x_0,\dots,x_m)$ on $X$ satisfying $x\in X^{I_i}$ for $ i=0,\dots,m$.

In the next two sections we will explicitly describe all these groups for schemes of dimension one and two.

\subsection{Ad\`eles in dimension one}\label{section_one_dim_adeles}
Let $X$ be a one-dimensional, irreducible, Noetherian scheme, and let $ M\in Coh(X)$. We will now explicitly calculate the ad\`ele groups $\bb A(\mathbf{0}, M)$, $\bb A(\mathbf{1}, M)$, and $\bb A(\mathbf{01}, M)$:

\begin{description}
\item[(0)] First consider $\mathbf{0}=\{x\in S_0(X):x\in X^0\}=\{\eta\}$, where $\eta$ is the generic point of $X$. Then rule (A2) implies that \[\bb{A}(\mathbf{0},M)=\prod_{x\in \mathbf{0}}\hat{M_x}=M_\eta.\]
\item[(1)] Next consider $\mathbf{1}=\{x\in S_0(X):x\in X^1\}=X_0$, where $X_0$ is the set of closed points of $X$. Then rule (A2) implies that \[\bb{A}(\mathbf{1},M)=\prod_{x\in \mathbf{1}}\hat{M_x}=\prod_{x\in X_0}\hat{M_x}.\]
\item[(01)] Finally, consider \[\mathbf{01}=\{(x_0,x_1)\in S_1(X):x_0\in X^0, \,x_1\in X^1\}=\{(\eta,x):x\in X_0\}.\] To apply rule (A3), we calculate, for any $z\in X$, \[_z\mathbf{01}=\{x\in X:(z,x)\in\mathbf{01}\}=\begin{cases}\emptyset& z\in X_0,\\X_0& z=\eta,\end{cases}\] so that rule (A3) implies
\begin{align*}
\bb{A}(\mathbf{01},M)
	&=\projlim_r \bb{A}(\mathbf{1},\tilde j_{r\eta} M)\\
	&=\bb{A}(\mathbf{1},\tilde j_\eta M)
\end{align*}
where $\tilde j_\eta M$ is the constant sheaf on $X$ everywhere equal to $\cal{F}_\eta$.

Next, $\tilde j_\eta M=\indlim_DM(D)$ where $D$ varies over the effective divisors on $X$ and $M(D)=M\otimes_{\roi_X}\roi_X(D)$. Hence rule (A1) implies $\bb{A}(\mathbf{01},M)=\indlim_{D\ge 0} \bb{A}(\mathbf{1},M(D))$. But now we may apply the {\bf(1)} calculation to the sheaf $ M(D)$ to write $\bb{A}(\mathbf{1},M(D))=\prod_{x\in X_0}\hat{M(D)_x}$; this demonstrates the recursive way in which ad\`ele groups are constructed. The final conclusion is that \[\bb{A}(\mathbf{01},M)=\indlim_D\prod_{x\in X_0}\hat{M(D)_x}.\]
\end{description}

This finishes our calculation of the ad\`ele groups on $X$ for an arbitrary coherent sheaf. Their cosimplicial structure is reflected by the natural homomorphisms \[\bb A(\mathbf{0}, M)\stackrel{\bor^{\mathbf 0}}{\To}\bb A(\mathbf{01}, M) \stackrel{\bor^{\mathbf 1}}{\longleftarrow}\bb A(\mathbf{1}, M),\] and corollary \ref{corollary_adeles} states that \[H^*(X, M)\cong H^*(0\to \bb A(\mathbf{0}, M)\oplus \bb A(\mathbf{1}, M)\stackrel{\bor^{\mathbf 0}-\bor^{\mathbf 1}}{\To}\bb A(\mathbf{01}, M)\to 0),\] which the reader may wish to directly check without appealing to the general theory.

Now we suppose further that $X$ is regular (e.g.~the spectrum of the ring of integers of a number field, or a smooth curve over a field) and take $ M=\roi_X$; the following proposition shows that the ad\`ele groups reduce to the very familiar ring of ad\`eles, its global subring, and its integral subring:

\begin{proposition}\label{proposition_one_dim_adeles}
Suppose that $X$ is a one-dimensional, regular, irreducible scheme, and set $ M=\roi_X$. For $x\in X_0$, let $K(X)_x=\Frac\hat{\roi}_{X,x}$ be the complete discrete valuation field associated to the closed point $x$. Then \[\bb{A}(\mathbf{0},\roi_X)=K(X),\quad\bb{A}(\mathbf{1},\roi_X)=\prod_{x\in X_0}\hat{\roi_{X,x}},\] and \[\bb A(\mathbf{01},\roi_X)=\rprod_{x\in X_0}K(X)_x:=\{(f_x)\in\prod_xK(X)_x:f_x\in\hat{\roi_{X,x}}\mbox{ for all but finitely many }x\}\] is the usual ring of (finite) ad\`eles.

The homomorphisms $\bor^{\mathbf 0}$ and $\bor^{\mathbf 1}$ are respectively given by the diagonal embedding and natural inclusion \[K(X)\stackrel{\bor^{\mathbf 0}}{\To}\rprod_{x\in X_0}K(X)_x\stackrel{\bor^{\mathbf 0}}{\longleftarrow}\prod_{x\in X_0}\hat{\roi_{X,x}}.\] 
\end{proposition}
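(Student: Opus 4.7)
The first two identifications are immediate from the general calculations preceding the statement, specialised to the coherent sheaf $\roi_X$: rule (A2) gives $\bb A(\mathbf{0},\roi_X)=\hat{(\roi_X)_\eta}=(\roi_X)_\eta=K(X)$, since the maximal ideal of the local ring at the generic point is zero and so completion is trivial, and similarly $\bb A(\mathbf{1},\roi_X)=\prod_{x\in X_0}\hat{\roi_{X,x}}$ with no further work to do.

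The substantive point is to identify $\bb A(\mathbf{01},\roi_X)=\indlim_{D\ge 0}\prod_{x\in X_0}\hat{\roi_X(D)_x}$ with the restricted product. The plan is to use the regularity of $X$: at each closed point $x$ the ring $\hat\roi_{X,x}$ is a complete discrete valuation ring, so a uniformiser $\pi_x$ exists, and for an effective divisor $D=\sum_{x} n_x[x]$ (a finite sum) one has $\hat{\roi_X(D)_x}=\pi_x^{-n_x}\hat\roi_{X,x}$, which equals $\hat\roi_{X,x}$ whenever $x\notin\supp D$. Consequently $\prod_{x\in X_0}\hat{\roi_X(D)_x}$ is precisely the subgroup of $\prod_x K(X)_x$ consisting of tuples $(f_x)$ with $\nu_x(f_x)\ge -n_x$ for all $x$. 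As $D$ ranges over all effective divisors, every system $(f_x)$ with $f_x\in\hat\roi_{X,x}$ for all but finitely many $x$ is captured (take $n_x=\max(0,-\nu_x(f_x))$), and conversely the direct limit can contain no other elements, so the union is exactly the restricted product $\rprod_{x\in X_0}K(X)_x$.

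For the two boundary maps, I would unwind how the semi-cosimplicial structure of the preceding subsection constructs $\bor^{\mathbf 0}$ and $\bor^{\mathbf 1}$ from the two strictly increasing maps $f_0,f_1:\underline 0\to\underline 1$ with $f_0(0)=0$ and $f_1(0)=1$. These produce, for the single chain $(\eta,x)\in\mathbf{01}$, natural homomorphisms $(\roi_X)_\eta^\comp\to (\roi_X)_{(\eta,x)}^\comp$ and $(\roi_X)_x^\comp\to(\roi_X)_{(\eta,x)}^\comp$, which by Lemma \ref{lemma_local_factors} are exactly the canonical maps $K(X)\to K(X)_x$ and $\hat\roi_{X,x}\to K(X)_x$. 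Taking the product over all $x\in X_0$ identifies $\bor^{\mathbf 0}$ with the diagonal embedding $K(X)\hookrightarrow\rprod_x K(X)_x$ (the image lies in the restricted product because any non-zero element of $K(X)$ has only finitely many zeros and poles) and $\bor^{\mathbf 1}$ with the obvious inclusion $\prod_x\hat\roi_{X,x}\hookrightarrow\rprod_x K(X)_x$.

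The only mildly non-trivial step is the direct-limit calculation; everything else is a matter of specialising the general formulas, unwinding the definition of the semi-cosimplicial structure, and observing that the direct-limit description of $\bb A(\mathbf{01},M)$ computed earlier is nothing other than the classical description of the ring of ad\`eles as a union of `ad\`eles of bounded pole order along a divisor'. I anticipate no real obstacle: the proof is essentially an exercise in translating the recursive adelic definition into the familiar restricted-product language once regularity allows one to identify twists of $\roi_X$ with fractional ideals.
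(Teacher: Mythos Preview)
Your proposal is correct and follows essentially the same approach as the paper: the paper also observes that only the identification of $\bb A(\mathbf{01},\roi_X)$ with the restricted product requires work, then sketches the same argument via the inclusions $\hat{\roi_X(D)_x}\hookrightarrow K(X)_x$ and the passage to Weil divisors, explicitly leaving the details to the reader. Your version simply fills in those details, and your treatment of the boundary maps is more explicit than the paper's (which does not address them in the proof at all).
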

\begin{proof}
The only part of the proposition which does not follow at once from the results immediately above for an arbitrary one-dimensional scheme is the identification of $\bb A(\mathbf{01},\roi_X)$ with the usual ad\`ele ring $\rprod_{x\in X_0}K(X)_x$.

For each effective divisor $D$, the inclusion $\roi_X(D)_x\into K(X)$ induces an inclusion on the completions $\hat{\roi_X(D)_x}\into K(X)_x$. Passing to the limit, we may identify $\bb A(\mathbf{01},\roi_X)$ with a subgroup of $\prod_{x\in X_0}K(X)_x$; to see that this subgroup is the restricted product, identify divisors with Weil divisors and take the limit. We leave the details to the reader.
\end{proof}

\begin{remark}
In fact, when $X$ is one-dimensional, the ad\`ele group $\bb A({\mathbf 01},M)$ may always be described by an `all but finitely many' restriction. This will be used in the next section when some ad\`ele groups for two-dimensional schemes are built recursively from one-dimensional ad\`ele groups.
\end{remark}

\begin{remark}
Of course, if $X$ is the spectrum of the ring of integers of a number field, then $\rprod_{x\in X_0}K(X)_x$ is not the full ring of ad\`eles, but only the finite part. The problem of incorporating archimedean data into higher dimensional ad\`eles has not been completely satisfactorily solved, and we say nothing about it here.
\end{remark}

\subsection{Ad\`eles in dimension two}
Let $X$ be a two-dimensional, irreducible, Noetherian scheme. We will now explicitly describe all the ad\`ele groups of $X$, thereby recovering the original, direct definitions given by Parshin of two-dimensional ad\`eles.

For simplicity we assume that $X$ is normal in codimension one. This has two useful (but unnecessary) consequences: firstly, each local ring $\roi_{X,y}$, for $y\in X^1$, is a discrete valuation ring; secondly, the function field of $X$, as a constant sheaf, is equal to $\indlim_{D\ge 0}\roi_X(D)$, where $D$ varies over all effective divisors.

Let $\eta$ denote the generic point of $X$ and $F$ its function field; codimension $2$ points will always be denoted $x\in X^2$, and codimension $1$ points will be denoted $y\in X^1$. To simplify our descriptions of the ad\`ele groups, we introduce the following standard pieces of notation for the local factors arising in the two-dimensional ad\`eles (the swapping of the order of $x$ and $y$ in some of the notation is an unfortunate historical convention arising from):

Firstly, `$x\in y$' always means that $x\in X^2$, $y\in X^1$, and $x\in \Cl y$ (i.e.~$(y,x)\in\mathbf{12}$). Then we have:
\begin{align*}
\roi_x&:=\roi_{X,x}^\comp=\bb A(x,\roi_X)=\hat{\roi_{X,x}}\quad\mbox{(a two-dimensional, complete local ring)}\\
F_x&:=\roi_{X,(\eta,x)}^\comp=\bb A((\eta,x),\roi_X)=\roi_x\otimes_{\roi_{X,x}}F\quad\mbox{(not a field!)}\\
\roi_y&:=\roi_{X,y}^\comp=\bb A(y,\roi_X)=\hat{\roi_{X,y}}\quad\mbox{(a complete discrete valuation ring)}\\
F_y&:=\roi_{X,(\eta,y)}^\comp=\bb A((\eta,y),\roi_X)=\roi_y\otimes_{\roi_{X,y}}F=\Frac\roi_y\quad\mbox{(a complete discrete valuation field)}\\
\roi_{x,y}&:=\roi_{X,(y,x)}^\comp=\bb A((y,x),\roi_X)\quad\mbox{(a finite product of one-dimensional, complete local rings; see below)}\\
F_{x,y}&:=\roi_{X,(\eta,y,x)}^\comp=\bb A((\eta,y,x),\roi_X)=\roi_{x,y}\otimes_{\roi_{X,y}}F=\Frac \roi_{x,y}\quad\mbox{(a finite product of fields of cdvdim $\ge 2$)}
\end{align*}
The ring $\roi_{x,y}$ requires some further attention: if $\frak p\subset\roi_{X,x}$ denotes the local equation of $y$ at $x$, then \[\roi_{x,y}=\hat{(\roi_x)_{\frak p\roi_x}}.\] i.e.~Localise $\roi_x$ away from the pushforward of $\frak p$, to get a one-dimensional, semi-local ring, and then complete at the Jacobson radical; compare with example \ref{example_low_dimensions2}(iii). We will denote the Jacobson radical of $\roi_{x,y}$ by $\frak p_{x,y}$; since we assumed $X$ to be normal in codimension one, $\frak p_{x,y}$ is principal, generated by a local parameter at $y$.

Now we are equipped to describe the ad\`ele groups of $X$:
\begin{proposition}\label{proposition_two_dim_adeles}
Let $X$ be a two-dimensional, irreducible, Noetherian scheme, normal in codimension one. Then the ad\`ele groups of $\roi_X$ are as follows:
\[
\bb{A}(\mathbf{0},\roi_X)=F,\quad
\bb{A}(\mathbf{1},\roi_X)=\prod_{y\in X^1}\roi_y,\quad
\bb{A}(\mathbf{2},\roi)=\prod_{x\in X^2}\roi_x.
\]
\[
\bb A(\mathbf{01},\roi_X)=\rprod_{y\in X^1}F_y=\bigg\{(f_y)\in\prod_{y\in X^1}F_y:f_y\in\roi_y\mbox{ for all but finitely many }y\bigg\}
\]
\begin{align*}
\bb{A}(\mathbf{02},\roi_X)=\rprod_{x\in X^2}F_x=\bigg\{(f_x)\in\prod_{x\in X^2}F_x:&\mbox{ there exists an effective divisor }D
	\\&\mbox{such that }f_x\in\roi_x\otimes_{\roi_{X,x}}\roi_X(D)_x\mbox{ for all }x\bigg\}
\end{align*}
\begin{align*}
\bb A(\mathbf{12},\roi_X)=\rprod_{(y,x)\in\mathbf{12}}\roi_{x,y}=\bigg\{(f_{x,y})&\in\prod_{(y,x)\in\mathbf{12}}\roi_{x,y}: \mbox{ for each $y\in X^1$ and each }r\ge 0,\\ &f_{x,y} \in\roi_x+\frak p_{x,y}^r \mbox{ for all but finitely many }x\in y\bigg\}
\end{align*}
\begin{align*}
\bb A(\mathbf{012},\roi_X)=\rprod_{(y,x)\in\mathbf{12}}F_{x,y}=\bigg\{(f_{x,y})&\in\prod_{(y,x)\in\mathbf{12}}F_{x,y}:
\mbox{(1): there exists an effective divisor $D$ }\\ 
& \mbox{such that }f_{x,y}\in\roi_{x,y}\otimes_{\roi_{X,y}}\roi_X(D)_y\mbox{ for all }(y,x),\\& \mbox{and (2): for each $y\in X^1$ and each }r\in\bb Z,\\ &f_{x,y} \in\roi_x+\frak p_{x,y}^r \mbox{ for all but finitely many }x\in y\bigg\}
\end{align*}
\end{proposition}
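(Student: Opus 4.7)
The plan is a systematic, case-by-case application of rules (A1)--(A4) to each of the seven sets of chains, working upward from simpler to more complex cases. The degree-zero groups $\bb A(\mathbf{0},\roi_X)$, $\bb A(\mathbf{1},\roi_X)$, $\bb A(\mathbf{2},\roi_X)$ are immediate from (A2): each is $\prod_{z\in T}\hat{\roi}_{X,z}$, which unfolds to the three displayed products (noting that $\roi_{X,\eta}=F$ is its own completion). For $\bb A(\mathbf{01},\roi_X)$ the argument of Proposition~\ref{proposition_one_dim_adeles} applies formally: (A3) kills all points except $z=\eta$, the inner $\projlim_r$ collapses because $\frak m_{X,\eta}=0$, and $\tilde j_\eta\roi_X=\indlim_D\roi_X(D)$ (using normality in codimension one) combined with (A1) and (A2) yields the usual restricted product. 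The same mechanism computes $\bb A(\mathbf{02},\roi_X)$: after (A3) and the collapse of $\projlim_r$, one obtains $\bb A(\mathbf{2},\tilde j_\eta\roi_X)=\indlim_D\prod_{x\in X^2}\roi_x\otimes_{\roi_{X,x}}\roi_X(D)_x$ (completion commutes with tensoring by the f.g.\ module $\roi_X(D)_x$), and the outer colimit over $D$ produces exactly the ``there exists an effective divisor $D$'' condition.

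The crucial case is $\bb A(\mathbf{12},\roi_X)$. Rule (A3) gives
\[\bb A(\mathbf{12},\roi_X)=\prod_{y\in X^1}\projlim_r\bb A(\{x\in y\},\tilde j_{ry}\roi_X),\]
where $\tilde j_{ry}\roi_X$ has constant stalk $\roi_{X,y}/\frak m_y^r$ along $\Cl y$ but is \emph{not} coherent: its stalk at a codimension-two point $x$ is the localization $(\roi_{X,x})_\frak p/\frak p^r(\roi_{X,x})_\frak p$, which fails to be finitely generated over $\roi_{X,x}$. Rule (A1) therefore expresses $\tilde j_{ry}\roi_X$ as the filtered colimit of its coherent subsheaves $N$, and (A2) applied termwise gives
\[\bb A(\{x\in y\},\tilde j_{ry}\roi_X)=\indlim_N\prod_{x\in y}\hat{N_x}.\]
After enlarging $N$ to contain the canonical subsheaf $\cal I_r$ (the image of $\roi_X\to\tilde j_{ry}\roi_X$), the completion $\hat{(\cal I_r)_x}$ is precisely the image of $\roi_x$ inside $\roi_{x,y}/\frak p_{x,y}^r$. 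The key observation is that $N/\cal I_r$ is a coherent sheaf on the one-dimensional scheme $\Cl y$ whose stalk at the generic point $y$ vanishes; hence it is supported at finitely many closed points, and $\hat{N_x}$ equals the image of $\roi_x$ for all but finitely many $x\in y$, while at the exceptional points $N$ can be enlarged to contain any prescribed element. Thus an element $(f_{x,y})_x$ lies in $\projlim_r\bb A(\{x\in y\},\tilde j_{ry}\roi_X)$ exactly when, for each $r$, the image of $f_{x,y}$ in $\roi_{x,y}/\frak p_{x,y}^r$ comes from $\roi_x$ for all but finitely many $x$, i.e.\ $f_{x,y}\in\roi_x+\frak p_{x,y}^r$.

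Finally, the computation of $\bb A(\mathbf{012},\roi_X)$ combines the techniques of the two preceding paragraphs. Rule (A3) leaves only $z=\eta$ and collapses $\projlim_r$, so (A1) together with $\tilde j_\eta\roi_X=\indlim_D\roi_X(D)$ gives $\bb A(\mathbf{012},\roi_X)=\indlim_D\bb A(\mathbf{12},\roi_X(D))$. Repeating the middle paragraph's analysis with $\roi_X(D)$ in place of $\roi_X$ produces condition (2) of the proposition for each fixed $D$, and the outer colimit over effective divisors produces condition (1). The main obstacle throughout is the generic-constancy argument on the one-dimensional irreducible scheme $\Cl y$ and the careful identification of the completions $\hat{N_x}$ as submodules of $\roi_{x,y}/\frak p_{x,y}^r$ corresponding (at cofinitely many $x$) to the image of $\roi_x$; the remaining content of the proof is essentially bookkeeping for the recursive rules.
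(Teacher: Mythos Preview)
Your proposal is correct, and for the groups $\mathbf 0,\mathbf 1,\mathbf 2,\mathbf{01},\mathbf{02},\mathbf{012}$ it matches the paper's argument essentially line for line. The one place where you take a genuinely different route is the computation of $\bb A(\mathbf{12},\roi_X)$.

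The paper does not analyse the coherent subsheaves of $\tilde j_{ry}\roi_X$ directly. Instead it invokes a general ``change of scheme'' identity
\[
\bb A_X(T,M)=\prod_{z\in X}\projlim_r\bb A_{r\res z}\bigl(\{z\}\times{_zT},\,j_{r\res z}^*M\bigr),
\]
which replaces the inner ad\`ele group by an ad\`ele group on the one-dimensional thickening $r\res y=\Spec\roi_X/\cal I(y)^r$, and then simply quotes the one-dimensional $\mathbf{01}$ computation from \S\ref{section_one_dim_adeles} on that scheme. Your argument, by contrast, stays on $X$: you write $\tilde j_{ry}\roi_X=\indlim_N N$ over coherent $N$, observe that any such $N$ containing the image $\cal I_r$ of $\roi_X$ has $N/\cal I_r$ coherent on $\Cl y$ with zero generic stalk, hence finite support, and read off the ``all but finitely many $x\in y$'' condition from this. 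This is exactly the content of the one-dimensional restricted-product description, unpacked by hand rather than black-boxed. Your approach is more self-contained (it avoids stating and proving the auxiliary identity), while the paper's approach makes the recursive structure more transparent and would scale more cleanly to $\dim X>2$. Either way the substance is the same: the restriction in $\mathbf{12}$ is the one-dimensional ad\`ele condition along each $\Cl y$, applied level by level in $r$.
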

\begin{proof}
As we did in dimension one, we will calculate the ad\`ele groups for an arbitrary coherent sheaf $M$ and then the results follow by specialising to $M=\roi_X$.

{\bf (0,1,2):} Firstly, rule (A2) implies that \[\bb{A}(\mathbf{0},M)=M_\eta,\quad \bb{A}(\mathbf{1},M)=\prod_{y\in X^1}\hat{M_y},\quad \bb{A}(\mathbf{2},M)=\prod_{x\in X^2}\hat{M_x}.\] We remark also that $\hat{M_y}=M_y\otimes_{\roi_{X,y}}\roi_y$ and $\hat{M_x}=M_x\otimes_{\roi_{X,x}}\roi_x$.

{\bf(01):} The calculation of the {\bf01} ad\`ele group for a one-dimensional scheme generalises to show that \[\bb{A}(\mathbf{01},\cal M)=\rprod_{y\in X^1}M_y\otimes_{\roi_{X,y}}F_y\] where the product is restricted by only taking those $(f_y)\in\prod_{y\in X^1}M_y\otimes_{\roi_{X,y}}F_y$ for which $f_y$ is in the image of $\hat{M_y}$ for all but finitely many $y\in X^1$.

{\bf(02)} Similarly to the {\bf(01)} group,
\begin{align*}
\bb{A}(\mathbf{02},M)
	&=\indlim_{D\ge 0} {\prod_{x\in X^2}}\hat{M(D)_x}\\
	&=\rprod_{x\in X^2}M_x\otimes_{\roi_{X,x}}F_x
\end{align*}
where the product is restricted by only taking those $(f_x)\in\prod_{x\in X^2}M_x\otimes_{\roi_{X,x}}F_x$ for which there exists an effective divisor $D$ such that $f_x$ is in the image of $\hat{M(D)_x}$ for all $x\in X^2$.

{\bf(12):} Now consider $\mathbf{12}=\{(y,x):y\in X^1, x\in X^2, x\in\Cl{y}\}$. To calculate $\bb A(\mathbf{12},M)$ we will appeal to a general result on ad\`ele groups which can be proved relatively easily in the usual inductive fashion:
\begin{quote}
If $X$ is a Noetherian scheme, $M\in Coh(X)$, and $T\subseteq S_m(X)$, then \[\bb A(T,M)=\prod_{z\in X}\projlim_r\bb A_{r\res z}(\{z\}\times{_zT},j_{r\res z}^*M),\] where the ad\`ele groups on the right side are for the scheme $r\res z:=\Spec\roi_X/\cal I(z)^r$ ($\cal I(z)$ denotes the ideal subsheaf of $\roi_X$ defining $\Cl z$) and $j_{r\res z}:r\res x\to X$ denotes the natural inclusion.
\end{quote}

Applying this in our setting yields the identity \[\bb A(\mathbf{12},M)=\prod_{y\in X^1}\projlim_r\bb A_{r\res z}(\mathbf{01},j_{r\res z}^*M).\] But $r\res z$ is a one-dimensional scheme so we may appeal to the calculations of section \ref{section_one_dim_adeles} to deduce that
\begin{align*}
\bb A_{r\res z}(\mathbf{01},j_{r\res z}^*M)
	&=\rprod_{x\in y}M_x\otimes_{\roi_{X,x}}\roi_{x,y}/\frak p_{x,y}^r\\
	&=\bigg\{(f_x)\in\prod_{x\in y}M_x\otimes_{\roi_{X,x}}\roi_{x,y}/\frak p_{x,y}^r: f_x\mbox{ is in the image of} \\ 
	&\phantom{=\bigg\{(f_x)\in\prod_{x\in y}M_x\otimes_{\roi_{X,x}}\roi_{x,y}/\frak p_{x,y}^r}M_x\otimes_{\roi_{X,x}}\roi_x\mbox{ for all but finitely many }x\bigg\}
\end{align*}
where `$x\in y$' really means `$x\in X^2\cap\Cl y$'. Taking $\prod_{y\in X^1}\projlim_r$ of these restricted products gives us $\bb A(\mathbf{12},M)$.
\comment{
Rule (A3) implies \[\bb{A}(\mathbf{12}, M)=\prod_{y\in X^1}\projlim_r\bb{A}(_y\mathbf{12},\tilde j_{ry}M),\] where $_y\mathbf{12}=\{x\in X^2:x\in y\}$. We can represent the quasi-coherent sheaf $\tilde j_{ry}M$ as a direct limit of coherent sheaves as follows: \[\tilde j_{ry}M=\indlim_{\substack{D\ge 0\\y\notin\supp D}}M\otimes_{\roi_X}\roi_X(D)/\roi_X(-r[y])\] where $[y]$ is the divisor associated to $y\in X^1$ and where the limit is taken over effective divisors whose support does not contain $[y]$. Since ad\`ele groups commute with direct limits, we apply rule (A1) obtain
\[
\bb A(_y\mathbf{12},\tilde j_{ry}M)
	=\indlim_{\substack{\\D\ge 0\\y\notin\supp D}}\bb{A}(_y\mathbf{12},M\otimes_{\roi_X}\roi_X(D)/\roi_X(D-r[y]))\]
and then apply (A3) to write this as
\[\qquad=\indlim_{\substack{\\D\ge 0\\y\notin\supp D}}\prod_{x\in y}(\cal{F}\otimes_{\roi_X}\roi_X(D)/\roi_X(D-r[y]))_x\otimes_{\roi_{X,x}}\roi_x,\] where the product is taken over all codimension two points $x\in X^2$ which sit on $\Cl y$.

	\[=\indlim_{\substack{\\D\ge 0\\y\notin\supp D}}\prod_{x\in X_0\cap\Cl{y}}\cal{F}_x\otimes_{\roi_{X,x}}\hat{\roi_X(D)_x/\frak{p}_y^r\roi_X(D)_x},\]

where $\frak{p}_y\lhd\roi_{X,x}$ is a local equation for $y$ at $x$ (of course, $\frak{p}_y$ depends on $x$, but we omit this from the notation as usual).

Now, in taking the limit over effective $D$ with support outside $y$ we obtain \[\indlim_{\substack{D\ge0\\y\notin\supp D}}\roi_X(D)_x=\roi_{X,y},\] which thereby gives a natural embedding \[\bb{A}(_y\mathbf{12},\tilde{\cal{F}_y/\frak{m}_{X,y}^r\cal{F}_y})\into \prod_{x\in X_0\cap\Cl{y}}\cal{F}_x\otimes_{\roi_{X,x}}\roi_{x,y}/\frak{p}_y^r\roi_{x,y}.\] Indeed, this identifies the ad\`elic space on the left with those elements $(f_x)$ on the right for which $f_x$ belongs to the image of $\cal{F}_x\otimes_{\roi_{X,x}}\roi_x/\frak{p}_y^r\roi_x$ for almost all $x$.

Now passing to the projective limit yields \[\bb{A}(\mathbf{12},\cal{F})=\prod_{y\in X^1}{\prod_{x\in X_0\cap\Cl{y}}}^{\!\!\prime}\cal{F}_x\otimes_{\roi_{X,x}}\roi_{x,y},\] where the double product is restricted by only taking those $(f_{x,y})$ such that for any fixed $y\in X^1$ and $r\ge 0$, the elements $f_{x,y}$ mod $\frak{p}_y^r$ belong to the image of $\cal{F}_x\otimes_{\roi_{X,x}}\roi_x/\frak{p}_y^r\roi_x$ for almost all $x$.

Ivan denotes the adelic space $\bb{A}(\{(y,x):x\in y\},\roi_X)$ by $\bb{A}_y$ (for a fixed $y$).
}

{\bf(012)} We finally calculate $\bb{A}(\mathbf{012}, M)$. Since $_z\mathbf{012}=\mathbf{12}$ if $z=\eta$ and $=\emptyset$ if $z$ is any other point, rule (A3) implies \[\bb{A}(\mathbf{012},M)=\bb{A}(\mathbf{12},\tilde j_\eta M),\] where $\tilde j_\eta M$ is the constant $M_\eta$ sheaf on $X$. Next, $\tilde j_\eta M=\indlim_{D\ge 0} M(D)$ and so rule (A1) implies \[\bb{A}(\mathbf{012},M)=\indlim_{D\ge 0}\bb{A}(\mathbf{12},M(D)),\] where each of the terms on the right has already just been analysed (replacing $M$ by $M(D)$). This completes our explicit analysis of the ad\`ele groups.
\comment{
write the constant sheaf $\tilde{\cal{F}_{\eta}}$ as a direct limit $\displaystyle\tilde{\cal{F}_{\eta}}=\lim_{\To} \cal{F}(D)$ over effective divisors $D$; since the ad\`elic construction commutes with direct limits, we may apply (iv) to see that \[\bb{A}(\mathbf{012},\cal{F})=\indlim_{D\ge 0}\left(\prod_{y\in X^1}{\prod_{x\in X_0\cap\Cl{y}}}^{\!\!\prime}\cal{F}(D)_x\otimes_{\roi_{X,x}}\roi_{x,y}\right),\] which may be interpreted as a restricted product \[\prod_{y\in X^1}{\prod_{x\in X_0\cap\Cl{y}}}^{\!\!\prime}\cal{F}_x\otimes_{\roi_{X,x}}F_{x,y}.\]}
\end{proof}

We continue this example of a two-dimensional scheme by explaining theorem \ref{theorem_adeles} and its corollary, working with $M=\roi_X$ for concreteness, though the discussion holds in general.

There is a large commutative diagram of products of our local factors
\[\xymatrixcolsep{1cm}\xymatrixrowsep{1cm}\xymatrix{
& 	& 	F\ar[dl]\ar[dr]&	&	\\
&	\prod_{y\in X^1}F_y\ar[r] & \prod_{(y,x)\in\mathbf{12}}F_{x,y} & \prod_{x\in X^2}F_x\ar[l] & \\
\prod_{y\in X^1}\roi_y\ar[rr]\ar[ur] & & \prod_{(y,x)\in\mathbf{12}}\roi_{x,y}\ar[u] & & \prod_{x\in X^2}\roi_x\ar[ll]\ar[ul]
}\]
where the three ascending arrows are the obvious inclusions and the remaining arrows are diagonal embeddings. Theorem \ref{theorem_adeles} states that these homomorphisms restrict to the ad\`ele groups to give a commutative diagram of ring homomorphisms:
\[\xymatrixcolsep{1.5cm}\xymatrixrowsep{1.5cm}\xymatrix{
& 	& 	\bb{A}(\mathbf{0},\roi_X)\ar[d]\ar[dl]_{\bor^{\mathbf{0}}_{\mathbf{01}}}\ar[dr]^{\bor^{\mathbf{0}}_{\mathbf{02}}}&	&	\\
&	\bb{A}(\mathbf{01},\roi_X)\ar[r]^{\bor^{\mathbf{01}}_{\mathbf{012}}} & \bb{A}(\mathbf{012},\roi_X) & \bb{A}(\mathbf{02},\roi_X)\ar[l]_{\bor^{\mathbf{02}}_{\mathbf{012}}} & \\
\bb{A}(\mathbf{1},\roi_X)\ar[urr]\ar[rr]_{\bor^{\mathbf{1}}_{\mathbf{12}}}\ar[ur]^{\bor^{\mathbf{1}}_{\mathbf{01}}} & & \bb{A}(\mathbf{12},\roi_X)\ar[u]_{\bor^{\mathbf{12}}_{\mathbf{012}}} & & \bb{A}(\mathbf{2},\roi_X)\ar[ull]\ar[ll]^{\bor^{\mathbf{2}}_{\mathbf{12}}}\ar[ul]_{\bor^{\mathbf{2}}_{\mathbf{02}}}
}\] For example, to define the diagonal map \[\bor^{\mathbf{1}}_{\mathbf{12}}:\bb{A}(\mathbf{1},\roi_X)=\prod_{y\in X^1}\roi_y\To \bb{A}(\mathbf{12},\roi_y)=\rprod_{(x,y)\in \mathbf{12}}\roi_{x,y},\] one must check that if $y\in X^1$, $f\in\roi_y$, and $r\ge 0$, then $f\in\roi_x+\frak p_{x,y}^r$ for all but finitely many $x\in y$. These are the types of `finiteness' conditions to which we alluded when sketching the proof of theorem \ref{theorem_adeles}.

The second part of theorem \ref{theorem_adeles} states that the resulting complex
\[\xymatrixcolsep{0.1cm}\xymatrixrowsep{0.4cm}\xymatrix{
\bb{A}(\mathbf{0},\roi_X)\oplus\bb{A}(\mathbf{1},\roi_X)\oplus\bb{A}(\mathbf{2},\roi_X)\ar[r]&
\bb{A}(\mathbf{01},\roi_X)\oplus\bb{A}(\mathbf{02},\roi_X)\oplus\bb{A}(\mathbf{12},\roi_X)\ar[r]&
\bb{A}(\mathbf{012},\roi_X)\\
(f_0,f_1,f_2)\ar[r]& (\bor^{\mathbf{0}}_{\mathbf{01}}f_0+\bor^{\mathbf{1}}_{\mathbf{01}}f_1, -\bor^{\mathbf{0}}_{\mathbf{02}}f_0+\bor^{\mathbf{2}}_{\mathbf{02}}f_2,-\bor^{\mathbf{1}}_{\mathbf{12}}f_1-\bor^{\mathbf{2}}_{\mathbf{12}}f_2) &\\
&(g_{01},g_{02},g_{12})\ar[r]&\bor^{\mathbf{01}}_{\mathbf{012}}g_{01}+\bor^{\mathbf{02}}_{\mathbf{012}}g_{02}+\bor^{\mathbf{12}}_{\mathbf{012}}g_{12} \\}\]\marginpar{Add zeros and restructure}
has cohomology naturally isomorphic to $H^*(X,\roi_X)$ (and we stress again that we could have replaced $\roi_X$ by any other quasi-coherent sheaf in this discussion).

This completes our dissection of the general adelic theory in dimension two.

\bibliographystyle{acm}
\bibliography{../Bibliography}

\noindent Matthew Morrow,\\
University of Chicago,\\
5734 S. University Ave.,\\
Chicago,\\
IL, 60637,\\
USA\\
{\tt mmorrow@math.uchicago.edu}\\
\url{http://math.uchicago.edu/~mmorrow/}\\
\end{document}